\newcommand{\Lip}{{\rm Lip}}
\newcommand{\Id}{{\rm Id}\,}
\newcommand{\dist}{{\rm dist}}
\newcommand{\loc}{{\rm loc}}
\newcommand{\diam}{{\rm diam}\,}
\newcommand{\spt}{{\rm spt}}
\DeclareMathOperator{\restrict}{\llcorner}
\DeclareMathOperator{\Tan}{Tan}  
\newcommand{\Diff}{{\rm Diff}}
\newcommand{\Der}{D}
\DeclareMathOperator{\ap}{ap}
\newcommand{\der}{\mathbf{D}}
\DeclareMathOperator{\Nor}{Nor}
\DeclareMathOperator{\nor}{nor}
\DeclareMathOperator{\Cut}{Cut}
\DeclareMathOperator{\Unp}{Unp}
\theoremstyle{plain}
\newtheorem{theorem}{Theorem}[section]
\newtheorem{lemma}[theorem]{Lemma}
\newtheorem{corollary}[theorem]{Corollary}
\newtheorem*{theorem*}{Theorem}
\newtheorem*{corollary*}{Corollary}
\newtheorem{thm}{Theorem}
\theoremstyle{definition}
\newtheorem{definition}[theorem]{Definition}
\newtheorem{remark}[theorem]{Remark}
\numberwithin{equation}{section}
\numberwithin{figure}{section}
\DeclareRobustCommand{\rchi}{{\mathpalette\irchi\relax}}
\newcommand{\irchi}[2]{\raisebox{\depth}{$#1\chi$}}
\title{Alexandrov sphere theorems for $ W^{2,n} $-hypersurfaces}
\author{Mario Santilli}
\address{Dipartimento di Ingegneria e Scienze dell'Informazione e Matematica, Universit\'a degli Studi dell'Aquila, 67100 L'Aquila, Italy}
\email{mario.santilli@univaq.it}
\author{Paolo Valentini}
\address{Dipartimento di Ingegneria e Scienze dell'Informazione e Matematica, Universit\'a degli Studi dell'Aquila, 67100 L'Aquila, Italy}
\email{paolo.valentini@graduate.univaq.it}
\begin{document}

	\begin{abstract}
We prove that the proximal unit normal bundle of the subgraph of a $ W^{2,n} $-function carries a natural structure of Legendrian \emph{cycle}. This result is used to obtain an Alexandrov-type sphere theorem for hypersurfaces in $ \mathbf{R}^{n+1} $ which are locally  graphs of \emph{arbitrary} $ W^{2,n} $-functions. We also extend the classical umbilicality theorem to $ W^{2,1} $-graphs, under the Lusin (N) condition for the graph map.
	\end{abstract}

		\maketitle
	\tableofcontents
		\paragraph*{\small MSC-classes 2020:}{\small 53C24, 53C65, 49Q20.}
	\paragraph*{\small Keywords:}{\small Sphere theorem,  Legendrian cycles, higher-order mean curvatures, $ W^{2,n} $-functions, Lusin (N) property}
	
	\section{Introduction}
	
\subsection{Background and motivation}
It is important and natural to understand if classical results of smooth differential geometry still hold if one weakens the regularity hypothesis. Since most of the classical differential-geometric techniques rely on some smoothness assumption, such a  question often calls for substantial generalizations of the existing methods. 
	
Our starting point in this paper is the following general version of the sphere theorem by Alexandrov \cite{Aleksandrov}. 
\begin{theorem*}[Alexandrov]
A bounded and connected $ C^2 $-domain $ \Omega \subseteq \mathbf{R}^{n+1} $ must be a round ball, provided there exist a $ C^1 $ function $ \varphi : \mathbf{R}^n \rightarrow \mathbf{R} $ and $ \lambda \in \mathbf{R} $ such that
$$ \varphi(\rchi_{\Omega,1}(p), \ldots, \rchi_{\Omega,n}(p)) = \lambda $$ 
and  
\begin{equation}\label{intro uniform ellipticity}
	\frac{\partial \varphi}{\partial t_i}(\rchi_{\Omega, 1}(p), \ldots , \rchi_{\Omega, n}(p)) >0 \quad \textrm{for $ i = 1, \ldots , n $,}
\end{equation}  
for every  $ p \in \partial \Omega $. Here  $ \rchi_{\Omega,1} \leq \ldots \leq \rchi_{\Omega,n} $ are the principal curvatures of $ \partial \Omega $.
\end{theorem*}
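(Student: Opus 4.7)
The plan is to prove the theorem by Alexandrov's classical \emph{moving planes} method, adapted to the fully nonlinear elliptic setting provided by hypothesis \eqref{intro uniform ellipticity}. Fix a unit vector $\nu\in\mathbf{R}^{n+1}$, and for $t\in\mathbf{R}$ let $H_t^\nu = \{x\in\mathbf{R}^{n+1} : x\cdot\nu = t\}$ and let $R_t^\nu$ denote the reflection across $H_t^\nu$. Starting with $t$ so large that $H_t^\nu$ lies above $\ov{\Om}$, I would slide $t$ downward and reflect the upper cap $\Om\cap\{x\cdot\nu > t\}$ across $H_t^\nu$; for $t$ slightly below the first touching value the reflected cap is compactly contained in $\Om$. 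Let $t_*^\nu$ be the infimum of those $t$ for which this containment persists. At $t = t_*^\nu$ standard geometric considerations show that one of two configurations must occur: either the reflected cap is internally tangent to $\partial\Om$ at an interior contact point $p_0$ (type I), or $H_{t_*^\nu}^\nu$ itself is tangent to $\partial\Om$ at some $p_0\in H_{t_*^\nu}^\nu\cap\partial\Om$ (type II).

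Near $p_0$ I would represent both $\partial\Om$ and the reflected cap as graphs of $C^2$ functions $u$ and $\tilde u$ over a common domain in the tangent hyperplane at $p_0$, chosen with compatible outward orientations so that $\tilde u \ge u$ with equality at $p_0$. Both functions satisfy the same fully nonlinear PDE
\[
F(D^2 v, Dv) := \vphi\bigl(\rchi_1(v), \ldots, \rchi_n(v)\bigr) = \lambda,
\]
and by the classical formula relating the derivatives of the principal curvatures to the Hessian of the graphing function, hypothesis \eqref{intro uniform ellipticity} translates into strict ellipticity of $F$ at the relevant argument. The difference $w := \tilde u - u$ therefore satisfies a linear elliptic equation with no zero-order term and attains a minimum equal to $0$ at $p_0$. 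Hopf's strong maximum principle (interior version in case I; boundary point version in case II, after aligning $H_{t_*^\nu}^\nu$ with a coordinate hyperplane) then forces $w \equiv 0$ in a neighborhood of $p_0$.

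A standard open-closed argument on the connected component of $\partial\Om$ containing $p_0$ upgrades this local coincidence to full symmetry of $\partial\Om$ across $H_{t_*^\nu}^\nu$; since $\nu$ was arbitrary, $\Om$ is invariant under reflection with respect to a hyperplane perpendicular to every direction, and elementary geometry forces $\Om$ to be a round ball. The main obstacle I expect is the careful treatment of the boundary-touching configuration (type II), where the tangent hyperplane to $\partial\Om$ at $p_0$ coincides with $H_{t_*^\nu}^\nu$ and a second-order analysis of the contact is needed; hypothesis \eqref{intro uniform ellipticity} is precisely what is required to make Hopf's boundary point lemma applicable, but the geometric setup -- in particular a coherent choice of outward normal for the original surface and for its reflected image so that the comparison $\tilde u \ge u$ holds with the right sign -- must be arranged with care.
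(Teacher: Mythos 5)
Your moving-plane argument is essentially Alexandrov's original proof, and its outline is correct (including the distinction between the interior and non-transversal tangency configurations, and the observation that continuity of the $C^1$ map $\varphi$ and compactness of the $C^2$ boundary upgrade the strict positivity \eqref{intro uniform ellipticity} to uniform ellipticity, which is what Hopf's lemma needs). Be aware, however, that the paper does \emph{not} prove this statement: it is quoted as classical background with a citation to Alexandrov, and the authors explicitly say they do \emph{not} follow the moving-plane route. The paper's own contribution (Theorems \ref{Heintze-Karcher} and \ref{Alexandrov}) is a generalization to $W^{2,n}$-domains with $\varphi = \sigma_k$, proved via the Montiel--Ros integral-geometric machinery: one first builds a Legendrian cycle on the proximal unit normal bundle (Theorem \ref{W2n domains}), uses it to derive Reilly/Minkowski-type variational identities (Theorems \ref{Reilly variational formualae}, \ref{Minkowski formulae}), and closes with a Heintze--Karcher inequality (Theorem \ref{Heintze-Karcher}). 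The two methods have genuinely complementary strengths: the moving plane argument handles an arbitrary elliptic $\varphi$ but leans on pointwise $C^2$ regularity and a maximum principle (Alexandrov himself pushed it to $W^{2,n}$ under a \emph{uniform} ellipticity bound, using ABP-type maximum principles), whereas the Montiel--Ros approach only covers the elementary symmetric functions $\sigma_k$ but tolerates arbitrary $W^{2,n}$-regularity and the weaker degenerate ellipticity \eqref{intro degenerate ellipticity}. Finally, a small point worth spelling out in your last paragraph: for each direction $\nu$ you obtain \emph{some} hyperplane of symmetry perpendicular to $\nu$; the standard centre-of-mass argument then shows all these hyperplanes pass through a common point, forcing $\Om$ to be a ball.
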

 \noindent The simplest case of this result is the famous rigidity result for hypersurfaces with constant mean curvature. More generally,  choosing $ \varphi = \sigma_k $, where  $ \sigma_k $ is the $ k $-th elementary symmetric function (see Definition \ref{symmetric function}), one can deduce (see  \cite[Appendix]{KorevaarRos})  that if $ \Omega \subseteq \mathbf{R}^{n+1} $ is a bounded $ C^2 $-domain such that $ H_{\Omega, k} =  \sigma_k(\rchi_{\Omega,1}, \ldots, \rchi_{\Omega,n}) $ is constant for some $ k $, then $ \Omega $ is a round ball. This result was proved by Alexandrov using the moving plane method. A completely different approach to treat the special case $ \varphi = \sigma_k $ and based on integral identities was found by Ros in \cite{RosRevista} and Montiel-Ros \cite{MontielRos}.
		
		It is natural to ask about generalization of the sphere theorem beyond the classical smooth regime. This problem was addressed by Alexandrov in \cite{Aleksandrov62}, where he generalized the sphere theorem to bounded domains whose boundary can be locally represented as graph of $ C^1 $-functions with second-order distributional derivatives in $ L^n $, and under the \emph{uniform ellipticity condition}
		\begin{equation}\label{intro uniform ellipticity 1}
	0 < \mu_1 \leq	\frac{\partial \varphi}{\partial t_i}(\rchi_{\Omega, 1}(p), \ldots , \rchi_{\Omega, n}(p)) \leq \mu_2 < \infty \quad \textrm{for $ i = 1, \ldots , n $,}
		\end{equation}
for $ \mathcal{H}^n $ a.e.\  $ p \in \partial \Omega $. Here  $ \rchi_{\Omega,1} \leq \ldots \leq \rchi_{\Omega,n} $ are the weak principal curvatures of $ \partial \Omega $. Obviously, \eqref{intro uniform ellipticity 1} reduces to \eqref{intro uniform ellipticity} for $ C^2 $-domains. The proof of this result is based on the generalization of the moving plane method by means of maximum principles for $ W^{2,n} $-solutions of uniformly elliptic PDE's. Both the hypothesis of $ C^1 $-regularity and the \emph{uniform ellipticity condition} \eqref{intro uniform ellipticity 1} are important for the applicability of this method. On the other hand, it is natural to ask if these hypotheses are convenient conditions, rather than necessary restrictions. Additionally, arbitrary $ W^{2,n} $-functions exhibit very different behaviours than $ C^1 $-functions.  For instance,  T.\ Toro in \cite{Toro94} constructs a $ W^{2,n}$-function with a (countable) dense subset of singular points, and J.\ Fu in \cite{FuAlexandrov} points out the existence of $ W^{2,n} $ functions on $ \mathbf{R}^n $ whose gradient has a dense graph in $ \mathbf{R}^n \times \mathbf{R}^n $. 

With these motivations in mind, in this paper we generalize Alexandrov sphere theorem to \emph{arbitrary $ W^{2,n} $-domains} (i.e.\ open sets which are locally subgraphs of $ W^{2,n} $-functions) when $ \varphi $ is a symmetric function of the weak principal curvatures. Moreover, we prove our result under a more general hypothesis than uniform ellipticity, namely the \emph{degenerate ellipticity condition} \eqref{intro degenerate ellipticity}, cf.\ Theorem \ref{intro sphere theorem}.
Instead of using a moving plane method,  we extend the Montiel-Ros integral-geometric approach to prove our result. In recent years Montiel-Ros argument has been generalized to some classes of non-smooth  geometric sets, namely  sets of finite perimeter with \emph{bounded} distributional mean curvature (see \cite{DelgadinoMaggi} and \cite{DeRosaetall}) and sets of positive reach  (cf.\ \cite{HugSantilli}). On the other hand, the aforementioned examples show that $ W^{2,n}  $-domains exhibit some very different behaviour than the sets treated in \cite{DelgadinoMaggi}, \cite{DeRosaetall}, or in  \cite{HugSantilli}. As explained below, this requires a substantially novel approach.


\subsection{Legendrian cycles and sphere theorem}

Here we discuss the generalization of the Montiel-Ros method (see \cite{MontielRos}) to $ W^{2,n} $-domains. In the smooth setting this method is based on a clever combination of the Heintze-Karcher inequality (see \cite{HeintzeKarcher} and \cite{MontielRos}) with  the variational formulae for the higher-order mean curvature integrals of a $ C^2 $-domain (cf.\ \cite{Hsiung} and \cite{Reilly1972}). As noted by Fu (cf.\ \cite{Fu98}),  the variational formulae are strictly related with the  structure of Legendrian cycle carried by the unit-normal bundle of a smooth submanifold.  We recall that an integer multiplicity locally rectifiable $ n $-current $ T $ of $ \mathbf{R}^{n+1} \times \mathbf{R}^{n+1} $  with compact support in $ \mathbf{R}^{n+1}  \times \mathbf{S}^n $  is called \emph{Legendrian cycle} of $ \mathbf{R}^{n+1} $ if and only if $ \partial T = 0 $ and $ T \restrict \alpha  = 0 $, where $ \alpha $ is the contact form in $ \mathbf{R}^{n+1} \times \mathbf{R}^{n+1} $ (see section \ref{section: Notation and background}). Indeed, it is a simple exercise to prove that the unit-normal bundle of a $ C^2 $-domain carries a natural structure of Legendrian cycle. It is also well known that if $ f $ is a $ W^{2,n} $-function on an open subset $ U $ of $ \mathbf{R}^{n+1} $ then there exists an integral current $ \mathbb{D}(f) $ (also denoted by $ [df] $) of $ U \times \mathbf{R}^n $  with zero distributional boundary (i.e.\ an integral cycle), which serves as a substitute for the graph of the differential of $ f $ (see \cite{Fu89}, \cite[(4.1) pag.\ 332]{JerrardMongeAmpere} and \cite{FuAlexandrov}).  This current is called \emph{differential cycle of $ f $.} The construction of this integral cycle can be naturally extended to construct a Legendrian cycle associated with the graph of $ f $. However,  this information alone is not sufficient to extend the Montiel-Ros method, as such extension seems to require a crucial geometric property for the carrier $ W $ of this Legendrian cycle: namely that the segments $ a + t u $ with $ t \geq 0 $, at least for "many" points $(a, u) \in W$, must be distance-minimizing segments near $ 0 $. This observation leads us to consider the proximal unit-normal bundle, which is defined for an arbitrary set $ C \subseteq \mathbf{R}^{n+1} $ as
\begin{equation*}
	\nor(C) = \{(x,u) \in \overline{C} \times \mathbf{S}^n: \dist(x+su,C) = s \; \textrm{for some $ s > 0 $}\}.
\end{equation*} 
It is always true that $ \nor(C) $ is a \emph{Legendrian rectifiable set} of $ \mathbf{R}^{n+1}  \times \mathbf{S}^n $ (see Definition \ref{def legendrian set}), namely it can be $ \mathcal{H}^n $ almost everywhere covered by a countable union of $ n $-dimensional $ C^1 $-submanifold of $ \mathbf{R}^{n+1}  \times \mathbf{S}^n $ and the contact form $ \alpha $ vanishes on the approximate tangent plane of $ \nor(C) $ at $ \mathcal{H}^n $ almost all points; cf.\ Lemma \ref{lem: Santilli20}. On the other hand, it is \emph{not} always true that $ \mathcal{H}^n\restrict \nor(C) $ is a Radon measure over $ \mathbf{R}^{n+1} \times \mathbf{S}^n $ (for instance even when $ C $ coincides with the closure of a smooth submanifold with bounded mean curvature, cf.\ Lemma \ref{lem Brakke example}), henceforth, $ \nor(C) $ cannot always carry an integer-multiplicity rectifiable current.

Our first and central result asserts that the proximal unit normal bundle of a $ W^{2,n} $-domain  carries a natural structure of Legendrian cycle. More precisely, denoting by  $ \pi_0 : \mathbf{R}^{n+1} \times \mathbf{R}^{n+1} \rightarrow \mathbf{R}^{n+1} $ the projection onto the first factor, cf.\ \eqref{projections}, and by $ E' $ a volume form of $ \mathbf{R}^{n+1} $, cf.\ \eqref{n form and coform}, we prove the following result.
\begin{thm}[\protect{cf.\ Theorem \ref{W2n functions normal cycles} and Theorem \ref{W2n domains}}]\label{intro Theorem Legendrian}
	If $ \Omega \subseteq \mathbf{R}^{n+1} $ is a bounded $W^{2,n} $-domain then $ \mathcal{H}^n(\nor(\Omega)) < \infty $ and there exists a unique $ n $-dimensional Legendrian cycle $ T $ such that
	$$ T = (\mathcal{H}^n \restrict \nor(\Omega)) \wedge \eta,  $$
	where $ \eta $ is a $ \mathcal{H}^n \restrict \nor(\Omega) $ measurable $ n $-vectorfield such that
	$$ | \eta(x, u)| =1, \quad \textrm{$ \eta(x, u) $ is simple}, $$
	$$ \textrm{$\Tan^n(\mathcal{H}^n \restrict \nor(\Omega), (x, u)) $ is associated with $ \eta(x, u) $} $$
	and 
	$$ \langle \big[ {\textstyle\bigwedge_n} \pi_0\big](\eta(x,u)) \wedge u, E'\rangle > 0 $$
	for $ \mathcal{H}^n $ a.e.\ $(x,u) \in \nor(\Omega) $. We write $ T = N_\Omega $.
\end{thm}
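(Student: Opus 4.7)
The plan is to build $N_\Omega$ locally from the differential cycle of the defining $W^{2,n}$-functions, glue via a partition of unity, and verify that the resulting current is carried by the proximal unit normal bundle.

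First I would work in a chart where $\partial\Omega$ is the graph of a $W^{2,n}$-function $f:U\subset\mathbf{R}^n\to\mathbf{R}$ and $\Omega$ is the strict subgraph, approximating $f$ by smooth $f_k\to f$ in $W^{2,n}_{\loc}$. For each smooth $f_k$ the subgraph is a $C^\infty$-domain whose classical normal cycle
\[
T_k := (\Psi_k)_{\#}\lsq U\rsq, \qquad \Psi_k(\hat x) := \Bigl((\hat x,f_k(\hat x)),\ \tfrac{(-\nabla f_k(\hat x),\,1)}{\sqrt{1+|\nabla f_k(\hat x)|^2}}\Bigr),
\]
is a Legendrian cycle of multiplicity one satisfying the positivity condition; a direct application of the area formula bounds the mass of $T_k$ by a polynomial in $\|\nabla f_k\|_{L^n}$ and $\|\nabla^2 f_k\|_{L^n}$, so $\{T_k\}$ is mass-bounded. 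By Federer--Fleming compactness a subsequence converges weakly to a limit current $T_f$, and both $\partial T_k=0$ and $T_k\restrict\alpha=0$ pass to the limit, yielding that $T_f$ is a Legendrian cycle. Using Fu's convergence $\mathbb{D}(f_k)\to\mathbb{D}(f)$ together with the uniform convergence $f_k\to f$ coming from the Sobolev embedding $W^{2,n}\hookrightarrow C^0$, I would identify $T_f$ as the natural pushforward of $\mathbb{D}(f)$ under the map $(\hat x,p)\mapsto((\hat x,f(\hat x)),(-p,1)/\sqrt{1+|p|^2})$, which in particular shows that $T_f$ does not depend on the approximating sequence.

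Next I would show that $T_f$ has the structure claimed in the statement. By Calder\'on--Zygmund theory, $f$ admits a pointwise second-order Taylor expansion at $\mathcal{H}^n$-almost every $\hat x\in U$; at each such point the graph is locally sandwiched between two paraboloids of bounded curvature, so the upper unit normal $\nu_f(\hat x)$ is proximal to the strict subgraph near $(\hat x,f(\hat x))$, whence $\Psi_f(\hat x)\in\nor(\Omega)$. Combined with Lemma \ref{lem: Santilli20} (rectifiability of $\nor(\Omega)$ and vanishing of the contact form on its approximate tangent plane), this yields $T_f=(\mathcal{H}^n\restrict\nor(\Omega))\wedge\eta$ with $\eta$ unit simple, associated with the approximate tangent plane $\mathcal{H}^n$-a.e., and satisfying the sign condition, which is inherited from the approximations $T_k$ where the upper normal is automatic. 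Uniqueness then follows because the sign condition pins down $\eta$ $\mathcal{H}^n$-a.e.; the finiteness $\mathcal{H}^n(\nor(\Omega)\cap V_i)<\infty$ follows from $\mathbf{M}(T_f)<\infty$ together with multiplicity one.

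Finally, I would glue the local cycles $T_{f_i}$ associated to a finite cover of $\partial\Omega$ by cylindrical charts using a partition of unity; the pieces agree on overlaps by the uniqueness statement, and summing the local mass bounds yields $\mathcal{H}^n(\nor(\Omega))<\infty$. The main obstacle will be the support identification in the middle stage: in view of the pathological examples of Toro and Fu recalled in the introduction, arbitrary $W^{2,n}$-graphs can be extremely irregular and the proximal condition can fail on large sets; the argument has to demonstrate that the $\mathcal{H}^n$-negligible "bad" set is invisible to the current $T_f$ (so that no spurious mass accumulates there in the weak limit $T_k\weak T_f$), while the set of pointwise twice-differentiable points carries the full current with geometrically proximal normals. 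Controlling the passage from the formal object $\mathbb{D}(f)$ to the geometric object $\nor(\Omega)$ is the technical heart of the theorem.
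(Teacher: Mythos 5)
Your high-level plan (smooth $W^{2,n}$-approximation $f_k\to f$, Legendrian cycles $T_k$ for the smooth subgraphs, passage to the limit, local proximality at twice-differentiable points, then a partition-of-unity glue) is essentially the paper's strategy, and you correctly single out the "support identification" as the technical heart. The difficulty is that you name the obstacle but do not overcome it, and without that step the chain of implications does not close.

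Concretely, two ingredients are missing, and both are needed to get from a weak limit $T_f$ (or a pushforward of $\mathbb{D}(f)$) to the precise representation $T_f=(\mathcal{H}^n\restrict\nor(\Omega))\wedge\eta$ with unit multiplicity and $\mathcal{H}^n(\nor(\Omega))<\infty$.
First, you only prove $\Phi_f(\mathcal{S}(f))\subseteq\nor(\Omega)$ via the touching-paraboloid argument; you also need the reverse inclusion $\nor(E_f)\cap(U\times\mathbf{R}\times\mathbf{R}^{n+1})\subseteq\Phi_f(\mathcal{S}^\ast(f))$, which the paper establishes in Lemma \ref{W2n functions normal bundle} by taking a touching ball, using Lemma \ref{lem no vertical touching balls} to exclude vertical normals, and then applying the one-sided estimate of Lemma \ref{W2n basic estimate} to place the base point in $\mathcal{S}^\ast(f)$ with the right gradient. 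Without this you cannot conclude that the carrier of your limit equals $\nor(\Omega)$ rather than a proper subset.
Second, and more seriously, you need the Lusin-type property that $\mathcal{H}^n\big(\Phi_f(Z)\big)=0$ whenever $Z\subseteq\mathcal{S}^\ast(f)$ and $\mathcal{L}^n(Z)=0$; this is Lemma \ref{W2n functions Lusin}, proved in the paper by a Rado–Reichelderfer covering argument built on Menne's oscillation estimate (Lemma \ref{W2n basic estimate}). It is exactly what makes the (injective) area formula valid for $\Phi_f$ on $\mathcal{S}^\ast(f)$, hence gives $\mathcal{H}^n(N_f)<\infty$, the identification of the underlying measure with $\mathcal{H}^n\restrict N_f$, and unit multiplicity. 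Federer–Fleming compactness plus the sign condition does not give you this for free: weak limits of unit-density cycles need not be unit density, and without the Lusin property one cannot even be sure the image $\Phi_f(\mathcal{S}(f))$ has positive, finite $\mathcal{H}^n$-measure. The Roskovec-type example quoted in the paper (there is $f$ with $[-1,1]^n\subseteq\nabla f([-1,1]\times\{0\}^{n-1})$, so $\overline{\nabla f}(\Diff(f)\setminus\mathcal{S}(f))$ has positive $\mathcal{H}^n$-measure) shows that such pathological concentration really can happen on $\Diff(f)\setminus\mathcal{S}(f)$, which is exactly why the Lusin estimate must be restricted to $\mathcal{S}^\ast(f)$. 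Finally, invoking "Fu's convergence $\mathbb{D}(f_k)\to\mathbb{D}(f)$" and pushing forward does not substitute for this: that $\mathbb{D}(f)$ is carried (with unit density) by $\{(x,\nabla f(x)):x\in\mathcal{S}(f)\}$ rather than by the graph of $\nabla f$ over all of $\Diff(f)$ is itself a new result of this paper (Theorem \ref{intro theorem Lagrangian}), proved by exactly the same Lusin argument, not a pre-existing black box. A smaller point: the paper avoids Federer–Fleming and compactness of subsequences altogether by writing down the candidate $T$ explicitly and proving $T_k\to T$ via dominated convergence, and the general $W^{2,n}$-domain case additionally requires the change-of-variables under $\Psi_F$ (Remark \ref{rmk invariance under diff}), which your sketch does not address.
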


A simple functional-analytic reformulation of Theorem \ref{intro Theorem Legendrian}, which can be proved along the same lines of Theorem \ref{W2n functions normal cycles}, states what follows.
 
 \begin{thm}\label{intro theorem Lagrangian}
 Suppose $ f \in W^{2,n}(U) $. Then $ \mathbb{D}(f) $ is a unit-density integral cycle carried over $\{(x, \nabla f(x)) : x \in \mathcal{S}(f) \} $, where $ \mathcal{S}(f)  $ is the set of pointwise \emph{twice}-differentiabily points of  $ f $ (cf.\ Definition \ref{def twice diff points}), and $ \nabla f $ is the pointwise gradient of $ f $.
 \end{thm}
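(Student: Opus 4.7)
The plan is to mirror the approach of Theorem~\ref{W2n functions normal cycles} but in the simpler Lagrangian setting, where no unit-sphere constraint or contact form enters. The three ingredients I would assemble are: (i) the existence of $\mathbb{D}(f)$ as an integer multiplicity $n$-cycle in $U \times \mathbf{R}^n$, obtained as the weak limit of the smooth graph currents $\mathbb{D}(f_k) = \lsq \{(x,\nabla f_k(x)) : x \in U\}\rsq$ for any mollification $f_k \to f$ in $W^{2,n}_{\mathrm{loc}}(U)$, cf.\ \cite{Fu89,JerrardMongeAmpere,FuAlexandrov}; (ii) the Calder\'on--Zygmund pointwise twice-differentiability theorem, which ensures $\mathcal{L}^n(U \setminus \mathcal{S}(f)) = 0$; and (iii) a $W^{2,n}$ Lusin-type approximation: for every $\varepsilon > 0$ there is a closed set $K_\varepsilon \subseteq \mathcal{S}(f)$ with $\mathcal{L}^n(U \setminus K_\varepsilon) < \varepsilon$ and a function $g_\varepsilon \in C^2(U)$ such that $f = g_\varepsilon$, $\nabla f = \nabla g_\varepsilon$, and the pointwise Hessian $D^2 f$ coincides with $D^2 g_\varepsilon$ on $K_\varepsilon$.

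The central step would be to establish the locality identity $\mathbb{D}(f) \restrict (K_\varepsilon \times \mathbf{R}^n) = \mathbb{D}(g_\varepsilon) \restrict (K_\varepsilon \times \mathbf{R}^n)$. By linearity of $\mathbb{D}$ under the weak-limit construction, this reduces to showing that $\mathbb{D}(h)$ has no mass over $K_\varepsilon$, where $h = f - g_\varepsilon \in W^{2,n}$ has vanishing pointwise first and second derivatives on $K_\varepsilon$. I would mollify $h$ to $h_k$, apply the classical area-formula mass bound $\mathbf{M}(\mathbb{D}(h_k) \restrict (V \times \mathbf{R}^n)) \lesssim \int_V (1 + |D^2 h_k|^n)\, d\mathcal{L}^n$ with $V$ a thin neighborhood of $K_\varepsilon$, and exploit the $L^n$-absolute continuity of $|D^2 h_k|$ together with the vanishing of $D^2 h$ on $K_\varepsilon$ to force the mass above $K_\varepsilon$ to vanish in the limit. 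Since $g_\varepsilon$ is $C^2$, the cycle $\mathbb{D}(g_\varepsilon)$ is exactly the classical integration current over the smooth graph of $\nabla g_\varepsilon$ with unit multiplicity, so the restriction is the unit-density integral current carried over $\{(x, \nabla g_\varepsilon(x)) : x \in K_\varepsilon\} = \{(x, \nabla f(x)) : x \in K_\varepsilon\}$.

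Finally, choosing $\varepsilon_j \to 0$ along an exhausting sequence $K_{\varepsilon_j} \nearrow K_\infty$ with $\mathcal{L}^n(\mathcal{S}(f) \setminus K_\infty) = 0$, and using that $\mathbb{D}(f)$ gives no mass to $\pi_0^{-1}(N)$ whenever $\mathcal{L}^n(N)=0$ (a direct consequence of the same mass bound applied to any approximating sequence), I would conclude that $\mathbb{D}(f)$ is a unit-density integral cycle supported on $\{(x, \nabla f(x)) : x \in \mathcal{S}(f)\}$, with coherent orientation inherited from the smooth graphs. The main obstacle is the locality step: since $\mathbb{D}$ is defined by a distributional limit rather than a pointwise formula, showing that two $W^{2,n}$ functions with coincident pointwise two-jets on a Borel set produce differential cycles agreeing above that set demands a sharp and essentially quantitative mass estimate---the very mechanism that is expected to drive the construction in Theorem~\ref{W2n functions normal cycles}.
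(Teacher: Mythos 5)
Your strategy is a genuinely different route from the paper's (which defines the current directly by an explicit integral over $U$ against a Borel representative of the approximate differentials of the graph map and proves convergence $T_k\to T$ via quantitative $W^{2,n}$-norm estimates, with the Lusin~(N) property of $\Phi_f$---Lemma~\ref{W2n functions Lusin}, not the Calder\'on--Zygmund $C^2$-approximation---closing the area formula), but as written the central reduction is broken. The asserted ``linearity of $\mathbb{D}$ under the weak-limit construction'' is false: $\mathbb{D}(f)$ is the weak limit of pushforwards $\lsq U\rsq\mapsto \overline{\nabla f_k}{}_\#\lsq U\rsq$, and pushforward is not linear in the pushing map, so $\mathbb{D}(f)-\mathbb{D}(g_\e)\neq\mathbb{D}(f-g_\e)$ in general. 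A concrete counterexample: $f(x)=|x|^2/2$, $g=0$ give $\mathbb{D}(f)=\lsq\{(x,x)\}\rsq$, $\mathbb{D}(g)=\lsq\{(x,0)\}\rsq$, $\mathbb{D}(f-g)=\mathbb{D}(f)$, yet $\mathbb{D}(f)-\mathbb{D}(g)\neq\mathbb{D}(f)$.

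Even if one granted the linearity, the resulting claim cannot be true and is not what your mass estimate delivers. For $h=f-g_\e$ one has $\nabla h=0$ and $\der^2h=0$ $\mathcal{L}^n$-a.e.\ on $K_\e$, so $\mathbb{D}(h)\restrict(K_\e\times\mathbf{R}^n)$ should coincide with $\lsq K_\e\times\{0\}\rsq$, which has mass $\mathcal{L}^n(K_\e)>0$; it is certainly not the zero current. Consistently, the Jacobian bound $\mathbf{M}(\mathbb{D}(h_k)\restrict(V\times\mathbf{R}^n))\lesssim\int_V(1+|\Der^2 h_k|^n)\,d\mathcal{L}^n$ tends, after letting $k\to\infty$ and shrinking $V\searrow K_\e$, to something of order $\mathcal{L}^n(K_\e)$, not to zero; the estimate only kills the mass over $U\setminus K_\e$ (where $\mathcal{L}^n(V)$ is small), not over $K_\e$ itself. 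So the mechanism you propose for the locality step does not close. To make a Lusin-approximation route work you would need a genuine locality principle for the differential cycle (that two $W^{2,n}$ functions with the same pointwise $2$-jets on a Borel set produce the same current above that set), which is nontrivial for a current defined only as a weak limit; this is exactly what the paper's direct integral-formula construction in \eqref{W2n functions legendrian cycle} makes manifest and hence bypasses.

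Finally, note that your ingredient~(iii), the Calder\'on--Zygmund $C^2$-Lusin approximation, is used by the paper elsewhere (e.g.\ Lemma~\ref{lem approx diff unit normal} and Theorem~\ref{Nabelpunktsatz}) but is not the Lusin-type fact underlying Theorem~\ref{W2n functions normal cycles}; what is needed there, and what is the novel point, is the Lusin~(N) property of the graph map $\Phi_f$ on $\mathcal{S}^\ast(f)$ (Lemma~\ref{W2n functions Lusin}), obtained from Menne's oscillation estimate (Lemma~\ref{W2n basic estimate}) by a Rado--Reichelderfer argument. Without that, one cannot rule out that the weak-limit cycle charges the graph of $\nabla f$ over $\Diff(f)\setminus\mathcal{S}(f)$, which can have positive $\mathcal{H}^n$-measure by \cite{Roskovec}; your outline never addresses this possibility.
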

 
\noindent In this direction we recall that $ \mathcal{L}^n(U \setminus \mathcal{S}(f)) =0 $ by Lemma \ref{W2n twice diff}, and the subtlety of Theorem \ref{intro theorem Lagrangian} becomes more transparent if we observe that $ \mathcal{S}(f) $ cannot be replaced by the set of pointwise differentiability points $ \Diff(f) $ of $ f $: indeed, there exists $ f \in C^1\big([-1,1]^n\big) \cap W^{2,n}\big((-1,1)^n\big) $ such that $$ [-1,1]^n \subseteq \nabla f([-1,1] \times \{0\}^{n-1}), $$ cf.\ \cite{Roskovec}, and denoting by $ \overline{\nabla f} $ the graph map of the gradient of $ f $, we use Lemma \ref{W2n functions Lusin} to conclude that $\mathcal{H}^n\big(\overline{\nabla f}(U) \setminus \overline{\nabla f}(\mathcal{S}(f))\big) > 0 $.



Combining Theorem \ref{intro Theorem Legendrian} with the variational formulae for Legendrian cycles in \cite{Fu98}, we can extend Reilly variational formulae (see \cite{Reilly1972}) to $ W^{2,n} $-domains, whence we deduce the Minkowski-Hsiung formulae in our setting (see Theorems \ref{Reilly variational formualae} and \ref{Minkowski formulae}). Moreover, the Heintze-Karcher inequality for $ W^{2,n} $-domains (see Theorem \ref{Heintze-Karcher}) can be deduced from the general inequality \cite[Theorem 3.20]{HugSantilli} employing some of the structural properties of the proximal unit-normal bundle (see Theorem \ref{W2n domains}\eqref{W2n domains 2}-\eqref{W2n domains 3}),  that already play a role in Theorem \ref{intro Theorem Legendrian}.
	
Combining these results we can eventually prove our generalization of Alexandrov sphere theorem.

\begin{thm}[\protect{cf.\ Theorem \ref{Alexandrov} and Remark \ref{Alexandrov rmk}}]\label{intro sphere theorem}
	A bounded and connected $ W^{2,n} $-domain $ \Omega \subseteq \mathbf{R}^{n+1} $ must be a round ball, provided there exist  $ k \in \{2, \ldots , n\} $ and  $ \lambda \in \mathbf{R} $ such that $$ \sigma_k(\rchi_{\Omega,1}(p), \ldots , \rchi_{\Omega, n}(p)) = \lambda $$
	and 
	\begin{equation}\label{intro degenerate ellipticity}
		\frac{\partial \sigma_k}{\partial t_i}(\rchi_{\Omega, 1}(p), \ldots , \rchi_{\Omega, n}(p)) \geq  0 \quad \textrm{for $ i = 1, \ldots , n $} 
	\end{equation}
	for $ \mathcal{H}^n $ a.e.\ $ p \in \partial \Omega $.
\end{thm}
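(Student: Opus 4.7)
The plan is to port the integral-geometric Montiel-Ros argument to the $W^{2,n}$-category, using the Legendrian cycle $N_\Omega$ of Theorem \ref{intro Theorem Legendrian} as a substitute for the classical unit-normal bundle. Every ingredient I need is already on the table: the Minkowski-Hsiung identities (Theorem \ref{Minkowski formulae}), the Heintze-Karcher inequality (Theorem \ref{Heintze-Karcher}), and the fact that the weak principal curvatures and the higher-order mean curvatures are well-defined $\mathcal{H}^n$-a.e.\ on $\partial \Omega$ through $N_\Omega$. The purely algebraic input comes from the Newton-MacLaurin inequalities on the closure $\overline{\Gamma_k}$ of the Garding cone.

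First I would check that \eqref{intro degenerate ellipticity} together with $\sigma_k \equiv \lambda$ forces $\rchi(p) := (\rchi_{\Omega,1}(p),\ldots,\rchi_{\Omega,n}(p)) \in \overline{\Gamma_k}$ for $\mathcal{H}^n$-a.e.\ $p \in \partial\Omega$: this is the standard characterization of $\overline{\Gamma_k}$ as the closure of the component of $\{\partial_i\sigma_k > 0 : i = 1,\ldots,n\}$ meeting the positive diagonal. The case $\lambda = 0$ can be ruled out since it would force all $\rchi_{\Omega,i}$ to vanish, contradicting the boundedness of $\Omega$. Hence, setting $H_j = \sigma_j/\binom{n}{j}$, the Maclaurin chain
\[
H_k^{1/k} \le H_{k-1}^{1/(k-1)} \le \cdots \le H_1
\]
holds pointwise $\mathcal{H}^n$-a.e.\ on $\partial \Omega$, with equality at any step iff all $\rchi_{\Omega,i}(p)$ coincide.

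Next I would close the integral chain. Applying the Minkowski-Hsiung formula of Theorem \ref{Minkowski formulae} at levels $k-1$ and $k$ together with the divergence theorem, and using that $H_k = \lambda/\binom{n}{k}$ is constant, one obtains
\[
(n+1)\Vol(\Omega) \;=\; \int_{\partial \Omega} \frac{H_{k-1}}{H_k}\, d\mathcal{H}^n.
\]
Pairing this with the Heintze-Karcher inequality (Theorem \ref{Heintze-Karcher})
\[
(n+1)\Vol(\Omega) \;\le\; \int_{\partial \Omega} \frac{1}{H_1}\, d\mathcal{H}^n,
\]
and with the pointwise Maclaurin bounds $1/H_1 \le H_k^{-1/k}$ and $H_{k-1}/H_k \ge H_k^{-1/k}$, yields the closed chain
\[
(n+1)\Vol(\Omega) = \int_{\partial \Omega}\!\frac{H_{k-1}}{H_k}\, d\mathcal{H}^n \ge H_k^{-1/k}\mathcal{H}^n(\partial \Omega) \ge \int_{\partial \Omega}\!\frac{1}{H_1}\, d\mathcal{H}^n \ge (n+1)\Vol(\Omega).
\]
Equality must therefore hold throughout; in particular the Maclaurin steps force $\rchi_{\Omega,1}(p) = \cdots = \rchi_{\Omega,n}(p) = H_k^{1/k}$ for $\mathcal{H}^n$-a.e.\ $p \in \partial \Omega$, i.e.\ weak umbilicity with a \emph{constant} umbilical radius $r = H_k^{-1/k}$.

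To conclude, I would invoke either the equality case of the Heintze-Karcher inequality for Legendrian cycles (cf.\ \cite{HugSantilli}) or the $W^{2,1}$-umbilicality theorem announced in the abstract; combined with the connectedness of $\partial\Omega$, this identifies $\Omega$ as a single round ball. The main obstacle will be the rigorous transfer of the equality cases from their integral form to $\mathcal{H}^n$-a.e.\ pointwise information on the rectifiable carrier of $N_\Omega$, and the careful handling of the $\mathcal{H}^n$-positive subsets of $\partial \Omega$ on which $\sigma_{k-1}$ may vanish: degenerate ellipticity permits such sets, and one must rule out contributions coming from the boundary of $\overline{\Gamma_k}$ outside the diagonal $\{t_1 = \cdots = t_n\}$.
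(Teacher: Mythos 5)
Your proposal follows essentially the same Montiel--Ros route as the paper's proof (Theorem \ref{Alexandrov} together with Remark \ref{Alexandrov rmk}): reduce the degenerate-ellipticity hypothesis to membership of the curvature vector in $\overline{\Gamma_k}$, invoke the Maclaurin inequalities, combine the Minkowski--Hsiung identity at level $k$ with the divergence theorem and with the Heintze--Karcher inequality (Theorem \ref{Heintze-Karcher}), and then appeal to the rigidity in Theorem \ref{HK general} / Theorem \ref{Heintze-Karcher} to conclude. The only presentational difference is that the paper does not close the chain of inequalities to extract pointwise umbilicity; it simply derives the bound $H_{\Omega,1}\geq \mathcal{H}^n(\partial\Omega)/[(n+1)\mathcal{L}^{n+1}(\Omega)]$ a.e.\ and feeds this directly into Theorem \ref{Heintze-Karcher}, whose statement already packages the rigidity.

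There is, however, one genuine error in your argument: the justification for ruling out $\lambda=0$. You claim that $\lambda=0$ ``would force all $\rchi_{\Omega,i}$ to vanish,'' but that is false. Having $\rchi(p)\in\overline{\Gamma_k}$ with $\sigma_k=0$ does not force $\rchi(p)=0$ --- think of the boundary of a cylinder, where $\sigma_k=0$ for all $k\geq 2$ while $\sigma_1>0$; more generally any point on $\partial\overline{\Gamma_k}\setminus\{0\}$ gives a counterexample. The correct way to exclude $\lambda=0$ is the iterated Minkowski argument of Corollary \ref{Minkowski fomrulae cor}: from $\int_{\partial\Omega}H_{\Omega,k-1}\,d\mathcal{H}^n = \lambda(n+1)\mathcal{L}^{n+1}(\Omega)=0$ together with $H_{\Omega,k-1}\geq 0$ one gets $H_{\Omega,k-1}=0$ a.e., and applying Corollary \ref{Minkowski formulae} repeatedly with $r=k-1,k-2,\ldots,1$ eventually yields $H_{\Omega,0}=0$ a.e., i.e.\ $\mathcal{H}^n(\partial\Omega)=0$, a contradiction. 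Once this is patched, the rest of your argument is sound; your alternative suggestion of concluding via the $W^{2,1}$-Nabelpunktsatz (Theorem \ref{Nabelpunktsatz}) is also viable in principle since the Lusin condition (N) is automatic for $W^{2,n}$-graphs by Remark \ref{rmk Lusin condition (N) for f}, but the paper's route through Theorem \ref{Heintze-Karcher} is cleaner as it avoids patching the local a.e.\ plane/sphere conclusions across charts.
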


\noindent If $ k = 1 $ the result would reduce to the smooth Alexandrov's sphere theorem for constant mean curvature hypersurfaces, since the condition $ H_{\Omega,1}(z) = \lambda $ for $ \mathcal{H}^n $ a.e.\ $ z \in \partial \Omega $ implies that $ \partial \Omega $ is smooth by Allard's regularity theorem (notice that by Theorem \ref{Reilly variational formualae} the function $ H_{\Omega,1} $ is the generalized mean curvature of $ \partial \Omega $ in the sense of varifolds, see \cite{Allard72}). No analogous regularity result is available when $ k \geq 2 $.

\subsection{The support of Legendrian cycles} Theorem \ref{intro Theorem Legendrian} finds natural application in other problems, beyond the rigidity questions that we have considered so far. In Section \ref{Section support} we employ it to answer a question implicit in \cite{RatajZaehle2015}. In \cite[Remark 2.3]{RatajZaehle2015} the authors asked if there exist $ n $-dimensional Legendrian cycles in $ \mathbf{R}^{n+1} $ whose support is not locally $ \mathcal{H}^n $-rectifiable, or even has positive $ \mathcal{H}^{n+1} $-measure. Combining Theorem \ref{intro Theorem Legendrian} with an observation by J.\ Fu in \cite{FuAlexandrov} about the existence of $ W^{2,n} $-functions whose differential has a graph dense in $ \mathbf{R}^n \times \mathbf{R}^n $, we prove the following result.
\begin{thm}
There exist $n $-dimensional Legendrian cycles of $ \mathbf{R}^{n+1} $ whose support has positive $ \mathcal{H}^{2n} $-measure.
\end{thm}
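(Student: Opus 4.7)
The strategy is to apply Theorem \ref{intro Theorem Legendrian} to a $W^{2,n}$-domain built from one of Fu's pathological $W^{2,n}$-functions. Concretely, I would take an open box $U \subset \mathbf{R}^n$ and a function $f \in W^{2,n}(U) \cap C^0(\overline{U})$ of the type constructed by Fu, so that the graph $\{(y,\nabla f(y)) : y \in \mathcal{S}(f)\}$ of its pointwise gradient is dense in $U \times V$ for some open $V \subset \mathbf{R}^n$ (recall that $\mathcal{S}(f)$ has full measure in $U$ by Lemma \ref{W2n twice diff}). Truncating $f$ outside a slightly smaller subbox, I would obtain a bounded $W^{2,n}$-domain $\Omega \subset \mathbf{R}^{n+1}$ whose boundary contains the graph of $f|_U$ as an open piece; Theorem \ref{intro Theorem Legendrian} then yields the Legendrian cycle $N_\Omega$, whose support coincides (up to $\mathcal{H}^n$-negligible sets) with $\overline{\nor(\Omega)}$.

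Next, observe that at every $y \in \mathcal{S}(f)$ the pointwise second-order Taylor expansion of $f$ produces an interior tangent ball at $(y,f(y))$, and hence $((y,f(y)),\nu(y)) \in \nor(\Omega)$, where
\[ \nu(y) := \frac{(-\nabla f(y),1)}{\sqrt{1+|\nabla f(y)|^2}} \]
is the upward unit normal. Given any $y_0 \in U$ and $p_0 \in V$, the density hypothesis lets me choose a sequence $y_k \to y_0$ in $\mathcal{S}(f)$ with $\nabla f(y_k) \to p_0$. Continuity of $f$ together with continuity of $p \mapsto (-p,1)/\sqrt{1+|p|^2}$ then forces
\[ \bigl((y_k, f(y_k)), \nu(y_k)\bigr) \; \longrightarrow \; \Bigl((y_0, f(y_0)),\, \tfrac{(-p_0,1)}{\sqrt{1+|p_0|^2}}\Bigr) \]
in $\mathbf{R}^{n+1} \times \mathbf{S}^n$. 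Thus $\overline{\nor(\Omega)}$ contains the set
\[ M \,:=\, \Bigl\{ \bigl((y,f(y)),\, \tfrac{(-p,1)}{\sqrt{1+|p|^2}}\bigr) : y \in U,\ p \in V \Bigr\}. \]

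To conclude, I would project $M$ by the $1$-Lipschitz map $\pi\colon \mathbf{R}^{n+1}\times\mathbf{S}^n \to \mathbf{R}^n\times\mathbf{S}^n$, $((y,t),u)\mapsto (y,u)$. Its image is $U \times \{(-p,1)/\sqrt{1+|p|^2} : p \in V\}$, an open subset of the $2n$-dimensional manifold $\mathbf{R}^n \times \mathbf{S}^n$, which therefore has positive $\mathcal{H}^{2n}$-measure; since Lipschitz maps cannot increase Hausdorff measure, $\mathcal{H}^{2n}(M) > 0$, and hence $\mathcal{H}^{2n}(\spt N_\Omega) > 0$. The subtle step will be to verify the density transfer to $\mathcal{S}(f)$: Fu's construction should yield density in a measure-theoretically robust form, so that removing a null set preserves density in every ball of $U \times V$, but this requires checking against the fine details of the construction. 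The rest of the argument is essentially point-set topology and a single Lipschitz projection estimate.
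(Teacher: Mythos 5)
Your overall strategy matches the paper's (take Fu's pathological $W^{2,n}$-function, build the Legendrian cycle of its subgraph via Theorem \ref{intro Theorem Legendrian}, and use density of the gradient graph to show the support contains a fat set), but there are two substantive gaps, both of which the paper closes with dedicated lemmas.

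First, the assertion that $\spt N_\Omega$ ``coincides (up to $\mathcal{H}^n$-negligible sets) with $\overline{\nor(\Omega)}$'' is not a formal consequence of Theorem \ref{intro Theorem Legendrian}. The cycle is carried over $\nor(\Omega)$, but its support is only the essential support of $\mathcal{H}^n\restrict\nor(\Omega)$: a priori there could be an open $W$ meeting $\nor(\Omega)$ with $\mathcal{H}^n(W\cap\nor(\Omega))=0$, in which case $N_\Omega$ vanishes on $W$. So $M\subseteq\overline{\nor(\Omega)}$ alone does not give $M\subseteq\spt N_\Omega$. The paper proves in Lemma \ref{dense support lem 1} that for \emph{any} $C\subseteq\mathbf{R}^{n+1}$, every open $W$ with $W\cap\nor(C)\neq\varnothing$ satisfies $\mathcal{H}^n(W\cap\nor(C))>0$; the proof is nontrivial, going through the nearest-point projection $\psi_C$ on $\Unp(C)$, the coarea formula for $\dist(\cdot,C)$, and the bi-Lipschitz parametrization of $\nor(C)$ over level sets away from the cut locus. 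You need this lemma (or an equivalent) to conclude $\nor(\Omega)\subseteq\spt N_\Omega$.

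Second, the ``subtle step'' you flag yourself — transferring density from $\Diff(f)$ to $\mathcal{S}(f)$ — is a genuine obstruction, not a checking exercise: as the Roskovec example quoted in the introduction shows, the gradient of a $W^{2,n}$-function can map a $\mathcal{L}^n$-null subset of $\Diff(f)$ onto a set of full $\mathcal{H}^n$-measure, so removing $U\setminus\mathcal{S}(f)$ (which is null) could in principle destroy density of the gradient graph. The paper avoids this altogether in Lemma \ref{dense support lem 2}: for every $x\in\Diff(f)$ the vector $\psi(\nabla f(x))$ is a \emph{regular} normal of $E_f$ at $\overline{f}(x)$, and the Rockafellar--Wets approximation of regular normals by proximal normals (\cite[Exercise 6.18(a)]{RockafellarWets}) yields density of $\nor(E_f)$ in $G\times\mathbf{S}^n_+$ directly from density of $\overline{\nabla f}[\Diff(f)]$, with no reference to $\mathcal{S}(f)$. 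Replacing your $\mathcal{S}(f)$-based tangent-ball argument with this two-step regular$\to$proximal approximation is the cleanest repair. The remainder of your argument (the $1$-Lipschitz projection to $\mathbf{R}^n\times\mathbf{S}^n$ and the measure estimate) is fine.
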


\subsection{The Nabelpunktsatz} In the final section of this paper we study the problem of extending the classical umbilicality theorem (or Nabelpunktsatz). The classical proof of this theorem works for hypersurfaces that are at least $ C^3 $-regular. A  proof for $ C^2 $-hypersurfaces is given in \cite{SouamToubiana2006} (see also \cite{Pauly} and \cite{MantegazzaUmbilical}). Considering more general hypersurfaces with curvatures defined only almost everywhere, the question about the validity of the Nabelpunktsatz goes back to the classical paper of Busemann and Feller \cite{BusemannFeller}, where they also pointed out the existence of non-spherical convex $ C^{1} $-hypersurfaces which are umbilical at almost every point; see also Remark \ref{rmk primitive cantor}. In \cite{DeRosaetall}  the Nabelpunksatz is extended to $ C^{1,1} $-hypersurfaces. Here we obtain the following far reaching generalization of this result.
\begin{thm}[cf.\ {\protect Theorem \ref{Nabelpunktsatz}}]
The Nabelpunktsatz holds for	almost everywhere umbilical $ W^{2,1} $-graphs,  provided the Lusin condition (N) holds for the graph function (cf. Definition \ref{Lusin condition})
\end{thm}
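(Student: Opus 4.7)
The goal is to show that a bounded piece of $\Gamma_f = \{(x,f(x)) : x \in U\}$ is contained in a sphere or a hyperplane. I would reduce the problem to proving that the common principal curvature $\kappa$ is locally constant, and then integrate a Weingarten-type identity in $W^{1,1}$.

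First, I would translate the $\mathcal H^n$-a.e.\ umbilicality on $\Gamma_f$ into a pointwise PDE for $f$. Write $g = \sqrt{1+|\nabla f|^2}$, $X(x)=(x,f(x))$, and $\nu(x) = g(x)^{-1}(-\nabla f(x),1)$. The Lusin (N) hypothesis on $X$ ensures that $\mathcal L^n$-null sets of $U$ have $\mathcal H^n$-null image in $\Gamma_f$, so umbilicality at $\mathcal H^n$-a.e.\ point of $\Gamma_f$ transfers to $\mathcal L^n$-a.e.\ $x\in U$. Using the standard graph formulae $h_{ij}=\partial_i\partial_j f/g$ and $g_{ij}=\delta_{ij}+\partial_i f\,\partial_j f$, the condition $h=\kappa g$ takes the form
$$
D^2 f(x)=\kappa(x)\,g(x)\bigl(I+\nabla f(x)\otimes\nabla f(x)\bigr) \quad \text{a.e. in } U,
$$
for some $\kappa\in L^1_{\mathrm{loc}}(U)$; equivalently, the Weingarten identity $D\nu=-\kappa\,DX$ holds a.e.\ in $W^{1,1}$.

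Next, I would derive a scalar first-order consequence by contracting the Hessian equation with $\nabla f$. Since $D^2 f\cdot\nabla f=g\,\nabla g$, one obtains
$$
\nabla(1/g)+\kappa\,\nabla f=0 \quad \text{a.e. in } U.
$$
This is the conservation law at the heart of the Nabelpunktsatz: once $\kappa$ is known to be constant, it integrates to $1/g+\kappa f=\text{const}$, which is exactly the graph equation of a sphere or a hyperplane.

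The pivotal step is to prove that $\kappa$ is locally constant. In the classical smooth setting this is the Codazzi equation for $n\geq 2$. For $W^{2,1}$-data, third derivatives of $f$ exist only as distributions, but Schwarz's identity $\partial_j\partial_k\partial_i f=\partial_i\partial_j\partial_k f$ still holds distributionally. Applying $\partial_j$ to the Hessian equation written as $\partial_k(\partial_i f)=\kappa g(\delta_{ki}+\partial_i f\,\partial_k f)$, symmetrizing in $(i,j,k)$, and using the already established a.e.\ identities (in particular $\nabla g=\kappa g^2\nabla f$) to cancel the quadratic and cubic terms in $\nabla f$, one is left with $\nabla \kappa=0$ as a distribution on $U$. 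Hence $\kappa$ is locally constant, and by connectedness globally constant on $U$.

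Finally, with $\kappa$ constant the conclusion is immediate. If $\kappa=0$ then $D^2 f=0$ a.e., so $f$ is affine and $\Gamma_f$ lies on a hyperplane. If $\kappa\neq 0$, the Weingarten relation with constant $\kappa$ gives
$$
D\!\left(X+\frac{\nu}{\kappa}\right)=DX+\frac{D\nu}{\kappa}=0 \quad\text{a.e.,}
$$
and since $X,\nu\in W^{1,1}$ this forces $X+\nu/\kappa\equiv p$ for some $p\in\mathbf R^{n+1}$. Therefore $|X-p|=1/|\kappa|$ a.e., and Lusin (N) upgrades this a.e.\ identification to $\Gamma_f$ lying on the sphere of radius $1/|\kappa|$ centred at $p$.

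\textbf{Main obstacle.} The delicate step is proving $\nabla\kappa=0$ in the $W^{2,1}$ regime. In the smooth case the Codazzi argument manipulates third derivatives of $f$ as functions; at our regularity they are only distributions, and products of such distributions with the a.e.-defined ingredients $\nabla f,g$ are a priori ill-defined. The Lusin (N) condition on the graph map is precisely the hypothesis that makes parameter-domain computations compatible with integrations on the rectifiable set $\Gamma_f$ (via the area formula) and thereby legitimises the distributional Codazzi computation; removing this assumption, examples of pathological $W^{2,1}$-graphs make the conclusion fail.
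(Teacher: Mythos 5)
Your overall scheme matches the paper's: translate the almost-everywhere umbilicality of the graph to a pointwise Hessian PDE on $U$, show the umbilicality factor $\kappa$ (paper's $\mu$) is constant, and then integrate the Weingarten relation $D\nu = -\kappa\, DX$ to conclude sphere/hyperplane. The Lusin (N) hypothesis is indeed used for the transfer between $G$ and $U$, and the final integration step $D(X + \nu/\kappa)=0$ is essentially the paper's ``$\eta - c\overline f \equiv w$''.

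However, the critical middle step — proving $\nabla\kappa=0$ — is where your argument has a genuine gap. You propose a ``distributional Codazzi'' symmetrization: differentiate the identity $\partial_k\partial_i f = \kappa g(\delta_{ki}+\partial_i f\,\partial_k f)$ in $\partial_j$ and cancel terms. But the right-hand side is a product of the merely-$L^1_{\mathrm{loc}}$ functions $\kappa$, $g$, $\partial_i f$, $\partial_k f$, and applying $\partial_j$ to it via a Leibniz rule is not legitimate at this regularity: $\partial_j\kappa$ does not exist a priori as a distribution that can be multiplied by $g(\delta_{ki}+\partial_i f\,\partial_k f)$, nor do the cross terms $\kappa\,\partial_j g$, $\kappa g\,\partial_j(\partial_i f\,\partial_k f)$ make unambiguous sense as distributions. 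You acknowledge this obstacle and assert that Lusin (N) ``legitimises the distributional Codazzi computation,'' but this is not so — Lusin (N) is a measure-theoretic condition on image sets, and it has no bearing on whether products of $L^1$ functions can be differentiated by Leibniz.

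The paper bypasses the distributional-third-derivative problem entirely. Writing the Gauss map components $g_k = -\der_k f/\sqrt{1+|\bm{\nabla}f|^2}$, the umbilicality relation $\der_i\eta = \mu\,\der_i\overline f$ yields the \emph{first-order} system $\der_i g_j = 0$ for $i\neq j$ and $\der_i g_i = \mu$. The off-diagonal vanishing, by a one-dimensional absolute-continuity (Fubini) argument along coordinate lines (Lemma \ref{umbilicality lemma}), shows each $g_j$ is a.e.\ constant along $e_k$-lines for $k\neq j$. Then for a test function $\phi$ supported in a cube,
\begin{equation*}
\int_Q \mu\,\Der_k\phi = \int_Q \der_j g_j\,\Der_k\phi = -\int_Q g_j\,\Der_j\Der_k\phi = 0
\end{equation*}
by the line-constancy of $g_j$ and compact support in the $e_k$-direction, with $j\neq k$. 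This requires only $W^{1,1}_{\loc}$ integration by parts and involves no products of distributions. You would need a replacement for this idea; as written, your step 3 does not close.
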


\noindent The Lusin condition (N) is necessary in order to guarantee the existence of weak curvatures on the graphs (i.e.\ second order rectifiability) and it is automatically verified for graphs of $ W^{2,p} $-functions, with $ p > \frac{n}{2} $; cf.\ Remark \ref{rmk Lusin condition (N) for f}.

\subsection{Acknowledgments} The first author is partially supported by INDAM-GNSAGA of the Instituto Nazionale di Alta Matematica "Francesco Severi", and by PRIN project no.\ 20225J97H5. The first author wishes to thank Slawomir Kolasinski for interesting discussions about the proof of Lemma \ref{lem exterior derivative Lipschitz-Killing}, and Ulrich Menne to have shared \cite[Appendix B]{menne2024sharplowerboundmean}, which plays an important role in this paper. 
	
	\section{Notation and background}\label{section: Notation and background}
	
	 Given a set of parameters $ \{p_1, p_2, \ldots p_n\}$, we denote a \emph{generic} positive constant depending only $ p_1, \ldots , p_n $  by $ c(p_1, \ldots , p_n) $. 
	
	If $ f : S \rightarrow T $ is a function we define 
	\begin{equation}\label{graph map}
		\overline{f}  : S \rightarrow S \times T, \qquad \overline{f}(x) = (x, f(x)). 
	\end{equation} 
The characteristic function of a set $ X $ is $\bm{1}_X$ and the Grassmannian of $ m $-dimensional subspaces of $ \mathbf{R}^k $ is $ \mathbf{G}(k,m) $.	Moreover, we often use the following projection maps 
	\begin{equation}\label{projections}
		\pi_0  : \mathbf{R}^{n+1} \times \mathbf{R}^{n+1} \rightarrow \mathbf{R}^{n+1} \quad \pi_1  : \mathbf{R}^{n+1} \times \mathbf{R}^{n+1} \rightarrow \mathbf{R}^{n+1} 
	\end{equation} 
	defined as $ \pi_0(x,u) = x $ and $ \pi_1(x,u) = u $.
	
	In this paper we use the symbol $ \bullet $ to denote \emph{scalar products}. In particular we fix a scalar product $ \bullet $ on $ \mathbf{R}^{n+1} $, 
	\begin{equation}\label{orthonormal basis}
		\textrm{an orthonormal basis $ e_1, \ldots , e_{n+1} $ of $ \mathbf{R}^{n+1} $ and its dual basis $ e'_1, \ldots, e'_{n+1}$.}
	\end{equation}
	 For a subset $ S $ of an Euclidean space, $ \overline{S} $ is the closure of $ S $. We use the symbols $ \Der $ and $ \nabla $ for the classical differential and the gradient.   If $ f : U \rightarrow \mathbf{R} $ is a continuous function defined on an open set $ U $, we denote the set of $ x \in U $ where $ f $ is pointwise differentiable by
	$$ \Diff(f). $$

	\subsection{Basic notions from geometric measure theory}
	
		In this paper we use standard notation from geometric measure theory, for which we refer to \cite{Fed69}. For reader's convenience we recall some basic notions here.

 For a subset $ X \subseteq  \mathbf{R}^m $ and a positive integer $ \mu $  we define $ \Tan^\mu(\mathcal{H}^\mu \restrict X,a) $ to be the set of all $ v \in \mathbf{R}^m $ such that 
$$ \Theta^{\ast \mu}\big(\mathcal{H}^\mu \restrict X  \cap \{x : |r(x-a) - v | < \epsilon\; \textrm{for some $ r > 0 $}\}, a\big) > 0 $$
for every $ \epsilon > 0 $. This is a cone with vertex at $ 0 $ and we set 
$$ \Nor^\mu(\mathcal{H}^\mu \restrict X,a) = \{v \in \mathbf{R}^m : v \bullet u \leq 0 \; \textrm{for $ u \in  \Tan^\mu(\mathcal{H}^\mu \restrict X,a) $}\}. $$

Suppose $ X \subseteq \mathbf{R}^{m} $ and $ f $ maps a subset of $ \mathbf{R}^m $ into $ \mathbf{R}^k $. Given a positive integer $ \mu $ and $ a \in \mathbf{R}^m $ we say that $ f $ is \emph{$ \mathcal{H}^\mu \restrict X $ approximately differentiable at $ a $} (cf.\ \cite[3.2.16]{Fed69}) if and only if there exists a map $ g : \mathbf{R}^m \rightarrow \mathbf{R}^k $ pointwise differentiable at $ a $ such that 
$$ \Theta^\mu(\mathcal{H}^\mu \restrict X \cap \{b: f(b) \neq g(b)\}, a) =0. $$
In this case (see \cite[3.2.16]{Fed69}) $ f $ determines the restriction of $ \Der g(a) $ on the approximate tangent cone $ \Tan^\mu(\mathcal{H}^\mu \restrict X, a) $ and we define 
$$ \ap \Der f(a) = \Der g(a)| \Tan^\mu(\mathcal{H}^\mu \restrict X,a). $$

Suppose $ X \subseteq \mathbf{R}^m $ and $ \mu $ is a positive integer. We say that $ X $ is \emph{countably $ \mathcal{H}^\mu $-rectifiable} if there exist countably many  $ \mu $-dimensional $ C^1 $-submanifolds $ \Sigma_i $ of $ \mathbb{R}^m $ such that $$ \mathcal{H}^\mu\big(X \setminus\bigcup_{i =1}^\infty \Sigma_i \big) = 0. $$ It is well known that \emph{if $ X $ is countably $ \mathcal{H}^\mu $-rectifiable with $ \mathcal{H}^\mu(X) < \infty $, then $ \Tan^\mu(\mathcal{H}^\mu\restrict X, a) $ is a $ \mu $-dimensional plane at $ \mathcal{H}^\mu $ a.e.\ $ a \in X $, and every Lipschitz function $ f : X \rightarrow \mathbf{R}^k $ has an $\mathcal{H}^\mu \restrict X $-approximate differential $ \ap \Der f(a) : \Tan^\mu(\mathcal{H}^\mu \restrict X, a)  \rightarrow \mathbb{R}^k $ at $ \mathcal{H}^\mu $ a.e.\ $ a \in X $.} At such points $ a $ we define for $ h \in \{1, \ldots , k\} $ the $ h $-dimensional approximate Jabobian of $ f $ 
$$ J_h^X f (a) = \sup \big\{ \big| [{\textstyle \bigwedge_h \ap \Der f(a)}](\xi) \big| : \xi \in {\textstyle \bigwedge_h \Tan^\mu(\mathcal{H}^\mu\restrict X, a)}, \, | \xi | = 1   \big\} $$
(see \eqref{natural map} for the definition of $ {\textstyle \bigwedge_h \ap \Der f(a)} $).	The approximate Jacobian naturally appears in area and coarea formula for $ f $; see \cite[3.2.20, 3.2.22]{Fed69}.

\subsection{Differential forms and currents} 

Let $ V $ be a vector space. We denote by $ v_1 \wedge \cdots \wedge v_m $ the simple $ m $-vector obtained by the exterior multiplication of vectors $ v_1, \ldots , v_m $ in $ V $ and $ \bigwedge_m V $ is the vector space generated by all simple $ m $ vectors of $ V $. Each linear map $ f : V \rightarrow V' $ can be uniquely extended to a linear map 
\begin{equation}\label{natural map}
{\textstyle \bigwedge_m} f : {\textstyle \bigwedge_m} V \rightarrow {\textstyle \bigwedge_m} V' 
\end{equation}
such that $ {\textstyle \bigwedge_m} f(v_1 \wedge \cdots \wedge v_m) = f(v_1) \wedge \cdots \wedge f(v_m) $ for every $ v_1, \ldots , v_m \in V $ (cf.\ \cite[1.3.1-1.3.3]{Fed69}).

 The vector space of all alternating $ m $-linear functions $ f : V^m \rightarrow \mathbf{R} $ (i.e.\ $ f(v_1, \ldots, v_m) =0 $ whenever $ v_1, \ldots v_m \in V $ and $ v_i = v_j $ for some $ i \neq j $) is denoted by $ \bigwedge^m V $. There is a natural isomorphism between $  \bigwedge^m V $ and the space of all linear $ \mathbf{R} $-valued maps on $ \bigwedge_m V $  (cf. \cite[1.4.1-1.4.3]{Fed69}). It is often convenient to use the following customary notation (see \cite{Fed69}):
$$ \langle \xi, h \rangle = h(\xi) \quad \textrm{whenever $ \xi \in {\textstyle\bigwedge_m} V $ and $ h \in {\textstyle\bigwedge^m} V $}. $$ 
Using the notation introduced in \eqref{orthonormal basis} we define 
\begin{equation}\label{n form and coform}
E= e_1 \wedge \cdots \wedge e_{n+1} \in {\textstyle \bigwedge_{n+1}} \mathbf{R}^{n+1} \quad \textrm{and} \quad E' = e'_1 \wedge \cdots \wedge e'_{n+1} \in {\textstyle \bigwedge^{n+1}}\mathbf{R}^{n+1} 
\end{equation} 
If $ V $ is an inner product space, then both $ {\textstyle \bigwedge_m V} $ and $ {\textstyle \bigwedge^m V} $ can be endowed with natural scalar products, whose \emph{associated norms} are denoted by $|\cdot | $; see \cite[1.7.5]{Fed69}.

Suppose $ U \subseteq \mathbf{R}^p$ is open and $ k \geq 0 $. A  $k$-form is a smooth map $ \phi : U \rightarrow \bigwedge^k \mathbf{R}^p $ (if $ k =0 $ we set $\bigwedge^0 \mathbf{R}^p = \mathbf{R} $). Following \cite[4.1.1, 4.1.7]{Fed69}, we denote by $ \mathcal{E}^k(U) $  the space of all smooth $ k $-forms on $ U $ and we denote by $ \mathcal{D}^k(U) $ the space of all $ k $-forms with compact support in $ U $. If $ \phi \in \mathcal{E}^k(U) $ we denote by $ d\, \phi $ the \emph{exterior derivative} of $ \phi $ (cf.\ \cite[4.1.6]{Fed69}). Moreover, if $ f $ is a smooth function mapping $ U $ into $ \mathbf{R}^q $ and $ \psi $ is a $ k $-form defined on an open subset $ V $ of $\mathbf{R}^q $ with $ f(U) \subseteq V $, then we define the $ k $-form $ f^{\#} \psi $ on $ U $ by the formula
$$ \langle v_1 \wedge \cdots \wedge v_k, f^{\#} \psi(x) \rangle = \langle {\textstyle \bigwedge_{k}}\Der f(x)(v_1 \wedge \cdots \wedge v_k), \psi(f(x)) \rangle$$
for $ x \in U $ and $ v_1, \ldots , v_k \in \mathbf{R}^p $. We refer to \cite[4.1.6]{Fed69} for the basic properties of $ f^\# $. Functions mapping a subset of $ U $ into $ \bigwedge_k(\mathbf{R}^p) $ are called \emph{$ k $-vectorfields}.

 Suppose $ U \subseteq \mathbf{R}^p$ is open and $ k \geq 0 $. A $ k $-current is a continuous $ \mathbf{R} $-valued linear map $ T $ on $ \mathcal{D}^k(U) $, with respect to the standard topology (cf.\ \cite[4.1.1]{Fed69}) and we denote the space of all $ k $-currents on $ U $ by $ \mathcal{D}_k(U) $. We say that a sequence $ T_\ell \in \mathcal{D}_k(U)$ \emph{weakly converges} to $ T\in \mathcal{D}_k(U) $ if and only if
$$ T_\ell(\phi) \to T(\phi)\qquad \textrm{for all $ \phi \in \mathcal{D}^k(U) $.} $$
If $ T $ is a $ k $-current on $ U $, then  \emph{the boundary of $ T $} is the $ (k-1) $-current $ \partial T \in \mathcal{D}_{k-1}(U) $ given by 
$$ \partial T(\phi) = T(d\, \phi) \qquad \textrm{for all $ \phi \in \mathcal{D}^k(U) $}, $$
while the \emph{support of $ T $} is defined as
$$ \spt(T) = U \setminus \bigcup\big\{ V : \textrm{$ V \subseteq U $ open and $ T(\phi) =0 $ for all $ \phi \in \mathcal{D}^k(V) $}  \}. $$
 If $ T \in \mathcal{D}_k(U) $ and $ \spt(T) $ is a compact subset of $ U $, then $ T $ can be uniquely extended to a continuous linear map on $ \mathcal{E}^k(U) $. If $ \psi \in \mathcal{E}^h(U) $, $ T \in \mathcal{D}_k(U) $ and $ h \leq k $ we set 
$$ (T \restrict \psi)(\phi) = T(\psi \wedge \phi) \qquad \textrm{for all $ \phi \in \mathcal{D}^{k-h}(U) $}. $$
If $ T \in \mathcal{D}_k(U) $, $ V $ is an open subset of $ \mathbf{R}^q $ and $ f : U \rightarrow V $ is a smooth map such that $ f | \spt (T) $ is proper, then noting that $ \spt f^\# \phi \subseteq f^{-1}(\spt\, \phi) $ and $ f^{-1}(\spt\, \phi) \cap \spt\, T $ is a compact subset of $ U $ for each $ \phi \in \mathcal{D}^k(V) $, we  define $ f_{\#}T \in \mathcal{D}_k(V) $  by the formula
\begin{equation}\label{eq push forward current}
f_\#T(\phi) = T\big[ \gamma \wedge f^\#\phi\big]  
\end{equation} 
whenever $ \phi \in \mathcal{D}^k(V) $ and $ \gamma \in \mathcal{D}^0(U) $ with $ f^{-1}(\spt \phi) \cap \spt T \subseteq {\rm interior}\,[\gamma^{-1}(\{1\})] $.  If $ \spt(T) $ is a compact subset of $ U $ then $ f_\#T(\phi) = T(f^\# \phi) $ whenever $ \phi \in \mathcal{E}^k(V) $. We refer to \cite[4.1.7]{Fed69} for the basic properties of the map $ f_\# $.

	We say that a $ k $-current $ T \in \mathcal{D}_k(U) $ is a \emph{integer multiplicity locally rectifiable $ k $-current of $ U $} provided
\begin{equation}\label{eq representaion of rectifiabl currents}
	T(\phi) = \int_{M}\langle \eta(x), \phi(x) \rangle \, d\mathcal{H}^k(x) \qquad \textrm{for all $ \phi \in \mathcal{D}^k(U) $,} 
\end{equation} 
where $ M \subseteq U $ is $ \mathcal{H}^k $-measurable and countably $ \mathcal{H}^k $-rectifiable and $ \eta $ is an $ \mathcal{H}^k \restrict M $-measurable $ k $-vectorfield such that
\begin{enumerate}
\item 	 $ \int_{K \cap M} | \eta|\, d\mathcal{H}^k < \infty $ for every compact subset $ K $ of $ U $, 
\item  $ \eta(x) $ is a simple and $ | \eta(x) | $ is a positive integer for $ \mathcal{H}^k $ a.e.\ $ x \in  M $,
\item  $ \Tan^k(\mathcal{H}^k \restrict M,x) $ is associated with $ \eta(x) $ for $ \mathcal{H}^k $ a.e.\ $ x \in M $.
\end{enumerate}
We refer to $ M $ as \emph{carrier of $ T $}.

\subsection{Legendrian currents} Here we introduce the central notion of Legendrian cycle and we collect some fundamental facts.

Let $  \alpha  \in \mathcal{E}^1(\mathbf{R}^{n+1} \times \mathbf{R}^{n+1}) $ be the \emph{contact $ 1 $-form of $ \mathbf{R}^{n+1} $}, which is defined by the formula $$ \langle (y,v),\alpha(x,u) \rangle = y \bullet u \quad \textrm{for $ (x,u), (y,v) \in \mathbf{R}^{n+1} $.}  $$
\begin{definition}\label{def legendrian set}
Let $ M \subseteq \mathbf{R}^{n+1} \times \mathbf{S}^n $ be a countably $ \mathcal{H}^n $ rectifiable set. We say that $ M $ is a \emph{Legendrian rectifiable set} if and only if for every $ Q \subseteq M $ with $ \mathcal{H}^n(Q) < \infty $ we have that 
$$ \alpha(x,u) | \Tan^n(\mathcal{H}^n \restrict Q, (x,u)) =0 $$
for $ \mathcal{H}^n $ a.e.\ $(x,u) \in Q $.
\end{definition}
\begin{remark}
	In relation with Definition \ref{def legendrian set} we recall  that $ \Tan^n(\mathcal{H}^n\restrict Q, (x,u)) $ is a $ n $-dimensional plane for $ \mathcal{H}^n $ a.e.\ $(x,u) \in Q $. 
\end{remark}
\begin{definition}\label{def Legendrian cycle}
	Let $ W \subseteq \mathbf{R}^{n+1} $ be an open set and let $ T $ be an integer-multiplicity locally rectifiable $ n $-current of $ W \times \mathbf{R}^{n+1} $ with $ \spt(T) \subseteq W \times \mathbf{S}^n $. 
	
	
	We say that $ T $ is \emph{Legendrian cycle of $ W $} if $ T \restrict \alpha =0 $ and $ \partial T =0 $.
\end{definition}
\begin{remark}
	If $ T $ is a Legendrian cycle and $ M $ is a carrier of $ T $, then $ M $ is a Legendrian rectifiable set (with $ \mathcal{H}^n(K \cap M) < \infty $ for every $ K \subseteq \mathbf{R}^{n+1} \times \mathbf{R}^{n+1} $ compact).
\end{remark}

\begin{lemma}\label{lem patching of legendrian cycles}
	Suppose $W_1, \ldots , W_m \subseteq \mathbf{R}^{n+1} $ are bounded open sets and $ T \in \mathcal{D}_{n}(\mathbf{R}^{n+1} \times \mathbf{R}^{n+1}) $ such that $ T\restrict (W_i \times \mathbf{R}^{n+1}) $ is a Legendrian cycle of $ W_i $ for every $ i = 1, \ldots , m $ and $ \spt(T) $ is a compact subset of $ \bigcup_{i=1}^m W_i \times \mathbf{S}^n $. Then $ T   $ is a Legendrian cycle of $ \mathbf{R}^{n+1} $.
\end{lemma}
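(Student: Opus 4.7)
The plan is to patch the three defining properties of a Legendrian cycle---integer-multiplicity local rectifiability, $T\restrict\alpha=0$, and $\partial T=0$---from the pieces $T_i := T\restrict(W_i\times\mathbf{R}^{n+1})$ via a smooth partition of unity on the base. Since $\spt(T)$ is compact and contained in $\bigcup_{i=1}^{m}W_i\times\mathbf{S}^n$, the projection $\pi_0(\spt T)$ is a compact subset of $\bigcup_i W_i$, so there exist $\gamma_i\in C_c^\infty(W_i)$ with $\sum_i\gamma_i\equiv 1$ on an open neighborhood $U$ of $\pi_0(\spt T)$. Writing $\tilde\gamma_i := \gamma_i\circ\pi_0$, the function $\sum_i\tilde\gamma_i$ equals $1$ on the neighborhood $\pi_0^{-1}(U)$ of $\spt(T)$, and each $\tilde\gamma_i$ has compact support in $W_i\times\mathbf{R}^{n+1}$. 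The support inclusion $\spt(T)\subseteq\mathbf{R}^{n+1}\times\mathbf{S}^n$ is part of the hypothesis.

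The two differential conditions follow from a direct manipulation. For any test form $\psi$, the identity $\sum_i\tilde\gamma_i\equiv 1$ near $\spt T$ yields $T(\psi)=\sum_i T(\tilde\gamma_i\psi)=\sum_i T_i(\tilde\gamma_i\psi)$, the last equality because $\tilde\gamma_i\psi$ is supported in $W_i\times\mathbf{R}^{n+1}$. Taking $\psi=\alpha\wedge\phi$ and using $T_i\restrict\alpha=0$ gives
$$(T\restrict\alpha)(\phi)=\sum_i(T_i\restrict\alpha)(\tilde\gamma_i\phi)=0.$$
Taking $\psi=d\phi$, and applying the Leibniz rule $d(\tilde\gamma_i\phi)=d\tilde\gamma_i\wedge\phi+\tilde\gamma_i\,d\phi$ together with $\partial T_i=0$, we obtain
$$\partial T(\phi)=\sum_i T_i(\tilde\gamma_i\,d\phi)=-\sum_i T_i(d\tilde\gamma_i\wedge\phi)=-T\!\left(d\!\Bigl({\textstyle\sum_i\tilde\gamma_i}\Bigr)\wedge\phi\right)=0,$$
since $d(\sum_i\tilde\gamma_i)$ vanishes on the neighborhood $\pi_0^{-1}(U)$ of $\spt T$.

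For rectifiability, each $T_i$ admits by hypothesis a carrier $M_i\subseteq W_i\times\mathbf{S}^n$ and an $\mathcal{H}^n\restrict M_i$-measurable $n$-vectorfield $\eta_i$ satisfying (i)--(iii) of \eqref{eq representaion of rectifiabl currents}. On each overlap $(W_i\cap W_j)\times\mathbf{R}^{n+1}$ the restrictions of $T_i$ and $T_j$ coincide with $T$ restricted to that open set, so by uniqueness of the integer-rectifiable representation of a current the carriers coincide modulo $\mathcal{H}^n$-null sets and $\eta_i=\eta_j$ $\mathcal{H}^n$-a.e.\ on the overlap. Thus $M:=\bigcup_i M_i$ is countably $n$-rectifiable and inherits a well-defined global $n$-vectorfield $\eta$ for which
$$T(\phi)=\sum_i T_i(\tilde\gamma_i\phi)=\sum_i\int_{M_i}\langle\eta_i,\tilde\gamma_i\phi\rangle\,d\mathcal{H}^n=\int_M\langle\eta,\phi\rangle\,d\mathcal{H}^n$$
for every $\phi\in\mathcal{D}^n(\mathbf{R}^{n+1}\times\mathbf{R}^{n+1})$. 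Local finiteness $\int_{K\cap M}|\eta|\,d\mathcal{H}^n<\infty$ for compact $K$ follows because $\spt(T)$ is covered by the finitely many compact sets $\spt\tilde\gamma_i\cap\spt T\subseteq W_i\times\mathbf{R}^{n+1}$, on each of which $T_i$ has finite mass.

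The main obstacle is the overlap compatibility just described: matching the carriers and orientations $\mathcal{H}^n$-a.e.\ across every pairwise intersection requires the uniqueness of the integer-rectifiable representation. Once the global rectifiable data $(M,\eta)$ are assembled, the closedness and contact conditions are straightforward consequences of the partition-of-unity decomposition above.
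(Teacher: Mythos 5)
Your proof is correct and follows the same partition-of-unity strategy as the paper for the contact and boundary conditions: choose $\gamma_i\in C_c^\infty(W_i)$ summing to $1$ near $\pi_0(\spt T)$, pull back by $\pi_0$, and use $\sum_i d\tilde\gamma_i=0$ near $\spt T$. The paper does not explicitly verify integer-multiplicity rectifiability (treating it as immediate from compactness of $\spt T$ and the local nature of rectifiability); your overlap-uniqueness argument is valid but heavier than needed---a disjoint Borel partition $\{B_i\}$ of $\pi_0(\spt T)$ with $B_i\subseteq W_i$ already gives $T=\sum_i T_i\restrict(B_i\times\mathbf{R}^{n+1})$ as a finite sum of integer-rectifiable currents with disjoint carriers, sidestepping any compatibility discussion.
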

\begin{proof}
	For each $ i = 1, \ldots , m $ choose an open set $ V_i $  with compact closure in $ W_i $ and $ f_i \in C^\infty(\mathbf{R}^{n+1}) $ such that $ \spt(f_i) $ is a compact subset of $ W_i $, $ \sum_{i=1}^m f_i(x) = 1 $ for every $ x \in \bigcup_{i=1}^m V_i $ and $ \spt(T) \subseteq \bigcup_{i=1}^m V_i \times \mathbf{S}^n $. Then $ T = T \restrict (\sum_{i=1}^m  f_i) $, $ \sum_{i=1}^m d f_i =0 $ on $ \bigcup_{i=1}^m V_i $, 
	$$ \langle \phi, T\restrict \alpha \rangle = \sum_{i=1}^m \langle f_i\phi, T \restrict \alpha \rangle =0 $$
	and 
	$$ \partial T(\phi) = \sum_{i=1}^m \partial T(f_i \phi) + T\Big[\Big({\textstyle\sum_{i=1}^m}df_i\Big)\wedge \phi \Big] =0 $$
	for every $ \phi \in \mathcal{D}^{n-1}(\mathbf{R}^{n+1} \times \mathbf{R}^{n+1}) $.
\end{proof}

For the next definition recall \eqref{orthonormal basis} and \eqref{n form and coform}.
\begin{definition}[Lipschitz-Killing forms (cf.\ \protect{\cite{Zaehle86}})]\label{Lipschitz Killing}
For $ k \in \{0, \ldots , n\} $	the \emph{$ k $-th Lipschitz-Killing form of $ \mathbf{R}^{n+1} $}, $ \varphi_k \in \mathcal{E}^n(\mathbf{R}^{n+1} \times \mathbf{R}^{n+1})$,  is defined by the formula
	\begin{flalign*}
		\langle \xi_1 \wedge \cdots \wedge \xi_n, \varphi_k(x,u) \rangle =\sum_{\sigma \in \Sigma_{n,k}} \langle \pi_{\sigma(1)}(\xi_1)\wedge \cdots \wedge \pi_{\sigma(n)}(\xi_n) \wedge u, E'\rangle,
	\end{flalign*} 
	for every $ \xi_1, \ldots , \xi_n \in \mathbf{R}^{n+1}\times \mathbf{R}^{n+1}$, where
	$$ \Sigma_{n,k} = \bigg\{ \sigma: \{1, \ldots , n\} \rightarrow \{0,1\}: \sum_{i=1}^n \sigma(i) = n-k \bigg\} $$
	and $ \pi_0 $ and $ \pi_1 $ are the projections defined in \eqref{projections}.
\end{definition}

 For $ 1 \leq k \leq m $ we denote by $ \Lambda(m,k) $ the set of all increasing mappings from $ \{1, \ldots, k\} $ into $ \{1, \ldots, m\} $.

\begin{lemma}[\protect{cf.\ \cite[Lemma 3.1]{Fu98}}]\label{lem exterior derivative Lipschitz-Killing}
	The exterior derivatives of the Lipschitz-Killing differential forms satisfy the following equations:
	\begin{equation}\label{lem exterior derivative Lipschitz-Killing 1}
		\langle \xi_1 \wedge \cdots \wedge \xi_{n+1}, d\, \varphi_k(x,u) \rangle  = \langle \xi_1 \wedge \cdots \wedge\xi_{n+1},  \alpha (x,u) \wedge (n-k+1) \varphi_{k-1}(x,u) \rangle 
	\end{equation} 
	for $ k = 1, \ldots, n $	and 
\begin{equation}\label{lem exterior derivative Lipschitz-Killing 2}
\langle \xi_1 \wedge \cdots \wedge \xi_{n+1}, d\, \varphi_0 (x,u) \rangle =0
\end{equation}
	whenever $ \xi_1, \ldots , \xi_{n+1} \in \Tan(\mathbf{R}^{n+1} \times \mathbf{S}^n, (x,u)) $ and $ (x,u) \in \mathbf{R}^{n+1} \times \mathbf{S}^n $.
\end{lemma}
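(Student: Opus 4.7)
The plan is to compute $d\varphi_k$ directly via Cartan's formula for the exterior derivative applied to constant vector fields, and then exploit the tangency constraint $\pi_1\xi \bullet u = 0$ that characterises $\Tan(\mathbf{R}^{n+1} \times \mathbf{S}^n, (x,u))$ by means of an elementary linear-algebra identity to rearrange the resulting sum into $\alpha \wedge (n-k+1)\varphi_{k-1}$.

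\textbf{First}, I would extend the vectors $\xi_1, \ldots, \xi_{n+1}$ to constant vector fields on $\mathbf{R}^{n+1} \times \mathbf{R}^{n+1}$, so that their Lie brackets vanish and Cartan's formula reads
\begin{equation*}
d\varphi_k(\xi_1, \ldots, \xi_{n+1}) = \sum_{j=1}^{n+1}(-1)^{j-1}\,\xi_j\bigl[\varphi_k(\xi_1,\ldots,\widehat{\xi_j},\ldots,\xi_{n+1})\bigr].
\end{equation*}
Since $\varphi_k(x,u)$ is independent of $x$ and depends linearly on $u$ (only through the last column of the defining determinant), differentiating in the direction $\xi_j$ replaces that column by $\pi_1 \xi_j$. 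Moving this column from slot $n+1$ back to slot $j$ introduces $(-1)^{n+1-j}$, and reindexing each $\sigma \in \Sigma_{n,k}$ (originally defined on $\{1,\ldots,n+1\}\setminus\{j\}$) as the $\tau \in \Sigma_{n+1,k}$ with $\tau(j)=1$ allows one to collect all terms into
\begin{equation*}
d\varphi_k(\xi_1, \ldots, \xi_{n+1}) = (-1)^n(n-k+1)\sum_{\tau \in \Sigma_{n+1,k}} \det\bigl[\pi_{\tau(1)}\xi_1, \ldots, \pi_{\tau(n+1)}\xi_{n+1}\bigr],
\end{equation*}
the combinatorial factor $n-k+1$ arising because each $\tau$ admits exactly $|\tau^{-1}(1)|=n-k+1$ valid choices of $j$.

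\textbf{Second}, I would invoke the decomposition $w = P(w) + (w\bullet u)\,u$, where $P$ denotes orthogonal projection onto $u^\perp$. Expanding each $(n+1)\times(n+1)$ determinant above by column multilinearity, terms with two or more $u$-columns vanish, and the all-$P(w)$ term vanishes as well because $P$ takes values in the $n$-dimensional subspace $u^\perp$. The tangency hypothesis $\pi_1 \xi_i \bullet u = 0$ annihilates every contribution from indices $i$ with $\tau(i)=1$; and for $\tau(i)=0$, since $P$ alters columns only by multiples of the already-present $u$-column, one may replace each $P(\pi_0\xi_{i'})$ by $\pi_0\xi_{i'}$ without affecting the determinant. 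Moving $u$ from position $i$ back to position $n+1$ (contributing $(-1)^{n+1-i}$) and using the natural bijection between $\{\tau \in \Sigma_{n+1,k} : \tau(i)=0\}$ and $\Sigma_{n, k-1}$ identifies the resulting determinant with a term in the expansion of $\varphi_{k-1}(\xi_1,\ldots,\widehat{\xi_i},\ldots,\xi_{n+1})$. Comparing signs $(-1)^{n+1-i}$ against $(-1)^{i-1}$ (whose ratio is $(-1)^n$) one finds $\sum_\tau \det[\ldots] = (-1)^n (\alpha \wedge \varphi_{k-1})(\xi_1,\ldots,\xi_{n+1})$, which combined with Step~1 yields \eqref{lem exterior derivative Lipschitz-Killing 1}. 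For $k=0$, $\Sigma_{n+1,0}$ contains only the constant map $\tau \equiv 1$, so the same computation reduces $d\varphi_0(\xi_1,\ldots,\xi_{n+1})$ to a multiple of $\det[\pi_1\xi_1,\ldots,\pi_1\xi_{n+1}]$, which vanishes because the $n+1$ vectors $\pi_1\xi_j$ all lie in the $n$-dimensional space $u^\perp$, proving \eqref{lem exterior derivative Lipschitz-Killing 2}.

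\textbf{Main obstacle.} The principal difficulty is the bookkeeping of signs and indexings: carefully tracking the transpositions incurred when moving the columns $\pi_1\xi_j$ and $u$ in and out of the final slot, and correctly setting up the bijections between the various index sets $\Sigma_{n,k}$, $\Sigma_{n+1,k}$, $\{\tau \in \Sigma_{n+1,k} : \tau(i)=0\}$ and $\Sigma_{n,k-1}$.
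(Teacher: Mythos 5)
Your proposal is correct, and it diverges from the paper's own argument in a genuine way after the first step. Both proofs begin identically: because $\varphi_k(x,u)$ is constant in $x$ and linear in $u$, the Cartan/Federer formula for the exterior derivative of constant vector fields gives $d\varphi_k(\xi_1,\dots,\xi_{n+1})$ as a sum over $j$ in which the $u$ column is replaced by $\pi_1(\xi_j)$, and sliding that column back into slot $j$ produces the overall factor $(-1)^n$; this is precisely the paper's equation \eqref{lem exterior derivative Lipschitz-Killing eq1}. From there, however, the paper also expands $\alpha\wedge\varphi_{k-1}$ via the shuffle formula, fixes an orthonormal basis $\tau_1,\dots,\tau_n$ of $u^\perp$, builds the basis $n+1$-vectors $v_\lambda$ of $\bigwedge_{n+1}(\mathbf{R}^{n+1}\times u^\perp)$, and verifies the desired identity by a case analysis on $\lambda$. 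You instead keep working with general tangent vectors: you re-collect the sum over pairs $(j,\sigma)$ as a sum over $\tau\in\Sigma_{n+1,k}$ (the factor $n-k+1$ being the multiplicity $|\tau^{-1}(1)|$), decompose each $\tau(i)=0$ column via $w=P(w)+(w\bullet u)u$, kill the all-$u^\perp$ term by a dimension count (here the tangency $\pi_1\xi_i\bullet u=0$ is used), observe that the remaining $P$'s can be dropped once a $u$-column is present, and then recognize the surviving sum as $(-1)^n$ times the shuffle expansion of $\alpha\wedge\varphi_{k-1}$, with the bijection $\{\tau\in\Sigma_{n+1,k}:\tau(i)=0\}\cong\Sigma_{n,k-1}$. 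Your route is coordinate-free and makes the combinatorial origin of the factor $n-k+1$ transparent; the paper's basis check is more mechanical but minimizes the sign-and-index bookkeeping that your argument asks the reader to trust. Your $k=0$ case is also handled correctly (only $\tau\equiv 1$ survives, and $n+1$ vectors in $u^\perp$ have vanishing wedge). No gap; what remains is exactly the careful tracking you flag as the main obstacle.
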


\begin{proof}
	We fix $ (x,u)\in \mathbf{R}^{n+1} \times \mathbf{S}^n $.
	
	Suppose $ k \geq 0 $ and notice that $ \varphi_k : \mathbf{R}^{n+1} \times \mathbf{R}^{n+1} \rightarrow {\textstyle \bigwedge^n}(\mathbf{R}^{n+1} \times \mathbf{R}^{n+1}) $ is a linear map. Henceforth,  we compute (cf.\ \cite[pag.\ 352]{Fed69})
	\allowdisplaybreaks{\begin{flalign}\label{lem exterior derivative Lipschitz-Killing eq1}
		&	\langle \xi_1 \wedge \cdots \wedge \xi_{n+1}, d\,\varphi_k(x,u) \rangle \\
		& \quad  = \sum_{j=1}^{n+1} (-1)^{j-1}\, \big\langle \xi_1 \wedge \cdots \wedge \xi_{j-1} \wedge \xi_{j+1} \wedge \cdots \wedge \xi_{n+1}, \langle \xi_j , \Der \varphi_k(x,u)\rangle \big\rangle \notag \\
		& \quad   = \sum_{j=1}^{n+1} (-1)^{j-1}\, \big\langle \xi_1 \wedge \cdots \wedge \xi_{j-1} \wedge \xi_{j+1} \wedge \cdots \wedge \xi_{n+1}, \varphi_k(\xi_j) \big\rangle \notag\\
		& \quad  = \sum_{j=1}^{n+1} (-1)^{j-1} \sum_{\sigma \in \Sigma_{n,k}} \langle \pi_{\sigma(1)}(\xi_1) \wedge \cdots \wedge \pi_{\sigma(j-1)}(\xi_{j-1}) \wedge \pi_{\sigma(j)}(\xi_{j+1}) \wedge \cdots \notag\\
		&  \qquad \qquad \qquad \qquad \qquad \qquad \qquad \qquad \qquad\qquad \qquad \qquad \qquad \, \cdots \wedge  \pi_{\sigma(n)}(\xi_{n+1}) \wedge \pi_1(\xi_j), E' \rangle \notag \\
		& \quad   = (-1)^n \sum_{j=1}^{n+1}  \sum_{\sigma \in \Sigma_{n,k}} \langle \pi_{\sigma(1)}(\xi_1) \wedge \cdots \wedge \pi_{\sigma(j-1)}(\xi_{j-1}) \wedge \pi_1(\xi_j) \wedge  \pi_{\sigma(j)}(\xi_{j+1}) \wedge \cdots \notag \\
		&  \qquad \qquad \qquad \qquad \qquad \qquad \qquad \qquad \qquad\qquad \qquad \qquad \qquad \qquad  \qquad\cdots \wedge  \pi_{\sigma(n)}(\xi_{n+1}), E' \rangle \notag 
	\end{flalign}}
	for $ \xi_1, \ldots, \xi_{n+1} \in \mathbf{R}^{n+1} \times \mathbf{R}^{n+1} $, whence we readily deduce \eqref{lem exterior derivative Lipschitz-Killing 2}.
	Moreover, if $ p_u : \mathbf{R}^{n+1} \rightarrow \mathbf{R}^{n+1}$ is the orthogonal projection onto $ {\rm span}\{u\} $ we use the permutation formula (cf.\ \cite[1.4.2]{Fed69}) to compute
	\allowdisplaybreaks{\begin{flalign}\label{lem exterior derivative Lipschitz-Killing eq2}
		&	\langle \xi_1 \wedge \cdots \wedge \xi_{n+1}, \alpha(x,u) \wedge \varphi_{k-1}(x,u) \rangle\\
		&  = \sum_{j=1}^{n+1}(-1)^{j-1} \, \langle \xi_j, \alpha(x,u) \rangle \, \langle \xi_1 \wedge \cdots \wedge  \xi_{j-1} \wedge \xi_{j+1} \wedge \cdots \wedge \xi_{n+1}, \varphi_{k-1}(x,u) \rangle \notag \\
		&  = (-1)^n \sum_{j=1}^{n+1}  \sum_{\sigma \in \Sigma_{n,k-1}} \langle \pi_{\sigma(1)}(\xi_1) \wedge \cdots \wedge \pi_{\sigma(j-1)}(\xi_{j-1}) \wedge p_u(\pi_0(\xi_j)) \wedge  \pi_{\sigma(j)}(\xi_{j+1}) \wedge \cdots \notag  \\
		& \qquad \qquad \qquad \qquad \qquad \qquad \qquad \qquad \qquad\qquad \qquad \qquad\qquad \qquad \qquad\cdots  \wedge  \pi_{\sigma(n)}(\xi_{n+1}), E' \rangle \notag 
	\end{flalign}}
	whenever  $ \xi_1, \ldots, \xi_{n+1} \in \mathbf{R}^{n+1} \times \mathbf{R}^{n+1} $ and $ k \geq 1 $.  Suppose $ \tau_1, \ldots, \tau_n \in u^\perp $ is an orthonormal set and
	we define 
	$$ v_i = (\tau_i, 0) \quad \textrm{for $ i = 1, \ldots , n $,} \quad v_i = (0, \tau_{i-n}) \quad \textrm{for $ i = n+1, \ldots, 2n $}, \quad v_{2n+1} = (u,0), $$
which form an orthonormal basis of $ \mathbf{R}^{n+1} \times u^\perp $. Then we define 
	$$ v_\lambda = v_{\lambda(1)} \wedge \cdots \wedge v_{\lambda(n+1)} \qquad \textrm{whenever $ \lambda \in \Lambda(2n+1, n+1) $} $$
	and, recalling that $ \{v_\lambda : \lambda \in \Lambda(2n+1, n+1) \} $ is a basis of $ \bigwedge_{n+1}(\mathbf{R}^{n+1} \times u^\perp) $ (cf. \cite[1.3.2]{Fed69}), we notice that \eqref{lem exterior derivative Lipschitz-Killing 1}  reduces to check
 \begin{equation}\label{lem exterior derivative Lipschitz-Killing eq3}
 	(n-k+1)\, \langle v_\lambda, \alpha(x,u) \wedge  \varphi_{k-1}(x,u) \rangle  = \langle v_\lambda, d\, \varphi_k(x,u) \rangle \quad \textrm{for $ \lambda \in \Lambda(2n+1, n+1) $}.
 \end{equation}   
 Firstly, we notice that if $ \lambda \in \Lambda(2n+1, n+1) $ and $ 2n+1 \notin {\rm Im}(\lambda) $ then one can easily check from \eqref{lem exterior derivative Lipschitz-Killing eq1} and \eqref{lem exterior derivative Lipschitz-Killing eq2} that both sides of \eqref{lem exterior derivative Lipschitz-Killing eq3} must be equal to zero.

We fix now $ \lambda \in \Lambda(2n+1, n+1) $ such that $ \lambda (n+1) = 2n+1 $. Then $ \pi_1(v_{2n+1}) =0 $ and we employ \eqref{lem exterior derivative Lipschitz-Killing eq1} to compute
	\allowdisplaybreaks{\begin{flalign*}
		&\langle v_\lambda, d\,\varphi_k(x,u) \rangle \\
		&    = (-1)^n \sum_{j=1}^{n}  \sum_{\sigma \in \Sigma_{n,k}} \langle \pi_{\sigma(1)}(v_{\lambda(1)}) \wedge \cdots \wedge \pi_{\sigma(j-1)}(v_{\lambda(j-1)}) \wedge \pi_1(v_{\lambda(j)}) \wedge  \pi_{\sigma(j)}(v_{\lambda(j+1)}) \wedge \cdots \notag \\
		&  \qquad \qquad \qquad \qquad \qquad\qquad \qquad \qquad \qquad \qquad  \qquad\cdots \wedge  \pi_{\sigma(n-1)}(v_{\lambda(n)}) \wedge  \pi_{\sigma(n)}(v_{2n+1}), E' \rangle \notag \\
		&   = (-1)^n \sum_{j \in \lambda^{-1}\{n+1, \cdots, 2n\}} \sum_{\substack{\sigma \in \Sigma_{n,k}\\ \sigma(n) =0}} \langle \pi_{\sigma(1)}(v_{\lambda(1)}) \wedge \cdots \wedge \pi_{\sigma(j-1)}(v_{\lambda(j-1)}) \wedge \tau_{\lambda(j)-n}  \wedge   \notag \\
		&  \qquad \qquad \qquad \qquad \qquad  \qquad \qquad \qquad  \qquad \ \ \, \wedge \pi_{\sigma(j)}(v_{\lambda(j+1)}) \wedge \cdots  \wedge   \pi_{\sigma(n-1)}(v_{\lambda(n)}) \wedge  u, E' \rangle, \\
	\end{flalign*}}
	while, since $ p_u(\pi_0(v_{\lambda(j)})) =0 $ for $ j = 1, \ldots, n $, we obtain from \eqref{lem exterior derivative Lipschitz-Killing eq2}
	\begin{flalign*}
			\langle v_\lambda, \alpha(x,u) \wedge \varphi_{k-1}(x,u) \rangle   = (-1)^n  \sum_{\sigma \in \Sigma_{n,k-1}} \langle \pi_{\sigma(1)}(v_{\lambda(1)}) \wedge \cdots \wedge  \pi_{\sigma(n)}(v_{\lambda(n)}) \wedge u, E' \rangle.
	\end{flalign*} 
	Therefore, if  $  \mathcal{H}^0\big(\lambda^{-1}\{1, \ldots, n\}\big) \neq k-1 $ then
	$$ \langle v_\lambda, \alpha(x,u) \wedge \varphi_{k-1}(x,u) \rangle =0 = \langle v_\lambda, d\,\varphi_k(x,u) \rangle. $$
	Finally, if $ \mathcal{H}^0\big(\lambda^{-1}\{1, \ldots, n\}\big) = k-1 $ then
	\begin{flalign*}
		&(n-k+1)\,	\langle v_\lambda, \alpha(x,u) \wedge \varphi_{k-1}(x,u) \rangle \\
		& \quad  = (n-k+1)\,(-1)^n \, \langle \tau_{\lambda(1)} \wedge \cdots \wedge \tau_{\lambda(k-1)}\wedge \tau_{\lambda(k) -n} \wedge \cdots \wedge \tau_{\lambda(n) -n}\wedge u , E' \rangle \\
		& \quad  = \langle v_\lambda, d\,\varphi_k(x,u) \rangle.
	\end{flalign*} 
\end{proof}


We recall that a \emph{local variation} $(F_t)_{t \in I} $ of $ \mathbf{R}^{n+1} $ is a smooth map $ F : I \times \mathbf{R}^{n+1} \rightarrow \mathbf{R}^{n+1} $, where $ I $ is an open interval of $\mathbf{R}$ with $ 0 \in I $, such that $ F_0 = F(0, \cdot) $ is the identity of $ \mathbf{R}^{n+1} $ and $ F_t = F(t, \cdot) : \mathbf{R}^{n+1} \rightarrow \mathbf{R}^{n+1} $ is a diffeomorphism for every $ t \in I $. For such local variation we define the initial velocity vector field $ V $ by
$$ V(x) = \lim_{t \to 0} \frac{F_t(x) - x}{t} \quad \textrm{for $ x \in \mathbf{R}^{n+1} $.} $$
Moreover, if  $ F :  U \rightarrow  V $ is a $ C^2 $-diffeomorphism between open subsets of $ \mathbf{R}^{n+1} $, we define the $ C^1 $-diffeomorphism $ \Psi_F  :U \times \mathbf{S}^n \rightarrow V \times \mathbf{S}^n $ by
\begin{equation}\label{PsiF}
	\Psi_F(x,u) = \bigg(F(x), \frac{(\Der F(x)^{-1})^\ast(u)}{| (\Der F(x)^{-1})^\ast(u) |}\bigg) \quad \textrm{for $(x,u) \in U \times \mathbf{S}^n $}.
\end{equation} 

We define $ \mathbf{R}^{n+1}_0 = \mathbf{R}^{n+1} \setminus \{0\} $. For a local variation $ (F_t)_{t \in I} $ (where $ I $ is an open interval of $ \mathbf{R} $ with $ 0 \in I $) we define (cf.\ \eqref{PsiF}) the smooth map $ h : \mathbf{R}^{n+1} \times \mathbf{R}^{n+1}_0 \times I \rightarrow \mathbf{R}^{n+1} \times \mathbf{R}^{n+1} $ by
$$ h(x,u,t) = \Psi_{F_t}(x,u) \qquad \textrm{for $(x,u,t)   \in   \mathbf{R}^{n+1} \times \mathbf{R}^{n+1}_0 $} \times I $$
and we notice that $ h(x,u,0) = (x,u) $ for $ (x,u)\in \mathbf{R}^{n+1} \times \mathbf{R}^{n+1}_0 $.  Moreover we define  $$ p : \mathbf{R}^{n+1} \times \mathbf{R}^{n+1} \times \mathbf{R} \rightarrow \mathbf{R}^{n+1} \times \mathbf{R}^{n+1}, \qquad  p(x,u,t) = (x,u),  $$
$$ q : \mathbf{R}^{n+1} \times \mathbf{R}^{n+1} \times \mathbf{R} \rightarrow  \mathbf{R}, \qquad  q(x,u,t) = t, $$
$$ P : \mathbf{R}^{n+1} \times \mathbf{R}^{n+1} \rightarrow  \mathbf{R}^{n+1} \times \mathbf{R}^{n+1} \times \mathbf{R}, \qquad P(x,u) = (x,u,0). $$

\begin{lemma}[\protect{cf.\ \cite{Fu98}}]\label{Lemma Fu}
Suppose $ T $ is a Legendrian cycle of $ \mathbf{R}^{n+1} $ with $ \spt (T) $ compact, $ (F_t)_{t \in I} $  is a  local variation of $ \mathbf{R}^{n+1} $ with initial velocity vector field $ V $ and $ \theta_V : \mathbf{R}^{n+1} \times \mathbf{R}^{n+1} \rightarrow \mathbf{R} $ is given by $ \theta_V(x,u) = V(x) \bullet u $ for $(x,u) \in \mathbf{R}^{n+1} \times \mathbf{R}^{n+1} $.

Then (see \eqref{PsiF})
	$$ \frac{d}{dt}\big[(\Psi_{F_t})_{\#}T \big](\varphi_{i}) \Big|_{t=0} = (n+1-i)T(\theta_V \wedge \varphi_{i-1}) \quad \textrm{for $ i = 1, \ldots , n $} $$
	and 
	$$  \frac{d}{dt}\big[(\Psi_{F_t})_{\#}T \big](\varphi_{0})  \Big|_{t=0}  =0. $$
\end{lemma}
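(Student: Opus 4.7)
The strategy is to reduce the first-variation formula to the cycle identity $\partial T = 0$, the Legendrian condition $T \restrict \alpha = 0$, and the pointwise differential identities for $\varphi_k$ from Lemma \ref{lem exterior derivative Lipschitz-Killing}, by applying Cartan's magic formula to the smooth one-parameter family of diffeomorphisms $(\Psi_{F_t})_{t\in I}$ of $\mathbf{R}^{n+1} \times \mathbf{S}^n$.

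First I would compute the initial velocity field $X(x,u) = \frac{d}{dt}\Psi_{F_t}(x,u)|_{t=0}$ directly from \eqref{PsiF}, using $F_0 = \id$, $\Der F_0 = \id$, the identity $\frac{d}{dt}(\Der F_t(x))^{-1}|_{t=0} = -\Der V(x)$, and the chain rule on the normalization $w \mapsto w/|w|$. This yields, for $(x,u) \in \mathbf{R}^{n+1} \times \mathbf{S}^n$,
$$ X(x,u) = \bigl( V(x),\, -(\Der V(x))^\ast u + (u \bullet (\Der V(x))^\ast u)\, u \bigr). $$
Two features of $X$ are crucial: the second component is orthogonal to $u$, so $X(x,u) \in \Tan(\mathbf{R}^{n+1} \times \mathbf{S}^n, (x,u))$; and $\langle X(x,u), \alpha(x,u)\rangle = V(x) \bullet u = \theta_V(x,u)$. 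Next, writing $(\Psi_{F_t})_\# T(\varphi_k) = T(\Psi_{F_t}^{\#} \varphi_k)$ and differentiating inside $T$ (justified since $T$ is continuous on $\mathcal{E}^n$ near its compact support and $t \mapsto \Psi_{F_t}^{\#} \varphi_k$ is smooth), the standard formula $\frac{d}{dt}|_{t=0}\Psi_{F_t}^{\#} \varphi_k = L_X \varphi_k$ together with Cartan's identity $L_X \varphi_k = d(i_X \varphi_k) + i_X d\varphi_k$ gives
$$ \frac{d}{dt}\bigl[(\Psi_{F_t})_{\#} T\bigr](\varphi_k)\Big|_{t=0} = \partial T(i_X \varphi_k) + T(i_X d\varphi_k) = T(i_X d\varphi_k), $$
the first term vanishing by $\partial T = 0$.

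To evaluate $T(i_X d\varphi_k)$ I would use that $T$ is represented by a carrier $M \subseteq \mathbf{R}^{n+1} \times \mathbf{S}^n$, so that only the values of $i_X d\varphi_k$ on $n$-vectors tangent to $\mathbf{R}^{n+1} \times \mathbf{S}^n$ at points of $M$ contribute to the pairing. Since $X$ is itself tangent to $\mathbf{R}^{n+1} \times \mathbf{S}^n$, tuples $(X, \xi_1, \ldots, \xi_n)$ with $\xi_j \in \Tan(\mathbf{R}^{n+1} \times \mathbf{S}^n, (x,u))$ fall exactly within the scope of Lemma \ref{lem exterior derivative Lipschitz-Killing}. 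For $k \geq 1$ the Lemma yields
$$ d\varphi_k(X, \xi_1, \ldots, \xi_n) = (n-k+1)\, (\alpha \wedge \varphi_{k-1})(X, \xi_1, \ldots, \xi_n), $$
and the identity $i_X(\alpha \wedge \varphi_{k-1}) = \theta_V \, \varphi_{k-1} - \alpha \wedge i_X \varphi_{k-1}$ combined with $T \restrict \alpha = 0$ gives $T(i_X d\varphi_k) = (n+1-k)\, T(\theta_V \wedge \varphi_{k-1})$. For $k = 0$, Lemma \ref{lem exterior derivative Lipschitz-Killing} instead gives $d\varphi_0 = 0$ on tangent $(n+1)$-vectors of $\mathbf{R}^{n+1} \times \mathbf{S}^n$, so that the tangentiality of $X$ forces $T(i_X d\varphi_0) = 0$.

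The main obstacle is precisely the tangential reduction in the last step: Lemma \ref{lem exterior derivative Lipschitz-Killing} is not a pointwise identity of $(n+1)$-forms on $\mathbf{R}^{n+1} \times \mathbf{R}^{n+1}$, but only an equality valid when evaluated on vectors tangent to $\mathbf{R}^{n+1} \times \mathbf{S}^n$. One must therefore invoke the rectifiable representation \eqref{eq representaion of rectifiabl currents} of $T$, the inclusion $\Tan^n(\mathcal{H}^n \restrict M, (x,u)) \subseteq \Tan(\mathbf{R}^{n+1} \times \mathbf{S}^n, (x,u))$ at $\mathcal{H}^n$-a.e.\ carrier point, and the tangentiality of $X$ established in the first paragraph, in order to argue that the discrepancy between the ambient $n$-form $i_X d\varphi_k$ and its restriction to such tangent vectors does not contribute to the pairing with $T$. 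Once this reduction is handled, the remainder of the computation is purely algebraic.
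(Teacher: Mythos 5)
Your proposal is correct, and it takes a genuinely different route from the paper. You use the infinitesimal Lie-derivative approach: compute the velocity field $X$ of the flow $\Psi_{F_t}$ on $\mathbf{R}^{n+1}\times\mathbf{S}^n$, observe that $X$ is tangent to $\mathbf{R}^{n+1}\times\mathbf{S}^n$ with $\langle X,\alpha\rangle = \theta_V$, differentiate $T(\Psi_{F_t}^{\#}\varphi_k)$ through the continuous pairing, and invoke Cartan's magic formula $L_X\varphi_k = d(i_X\varphi_k) + i_X\,d\varphi_k$. The $d(i_X\cdot)$ term is killed by $\partial T = 0$, the residual $\alpha$-term is killed by the pointwise vanishing of $\alpha$ on tangent planes of the carrier, and Lemma \ref{lem exterior derivative Lipschitz-Killing} supplies $d\varphi_k$ on tangent vectors. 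The paper instead uses the integral (homotopy-formula) form of the same idea: it passes to the cylinder current $T\times\llbracket 0,t\rrbracket$, pushes it forward under the map $h(x,u,t) = \Psi_{F_t}(x,u)$, and applies $(\Psi_{F_t})_\#T - T = (-1)^n\partial[h_\#(T\times\llbracket 0,t\rrbracket)]$ before taking the $t\to 0$ limit, with explicit formulas \eqref{Lemma Fu eq1}--\eqref{Lemma Fu eq2} for $h^\#\alpha$ and $h^\#\varphi_k$ at $t=0$ playing the role of your velocity-field identities. Both arguments rest on the same three pillars ($\partial T = 0$, the Legendrian condition, and Lemma \ref{lem exterior derivative Lipschitz-Killing}), so the computational content is equivalent; yours is the more conceptual and compact form of the argument, while the paper's finite-difference version avoids having to justify differentiation through the pairing and stays closer to the Federer-style current calculus used throughout. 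Your last paragraph correctly identifies the one point requiring care — that Lemma \ref{lem exterior derivative Lipschitz-Killing} is only a tangential identity — and your resolution (tangentiality of $X$ together with $\Tan^n(\mathcal{H}^n\restrict M,(x,u))\subseteq\Tan(\mathbf{R}^{n+1}\times\mathbf{S}^n,(x,u))$ at a.e.\ carrier point) is exactly right; the paper handles the same point by restricting the homotopy image $h_\#(T\times\llbracket 0,t\rrbracket)$ to $\mathbf{R}^{n+1}\times\mathbf{S}^n$ before applying the lemma.
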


\begin{proof}
	Firstly, a simple direct computation leads to
	\begin{equation}\label{Lemma Fu eq1}
		h^\# \alpha \circ P = \big(p^\# \alpha + \theta_V \, dq\big) \circ P
	\end{equation}
	and 
	\begin{equation}\label{Lemma Fu eq2}
		\big(h^\# \varphi_k  \wedge \, dq\big) \circ P = \big(p^\# \varphi_k \wedge \, dq\big) \circ P.
	\end{equation}

	Suppose now $ M \subseteq \mathbf{R}^{n+1} \times \mathbf{S}^n $ is a countably $ \mathcal{H}^n $-rectifiable set and $ \eta $ is a $ \mathcal{H}^n \restrict M $-measurable simple $ n $-vectorfield such that $ | \eta(x,u) | $ is a positive integer for $ \mathcal{H}^n $ a.e.\ $(x,u) \in M $,
	$$ T (\phi) = \int_M \langle \eta(x,u), \phi(x,u) \rangle \, d\mathcal{H}^n(x,u) \quad \textrm{for each $ \phi \in \mathcal{D}^n(\mathbf{R}^{n+1} \times \mathbf{R}^{n+1}) $}, $$
	and $ \Tan^n(\mathcal{H}^n \restrict M, (x,u)) $ is associated with $ \eta(x,u) $ for $ \mathcal{H}^n $ a.e.\ $(x,u) \in M $.
	For $ t > 0 $ we define $ \llbracket 0,t \rrbracket \in \mathcal{D}_1(\mathbf{R}) $ by the formula
	$$ \llbracket 0,t \rrbracket(\beta) = \int_0^1 \langle t, \beta(st) \rangle \, ds = \int_0^t \langle 1, \beta(s) \rangle \, ds \quad \textrm{for $ \beta \in \mathcal{E}^1(\mathbf{R}) $.} $$
	Denoting by $ T \times \llbracket 0,t \rrbracket \in \mathcal{D}_{n+1}(\mathbf{R}^{n+1} \times \mathbf{R}^{n+1} \times \mathbf{R}) $ the cartesian product of $ T $ and $ \llbracket 0,t \rrbracket $, we employ \cite[4.1.8]{Fed69} to compute 
	$$ \big( T \times \llbracket 0,t \rrbracket\big)(\phi) = \int_M \int_0^t \big\langle \zeta(x,u,s), \phi(x,u,s) \big\rangle\, ds\, d\mathcal{H}^n(x,u) $$
	whenever $ \phi \in \mathcal{E}^{n+1}(\mathbf{R}^{n+1} \times \mathbf{R}^{n+1} \times \mathbf{R}) $, where
	$$ \zeta(x,u,s) =\big({\textstyle \bigwedge_n} P\big)(\eta(x,u)) \wedge w_{2n+3} \quad \textrm{for $ \mathcal{H}^n $ a.e.\ $ (x,u) \in M  $ and  for $ s \in (0,t) $.}$$
	Employing \cite[4.1.8, 4.1.9]{Fed69} and taking into account that $ \partial T = 0 $  we derive the homotopy formula
	$$
	\big(\Psi_{F_t}\big)_\# T - T    = (-1)^n\,  \partial \big[h_\#(T \times \llbracket 0,t \rrbracket)\big].
	$$
	Since $ \spt \big( h_\#\big(T \times \llbracket 0,t \rrbracket \big)\big) \subseteq \mathbf{R}^{n+1} \times \mathbf{S}^n $ we use Lemma \ref{lem exterior derivative Lipschitz-Killing} to compute
	\begin{flalign*}
		&	\big[ \big(\Psi_{F_t}\big)_\# T - T \big](\varphi_k)   = (-1)^n\,(n-k+1)\, h_\#(T \times \llbracket 0,t \rrbracket)(\alpha \wedge \varphi_{k-1}) \\
		& \qquad 	 = (-1)^n\, (n-k+1)\, \int_M \int_0^t \big\langle \zeta(x,u,s), h^\# \alpha \wedge h^\#\varphi_{k-1}(x,u,s) \big\rangle\, ds\, d\mathcal{H}^n(x,u)
	\end{flalign*}
	whence we infer that 
	\begin{flalign*}
		&\lim_{t \to 0} \frac{\big[ \big(\Psi_{F_t}\big)_\# T - T \big](\varphi_k)}{t} \\
		&  \qquad  =  (-1)^n\,(n-k+1)\, \int_M  \big\langle \zeta(x,u,0), h^\# \alpha \wedge h^\#\varphi_{k-1}(x,u,0) \big\rangle\,  d\mathcal{H}^n(x,u).
	\end{flalign*}
	Using \eqref{Lemma Fu eq1} and \eqref{Lemma Fu eq2} we deduce that
	\begin{flalign*}
		& \big\langle \zeta(x,u,0), h^\# \alpha \wedge h^\#\varphi_{k-1}(x,u,0) \big\rangle \\
		& \qquad = \langle \zeta(x,u,0), p^\# \alpha \wedge h^\#\varphi_{k-1}(x,u,0) \rangle + \langle \zeta(x,u,0), \theta_V(x,u)\, dq \wedge p^\#\varphi_{k-1}(x,u,0) \rangle 
	\end{flalign*}
	for $ \mathcal{H}^n $ a.e.\ $ (x,u) \in M $. Moreover, noting that $ p(w_{2n+3}) =0 $ and $ \langle \tau, \alpha(x,u) \rangle =0 $ whenever $ \tau \in \Tan^n(\mathcal{H}^n \restrict M, (x,u)) $ for $ \mathcal{H}^n $ a.e.\ $ (x,u) \in M $ by \cite[Theorem 9.2]{RatajZaehlebook}, we obtain that 
	$$ \langle \zeta(x,u,0), p^\# \alpha \wedge h^\#\varphi_{k-1}(x,u,0) \rangle  =0 \qquad \textrm{for $ \mathcal{H}^n $ a.e.\ $(x,u) \in M $} $$
	and employing the shuffle formula we compute 
	$$  \langle \zeta(x,u,0), \theta_V(x,u)\, dq \wedge p^\#\varphi_{k-1}(x,u,0) \rangle  = (-1)^n\,\theta_V(x,u)\, \langle \eta(x,u), \varphi_{k-1}(x,u) \rangle $$ 
	for $ \mathcal{H}^n $ a.e.\ $(x,u) \in M $. Moreover, applying Lemma \ref{lem exterior derivative Lipschitz-Killing}, we obtain
	$$\frac{d}{dt}\big[(\Psi_{F_t})_{\#}T \big](\varphi_{0})  \Big|_{t=0}  =(-1)^n\,\lim_{t\to0}\frac{[h_\#(T \times \llbracket 0,t \rrbracket)\big](d\varphi_0)}{t}=0\,.$$
\end{proof}

	\subsection{The proximal unit normal bundle} The following notion  plays a key role in this paper.
	
	\begin{definition}[\protect{cf.\ \cite[pag.\ 212]{RockafellarWets}}]\label{def proximal unit normal bundle}
		 If $\varnothing \neq C \subseteq \mathbf{R}^{n+1} $ we define the \emph{proximal unit normal bundle} of $ C $ as
		$$ \nor(C) = \{(x,\nu)\in \overline{C} \times \mathbf{S}^{n}: \dist(x+s\nu, C) = s \; \textrm{for some $ s > 0 $}\}. $$
	\end{definition}

\begin{remark}\label{rmk prox normal bundle}
	Notice that $ \nor(C) = \nor(\overline{C}) $. We recall that $ \nor(C) $ is a Borel set and it is always countably $ \mathcal{H}^n $-rectifiable; see \cite[Remark 4.3]{SantilliAnnali}\footnote{The unit normal bundle of a closed set $ C $ in \cite{SantilliAnnali} is denoted with $N(C)$.}. However, $ \mathcal{H}^n \restrict \nor(C) $ might not be a Radon measure, even when $ C $ is the closure of a smooth submanifold with bounded mean curvature; cf.\ Lemma \ref{lem Brakke example} and Remark \ref{rmk Brakke example}.
\end{remark}

The following lemma is a simple extension of well known results for sets of positive reach (cf.\ \cite[Lemma 4.23 and 4.24]{RatajZaehlebook}). 
\begin{lemma}\label{lem: Santilli20}
Suppose $ C \subseteq \mathbf{R}^{n+1} $. For $ \mathcal{H}^n $ a.e.\ $ (x,u) \in \nor(C) $ there exist numbers $$-\infty < \kappa_{1}(x,u) \leq \ldots \leq \kappa_{n}(x,u)\leq \infty $$ and vectors  $ \tau_1(x,u), \ldots , \tau_n(x,u)$  such that $ \{\tau_1(x,u), \ldots , \tau_n(x,u), u\} $ is an orthonormal basis of $\mathbf{R}^{n+1} $ and the vectors 
$$ \zeta_i(x,u) = \bigg(\frac{1}{\sqrt{1 + \kappa_i(x,u)^2}} \tau_i(x,u), \frac{\kappa_i(x,u)}{\sqrt{1 + \kappa_i(x,u)^2}}\tau_i(x,u) \bigg), \quad i = 1, \ldots, n, $$
form an orthonormal basis of $ \Tan^n(\mathcal{H}^n \restrict Q, (x,u)) $ for every $ \mathcal{H}^n $-measurable set $ Q \subseteq\nor(C) $ with $ \mathcal{H}^n(Q) < \infty $ and for $ \mathcal{H}^n $ a.e.\ $(x,u) \in Q $ (We set $ \frac{1}{\infty} = 0 $ and $ \frac{\infty}{\infty} = 1 $). In particular, 
\begin{center}
$ \nor(C) $ is a Legendrian rectifiable set.
\end{center}

Moreover, the maps $ \kappa_{1}, \ldots , \kappa_n $ can be chosen to be $ \mathcal{H}^n \restrict \nor(C) $-measurable and they are $ \mathcal{H}^n \restrict \nor(C) $-almost uniquely determined.
\end{lemma}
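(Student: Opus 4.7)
I would reduce to the classical theory for sets of positive reach by decomposing $\nor(C)$ according to the length of the outward normal segment along which the foot-point projection onto $C$ is well-defined. By Remark \ref{rmk prox normal bundle} I may assume $C$ is closed. For each $k \in \mathbf{N}$ I set
$$ \nor_k(C) := \{(x,u) \in \nor(C) : \dist(x+tu, C) = t \text{ for all } t \in [0, 1/k]\}, $$
an ascending sequence of Borel sets with $\nor(C) = \bigcup_k \nor_k(C)$. It is thus enough to produce the frame on each $\nor_k(C)$ and then verify consistency on overlaps.

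\emph{Transfer to a positive-reach companion set.} Fix $k$ and consider $A_k := \{y \in \mathbf{R}^{n+1} : \dist(y,C) \geq 1/k\}$. A direct argument using the $1$-Lipschitz continuity of $\dist(\cdot, C)$ shows that for any $(x,u) \in \nor_k(C)$ the point $y := x + (1/k)u$ lies in $\partial A_k$, that the segment $y-tu$, $t \in (0, 1/k)$, realises the distance from $y-tu$ to $A_k$, and that $A_k$ therefore has local reach at least $1/k$ at $y$. Consequently the Lipschitz map
$$ \Phi_k(x,u) := (x + (1/k)u,\, -u) $$
is a Borel injection of $\nor_k(C)$ into $\nor(A_k)$, landing on a portion of that normal bundle where the structural results for sets of positive reach \cite[Lemma 4.23, 4.24]{RatajZaehlebook} apply. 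These results supply, for $\mathcal{H}^n$-a.e.\ image point $(y,-u)$, an orthonormal basis $\tilde\tau_1,\ldots,\tilde\tau_n$ of $u^\perp$ and an ordered sequence of real principal curvatures $\tilde\kappa_1^{(k)} \leq \cdots \leq \tilde\kappa_n^{(k)}$ such that the standard vectors $\tilde\zeta_i$ constitute an orthonormal basis of $\Tan^n(\mathcal{H}^n \restrict \Phi_k(Q), (y,-u))$ for every $Q \subseteq \nor_k(C)$ of finite $\mathcal{H}^n$-measure.

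\emph{Pullback of the frame and Legendrian property.} The inverse differential $d\Phi_k^{-1}(v,w) = (v + (1/k)w, -w)$ sends each $\tilde\zeta_i$ to a vector collinear with the target vector $\zeta_i$ of the lemma, provided one defines
$$ \tau_i(x,u) := \tilde\tau_i, \qquad \kappa_i(x,u) := -\frac{\tilde\kappa_i^{(k)}}{1 + (1/k)\tilde\kappa_i^{(k)}}, $$
with the convention $\kappa_i(x,u) = +\infty$ precisely when the denominator vanishes. This is the only choice compatible with the relationship between the shape operators of $\partial C$ and $\partial A_k$ obtained by advancing a distance $1/k$ along the normal. Since every $(x,u) \in \nor(C)$ lies in some $\nor_k(C)$, the resulting $\kappa_i(x,u)$ is always strictly greater than $-\infty$. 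Measurability of $\kappa_i$ and $\tau_i$ on each $\nor_k(C)$ is inherited through $\Phi_k$ from the positive-reach case, almost-unique determination follows from the standard eigenvalue argument together with the increasing ordering, and the consistency of the resulting maps on the overlaps $\nor_k(C) \cap \nor_\ell(C)$ ($\ell \geq k$) is automatic from this uniqueness and the involutive nature of the transfer formula. Finally, $\nor(C)$ is Legendrian rectifiable because
$$ \langle \zeta_i(x,u),\, \alpha(x,u)\rangle = (1 + \kappa_i(x,u)^2)^{-1/2}\, \tau_i(x,u) \bullet u = 0 $$
for every $i$, so $\alpha$ annihilates the approximate tangent plane of $\nor(C)$ at $\mathcal{H}^n$-a.e.\ point.

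\emph{Main obstacle.} The delicate point of the plan is the behaviour at the limit $1 + (1/k)\tilde\kappa_i^{(k)} \to 0$, equivalently $\kappa_i \to +\infty$. At such points $\zeta_i$ degenerates to the purely vertical vector $(0, \tau_i)$ lying entirely in the sphere fibre, while $d\Phi_k$ collapses one tangent direction of $\nor(A_k)$ into that fibre. Checking that the pushed-forward frame nevertheless spans an $n$-dimensional plane $\mathcal{H}^n$-almost everywhere, that the conventions $1/\infty = 0$ and $\infty/\infty = 1$ in the statement correspond exactly to what the limit produces, and that the degenerate vector still has unit length, is the only step that requires honest work rather than a mechanical appeal to \cite{RatajZaehlebook}.
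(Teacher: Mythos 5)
The central step of your plan is the assertion that, for $(x,u) \in \nor_k(C)$ and $y := x + (1/k)u$, the offset set $A_k = \{\dist(\cdot,C) \geq 1/k\}$ has local reach at least $1/k$ at $y$. This does not follow from the $1$-Lipschitz property of $\dist(\cdot,C)$, and in fact it is false. The segment condition you verify shows only that $-u$ is a proximal unit normal of $A_k$ at $y$ (i.e.\ $(y,-u) \in \nor(A_k)$); that is a statement about one normal direction, whereas $\reach(A_k,y) \geq 1/k$ demands unique foot points for \emph{all} $z$ with $|z-y| < 1/k$. Concretely, take $n = 1$, $C = \{(-1,0),(1,0)\} \subseteq \mathbf{R}^2$, $x = (1,0)$, $u = (-1/2,\sqrt{3}/2)$. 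Then $\dist(x+tu,C) = t$ for all $t \in [0,2]$, so $(x,u)$ lies in the piece with segment length $1/k = 2$, and $y = (0,\sqrt{3})$. However, $z = (0,0)$ satisfies $|z-y| = \sqrt{3} < 2$ and admits two nearest points in $A_2$, namely $(0,\pm\sqrt{3})$; hence $\reach(A_2,y) = \sqrt{3} < 2$. Worse still, by stringing together widely separated pairs $\{(2m,0),(2m+2\epsilon_m,0)\}$ with $\epsilon_m \uparrow 1/k$, one drives the local reach of $A_k$ at the corresponding corners to $0$, so $A_k$ need not even be a set of positive reach. Since \cite[Lemmas 4.23, 4.24]{RatajZaehlebook} presuppose positive reach, your appeal to them for $\nor(A_k)$ is unjustified, and the whole transfer collapses at this point; the secondary issue you flag (degenerate directions as $\kappa_i \to +\infty$) is genuine but subordinate to this one.

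The paper itself simply cites \cite[Section 3, Remark 3.7]{HugSantilli} for existence and measurability, and \cite[Lemma 4.24]{RatajZaehlebook} for uniqueness. The route taken there is not through the offset set $A_k$ but through the level sets $S_t(C)$ of the distance function intersected with the sublevel sets of the reach function $\rho_C$: at an interior point $y' = x + tu$ with $0 < t < 1/k$, one has a \emph{two-sided} ball condition (the ball $B(x,t)$ on one side, and the ball $B(x + (1/k)u, 1/k - t)$ on the other, both interior-disjoint from $S_t(C)$), which gives local $C^{1,1}$ regularity of $S_t(C)$ on the region $\{\rho_C \geq (1/k)/t\}$, and \cite[Theorem 3.16]{HugSantilli} furnishes the bi-Lipschitz map $\psi_C$ identifying that region with a piece of $\nor(C)$. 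If you want a self-contained offset-set argument, you would need to replace your false reach claim by this two-sided ball estimate (or restrict to a smaller ball around $y$ and prove a quantified local reach bound); as written, the step "therefore has local reach at least $1/k$" is a gap, not a deduction.
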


\begin{proof}
The existence part of the statement and the measurability  property are discussed in \cite[Section 3]{HugSantilli} (see in particular \cite[Remark 3.7]{HugSantilli}). Uniqueness can be proved as in \cite[Lemma 4.24]{RatajZaehlebook}.
\end{proof}

\begin{definition}\label{def principal curvatures}
If $ C \subseteq \mathbf{R}^{n+1} $ we denote by $ \kappa_{C,1}, \ldots , \kappa_{C,n} $ the $ \mathcal{H}^n \restrict \nor(C) $ measurable maps given by Lemma \ref{lem: Santilli20}.
\end{definition}

The following Heintze-Karcher type inequality for arbitrary closed sets is proved in \cite{HugSantilli}.

\begin{theorem}[\protect{cf.\ \cite[Theorem 3.20]{HugSantilli}}]\label{HK general}
	Let $ C \subseteq \mathbf{R}^{n+1} $ be a bounded closed set with non empty interior. Let $ K = \mathbf{R}^{n+1}  \setminus {\rm interior}(C) $ and assume that 
	$$ \sum_{i=1}^n \kappa_{K,i}(x,u) \leq 0 \quad \textrm{for $ \mathcal{H}^n $ a.e.\ $(x,u) \in \nor(K)$.} $$
	Then 
	$$ (n+1)\mathcal{L}^{n+1}({\rm interior}(C)) \leq  \int_{\nor(K)}J_n^{\nor(K)}\pi_0(x,u)\,\frac{n}{|\sum_{i=1}^n \kappa_{K,i}(x,u)|}\, d\mathcal{H}^n(x,u). $$
	Moreover, if the equality holds and there exists $ q < \infty $ such that $ |\sum_{i=1}^n \kappa_{K,i}(x,u)| \leq q $ for $ \mathcal{H}^n $ a.e.\ $ (x,u) \in \nor(K) $, then $ \textup{interior}(C)$ is a finite union of disjointed (possibly mutually tangent) open balls.
\end{theorem}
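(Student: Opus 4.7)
The plan is to extend the classical Heintze-Karcher argument to arbitrary closed sets, using the proximal normal bundle $\nor(K)$ in place of a smooth boundary. Introduce the parallel map $F : \nor(K) \times [0, \infty) \to \mathbf{R}^{n+1}$ defined by $F(x, u, t) = x + tu$, together with the reach function $r(x, u) := \sup\{s > 0 : \dist(x+su, K) = s\}$, which is strictly positive on $\nor(K)$ by definition. First I would verify two geometric facts. (a) The distance-realizing cylinder $A := \{(x, u, t) : (x,u) \in \nor(K),\ 0 < t < r(x, u)\}$ is mapped by $F$ into $\mathbf{R}^{n+1} \setminus K = {\rm interior}(C)$, and, conversely, every $z \in {\rm interior}(C)$ arises from a nearest point of $K$ and thus yields a triple in $A$ with $F(x,u,t) = z$; hence $F(A) \supseteq {\rm interior}(C)$. (b) For every $(x, u, t) \in A$ and every $i$, the principal curvatures from Lemma \ref{lem: Santilli20} satisfy $1 + t\kappa_{K,i}(x, u) \geq 0$, which is the standard focal-point obstruction to distance minimization: if $1 + t\kappa_i < 0$, a direct comparison argument shows that $x$ would fail to be the nearest point in $K$ for every $z$ on the normal ray slightly past $t$. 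From (b), summing $\kappa_{K,i}(x,u) \geq -1/t$ over $i$ gives $t \leq n/|H(x, u)|$, where $H(x,u) := \sum_i \kappa_{K,i}(x,u) \leq 0$.

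Next, using the orthonormal frame $\zeta_1, \ldots, \zeta_n$ for $\Tan^n(\mathcal{H}^n \restrict \nor(K), (x, u))$ given by Lemma \ref{lem: Santilli20}, a direct computation shows that $\Der F$ sends $\zeta_i$ to $(1+t\kappa_{K,i})\tau_i/\sqrt{1 + \kappa_{K,i}^2}$ and $\partial_t$ to $u$; these $n+1$ vectors are pairwise orthogonal in $\mathbf{R}^{n+1}$, so the $(n+1)$-dimensional Jacobian of $F$ along $A$ equals
$$ J_{n+1} F(x, u, t) \;=\; J_n^{\nor(K)}\pi_0(x, u) \cdot \prod_{i=1}^n\bigl(1 + t\kappa_{K,i}(x, u)\bigr), $$
with the natural interpretation $(1+t\kappa)/\sqrt{1+\kappa^2} \to t$ when $\kappa = \infty$. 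Invoking the area formula of \cite[3.2.20, 3.2.22]{Fed69} applied to $F$ on $A$ (together with Fubini in $t$) and using $F(A) \supseteq {\rm interior}(C)$, one obtains
$$ \mathcal{L}^{n+1}({\rm interior}(C)) \;\leq\; \int_{\nor(K)} J_n^{\nor(K)}\pi_0(x,u) \int_0^{r(x,u)} \prod_{i=1}^n\bigl(1+t\kappa_{K,i}(x,u)\bigr)\,dt\,d\mathcal{H}^n(x,u). $$

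Finally, since each factor $1 + t\kappa_{K,i}$ is nonnegative on $A$, the AM-GM inequality yields
$$ \prod_{i=1}^n\bigl(1 + t\kappa_{K,i}(x,u)\bigr) \;\leq\; \left(1 - \frac{t|H(x,u)|}{n}\right)^n, $$
which, combined with the bound $r(x,u) \leq n/|H(x,u)|$ from step (b) and the elementary identity $\int_0^{n/|H|}(1-t|H|/n)^n\,dt = n/((n+1)|H|)$, gives exactly the stated Heintze-Karcher inequality after multiplication by $n+1$. For the rigidity part, equality forces equality in AM-GM at $\mathcal{H}^n$-a.e.\ point of $\nor(K)$, so $\kappa_{K,1}(x,u) = \cdots = \kappa_{K,n}(x,u) = H(x,u)/n$ and $r(x,u) = n/|H(x,u)|$ $\mathcal{H}^n$-a.e.; the proximal normal rays therefore all terminate at a common focal point at distance $n/|H|$, and the usual osculating-ball argument identifies every connected component of ${\rm interior}(C)$ with an open ball of radius $n/|H|$. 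The uniform bound $|H| \leq q$ together with boundedness of $C$ then restricts this decomposition to \emph{finitely} many (possibly mutually tangent but pairwise disjoint) balls. The main technical obstacle is to make (b) and the area formula rigorous in the low-regularity setting of arbitrary closed sets, and to correctly handle the degeneracy $\kappa_i = +\infty$ in the Jacobian computation; both issues are absorbed by the structural results recorded in Lemma \ref{lem: Santilli20} and its supporting theory from \cite{HugSantilli}.
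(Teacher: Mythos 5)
The paper does not supply a proof of Theorem \ref{HK general}; it imports the statement verbatim from \cite[Theorem 3.20]{HugSantilli} and uses it as a black box in the proof of Theorem \ref{Heintze-Karcher}. Your outline correctly reproduces the Heintze--Karcher strategy that underlies the cited result: parameterize ${\rm interior}(C)$ by the proximal normal exponential map $F(x,u,t)=x+tu$ over the region below the graph of the reach function $r$, compute the $(n+1)$-Jacobian of $F$ in the frame $\zeta_1,\ldots,\zeta_n$ supplied by Lemma~\ref{lem: Santilli20}, combine the focal obstruction $1+t\kappa_{K,i}\geq 0$ with the area formula and AM--GM, and integrate in $t$ against the cutoff $n/|H|$. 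Your arithmetic is correct, and the $\kappa_i=\infty$ degeneracy you flag is in fact a non-issue here: since each $\kappa_{K,i}>-\infty$, the hypothesis $\sum_{i=1}^n\kappa_{K,i}\leq 0$ forces every $\kappa_{K,i}$ to be finite $\mathcal{H}^n$-a.e.\ on $\nor(K)$. The genuine technical content of \cite{HugSantilli} sits exactly where you defer it: making the focal bound, the area formula on the countably rectifiable parallel set, and the rigidity argument precise for an arbitrary closed $K$. On the rigidity clause in particular, the assertion that ``the proximal normal rays therefore all terminate at a common focal point'' is too quick; equality only shows each ray reaches a focal point at the common distance $n/|H|$, and a priori these focal points trace out a whole skeleton, not a single point. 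Showing that equality together with $|H|\leq q$ forces that skeleton to collapse to finitely many isolated centers is where most of the effort in \cite{HugSantilli}'s rigidity argument actually goes.
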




\subsection{$W^{2,n}$-functions}

Suppose $ U \subseteq \mathbf{R}^n $ is open. We denote by $ W^{k,p}(U) $ (resp.\ $ W^{k,p}_\loc(U) $) the usual Sobolev space of $ k $-times weakly differentiable functions, whose distributional derivatives up to order $ k $ belong to the Lebesgue space $ L^p(U) $ (resp.\ $ L^p_\loc(U) $); cf.\ \cite[Chapter 7]{GilbargTrudinger}.  We denote by $ \bm{\nabla}f $ and $ \der^i f $  the distributional gradient and the distributional $ i $-th differential of a Sobolev function $ f $. 

We state now two results on the fine properties of $ W^{2,n} $-functions, that play an important role in this paper. We start with some definitions.

\begin{definition}
	Given $ U \subseteq \mathbf{R}^n $ open set and $ f : U \rightarrow \mathbf{R} $ continuous function, we define $ \Gamma^+(f, U) $ as the set of $ x \in U $ for which there exists $ p \in \mathbf{R}^n $ such that
	$$ f(y) \leq f(x) + p \bullet (y-x)  \quad \forall  y \in U. $$
\end{definition}

\begin{definition}\label{def twice diff points}
	Suppose $ f : U \rightarrow \mathbf{R} $ is a continuous function. We define $ \mathcal{S}^\ast(f)$, respectively $ \mathcal{S}_\ast(f)$, as the set of $ x \in U $ where there exists a polynomial function $ P $ of degree at most $ 2 $ such that $ P(x) = f(x) $ and 
	$$ \limsup_{y \to x} \frac{f(y) - P(y)}{|y-x|^2} < \infty, \qquad \textrm{respectively} \quad \liminf_{y \to x} \frac{f(y) - P(y)}{|y-x|^2} > - \infty. $$
	Moreover, we set $ \mathcal{S}(f) $ to be the set of $ x \in U $ where there exists a polynomial function $ P $ of degree at most $ 2 $ such that $ P(x) = f(x) $ and 
	$$ \lim_{y \to x} \frac{f(y) - P(y)}{|y-x|^2} =0. $$
\end{definition}

\begin{lemma}\label{W2n twice diff}
	If $ f \in C(U) \cap W^{2,n}(U) $, then $ \mathcal{L}^n\big(U \setminus \mathcal{S}(f)\big) =0 $. 
\end{lemma}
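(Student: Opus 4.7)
The plan is to prove pointwise second-order Taylor expansion almost everywhere by the Aleksandrov--Bakelman--Pucci (ABP) maximum principle, a classical Calder\'on--Aleksandrov argument.

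First I would identify a full-measure set of good points. Since $f \in W^{2,n}(U)$, Sobolev embedding gives $\bm{\nabla} f \in W^{1,n}_{\loc}(U) \subseteq L^q_{\loc}(U)$ for every $q < \infty$, and the Calder\'on differentiability theorem (applied to $f \in W^{1,q}_{\loc}$ with $q > n$) yields pointwise differentiability of $f$ at $\mathcal{L}^n$-a.e.\ point of $U$. Combining with Lebesgue's differentiation theorem, there exists a full-measure subset $U^\ast \subseteq U$ whose points $x_0$ are simultaneously points of pointwise differentiability of $f$ and Lebesgue points of both $\bm{\nabla} f$ and of the distributional Hessian $\der^2 f$. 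For each such $x_0$, set $A := \der^2 f(x_0)$, define the candidate Taylor polynomial
\[
P(y) = f(x_0) + \bm{\nabla} f(x_0) \bullet (y - x_0) + \tfrac{1}{2}\, A(y - x_0) \bullet (y - x_0),
\]
and $g := f - P$; the claim $x_0 \in \mathcal{S}(f)$ then reduces to $\sup_{B_r(x_0)} |g| = o(r^2)$ as $r \to 0^+$.

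The main analytic tool is the ABP estimate: for any $v \in C(\overline{B_r}) \cap W^{2,n}(B_r)$,
\[
\sup_{B_r} v \;\leq\; \sup_{\partial B_r} v \,+\, C_n\, r\, \|\der^2 v\|_{L^n(B_r \cap \Gamma^+(v, B_r))},
\]
where AM--GM has been used to dominate $|\det \der^2 v|^{1/n}$ by the Frobenius norm. Applied to $\pm g$ on $B_r(x_0)$ this gives
\[
\sup_{B_r(x_0)} |g| \;\leq\; \sup_{\partial B_r(x_0)} |g| + C_n\, r\, \|\der^2 f - A\|_{L^n(B_r(x_0))}.
\]
Since $x_0$ is a Lebesgue point of $\der^2 f$, one has $\|\der^2 f - A\|_{L^n(B_r(x_0))} = o(r)$, and therefore the second summand is $o(r^2)$ as required.

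The main obstacle is the boundary contribution: the pointwise differentiability of $f$ at $x_0$ only yields $\sup_{\partial B_r(x_0)} |g| = o(r)$, not the required $o(r^2)$. The standard remedy is a Campanato-style bootstrap on dyadic balls $B_{r_k}(x_0)$ with $r_k = 2^{-k} r_0$: iterating the ABP estimate across successive scales, the decay $\|\der^2 f - A\|_{L^n(B_{r_k})} = o(r_k)$ is strong enough to upgrade the boundary term itself to $o(r_k^2)$, closing the argument and showing $x_0 \in \mathcal{S}(f)$. This bootstrap is the technical core of the Calder\'on--Aleksandrov pointwise twice-differentiability theorem and is worked out, e.g., in Caffarelli--Cabr\'e's monograph on fully nonlinear elliptic equations and in Evans--Gariepy for the Aleksandrov formulation.
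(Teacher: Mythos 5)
Your proposal starts along the right lines: the use of the Aleksandrov--Bakelman--Pucci estimate is indeed at the heart of the arguments for $W^{2,n}$-twice-differentiability in \cite{Caffarelli96} and \cite{Trudinger89} (the two references the paper itself leans on, since the paper gives no self-contained proof of this lemma). Your reduction to the estimate $\sup_{B_r(x_0)}|g| = o(r^2)$, and your observation that the boundary contribution is the obstruction, are both correct.

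However, the ``Campanato-style bootstrap'' you invoke to remove the boundary term is not adequately described and, as stated, does not close. The ABP inequality you use reads
$\sup_{B_r}|g| \leq \sup_{\partial B_r}|g| + C_n\, r\,\|\der^2 f - A\|_{L^n(B_r)}$,
and for a continuous $g$ one always has $\sup_{\partial B_r}|g| \leq \sup_{\overline{B_r}}|g|$, so this inequality conveys no information about the boundary sup. Iterating on dyadic radii, using $\partial B_{r_{k+1}} \subset B_{r_k}$, gives only $\sup_{B_{r_{k+1}}}|g| \leq \sup_{B_{r_k}}|g| + o(r_k^2)$, which telescopes to $\sup_{B_{r_N}}|g| \leq \sup_{B_{r_0}}|g| + o(r_0^2)$ --- a vacuous bound, since the left side must decay like $o(r_N^2)$ while the right side remains of size $o(r_0)$. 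ABP is a one-sided upper bound, not a contraction, and there is no mechanism in the iteration you describe that improves the boundary term from scale to scale. A genuine bootstrap would require subtracting a \emph{different} affine or quadratic competitor at each scale and controlling the accumulated discrepancy between successive competitors, and this is precisely what the actual arguments in the literature do, but through a contact-set (touching-paraboloid) measure estimate rather than a boundary-value iteration. (The references you name are also somewhat off target: Evans--Gariepy treat the convex case of Aleksandrov's theorem, where the Hessian is a measure, and Caffarelli--Cabr\'e establish pointwise $C^{1,\alpha}$ and twice-differentiability for viscosity solutions of uniformly elliptic equations via the measure-estimate machinery, not a simple Campanato scheme.) A further route, closer in spirit to the tools already present in this paper, would be to combine the one-sided ABP estimate of Lemma \ref{W2n basic estimate} with an argument showing that $\mathcal{L}^n$-a.e.\ point of $U$ admits touching paraboloids from both above and below; but establishing that touching is a.e.\ is itself the nontrivial core of the result and cannot be replaced by the Lebesgue-point selection in your first step.
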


\begin{proof}
	This is a special case of \cite[Proposition 2.2]{Caffarelli96}, which is attributed to Calderon and Zygmund.  This result can also be proved by a simple adaptation of the method of \cite{Trudinger89} (cf.\ \cite{Valentini}). An interesting generalization is given in \cite{FuAlexandrov}.
\end{proof}

The following oscillation estimate plays a key role in Lemma \ref{W2n functions Lusin} and was proved by Ulrich Menne, see \cite[Appendix B]{menne2024sharplowerboundmean}.

\begin{lemma}\label{W2n basic estimate}
	Let $ a \in \mathbf{R}^n $, $ r > 0 $, $ f \in C(B_r(a)) \cap W^{2,n}(B_r(a)) $ and $ g \in C^2(B_r(a)) $ such that 
	$$ g(a) = f(a), \qquad f(x) \geq g(x) \quad \textrm{for every $ x \in B_r(a) $.} $$
	
	Then there exists a constant $ c(n) $, depending only on $ n $, such that 
	\begin{equation}\label{W2n basic estimate gradient}
		\| \der f - \Der g(a) \|_{L^n(B_r(a))} \leq c(n) r \Big( \| \der^2 f \|_{L^n(B_r(a))} + r \| \Der^2 g \|_{L^\infty(B_r(a))}\Big), 
	\end{equation}
	\begin{equation}\label{W2n basic estimate function}
		\| f - L_a \|_{L^\infty(B_r(a))} \leq c(n) r \Big( \| \der^2 f \|_{L^n(B_r(a))} + r \| \Der^2 g \|_{L^\infty(B_r(a))}\Big),
	\end{equation}
	where $ L_a(x) = f(a) + \Der g(a)(x-a) $ for $ x \in \mathbf{R}^n $. In particular, $ f $ is pointwise differentiable at $  a$ with $ \Der f(a) = \Der g(a) $.
\end{lemma}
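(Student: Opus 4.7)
The plan is to reduce the problem to a non-negative $W^{2,n}$-function vanishing at $a$, and then to apply a Morrey--Campanato type estimate (at the critical Sobolev exponent $p=n$) together with Poincaré inequalities to extract the required control from the one-sided barrier hypothesis.

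I would first set $L_a(x) = f(a) + \Der g(a)(x-a)$, let $\tilde h = f - L_a$, and write $M = \|\Der^2 g\|_{L^\infty(B_r(a))}$. Since $g \in C^2(B_r(a))$ and $B_r(a)$ is convex, Taylor's theorem gives $g(x) - L_a(x) \geq -\tfrac{M}{2}|x-a|^2$, and hence the hypothesis $f \geq g$ combined with $f(a) = g(a)$ yields
$$\tilde h(x) \geq -\tfrac{M}{2}|x-a|^2 \qquad \textrm{for every $x \in B_r(a)$.}$$
Accordingly, I define $F \in W^{2,n}(B_r(a)) \cap C(B_r(a))$ by $F(x) = \tilde h(x) + \tfrac{M}{2}|x-a|^2$, so that $F \geq 0$, $F(a) = 0$, $\der F = \der f - \Der g(a) + M(x-a)$ and $\der^2 F = \der^2 f + M\,\mathrm{Id}$. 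The positivity of $F$ together with $F(a)=0$ is the clean geometric form that replaces the original barrier condition.

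The heart of the argument is the Morrey--Campanato estimate
$$\|F - P\|_{L^\infty(B_r(a))} \leq c(n)\, r\, \|\der^2 F\|_{L^n(B_r(a))},$$
where $P(x) = F_{B_r(a)} + (\der F)_{B_r(a)} \cdot (x-a)$ uses the integral averages on $B_r(a)$, so that both $F-P$ and $\der(F-P)$ have vanishing mean on $B_r(a)$. I would prove it by rescaling to the unit ball, invoking the Sobolev embedding $W^{2,n}(B_1) \hookrightarrow L^\infty(B_1)$ applied to $F-P$, and eliminating the lower order terms via two applications of the Poincaré inequality. Setting $A = c(n)\, r\, \|\der^2 F\|_{L^n(B_r(a))}$ and plugging in $x=a$ gives $|F_{B_r(a)}| \leq A$ because $F(a)=0$. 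The hypothesis $F \geq 0$ then forces $P(x) \geq -A$ throughout $\overline{B_r(a)}$, and choosing $x = a - r(\der F)_{B_r(a)}/|(\der F)_{B_r(a)}|$ (assuming the gradient average is nonzero) yields $r\,|(\der F)_{B_r(a)}| \leq F_{B_r(a)} + A \leq 2A$. Triangle inequality and one additional Poincaré applied to $\der F$ then produce
$$\|F\|_{L^\infty(B_r(a))} + \|\der F\|_{L^n(B_r(a))} \leq c'(n)\, r\, \|\der^2 F\|_{L^n(B_r(a))}.$$

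Unpacking through $\tilde h = F - \tfrac{M}{2}|x-a|^2$, and using the elementary bound $\|\der^2 F\|_{L^n(B_r(a))} \leq \|\der^2 f\|_{L^n(B_r(a))} + c(n)\, rM$, one immediately obtains both \eqref{W2n basic estimate gradient} and \eqref{W2n basic estimate function}. For the pointwise differentiability at $a$, the same inequality applied on $B_\rho(a)$ for $0 < \rho < r$ gives
$$\rho^{-1}\|f - L_a\|_{L^\infty(B_\rho(a))} \leq c(n)\bigl(\|\der^2 f\|_{L^n(B_\rho(a))} + \rho\, \|\Der^2 g\|_{L^\infty(B_r(a))}\bigr),$$
and the right-hand side tends to $0$ as $\rho \to 0^+$ by absolute continuity of the Lebesgue integral applied to $|\der^2 f|^n \in L^1$. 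The main technical obstacle is the Morrey--Campanato estimate at the borderline exponent: at $p=n$, $\der F \in W^{1,n}$ only embeds into $\mathrm{BMO}$ rather than $L^\infty$, so one must genuinely exploit the extra derivative in $W^{2,n} \hookrightarrow L^\infty$ (valid because $2n > n$) rather than attempt to bootstrap through $W^{1,n}$.
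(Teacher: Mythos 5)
Your argument is correct. The paper does not prove this lemma itself but cites Lemma~B.3 of \cite{menne2024sharplowerboundmean}, so a direct comparison is not possible; your proof is a clean, self-contained alternative. The normalization $F = f - L_a + \tfrac{M}{2}|\cdot-a|^2 \geq 0$ with $F(a)=0$, the scale-invariant bound $\|F-P\|_{L^\infty(B_r(a))} \leq c(n)\,r\,\|\der^2 F\|_{L^n(B_r(a))}$ obtained from $W^{2,n}(B_1)\hookrightarrow L^\infty(B_1)$ together with two Poincar\'e inequalities and rescaling, and the use of the sign condition $F\geq 0$ at the extremal boundary point to control the affine part $P$ are exactly the right mechanism for the critical exponent $p=n$. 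Two points to tighten in a final write-up: the extremal point $a - r(\der F)_{B_r(a)}/|(\der F)_{B_r(a)}|$ lies on $\partial B_r(a)$ rather than in the open ball, so one should observe that the a.e.\ bound $|F-P|\leq A$ holds pointwise on $B_r(a)$ because $F$ is continuous by hypothesis and $P$ is affine, whence $P\geq -A$ extends to $\overline{B_r(a)}$ by continuity of $P$ alone; and one should verify that $F_{B_r(a)} \le A$ (via $F(a)=0$ and $|F(a)-P(a)|\le A$) before concluding $r|(\der F)_{B_r(a)}|\le 2A$. The differentiability step, applying the estimate on $B_\rho(a)$ (noting $L_a$ is independent of $\rho$) and invoking absolute continuity of $\int\|\der^2 f\|^n$, is also correct.
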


\begin{proof}
	See \cite[Lemma B.3]{menne2024sharplowerboundmean}. In particular,  \eqref{W2n basic estimate gradient} and \eqref{W2n basic estimate function} follows from the estimate of \cite[Lemma B.3]{menne2024sharplowerboundmean}, while the pointwise differentiability of $ f $ at $ a $ directly follows from \eqref{W2n basic estimate function}.
\end{proof}

\begin{remark}\label{rmk diff points of W2n}
	It follows from  Lemma \ref{W2n basic estimate} that if $ a \in \mathcal{S}^\ast(f) \cup \mathcal{S}_\ast(f) $, then $ a $ is  a Lebesgue point of $ \der f $, the map $ f $ is pointwise differentiable at $ a $, and 
	$$  \Der f(a) = \der f(a). $$
\end{remark}

We conclude with a Lusin-type result for $ W^{1,1}$-functions, which can be easily deduced from well known results of Calderon-Zygmund and Federer. We provide a detailed proof since we were unable to find this precise statement in classical references.

\begin{lemma}\label{lem weakly vs approx diff Sobolev maps}
	Suppose $ U $ is an open subset of $ \mathbf{R}^n $ and $ f \in W^{1,1}_\loc(U, \mathbf{R}^k)$. Then $ f $ is $ \mathcal{L}^n \restrict U $-approximately differentiable at $ \mathcal{L}^n $ a.e.\ $ x \in U $ with $ \ap \Der f(x) = \der f(x) $. In particular, there exists countably many $ \mathcal{L}^n $-measurable subsets $ A_i $ of $ U $ such that $ \mathcal{L}^n\big(U \setminus \bigcup_{i=1}^\infty A_i\big) =0 $ and $ \Lip (f|A_i) < \infty $ for every $ i \geq 1 $.
\end{lemma}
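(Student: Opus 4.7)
The strategy is the classical Calderón--Zygmund--Federer chain: first establish approximate differentiability a.e.\ via the fine properties of $W^{1,1}$-functions, then deduce the Lipschitz decomposition via Federer's structure theorem.

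\emph{Step 1 (Approximate differentiability via a pointwise Taylor estimate).} At $\mathcal{L}^n$-a.e.\ $x \in U$ I will show that
$$ \lim_{r \to 0^+}\; \frac{1}{r}\fint_{B_r(x)}\bigl| f(y) - f(x) - \der f(x)(y-x)\bigr|\, dy \;=\; 0. $$
This is a well-known consequence of the Poincar\'e inequality combined with the weak-$(1,1)$ bound for the Hardy--Littlewood maximal function of $|\der f|$, valid at every Lebesgue point of $f$ and $\der f$ where the local maximal function of $|\der f|$ is finite. By Chebyshev, the above display implies that for every $\varepsilon>0$ the set $\{y\in U : |f(y)-f(x)-\der f(x)(y-x)|\geq \varepsilon |y-x|\}$ has density $0$ at $x$. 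Defining $g(y) := f(x) + \der f(x)(y-x)$, which is pointwise differentiable at $x$ with $\Der g(x)=\der f(x)$, this is precisely the statement that $f$ is $\mathcal{L}^n\restrict U$-approximately differentiable at $x$ with $\ap \Der f(x) = \der f(x)$, as in the paper's definition.

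\emph{Step 2 (Lipschitz decomposition).} Once approximate differentiability a.e.\ is in hand, I apply Federer's Whitney-type structure theorem for approximately differentiable maps (\cite[Theorem~3.1.16]{Fed69}): there exist countably many $\mathcal{L}^n$-measurable sets $B_i \subseteq U$ with $\mathcal{L}^n(U\setminus\bigcup_i B_i)=0$ and maps $h_i \in C^1(\mathbf{R}^n,\mathbf{R}^k)$ such that $f|B_i = h_i|B_i$ for each $i$. Fixing an exhaustion of $U$ by precompact open sets $V_j\subset\subset U$ and setting $A_{i,j} := B_i \cap \overline{V_j}$, each $A_{i,j}$ is bounded, $f|A_{i,j} = h_i|A_{i,j}$, and hence $\Lip(f|A_{i,j}) \leq \sup_{\overline{V_j}}|\Der h_i| < \infty$. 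After relabelling $\{A_{i,j}\}_{i,j}$ as $\{A_i\}_{i\geq 1}$, this is the required decomposition.

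\emph{Main obstacle.} There is no genuine obstacle: the lemma is a standard amalgam of classical results. The only point requiring care is the identification $\ap\Der f(x) = \der f(x)$ (as opposed to an arbitrary linear map), which is handled by choosing the candidate affine approximant in Step~1 to be the first-order Taylor polynomial of $f$ centred on the Lebesgue representative of $\der f$. The exhaustion argument in Step~2 is the mechanical device that upgrades ``coincides with a $C^1$-map'' to ``Lipschitz on a bounded set''.
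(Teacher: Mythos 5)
Your argument follows the same path as the paper: the Calder\'on--Zygmund $L^1$-differentiability theorem for $W^{1,1}_\loc$-functions (the paper cites this result directly from Calder\'on--Zygmund / Ziemer; your Poincar\'e-plus-maximal-function sketch is one standard way to prove it), a Chebyshev-type density computation to upgrade it to approximate differentiability with $\ap\Der f=\der f$, and Federer's structure theory for the Lipschitz cover. Two small remarks. First, ``by Chebyshev'' is a little too quick for the density-zero claim: from $\fint_{B_r(x)}|f-L_x|\,d\mathcal{L}^n = o(r)$ a direct Chebyshev application controls the measure of $B_r(x)\cap\{|f-L_x|\geq\varepsilon r\}$, which is a \emph{subset} of $B_r(x)\cap\{y:|f(y)-L_x(y)|\geq\varepsilon|y-x|\}$, so by itself it does not bound the set you actually need; a dyadic-annulus argument is required (on $B_r\setminus B_{r/2}$ one has $|y-x|\geq r/2$, apply Chebyshev annulus by annulus and sum), and the paper outsources exactly this step to a lemma of Santilli (which is also why the paper's density set carries a factor $2\varepsilon$). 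Second, for the Lipschitz decomposition the paper invokes Federer 3.1.8, which produces the countable Lipschitz cover of the domain of an a.e.\ approximately differentiable map in a single step, whereas you route through the $C^1$-Lusin theorem 3.1.16 and then intersect with a compact exhaustion to tame the Lipschitz constants; both are correct, but 3.1.8 is the sharper citation and renders the exhaustion step unnecessary.
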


\begin{proof}
	By \cite[Theorem 12]{CalderonZygmund} (or \cite[Theorem 3.4.2]{Ziemerbook}) we have that
	\begin{equation}\label{lem weakly vs approx diff Sobolev maps eq}
		\lim_{r \to 0} r^{-n-1} \int_{B(x,r)} | f(y) - f(x) - \der f(x)(y-x)|\, d\mathcal{L}^n(y) =0 
	\end{equation} 
	for $ \mathcal{L}^n $ a.e.\ $ x \in U $. Fix now $ x \in U $ such that \eqref{lem weakly vs approx diff Sobolev maps eq} holds, define the affine function $ L_x(y) = f(x)  + \der f(x)(y-x) $ for $ y \in \mathbf{R}^n $ and notice that 
	$$ \frac{\epsilon \mathcal{L}^n(B(x,r) \cap \{y : |f(y)- L_x(y)| \geq \epsilon \, r\})}{r^n} \leq r^{-n-1} \int_{B(x,r)} |f(y)- L_x(y)|\, d\mathcal{L}^ny $$
	for every $ \epsilon > 0 $.  Henceforth,  by \eqref{lem weakly vs approx diff Sobolev maps eq} and \cite[Lemma 2.7]{Santilli19},  
	$$ \Theta^n\big(\mathcal{L}^n \restrict \{y : | f(y)- L_x(y)| \geq 2 \epsilon | y-x|\}, x\big) =0 \quad \textrm{for every $ \epsilon > 0 $}, $$
	whence we conclude that $ f $ is $ \mathcal{L}^n \restrict U $-approximately differentiable at $ x $ with $ \ap \Der f(x) = \der f(x) $ applying \cite[pag.\ 253]{Fed69} with $ \phi $ and $ m $ replaced by $ \mathcal{L}^n \restrict U $ and $  n $. Now we can use \cite[3.1.8]{Fed69} to infer the existence of the countable cover $ A_i $.
\end{proof}

\section{Legendrian cycles over $W^{2,n} $-graphs}

	The main result of this section (Theorem \ref{W2n functions normal cycles}) proves that the Legendrian cycle of associated with the subgraph of a $W^{2,n}$ function is carried over its proximal unit normal bundle. This is the key result to extend the Alexandrov sphere theorem to $ W^{2,n} $-domains.

\begin{definition}
	Suppose $ \psi : \mathbf{R}^n \rightarrow \mathbf{R}^{n+1} $ is defined as $$  \psi(y) = \frac{(-y,1)}{\sqrt{1 + |y|^2}}. $$
	If $ f \in W^{2,n}(U) \cap C(U) $ we define $$ \Phi_f(x) = (x,f(x), \psi(\nabla f(x))) \in U \times \mathbf{R} \times \mathbf{S}^n \quad \textrm{for every $ x \in \Diff(f) $.} $$
\end{definition}

\begin{remark}\label{rmk map eta est}
Let $ \mathbf{S}^n_+ = \{(z, t) \in \mathbf{R}^n \times \mathbf{R} : |z|^2 + t^2 = 1, \; t > 0\} $. Notice that $ \psi $ is a diffeomorphism onto $ \mathbf{S}^n_+ $ with inverse given by 
$$ \varphi : \mathbf{S}^n_+ \rightarrow \mathbf{R}^n, \quad \varphi(z,t) = - \frac{z}{t}, $$
and $ \| \Der \psi (y) \| \leq 2 $ for $  y \in \mathbf{R}^n $. 
\end{remark}

\begin{remark}\label{rmk approximate differentiability of Phi f}
We recall that $  \bm{\nabla} f $ is $ \mathcal{L}^n \restrict U $-approximately differentiable at $ \mathcal{L}^n $ a.e.\ $ a \in U $ by Lemma \ref{lem weakly vs approx diff Sobolev maps}. Moreover, $ \nabla f(a) = \bm{\nabla} f(a) $ for $ \mathcal{L}^n $ a.e.\ $ a \in U $ by Remark \ref{rmk diff points of W2n}. If $ a \in \Diff(f) $ and $ \nabla f $ is $ \mathcal{L}^n \restrict U $-approximately differentiable at $ a $, then $ \Phi_f $ is $ \mathcal{L}^n \restrict U $-approximately differentiable at $ a $ and 
$$ \ap \Der \Phi_f(a)(\tau) = \big(\tau, \Der f(a)(\tau), \langle \ap \Der(\nabla f)(a)(\tau), \Der \psi(\nabla f(a))\rangle \big) $$
for every $ \tau \in \mathbf{R}^n $. In particular $ \ap \Der \Phi_f(a) $ is injective for $ \mathcal{L}^n $ a.e.\ $ a \in U $ and, recalling \eqref{rmk map eta est} and noting that $ \der(\nabla f) = \ap \Der (\nabla f) $ by Lemma \ref{lem weakly vs approx diff Sobolev maps}, we infer that
\begin{flalign*} 
	\int_U \| \ap \Der \Phi_f \|^n \, d\mathcal{L}^n \leq c(n) \bigg(\mathcal{L}^n(U) + \int_U \| \Der f \|^n \, d\mathcal{L}^n + \int_U \| \der^2 f \|^n \, d\mathcal{L}^n \bigg).
\end{flalign*}
It follows that $ \Phi_f $ is a $W^{1,n}$-map over $ U $.
\end{remark}

\begin{remark}\label{rmk measure theory}
	The following basic fact of measure theory is used 	in the proof of Lemma \ref{W2n functions Lusin}. \emph{Suppose $ \mu $ is a measure over a set $ X $, $ C $ is a positive constant, and $ \{E_j: j \in S\} $ is a countable family of $ \mu $-measurable sets, such that $ \mathcal{H}^0(\{j \in S : E_j \cap E_i \neq \varnothing\}) \leq C $ for every $ i \in S $. Then 
		$$ \sum_{i \in S} \mu(E_i) \leq C \mu\bigg( \bigcup_{i \in S} E_i\bigg). $$}
\end{remark}

The following result is proved using a Rado-Reichelderfer type argument; cf.\ \cite{Maly99} and references therein.
\begin{lemma}\label{W2n functions Lusin} If $f\in C(U)\cap W^{2,n}_\loc(U)$ then $ \mathcal{H}^n(\Phi_f(Z)) =0 $ for every $ Z \subseteq \mathcal{S}^\ast(f) $ such that $ \mathcal{L}^n(Z) =0 $.
\end{lemma}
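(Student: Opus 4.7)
The plan is to prove Lusin (N) for $\Phi_f$ restricted to $\mathcal{S}^*(f)$ via a Rado-Reichelderfer type covering argument. I first decompose $\mathcal{S}^*(f) = \bigcup_{k \geq 1} S_k^*$, where $a \in S_k^*$ precisely when there exists a polynomial $P_a$ of degree at most $2$ with $P_a(a) = f(a)$, $\|P_a\|_{C^2(B_{1/k}(a))} \leq k$, $B_{1/k}(a) \subseteq U$, and
\begin{equation*}
f(y) \leq P_a(y) + k|y - a|^2 \qquad \textrm{for every } y \in B_{1/k}(a).
\end{equation*}
Every $a \in \mathcal{S}^*(f)$ belongs to some $S_k^*$, so it suffices to show $\mathcal{H}^n(\Phi_f(Z \cap S_k^*)) = 0$ for each fixed $k$; write $Z_k := Z \cap S_k^*$.

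For $a \in S_k^*$ and $r < 1/(2k)$ I apply Lemma \ref{W2n basic estimate} to the pair $(-f,\, -P_a - k|\cdot - a|^2)$: since $-f \geq -P_a - k|\cdot - a|^2$ on $B_{1/k}(a)$ with equality at $a$, the lemma yields $\nabla f(a) = \nabla P_a(a)$ (so $|\nabla f(a)| \leq k$) together with the oscillation bounds
\begin{equation*}
\|\nabla f - \nabla f(a)\|_{L^n(B_r(a))} + \tfrac{1}{r}\|f - L_a\|_{L^\infty(B_r(a))} \leq C_k\bigl(r \|\der^2 f\|_{L^n(B_r(a))} + r^2\bigr),
\end{equation*}
where $L_a(y) = f(a) + \nabla f(a)(y - a)$. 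Together with the pointwise bound $|\ap \Der \Phi_f|^n \leq c(n)(1 + |\nabla f|^n + |\der^2 f|^n)$ implicit in Remark \ref{rmk approximate differentiability of Phi f} and the area formula applied on the Lipschitz pieces of $\Phi_f$ supplied by Lemma \ref{lem weakly vs approx diff Sobolev maps} (applied to $\nabla f$), these estimates should deliver the Rado-Reichelderfer type inequality
\begin{equation*}
\mathcal{H}^n(\Phi_f(B_r(a))) \leq C_k\Bigl(r^n + \int_{B_r(a)} |\der^2 f|^n\, d\mathcal{L}^n\Bigr) \qquad \textrm{for every } a \in S_k^* \textrm{ and } r < 1/(2k).
\end{equation*}

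With this local estimate at hand, fix $\varepsilon > 0$ and use absolute continuity of $A \mapsto \int_A |\der^2 f|^n$ to pick $\delta > 0$ such that $\mathcal{L}^n(A) < \delta$ forces $\int_A |\der^2 f|^n < \varepsilon$. Enclose $Z_k$ in an open set $V \subseteq U$ with $\mathcal{L}^n(V) < \delta$, and invoke Besicovitch's covering theorem to extract a countable family $\{B_{r_j}(a_j)\}_j$ with $a_j \in Z_k$, $r_j < 1/(2k)$, $B_{r_j}(a_j) \subseteq V$, $Z_k \subseteq \bigcup_j B_{r_j}(a_j)$, and overlap bounded by $c(n)$. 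Summing the Rado-Reichelderfer estimate and applying the bounded-overlap principle of Remark \ref{rmk measure theory} yields
\begin{equation*}
\mathcal{H}^n(\Phi_f(Z_k)) \leq C_k\Bigl(\sum_j \mathcal{L}^n(B_{r_j}(a_j)) + \sum_j \int_{B_{r_j}(a_j)}|\der^2 f|^n\Bigr) \leq C_k\bigl(c(n)\delta + c(n)\varepsilon\bigr),
\end{equation*}
and letting $\varepsilon,\delta \to 0$ then summing over $k$ concludes the proof.

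The main obstacle is proving the Rado-Reichelderfer inequality itself. A generic $W^{1,n}$-map need not satisfy the Lusin (N) property, so the area formula $\mathcal{H}^n(\Phi_f(A)) \leq \int_A J_n \Phi_f\, d\mathcal{L}^n$ only controls the image on the Lipschitz decomposition of $A$ coming from Lemma \ref{lem weakly vs approx diff Sobolev maps}, and a priori leaves an $\mathcal{L}^n$-null exceptional set whose image could carry positive $\mathcal{H}^n$-measure. The uniform quadratic upper bound on $S_k^*$, upgraded via Menne's Lemma \ref{W2n basic estimate} into $L^\infty$-control of $f - L_a$ and $L^n$-control of $\nabla f - \nabla f(a)$, provides precisely the additional rigidity needed to absorb this exceptional set into the integral estimate.
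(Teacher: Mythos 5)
Your plan reaches the same decomposition of $\mathcal{S}^\ast(f)$ into pieces with uniform touching polynomials, and it invokes the same two key tools (Menne's Lemma \ref{W2n basic estimate} and a Besicovitch covering), but the central step is not correct as stated, and the way you propose to close the acknowledged gap does not work.

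The claimed Rado--Reichelderfer inequality $\mathcal{H}^n(\Phi_f(B_r(a))) \leq C_k\big(r^n + \int_{B_r(a)}|\der^2 f|^n\big)$ for $a \in S_k^\ast$ is false. Indeed, Roskovec's example (cited in the introduction) produces $f \in C^1([-1,1]^n) \cap W^{2,n}((-1,1)^n)$ with $[-1,1]^n \subseteq \nabla f\big([-1,1]\times\{0\}^{n-1}\big)$, so $\Phi_f$ sends an $\mathcal{L}^n$-null set onto a set of positive $\mathcal{H}^n$-measure. For such $f$ the quantity $\mathcal{H}^n(\Phi_f(B_r(a)))$ cannot be controlled by $r^n + \int_{B_r(a)}|\der^2 f|^n$, no matter how good the touching polynomial at $a$ is, because $B_r(a)$ still contains the offending null set. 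You flag this difficulty yourself, but the remedy you offer — that Menne's oscillation bounds ``absorb this exceptional set into the integral estimate'' — does not hold up: \eqref{W2n basic estimate gradient}--\eqref{W2n basic estimate function} control $\der f - \Der g(a)$ in $L^n$ and $f - L_a$ in $L^\infty$ over $B_r(a)$, which are averaged/uniform statements about the Sobolev representative. They say nothing about the \emph{pointwise} gradient $\nabla f$ on the exceptional null set, which is exactly where the mass escapes. The Lipschitz decomposition from Lemma \ref{lem weakly vs approx diff Sobolev maps} likewise only controls the area-formula integral on the pieces and gives no handle on the leftover null set.

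What is actually needed — and what the paper does — is to bound not the measure of $\Phi_f(B_r(a))$ but the \emph{diameter} of $\Phi_f\big(B_r(c) \cap X(U,\mu)\big)$, i.e.\ the image of the ball \emph{intersected with the good set only}. This requires the two-point estimate: for $a,b \in X(U,\mu) \cap B_r(c)$, apply Menne's lemma separately at $a$ and at $b$ on overlapping balls of radius comparable to $|a-b|$, then compare on the common sub-ball to get $\|\Der f(a) - \Der f(b)\| \leq c(n)(\|\der^2 f\|_{L^n(B_{3r}(c))} + \mu r)$, and similarly for $|f(a)-f(b)|$. That yields a genuine modulus-of-continuity bound on $\Phi_f|X(U,\mu)$, hence $\diam \Phi_f(B_{R}(x)\cap X(U,\mu)) \leq c(n,\mu)(\|\der^2 f\|_{L^n(B_{3R}(x))} + R)$. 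One then estimates the size-$\delta$ Hausdorff premeasure $\phi_\delta$ of $\Phi_f(Z)$ directly from the diameters of the covering pieces and the bounded-overlap count, rather than passing through an impossible $\mathcal{H}^n$-bound on $\Phi_f$ of whole balls. Your covering and absolute-continuity machinery can then be kept as is, once the local estimate is replaced by this diameter bound on the restricted image.
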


\begin{proof}
Given $ \mu > 0 $ and $ V \subseteq U $, we define $ X(V, \mu) $ as the set of $ x \in V $ for which there exists a polynomial function $ Q $ of degree at most $ 2 $ such that $ f(y) \leq Q(y) $ for every $ y \in V $, $ f(x) = Q(x) $, $ \| \Der Q(x) \| \leq \mu $ and $ \| \Der^2 Q \| \leq \mu $. If $ D \subseteq U $ is a countable dense subset of $ U $ and  $  I(c) = \{s \in \mathbf{Q} : B_s(c) \subseteq U\} $ for every $  c\in D $, then we notice that
$$ \mathcal{S}^\ast(f) \subseteq \bigcup_{c \in D} \bigcup_{s \in I(c)}\bigcup_{i =1}^\infty X(B_s(c), i). $$
Henceforth, it is sufficient to show that $ \mathcal{H}^n(\Phi_f(Z)) =0 $ whenever $ Z \subseteq X(U,\mu) $ with $ \mathcal{L}^n(Z) =0 $, for some $ \mu > 0 $.  Notice that $ f $ is pointwise differentiable at each point of the set $ X(V,\mu) $, by Lemma \ref{W2n basic estimate}.

Now we prove the following estimates: given $ c \in U $ and $ 0 < r < 1 $ such that $ \overline{B_{3r}(c)} \subseteq U $, then
\begin{equation}\label{W2n functions Lusin gradient estimate}
\| \Der f(a) - \Der f(b) \| \leq c(n)\big( \|\der^2 f\|_{L^n(B_{3r}(c))} + \mu r\big)
\end{equation} 
\begin{equation}\label{W2n functions Lusin function estimate}
|f(a) - f(b) | \leq c(n) r \big( \|\der^2 f\|_{L^n(B_{3r}(c))} + \mu \big)
\end{equation}
for every $ a, b \in X(U,\mu)  \cap B_{r}(c) $. 	We fix $ a,  b \in X(U,\mu) \cap B_r(c) $, $ a \neq b $, and we define
$$ s = \frac{|a-b|}{2} \quad \textrm{and} \quad d = \frac{a+b}{2}. $$
We notice that $ s \leq r $, 
$$ B_s(d) \subseteq B_{2s}(a) \cap B_{2s}(b) \quad \textrm{and} \quad B_{2s}(a) \cup B_{2s}(b) \subseteq B_{3r}(c); $$
consequently it follows from \eqref{W2n basic estimate gradient} of Lemma \ref{W2n basic estimate} that
\begin{flalign*}
	\| \der f -\Der f(e) \|_{L^n(B_s(d))} & \leq 	\| \der f - \Der f(e) \|_{L^n(B_{2s}(e))}\\
	& \leq  c(n) s \big( \|\der^2 f\|_{L^n(B_{2s}(e))} + \mu s \big)\\
	& \leq c(n) s \big( |\der^2 f\|_{L^n(B_{3r}(c))} + \mu r\big),
\end{flalign*}
for $ e \in \{a,b\} $, whence we infer 
\begin{flalign*}
	\bm{\alpha}(n)^{1/n}s \| \Der f(a) - \Der f(b) \| & \leq \| \der f -\Der f(a) \|_{L^n(B_s(d))} + \| \der f - \Der f(b) |_{L^n(B_s(d))}\\
	& \leq c(n) s \big( \|\der^2 f\|_{L^n(B_{3r}(c))} + \mu r\big)
\end{flalign*}
and \eqref{W2n functions Lusin gradient estimate} is proved. Moreover, combining \eqref{W2n basic estimate function} of Lemma \ref{W2n basic estimate} with \eqref{W2n functions Lusin gradient estimate}
\begin{flalign*}
&	|f(a) - f(b) | \\
& \qquad  \leq  \| f - L_a\|_{L^\infty(B_s(d))} + \| f - L_b \|_{L^\infty(B_s(d))} + s \| \Der f(a) \| + s \|  \Der f(b) \| \\
	 &  \qquad \leq  \| f - L_a\|_{L^\infty(B_{2s}(a))} + \| f - L_b \|_{L^\infty(B_{2s}(b))} + 2\mu r  \\
	 & \qquad  \leq  c(n) r \big( \|\der^2 f\|_{L^n(B_{3r}(c))} + \mu \big).
\end{flalign*}

We consider the function $ \overline{f} \times \nabla f $ mapping $ x \in \Diff(f)$ into $ (\overline{f}(x), \nabla f(x)) \in \mathbf{R}^{2n+1} $. Then it follows from \eqref{W2n functions Lusin gradient estimate} and \eqref{W2n functions Lusin function estimate} that 
\begin{equation}\label{W2n functions Lusin diamater estimate}
	\diam\big[(\overline{f} \times \nabla f)\big(B_r(c) \cap X(U,\mu)\big)\big] \leq c(n,\mu) \big( \|\der^2 f\|_{L^n(B_{3r}(c))} + r\big). 
\end{equation}
Let $ Z \subseteq X(U,\mu) $ bounded and $ \mathcal{L}^n(Z) =0 $. Given $ \epsilon > 0 $, we choose an open set $ V \subseteq U $ such that $ Z \subseteq V $, and 
\begin{equation}\label{W2n functions Lusin diamater epsilon}
	\mathcal{L}^n(V) \leq \epsilon, \quad \| \der^2 f \|_{L^n(V)}^n \leq \epsilon.
\end{equation}  
We define $ R : Z \rightarrow \mathbf{R} $ and $ \rho : Z \rightarrow \mathbf{R} $ as $$ R(x)  = \inf\bigg\{1, \frac{\dist(x, \mathbf{R}^n \setminus V)}{4}\bigg\}, \quad \textrm{for $ x \in Z $,} $$
$$ \rho(x) = \diam ((\overline{f} \times \nabla f)(B_{R(x)}(x) \cap X(U,\mu))), \quad \textrm{for $ x \in Z $.} $$
We notice that $ R $ is a Lipschitzian function with $ \Lip(R) \leq \frac{1}{4} $ and, noting that $ B_{3R(x)}(x) \subseteq V $ for every $ x \in Z $ and combining \eqref{W2n functions Lusin diamater estimate} and \eqref{W2n functions Lusin diamater epsilon}, we obtain
\begin{equation}\label{W2n functions Lusin diamater rho}
\rho(x) \leq c(n,\mu) \big( \|\der^2 f\|_{L^n(B_{3R(x)}(x))} + R(x)\big) \leq c(n,\mu) \epsilon^{1/n},
\end{equation}  
for $ x \in Z $. We prove now the following claim: \emph{there exists $ C \subseteq Z $ countable such that  $$ \textrm{$\{B_{R(y)/5}(y): y \in C\} $ is disjointed}, $$ 
	$$ Z \subseteq \bigcup_{y \in C}B_{R(y)}(y), $$
	and 
	$$ \mathcal{H}^0\big(\{y \in C: B_{3R(y)}(y) \cap B_{3R(x)}(x) \neq \varnothing\}\big) \leq c(n), \quad \textrm{for every $ x \in Z $.} $$
}
Applying Besicovitch covering theorem \cite[Theorem 2.17]{AFP00} (see also the remark at the beginning of page 52), there exists a positive constant $ \xi(n) $ depending only on $ n $, and there exist $ Z_1, \ldots , Z_{\xi(n)}  \subseteq Z $ such that 
$$ Z \subseteq \bigcup_{i=1}^{\xi(n)} \bigcup_{x \in Z_i}B_{R(x)/5}(x), $$
and $ \{B_{R(x)/5}(x) : x \in Z_i\} $ is disjointed for every $ i = 1, \ldots , \xi(n) $. We now apply \cite[Lemma 3.1.12]{Fed69} with $ S = Z_i $, $ U = Z $, $ h = \frac{R}{5}$, $ \lambda = \frac{1}{20} $ and $ \alpha = \beta = 15 $, to infer that
$$ \mathcal{H}^0 \big( \{y \in Z_i : B_{3R(y)}(y) \cap B_{3R(x)}(x) \neq \varnothing\}\big) \leq c(n), \quad \textrm{for every $ i = 1, \ldots , \xi(n) $}. $$
We define $ Z' = \bigcup_{i=1}^{\xi(n)} Z_i $, and we notice that $ Z \subseteq \bigcup_{x \in Z'} B_{R(x)/5}(x) $  and 
$$ \mathcal{H}^0\big(\{y \in Z' : B_{3R(y)}(y) \cap B_{3R(x)}(x) \neq \varnothing\}\big) \leq \xi(n)c(n), \quad \textrm{for every $ x \in Z $.} $$
Now we apply Vitali covering theorem to find a countable set $ C \subseteq Z' $ such that $ \{B_{R(x)/5}(x): x \in C\} $ is disjointed and 
$$ Z \subseteq \bigcup_{x \in C}B_{R(x)}(x), $$
which proves the claim.

Denoting with $\phi_\delta $ the size $ \delta $ approximating measure of the $ n $-dimensional Hausdorff measure $ \mathcal{H}^n $ of $ \mathbf{R}^{2n+1} $ (cf.\ \cite[2.10.1, 2.10.2]{Fed69}),  and combining \eqref{W2n functions Lusin diamater rho} with the claim above and with Remark \ref{rmk measure theory}, we have
\begin{flalign*}
	&\phi_{c(n, \mu)\epsilon^{1/n}} ((\overline{f} \times \nabla f)(Z))  \leq c(n) \sum_{y \in C}  \rho(y)^n\\
	& \qquad  \leq c(n,\mu) \sum_{y \in C} \Big( \|\der^2 f \|_{L^n(B_{3R(y)}(y))} + R(y)\Big)^n\\
	& \qquad \leq c(n,\mu) \sum_{y \in C} R(y)^n + c(n,\mu) \sum_{y \in C}\int_{B_{3R(y)}(y)}\|\der^2 f \|^n\, d\mathcal{L}^n \\
	&\qquad  \leq c(n, \mu)\mathcal{L}^n(V) + c(n, \mu) \int_V \| \der^2 f \|^n\, d\mathcal{L}^n \\
	& \qquad \leq c(n,\mu) \epsilon.
\end{flalign*}
Henceforth, letting $ \epsilon \to 0 $ we deduce that $ \mathcal{H}^n((\overline{f} \times \nabla f)(Z)) =0 $. Since $ \Phi_f = (\bm{1}_{\mathbf{R}^{n+1}} \times \psi) \circ (\overline{f} \times \nabla f) $ (see Remark \ref{rmk map eta est}), we conclude that $ \mathcal{H}^n(\Phi_f(Z)) =0 $.  
\end{proof}

\begin{lemma}\label{lem no vertical touching balls}
Suppose $ U \subseteq \mathbf{R}^n $ is open, $ \gamma > \frac{1}{2} $, $ f \in C^{0, \gamma}(U) $, $ x \in U $, $ \nu \in \mathbf{S}^n \subseteq \mathbf{R}^n \times \mathbf{R} $ such that $ B^{n+1}(\overline{f}(x)+s\nu, s) \cap \overline{f}(U) = \varnothing $ for some $ s > 0 $.
	
	Then $ \nu \notin \mathbf{R}^n \times \{0\} $. In particular, this is always true for $ f \in W^{2,n}_\loc(U) \cap C(U) $.
\end{lemma}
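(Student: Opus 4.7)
The plan is to argue by contradiction: assume $\nu = (\nu',0)$ with $\nu' \in \mathbf{S}^{n-1}$ and exploit the fact that Hölder regularity with exponent $\gamma > 1/2$ is incompatible with a \emph{horizontal} tangent ball of positive radius.

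First, since $U$ is open and $x \in U$, for all sufficiently small $t > 0$ the point $y_t = x + t\nu'$ lies in $U$, and $\overline{f}(y_t) = (x+t\nu', f(x+t\nu'))$. The center of the ball is $c := \overline{f}(x) + s\nu = (x+s\nu', f(x))$, so the squared distance from $\overline{f}(y_t)$ to $c$ is
\begin{equation*}
|\overline{f}(y_t)-c|^2 = (t-s)^2 + \bigl(f(x+t\nu') - f(x)\bigr)^2.
\end{equation*}
The hypothesis $B^{n+1}(c,s) \cap \overline{f}(U) = \varnothing$ together with $\overline{f}(y_t) \in \overline{f}(U)$ forces $|\overline{f}(y_t)-c|^2 \geq s^2$, which rearranges to
\begin{equation*}
\bigl(f(x+t\nu') - f(x)\bigr)^2 \ \geq \ t(2s-t).
\end{equation*}

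On the other hand, from $f \in C^{0,\gamma}(U)$ there exist $r_0 > 0$ and $M < \infty $ with
\begin{equation*}
|f(x+t\nu') - f(x)| \leq M \, t^\gamma \quad \text{for all } 0 < t < r_0.
\end{equation*}
Combining the two inequalities yields $M^2 t^{2\gamma} \geq t(2s-t)$, i.e.\ $M^2 t^{2\gamma - 1} \geq 2s - t$ for $0 < t < \min\{r_0, \dist(x, \partial U)\}$. Letting $t \downarrow 0$, the left-hand side tends to $0$ because $2\gamma - 1 > 0$, while the right-hand side tends to $2s > 0$, a contradiction. Hence $\nu \notin \mathbf{R}^n \times \{0\}$.

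For the final sentence, it suffices to show that any $f \in W^{2,n}_\loc(U) \cap C(U)$ is locally $C^{0,\gamma}$ for some $\gamma > 1/2$. This follows from Sobolev embedding: since $\bm{\nabla} f \in W^{1,n}_\loc(U;\mathbf{R}^n)$, the Sobolev inequality gives $\bm{\nabla} f \in L^p_\loc(U;\mathbf{R}^n)$ for every $p < \infty$, so $f \in W^{1,p}_\loc(U)$ for every $p < \infty$. Choosing $p > 2n$ and applying Morrey's embedding $W^{1,p}_\loc \hookrightarrow C^{0,1-n/p}_\loc$ yields local Hölder regularity with exponent $1 - n/p > 1/2$, to which the previous argument applies on a neighborhood of $x$. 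The main (and only) subtlety is the quantitative matching of exponents: we need $\gamma > 1/2$ precisely so that $2\gamma - 1 > 0$ makes $t^{2\gamma-1}$ vanish in the limit, which is why the hypothesis is stated in this sharp form.
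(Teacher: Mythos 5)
Your proof is correct and follows essentially the same strategy as the paper's: assume a horizontal tangent ball and derive a quantitative contradiction with $\gamma$-H\"older continuity along the segment from $x$ in the direction $\nu'$. The only stylistic difference is that you compute the squared Euclidean distance $|\overline{f}(y_t)-c|^2 \geq s^2$ directly, which gives $(f(x+t\nu')-f(x))^2 \geq t(2s-t)$ without needing to argue (as the paper does, via the hemisphere functions $h_\pm$ and a connectedness argument) that $f$ stays on a single side of the ball; and your treatment of the $W^{2,n}_\loc$ case via $W^{2,n}_\loc \hookrightarrow W^{1,p}_\loc \hookrightarrow C^{0,1-n/p}_\loc$ for $p>2n$ makes explicit the embedding the paper leaves implicit.
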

\begin{proof}
We prove the assertion by contradiction. Suppose $ 0 \in U $ and  $ f(0) =0 $; hence there exists $ c \in U \setminus \{0\} $ such that $$ B^{n+1}(c,|c|) \cap G = \varnothing \quad \textrm{and} \quad  K : = \overline{ B^{n+1}(c,|c|) \cap (\mathbf{R}^n \times \{0\})} \subseteq U. $$ Suppose $ L > 0 $ such that $ | f(x) - f(y) | \leq L |x-y|^\gamma $ for every $ x, y \in K $, and define $$ h_{\pm}(x) = \pm \sqrt{|c|^2 -|x-c|^2} = \pm \sqrt{2 c \bullet x - | x |^2} $$ for $ x \in K $. Henceforth, either $ h_+(x) \leq f(x) $ for every $ x \in K $, or $ f(x) \leq h_-(x) $ for every $ x \in K $. In both cases, replacing $ x $ with $ t c $ and $ 0 < t < 1 $, one obtains
	$$ t^{1-2\gamma}(2-t)\leq L^2 |c|^{2\gamma-2} $$
	for $ 0 < t < 1 $. This is clearly impossible, since $ 1-2\gamma < 0$.
\end{proof}

\begin{definition}
	If $ U \subseteq \mathbf{R}^n $ is an open set and $ f : U \rightarrow \mathbf{R} $ is a function, we define 
	$$ E_f = \{(x,u) \in U \times \mathbf{R} :  u \leq f(x)\}  $$ 
	and 
	$$ N_f = \nor(E_f) \cap (U \times \mathbf{R} \times \mathbf{R}^{n+1}). $$
\end{definition}

\begin{lemma}\label{W2n functions normal bundle}
If $ U $ is a bounded open set and $ f \in W^{2,n}(U) \cap C(U) $ then 
\begin{equation}\label{W2n functions normal bundle par}
	N_f \cap (A \times \mathbf{R} \times \mathbf{R}^{n+1})  =  \Phi_f\big[ A \cap \mathcal{S}^\ast(f) \big]
\end{equation}
 for every $ A \subseteq U $, and 
\begin{equation}\label{W2n functions normal bundle area formula}
	\int_{N_f} \beta\, d\mathcal{H}^n = \int_U \beta(\Phi_f(x))\,  \ap J_n\Phi_f(x)\, d\mathcal{L}^n(x)
\end{equation}   
whenever $ \beta : U \times \mathbf{R} \times \mathbf{R}^{n+1} \rightarrow \mathbf{R} $ is a $ \mathcal{H}^n $-measurable non-negative function. In particular, $ \mathcal{H}^n(N_f) < \infty $ and
\begin{equation}\label{W2n functions normal bundle tangent}
\textrm{$ \ap \Der \Phi_f(x)[\mathbf{R}^n] = \Tan^n(\mathcal{H}^n \restrict N_f, \Phi_f(x))$} 
\end{equation}  
for $ \mathcal{L}^n $ a.e.\ $ x \in \mathcal{S}^\ast(f) $.
\end{lemma}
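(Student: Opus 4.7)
The plan is to first establish the pointwise identity \eqref{W2n functions normal bundle par}, after which \eqref{W2n functions normal bundle area formula} and \eqref{W2n functions normal bundle tangent} follow by applying Federer's area formula to a Lipschitz decomposition of $\Phi_f$. For the inclusion $\Phi_f(\mathcal{S}^\ast(f)\cap A) \subseteq N_f$, at $x \in \mathcal{S}^\ast(f)$ Remark \ref{rmk diff points of W2n} gives pointwise differentiability of $f$, and the degree-$2$ polynomial $P$ witnessing $x\in\mathcal{S}^\ast(f)$ satisfies $\Der P(x)=\nabla f(x)$; choosing $s>0$ small, the quadratic coefficient of the lower hemisphere of a ball of radius $s$ with outward normal $\psi(\nabla f(x))$ at $(x,f(x))$ dominates that of $P$, so the ball sits above the graph locally and, being confined to a small neighborhood of $(x,f(x))$, misses $E_f$ globally. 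Conversely, if $(x,f(x),\nu)\in N_f$ with $x\in A$, Lemma \ref{lem no vertical touching balls} forces $\nu$ to have strictly positive last coordinate, so $\nu=\psi(p)$ for a unique $p\in\mathbf{R}^n$; the lower hemisphere of the tangent ball is a smooth function $g\geq f$ with $g(x)=f(x)$ and $\Der g(x)=p$, so Lemma \ref{W2n basic estimate} applied to $-f$ and $-g$ forces pointwise differentiability of $f$ at $x$ with $\nabla f(x)=p$, while the second-order Taylor polynomial of $g$ at $x$ witnesses $x\in\mathcal{S}^\ast(f)$.

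I would next prove the area formula \eqref{W2n functions normal bundle area formula}. By Remark \ref{rmk approximate differentiability of Phi f}, $\Phi_f\in W^{1,n}(U;\mathbf{R}^{n+1}\times\mathbf{R}^{n+1})$, so Lemma \ref{lem weakly vs approx diff Sobolev maps} (after intersecting with $\mathcal{S}^\ast(f)$ and a disjointifying procedure) furnishes a pairwise disjoint family $\{A_i\}$ of $\mathcal{L}^n$-measurable subsets of $\mathcal{S}^\ast(f)$ covering $\mathcal{S}^\ast(f)$ modulo an $\mathcal{L}^n$-null set, on each of which $\Phi_f$ restricts to a Lipschitz map with $\Der(\Phi_f|A_i)=\ap\Der\Phi_f$ almost everywhere. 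Since the first $n+1$ components of $\Phi_f$ recover $(x,f(x))$, the map $\Phi_f$ is globally injective. Federer's area formula \cite[3.2.20]{Fed69} on each piece, summed and combined with injectivity, yields
\begin{equation*}
\int_{\bigcup_i A_i}\beta(\Phi_f)\,\ap J_n\Phi_f\,d\mathcal{L}^n=\int_{\Phi_f(\bigcup_i A_i)}\beta\,d\mathcal{H}^n.
\end{equation*}
The left-hand side equals $\int_U\beta(\Phi_f)\,\ap J_n\Phi_f\,d\mathcal{L}^n$ since the residual is $\mathcal{L}^n$-null, and Lemma \ref{W2n functions Lusin} applied to $\mathcal{S}^\ast(f)\setminus\bigcup_i A_i$ shows the right-hand side equals $\int_{\Phi_f(\mathcal{S}^\ast(f))}\beta\,d\mathcal{H}^n=\int_{N_f}\beta\,d\mathcal{H}^n$ by Step 1.

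Testing \eqref{W2n functions normal bundle area formula} with $\beta=1$ together with $\ap J_n\Phi_f\in L^1(U)$ (from Remark \ref{rmk approximate differentiability of Phi f} and boundedness of $U$) then gives $\mathcal{H}^n(N_f)<\infty$. The tangent plane identity \eqref{W2n functions normal bundle tangent} follows because on each Lipschitz piece $A_i$ the image $\Phi_f(A_i)$ is countably $\mathcal{H}^n$-rectifiable with approximate tangent plane $\ap\Der\Phi_f(x)[\mathbf{R}^n]$ at $\Phi_f(x)$ for $\mathcal{L}^n$-a.e.\ $x\in A_i$, and these images cover $N_f$ modulo $\mathcal{H}^n$-null sets by Lemma \ref{W2n functions Lusin} and Step 1.

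The main obstacle is the converse inclusion in Step 1: translating a single proximal touching ball at $(x,f(x))$ into pointwise differentiability of $f$ at $x$ plus enough second-order control to place $x$ in $\mathcal{S}^\ast(f)$. This is precisely where the $W^{2,n}$-regularity is crucial, via the Calder\'on--Zygmund type oscillation estimate of Lemma \ref{W2n basic estimate}; for a merely continuous $f$ a single touching ball need not imply even pointwise differentiability. Once Step 1 is in hand, the remainder is a routine Federer-style manipulation combining the $W^{1,n}$-regularity of $\Phi_f$, global injectivity from the first coordinates, and the Lusin-type property in Lemma \ref{W2n functions Lusin}.
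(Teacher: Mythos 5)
Your proposal is correct and follows essentially the same route as the paper's proof: establish $N_f = \Phi_f(\mathcal{S}^\ast(f))$ via touching-ball arguments and the Menne oscillation estimate (Lemma \ref{W2n basic estimate}), then derive the area and tangent formulae by applying Federer's area formula to the Lipschitz decomposition of the $W^{1,n}$-map $\Phi_f$ furnished by Lemma \ref{lem weakly vs approx diff Sobolev maps}, using Lemma \ref{W2n functions Lusin} to discard the $\mathcal{L}^n$-null residual. Your treatment of the forward inclusion of \eqref{W2n functions normal bundle par} is slightly more explicit than the paper's (which merely declares it clear), but the underlying idea is the same.
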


\begin{proof}

Suppose $(z, \nu)  \in N_f$, where  $ z = (x, f(x)) $ with $ x \in A $, and $ s > 0 $ such that $ B^{n+1}(z+s\nu, s) \cap E_f= \varnothing $. Since  $ \nu \notin \mathbf{R}^n \times \{0\} $ by Lemma \ref{lem no vertical touching balls}, we can easily see that there exists an open set $ W \subseteq U \times \mathbf{R} $ with $ z\in W $, an open set $ V \subseteq U $ with $ x \in V $, and a smooth function $ g : V \rightarrow \mathbf{R} $ such that  $ f(x) = g(x)$ and $$ W \cap B^{n+1}(z+s\nu, s) = \{(y,u) : y \in V, \; u > g(y)\}; $$  in particular $ f(y) \leq g(y) $ for every $ y \in V $. It follows that $ x \in \mathcal{S}^\ast(f) $, $ x \in \Diff(f) $ and $ \Der f(x) = \Der g(x) $ by Lemma \ref{W2n basic estimate}. Noting that 
$$ \nu = \frac{(-\nabla g(x), 1)}{\sqrt{1 + | \nabla g(x)|^2}} $$
we conclude $(z,\nu) = \Phi_f(x) $ and $ N_f \cap (A \times \mathbf{R} \times \mathbf{R}^{n+1}) \subseteq \Phi_f(\mathcal{S}^\ast(f)\cap A) $.  

The opposite inclusion is clear, since for every $ x \in \mathcal{S}^\ast(f) $ there exists a polynomial function $ P $ and an open neighbourhood $ V $ of $ x $, such that $ P(x) = f(x) $ and $ P(y) \geq f(y) $ for every $ y \in V $.

We prove now the area formula in \eqref{W2n functions normal bundle area formula}. Since $ \Phi_f $ is a $ W^{1,n} $-map (see Remark \ref{rmk approximate differentiability of Phi f}), we apply Lemma \ref{lem weakly vs approx diff Sobolev maps} with $ f $ replaced by $ \Phi_f $ and we find countably many $ \mathcal{L}^n $-measurable sets $ B_i $ of $ U $ such that $ \Lip \big(\Phi_f|B_i\big) < \infty $ for every $ i \geq 1 $ and
$$ \mathcal{L}^n\bigg(U \setminus \bigcup_{i=1}^\infty B_i \bigg) =0. $$
 Then  we set
$$ A_1 = B_1 \cap \mathcal{S}^\ast(f), \qquad  A_i = B_i \cap \mathcal{S}^\ast(f) \setminus {\textstyle\bigcup_{\ell = 1}^{i-1}} B_\ell \quad \textrm{for $ i \geq 2 $} $$
and we notice that 
\begin{equation}\label{W2n functions decomposition of N}
\mathcal{L}^n \big( U \setminus \textstyle{\bigcup_{i=1}^\infty} A_i\big)=0 \quad \textrm{and} \quad \mathcal{H}^n\big(N_f \setminus \textstyle{\bigcup_{i=1}^\infty} \Phi_f(A_i) \big) =0 
\end{equation}    
by Lemma \ref{W2n twice diff}, Lemma \ref{W2n functions Lusin} and \eqref{W2n functions normal bundle par}. If  $ \beta : U \times \mathbf{R} \times \mathbf{R}^{n+1} \rightarrow \mathbf{R} $ is a $ \mathcal{H}^n $-measurable non-negative function, firstly we notice that (cf.\ \cite[2.4.8]{Fed69}) 
$$ \int_{U} \beta(\Phi_f(x))\, \ap J_n \Phi_f(x)\, d\mathcal{L}^n(x)  = \sum_{i=1}^\infty \int_{A_i}\beta(\Phi_f(x))\,\ap J_n\Phi_f(x)\, d\mathcal{L}^n(x), $$
then, recalling \cite[2.10.43]{Fed69}, we use the area formula for Lipschitzian maps in \cite[3.2.5]{Fed69} and the injectivity of $ \Phi_f $ to compute
$$ \int_{A_i}\beta(\Phi_f(x))\,\ap J_n\Phi_f(x)\, d\mathcal{L}^n(x)  =\int_{\Phi_f(A_i)}\beta(y)\, d\mathcal{H}^n(y) \qquad \textrm{for $ i \geq 1 $} $$
 and we use \eqref{W2n functions decomposition of N} to conclude
\begin{flalign*}
	 \int_{U} \beta(\Phi_f(x))\, \ap J_n \Phi_f(x)\, d\mathcal{L}^n(x) & =  \sum_{i=1}^\infty \int_{\Phi_f(A_i)}\beta(y)\, d\mathcal{H}^n(y)\\
	 & = \int_{N_f}\beta(y)\, d\mathcal{H}^n(y).
\end{flalign*}
Choosing $ \beta = 1 $ we conclude that $ \mathcal{H}^n(N_f) < \infty $ and we deduce that $ \Tan^n(\mathcal{H}^n \restrict N_f, (z,\nu)) $ is a $ n $-dimensional plane for $ \mathcal{H}^n $ a.e.\ $(z, \nu) \in N_f $. Let $ D_i $ be the set of $x \in A_i $ such that  $\bm{\Theta}^n(\mathcal{L}^n\restrict \mathbf{R}^n \setminus A_i, x) = 0 $, $ \ap \Der \Phi_f(x) $ is injective and $ \Tan^n(\mathcal{H}^n \restrict N_f, \Phi_f(x)) $ is a $ n $-dimensional plane. Noting \cite[2.10.19]{Fed69} and Remark \ref{rmk approximate differentiability of Phi f}, we deduce that 
$$  \mathcal{H}^n \big(A_i \setminus D_i\big)  =0. $$
Noting that $ \Tan^n(\mathcal{L}^n \restrict A_i , x) = \mathbf{R}^n $ for $ x \in D_i $, and noting that $ \Psi|A_i $ is a bi-lipschitz homeomorphism onto $ \Phi_f(A_i) $, we employ \cite[Lemma B.2]{SantilliAnnali} to conclude
\begin{flalign*}
\ap \Der \Phi_f(x)[\mathbf{R}^n] & = \Der \Psi_i(x)\big[\Tan^n(\mathcal{L}^n \restrict A_i, x)\big]\\
& \subseteq \Tan^n(\mathcal{H}^n \restrict \Psi_i(A_i), \Phi_f(x)) \subseteq \Tan^n(\mathcal{H}^n \restrict N_f, \Phi_f(x))
\end{flalign*} 
for every $ x \in D_i $. Since $ \Tan^n(\mathcal{H}^n \restrict N_f, \Phi_f(x)) $ is a $ n $-dimensional plane and $ \ap D \Phi_f(x) $ is injective for every $ x \in D_i $, we conclude that 
$$ \ap D \Phi_f(x)[\mathbf{R}^n] = \Tan^n(\mathcal{H}^n \restrict N_f, \Phi_f(x)) \quad \textrm{for every $ x \in D_i $}. $$
\end{proof}


\begin{theorem}\label{W2n functions normal cycles}
If $ U \subseteq \mathbf{R}^{n} $ is a bounded open set and $ f \in W^{2,n}(U) \cap C(U) $, then there exists a Borel  $ n $-vectorfield $ \eta $ on $ N_f $  such that
$$ \textrm{$ (\mathcal{H}^n \restrict N_f) \wedge \eta $ is a Legendrian cycle of $ U \times \mathbf{R} $} $$
and, for $ \mathcal{H}^n $ a.e.\ $(z,\nu) \in N_f $, 
$$ | \eta(z, \nu)| =1, \quad \textrm{$ \eta(z, \nu) $ is simple}, $$
$$ \textrm{$\Tan^n(\mathcal{H}^n \restrict N_f, (z, \nu)) $ is associated with $ \eta(z, \nu) $}, $$
$$ \langle \big[ {\textstyle\bigwedge_n} \pi_0\big](\eta(z,\nu)) \wedge \nu, E' \rangle > 0. $$
\end{theorem}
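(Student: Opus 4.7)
My plan is to define $\eta$ explicitly via the approximate differential of $\Phi_f$, verify the pointwise/algebraic conditions directly, and then obtain $\partial T=0$ by smooth approximation. By Lemma~\ref{W2n functions normal bundle} together with Remark~\ref{rmk approximate differentiability of Phi f}, at $\mathcal{L}^n$-a.e.\ $x\in\mathcal{S}^\ast(f)$ the map $\Phi_f$ is $\mathcal{L}^n\restrict U$-approximately differentiable with injective approximate differential and $\ap\Der\Phi_f(x)[\mathbf{R}^n]=\Tan^n(\mathcal{H}^n\restrict N_f,\Phi_f(x))$. I therefore set
$$
\eta(\Phi_f(x)):=\frac{\ap\Der\Phi_f(x)(e_1)\wedge\cdots\wedge\ap\Der\Phi_f(x)(e_n)}{\ap J_n\Phi_f(x)}, \qquad T:=(\mathcal{H}^n\restrict N_f)\wedge\eta,
$$
extending $\eta$ by zero on the $\mathcal{H}^n$-negligible complement in $N_f$. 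The finiteness $\mathcal{H}^n(N_f)<\infty$ from Lemma~\ref{W2n functions normal bundle} and the area formula \eqref{W2n functions normal bundle area formula} make $T$ a unit-density integer-multiplicity rectifiable $n$-current whose underlying vectorfield is Borel, simple, of unit norm, and orients $\Tan^n(\mathcal{H}^n\restrict N_f,\cdot)$.

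The Legendrian relation $T\restrict\alpha=0$ is automatic from Lemma~\ref{lem: Santilli20}: $N_f\subseteq\nor(E_f)$ is a Legendrian rectifiable set, so $\alpha$ vanishes on $\Tan^n(\mathcal{H}^n\restrict N_f,\cdot)$ at $\mathcal{H}^n$-a.e.\ point, hence on $\eta$. The positivity $\langle[\bigwedge_n\pi_0]\eta\wedge\nu,E'\rangle>0$ follows by a direct cofactor expansion: writing $[\bigwedge_n\pi_0]\ap\Der\Phi_f(x)(e_i)=(e_i,\partial_if(x))\in\mathbf{R}^n\times\mathbf{R}$ and $\nu=\psi(\nabla f(x))=(1+|\nabla f(x)|^2)^{-1/2}(-\nabla f(x),1)$, the $(n+1)$-vector $(e_1,\partial_1f)\wedge\cdots\wedge(e_n,\partial_nf)\wedge\nu$ equals $\sqrt{1+|\nabla f(x)|^2}\,E$, so the pairing with $E'$ yields $\sqrt{1+|\nabla f(x)|^2}/\ap J_n\Phi_f(x)>0$.

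The real work lies in proving $\partial T=0$, which I plan to obtain by smooth approximation. Fixing a compact $K\subset\subset U$, let $f_\varepsilon:=f\ast\rho_\varepsilon$ on a slightly larger neighbourhood. For each smooth $f_\varepsilon$ the push-forward $T_\varepsilon:=(\Phi_{f_\varepsilon})_\#\llbracket K\rrbracket$ is a classical Legendrian cycle on $\mathrm{int}(K)\times\mathbf{R}\times\mathbf{R}^{n+1}$: it parametrises the upper Gauss graph of the smooth hypersurface $\{x_{n+1}=f_\varepsilon(x)\}$, and $\partial T_\varepsilon=0$ follows from classical Stokes on $K$ because any test form $\phi\in\mathcal{D}^n(\mathrm{int}(K)\times\mathbf{R}\times\mathbf{R}^{n+1})$ pulls back under $\Phi_{f_\varepsilon}$ to a form with compact support in $K$. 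Combining $f_\varepsilon\to f$ in $W^{2,n}(K)$ with a.e.\ convergence $(\Der f_\varepsilon,\Der^2f_\varepsilon)\to(\nabla f,\der^2 f)$ up to subsequences (cf.\ Remark~\ref{rmk diff points of W2n}) and the uniform bound $\int_K\|\ap\Der\Phi_{f_\varepsilon}\|^n\,d\mathcal{L}^n\leq c(n)(\mathcal{L}^n(K)+\|f_\varepsilon\|_{W^{2,n}(K)}^n)$ from Remark~\ref{rmk approximate differentiability of Phi f}, I would apply dominated convergence to the area-formula representations
$$
T_\varepsilon(\phi)=\int_K\langle e_1\wedge\cdots\wedge e_n,\Phi_{f_\varepsilon}^{\#}\phi\rangle\,d\mathcal{L}^n, \qquad T(\phi)=\int_K\langle e_1\wedge\cdots\wedge e_n,\Phi_f^{\#}\phi\rangle\,d\mathcal{L}^n,
$$
to obtain $T_\varepsilon\to T$ weakly, whence $\partial T=\lim\partial T_\varepsilon=0$. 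A partition of unity together with Lemma~\ref{lem patching of legendrian cycles} then globalises this local conclusion to a Legendrian cycle on all of $U\times\mathbf{R}$.

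The main obstacle I anticipate is the dominated-convergence step. The coefficients of $\Phi_{f_\varepsilon}^{\#}\phi$ are polynomials of degree $n$ in the first derivatives of $f_\varepsilon$, with one factor of the form $\Der\psi(\Der f_\varepsilon)\circ\Der^2 f_\varepsilon$, so one must rule out mass escaping to the negligible set $U\setminus\mathcal{S}^\ast(f)$ in the limit. This is precisely where the Lusin-type property of Lemma~\ref{W2n functions Lusin} enters, ensuring that the limit current cannot be supported outside $\Phi_f(\mathcal{S}^\ast(f))=N_f$; the bound of Remark~\ref{rmk approximate differentiability of Phi f} then provides the $L^1$-domination needed to pass the area-formula representation to the limit. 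Once this is established, uniqueness of $\eta$ is automatic, since the orientation of an $n$-dimensional approximate tangent plane is determined up to a sign and that sign is pinned down by the positivity requirement.
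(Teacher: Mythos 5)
Your proposal follows essentially the same strategy as the paper: read off the carrier, unit-density, simplicity, and tangency of $T$ from Lemma~\ref{W2n functions normal bundle}; obtain $T\restrict\alpha=0$ from Lemma~\ref{lem: Santilli20}; check the positivity by a direct wedge computation; and prove $\partial T=0$ by approximating with the Gauss-graph cycles of smooth functions. The choice of mollification on compacta followed by localization, rather than a single global $W^{2,n}$-approximation $f_k\to f$ on $U$, is a matter of packaging. Two points in the $\partial T=0$ step, however, do not go through as written.

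First, the convergence $T_\varepsilon(\phi)\to T(\phi)$ does not follow from the uniform bound $\sup_\varepsilon\int_K\|\Der\Phi_{f_\varepsilon}\|^n\,d\mathcal{L}^n<\infty$ together with a.e.\ convergence: a uniform $L^1$-bound on the Jacobians is only a mass bound and does not furnish a pointwise $L^1$-dominating function, so dominated convergence cannot be invoked in the form you propose (oscillation or concentration is not excluded). What is actually true, and what the paper exploits, is the stronger convergence $\|\Der\Phi_{f_\varepsilon}-\ap\Der\Phi_f\|_{L^n(K)}\to0$; this is fed through the telescoping identity for $\Der_1\Phi_{f_\varepsilon}\wedge\cdots\wedge\Der_n\Phi_{f_\varepsilon}-\xi_1\wedge\cdots\wedge\xi_n$ and the generalized H\"older inequality (see \eqref{W2n functions normal cycles eq7}) to obtain $L^1$-convergence of the vectorfields, and only the remaining factor $\phi\circ\Phi_{f_\varepsilon}$ is treated by dominated convergence, with the \emph{fixed} function $\ap J_n\Phi_f\in L^1$ as majorant. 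Relatedly, Lemma~\ref{W2n functions Lusin} does not serve to prevent mass escape in the $\varepsilon\to0$ limit: its role is entirely inside Lemma~\ref{W2n functions normal bundle}, where it shows $\Phi_f(\mathcal{S}^\ast(f))$ exhausts $N_f$ up to $\mathcal{H}^n$-null sets so that the area formula identifies the carrier of $T$.

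Second, Lemma~\ref{lem patching of legendrian cycles} requires a \emph{finite} cover and $\spt T$ compact in that cover, neither of which hold here since $\spt T$ may accumulate at $\partial U\times\mathbf{R}\times\mathbf{S}^n$. You do not in fact need it: $\partial T=0$ is a local condition, since any $\phi\in\mathcal{D}^{n-1}(U\times\mathbf{R}\times\mathbf{R}^{n+1})$ has $\Phi_f^{-1}(\spt\phi)$ contained in some compact $K\subset\subset U$, so establishing the claim on each such $K$ already gives $\partial T=0$ on $U\times\mathbf{R}$.
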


\begin{proof}
We identify $ \mathbf{R}^{n+1}  \simeq \mathbf{R}^{n} \times \mathbf{R}$  and we consider the orthonormal basis  $ \epsilon_1, \ldots , \epsilon_n $ of $\mathbf{R}^n $ such that $ (\epsilon_i,0) = e_i $ for $ i =1, \ldots, n $ (cf.\ \eqref{orthonormal basis}). We use the notation $ \Der_1, \ldots \Der_n $ and $ \ap \Der_1 , \ldots , \ap \Der_n $ for the partial derivatives and the approximate partial derivatives with respect to $ \epsilon_1, \ldots , \epsilon_n $, respectively.	We notice by \cite[3.1.4]{Fed69} that $ \ap \Der_i \Phi_f $ is a $ \mathcal{L}^n \restrict U $-measurable map for $ i = 1, \ldots , n $. Henceforth, by the classical Lusin theorem (cf.\ \cite[2.3.5, 2.3.6]{Fed69}) there exists a Borel map $ \xi_i : U \rightarrow \mathbf{R}^{n} \times \mathbf{R} \times \mathbf{R}^{n+1} $ such that $ \xi_i $ is $ \mathcal{L}^n \restrict U $ almost equal to $ \ap \Der_i \Phi_f $ for $ i = 1, \ldots , n $.
	
Since $ \int_U | \xi_1 \wedge \cdots \wedge \xi_n |\, d\mathcal{L}^n < \infty $ by Remark \ref{rmk approximate differentiability of Phi f}, we define   
\begin{equation}\label{W2n functions legendrian cycle}
 T(\phi) = \int_U \langle \xi_1(x) \wedge \cdots \wedge \xi_n(x), \phi(\Phi_f(x)) \rangle\, d\mathcal{L}^n(x) 
\end{equation}
for $ \phi \in \mathcal{D}^n(U \times \mathbf{R} \times \mathbf{R}^{n+1}) $ and we notice that  $ T \in \mathcal{D}_n(U \times \mathbf{R} \times \mathbf{R}^{n+1}) $. We choose now a sequence $ f_k \in C^\infty(U)\cap W^{2,n}(U) $ such that $f_k \to f$ in $ W^{2,n}(U) $, $ f_k(x) \to f(x) $ and $ \der f_k(x) \to \der f(x) $ for $ \mathcal{L}^n $ a.e.\ $ x \in U $; see \cite[Theorem 7.9]{GilbargTrudinger}. Since $ \Phi_{f_k} : U \rightarrow U \times \mathbf{R} \times \mathbf{R}^{n+1} $ is a smooth proper map,  we define $$ T_k  = (\Phi_{f_k})_{\#}\big( (\mathcal{L}^n \restrict U) \wedge \epsilon_1 \wedge \cdots \wedge \epsilon_n  \big) \in \mathcal{D}_n(U \times \mathbf{R} \times \mathbf{R}^{n+1}) $$ and we prove that
\begin{equation}\label{W2n functions normal cycles eq1}
\textrm{$T_k \to T $ in $ \mathcal{D}_n(U \times \mathbf{R} \times \mathbf{R}^{n+1}) $.}
\end{equation}
Noting that
$$T_k(\phi)  = \int_{U} \langle \Der_1 \Phi_{f_k}(x) \wedge \cdots \wedge \Der_n \Phi_{f_k}(x), \phi(\Phi_{f_k}(x)) \rangle\, d\mathcal{L}^n(x)$$
whenever $ \phi \in   \mathcal{D}^n(U \times \mathbf{R} \times \mathbf{R}^{n+1}) $, we estimate
\begin{flalign}\label{W2n functions normal cycles eq6}
|T_k&(\phi)  - T(\phi) | \\
& \leq \int_U \big|  \langle \Der_1 \Phi_{f_k}(x) \wedge \cdots \wedge \Der_n \Phi_{f_k}(x) - \xi_1(x) \wedge \cdots \wedge \xi_n(x), \phi(\Phi_{f_k}(x)) \rangle\big|\, d\mathcal{L}^n(x)\notag \\
&  \quad + \int_U \big|\langle \xi_1(x) \wedge \cdots \wedge \xi_n(x), \phi(\Phi_{f_k}(x)) - \phi(\Phi_f(x)) \rangle\big| \, d\mathcal{L}^n(x)\notag \\
& \leq \| \phi \|_{L^\infty(U)} \int_U \big|  \Der_1 \Phi_{f_k}(x) \wedge \cdots \wedge \Der_n \Phi_{f_k}(x)   - \xi_1(x) \wedge \cdots \wedge \xi_n(x)\big|\, d\mathcal{L}^n(x) \notag\\
& \quad + \int_U  \textup{ap}\,J_n\Phi_f(x)\, \| \phi(\Phi_f(x)) - \phi(\Phi_{f_k}(x)) \|\, d\mathcal{L}^n(x) \notag
\end{flalign}
for $ \phi \in \mathcal{D}^n(U \times \mathbf{R} \times \mathbf{R}^{n+1})  $. Moreover,  noting that $  \ap J_n \Phi_f \in L^1(U) $ (see Remark \ref{rmk approximate differentiability of Phi f}) and $$ \ap J_n \Phi_f(x) \, \| \phi(\Phi_f(x)) - \phi(\Phi_{f_k}(x)) \| \leq  2 \| \phi \|_{L^\infty(U \times \mathbf{R} \times \mathbf{R}^n)} \ap J_n\Phi_f(x) $$
for $ \mathcal{L}^n $ a.e.\ $ x \in U $ and for every $ k \geq 1 $, it follows from the dominated convergence theorem that 
\begin{equation}\label{W2n functions normal cycles eq2}
	\lim_{k \to \infty}\int_U  \ap J_n\Phi_f(x)\, \| \phi(\Phi_f(x)) - \phi(\Phi_{f_k}(x)) \|\, d\mathcal{L}^n(x) =0. 
\end{equation} 
We observe that 
\begin{flalign*}
	&\Der_1 \Phi_{f_k} \wedge \cdots \wedge \Der_n \Phi_{f_k}   - \xi_1 \wedge \cdots \wedge \xi_n \\
	& \quad  = \sum_{i=1}^{n} \xi_1 \wedge \cdots \wedge \xi_{i-1} \wedge \big(\Der_i \Phi_{f_k}-\xi_i\big) \wedge \Der_{i+1} \Phi_{f_k} \wedge \cdots \wedge \Der_n\Phi_{f_k}
\end{flalign*} 
and we use the generalized Holder's inequality to estimate 
\begin{flalign}\label{W2n functions normal cycles eq7}
&\int_U \big|  \Der_1 \Phi_{f_k} \wedge \cdots \wedge \Der_n \Phi_{f_k}   - \xi_1 \wedge \cdots \wedge \xi_n\big|\, d\mathcal{L}^n\\
& \quad  \leq \sum_{i=1}^n \int_U \big|\xi_1 \wedge \cdots \wedge \xi_{i-1}\big|\cdot \big| \xi_i - \Der_i \Phi_{f_k}\big|\cdot \big|  \Der_{i+1} \Phi_{f_k} \wedge \cdots \wedge \Der_n\Phi_{f_k}\big|\,d\mathcal{L}^n \notag \\
& \quad \leq \sum_{i=1}^n \int_U \| \ap\Der \Phi_f \|^{i-1}\cdot \| \ap\Der \Phi_f - \Der \Phi_{f_k}\| \cdot \| \Der\Phi_{f_k} \|^{n-i}\, d\mathcal{L}^n \notag \\
& \quad \leq  \sum_{i=1}^n \bigg(\int_U \| \ap\Der \Phi_f \|^{n}\, d\mathcal{L}^n \bigg)^{\frac{i-1}{n}}\cdot \bigg( \int_U  \| \ap\Der \Phi_f - \Der \Phi_{f_k}\|^{n}\, d\mathcal{L}^n \bigg)^{\frac{1}{n}}\cdot \bigg(\int_U \| \Der\Phi_{f_k} \|^{n}\, d\mathcal{L}^n\bigg)^{\frac{n-i}{n}}.\notag 
\end{flalign}
Moreover, by \eqref{rmk map eta est},
\begin{flalign*}
	\int_U & \| \Der \Phi_{f_k}  - \ap\Der \Phi_f \|^n \, d\mathcal{L}^n\\ 
&  \leq c(n)\bigg( \int_U \| \Der \overline{f}_k - \der \overline{f} \|^n \, d\mathcal{L}^n + \int_U \| \Der (\psi \circ \nabla f_k) - \der ( \psi \circ \bm{\nabla}f)  \|^n \, d\mathcal{L}^n\bigg)\\
&  \leq c(n)\bigg( \int_U \| \Der \overline{f}_k - \der \overline{f} \|^n \, d\mathcal{L}^n + \int_U \| \Der \psi (\nabla f_k)\|^n \|\Der^2 f_k - \der^2 f_k \|^n\, d\mathcal{L}^n \\
& \qquad \qquad \qquad \qquad \qquad \qquad \qquad \qquad + \int_U \| \Der \psi(\nabla f_k) - \Der \psi(\bm{\nabla} f) \|^n \| \der^2 f \|^n\, d\mathcal{L}^n\bigg)\\
& \leq c(n) \bigg( \int_U \| \Der \overline{f}_k - \der \overline{f} \|^n \, d\mathcal{L}^n + \int_U  \|\Der^2 f_k - \der^2 f_k \|^n\, d\mathcal{L}^n \\
& \qquad \qquad \qquad \qquad \qquad \qquad \qquad \qquad + \int_U \| \Der \psi(\nabla f_k) - \Der \psi(\bm{\nabla} f) \|^n \| \der^2 f \|^n\, d\mathcal{L}^n\bigg)
\end{flalign*}
and 
$$  \lim_{k \to \infty}\int_U \| \Der \psi(\nabla f_k) - \Der \psi(\bm{\nabla} f) \|^n \| \der^2 f \|^n\, d\mathcal{L}^n = 0 $$
 by dominated convergence theorem. Consequently $ \| \Der \Phi_{f_k}  - \der \Phi_f \|_{L^n(U)} \to 0 $, and combining \eqref{W2n functions normal cycles eq6}, \eqref{W2n functions normal cycles eq2} and \eqref{W2n functions normal cycles eq7} we obtain \eqref{W2n functions normal cycles eq1}.

Since $ \partial T_k =0 $ for every $ k \geq 1 $, we readily infer from \eqref{W2n functions normal cycles eq1} that $ \partial T =0 $. Define $ G = [\Phi_f|\mathcal{S}^\ast(f)]^{-1} : N_f \rightarrow U $  and notice that $ G $ is simply the restriction on $ N_f $ of the linear function that maps a point of $ \mathbf{R}^{n+1} \times \mathbf{R}^{n+1} $ onto its first $ n $ coordinates; in particular $ G $ is a Borel map (recall that $ N_f $ is a Borel set). We employ Lemma \ref{W2n functions normal bundle} to see that 
\begin{equation} \label{W2n functions normal cycles eq3}
	T(\phi) = \int_{N_f} \langle \xi\big[G(z,\nu)\big], \phi(z,\nu) \rangle\, d\mathcal{H}^n(z, \nu) 
\end{equation} 
for every $ \phi \in \mathcal{D}^n(U \times \mathbf{R} \times \mathbf{R}^{n+1}) $, where  $ \xi : U \rightarrow    \bigwedge_n(\mathbf{R}^{n+1} \times \mathbf{R}^{n+1})$ is the Borel map  defined as 
$$ \xi(x) = \frac{\xi_1(x) \wedge \cdots \wedge \xi_n(x)}{\big|\xi_1(x) \wedge \cdots \wedge \xi_n(x)\big|}. $$
Then we define the Borel map $$ \eta : N_f \rightarrow    {\textstyle\bigwedge_n}(\mathbf{R}^{n+1} \times \mathbf{R}^{n+1}), \quad  \eta = \xi \circ G, $$
and we readily infer that $ \Tan^n(\mathcal{H}^n \restrict N_f, (z, \nu)) $ is associated with $ \eta(z, \nu) $  for $ \mathcal{H}^n $ a.e.\ $(z, \nu) \in N_f $ by Lemma \ref{W2n functions normal bundle}, and $ (\mathcal{H}^n \restrict N_f) \wedge \eta $ is a Legendrian cycle by Lemma \ref{lem: Santilli20}. Finally, we use Remark \ref{rmk approximate differentiability of Phi f} and shuffle formula (see \cite[pag.\ 19]{Fed69}) to compute
\begin{flalign*}
\big\langle \big[\textstyle{\bigwedge_n \pi_0}\big]&(\xi(x)) \wedge \psi(\nabla f(x)), E' \big\rangle\\
&  = \frac{1}{\ap J_n \Phi_f(x)} \, \langle  \Der_1\overline{f}(x) \wedge \cdots \wedge \Der_n\overline{f}(x) \wedge \psi(\nabla f(x)), E' \rangle \\
& = \frac{e'_{n+1}(\psi(\nabla f(x)))}{\ap J_n \Phi_f(x)}  > 0
\end{flalign*}
for $ \mathcal{L}^n $ a.e.\ $ x \in U $.
\end{proof}

\section{The support of Legendrian cycles}\label{Section support}

Suppose $ C \subseteq \mathbf{R}^{n+1} $. We define $\Unp(C) $ as the set of $ x \in \mathbf{R}^{n+1} \setminus \overline{C} $ such that there exists a \emph{unique} $ y \in \overline{C} $ with $ \dist(x, C) = | y-x| $. It is well known  that $ \mathbf{R}^{n+1} \setminus (\overline{C} \cup \Unp(C)) $ is the set of points in $ \mathbf{R}^{n+1} \setminus \overline{C} $ where $ \dist(\cdot, C) $ is not differentiable, see \cite[Lemma 2.41(c)]{KolSan} and references therein. In particular, Rademacher theorem ensures that 
\begin{equation}\label{support eq1}
\mathcal{L}^{n+1}( \mathbf{R}^{n+1} \setminus (\overline{C}\cup \Unp(C))) =0.
\end{equation} 
The nearest point projection $\xi_C  $ is multivalued function mapping $ x  \in \mathbf{R}^{n+1} $ onto 
$$ \xi_C(x) = \{a \in C: | a-x| = \dist(x, C)\}. $$
Notice that $ \xi_C| \Unp(C) $ is single-valued and we define 
$$ \nu_C(x) = \frac{x-\xi_C(x)}{\dist(x,C)} \quad \textrm{and} \quad \psi_C(x) =(\xi_C(x), \nu_C(x)) $$
for $ x \in \Unp(C) $. It is  well known (see \cite[Theorem 4.8(4)]{Fed59}) that $ \xi_C $, $ \nu_C $ and $ \psi_C $ are continuous functions over $ \Unp(C) $ and it is easy to see that 
\begin{equation}\label{support eq2}
	\nor(C) = \psi_C(\Unp(C)).
\end{equation}
We also define 
$$ \rho_C(x) = \sup\{s > 0: \dist(a + s(x-a), C) = s \dist(x, C)\}  $$
for $ x \in \mathbf{R}^{n+1} \setminus \overline{C} $ and $ a \in \xi_C(x) $. This definition does not depend on the choice of $ a \in \xi_C(x) $, the function $ \rho_C : \mathbf{R}^{n+1} \setminus \overline{C} \rightarrow [1, \infty] $ is upper-semicontinuous and we set
$$ \Cut(C) = \{x \in \mathbf{R}^{n+1} \setminus \overline{C} : \rho_C(x) =1\};  $$
see \cite[Remark 2.32 and Lemma 2.33]{KolSan}.  Finally  we define 
$$ S_t(C) = \{x \in \mathbf{R}^{n+1} : \dist(x, C) = t\} \qquad \textrm{for $ t > 0 $} $$
and  we recall from \cite[Lemma 4.2(53)]{HugSantilli} that 
\begin{equation}\label{support eq3}
	\mathcal{H}^n(S_t(C) \cap \Unp(C) \cap \Cut(C)) =0 \qquad \textrm{for every $ t > 0 $.} 
\end{equation}
We are ready to prove the following lemma.

\begin{lemma}\label{dense support lem 1}
	If $ C \subseteq \mathbf{R}^{n+1} $ and $ W \subseteq \mathbf{R}^{n+1} \times \mathbf{R}^{n+1} $ is an open set such that $ W \cap \nor(C) \neq \varnothing $, then 
	$ \mathcal{H}^n(W \cap \nor(C)) > 0. $
\end{lemma}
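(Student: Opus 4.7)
The plan is to exploit the normal-bundle structure at one well-chosen point $(a,u)\in W\cap\nor(C)$ and then use the coarea formula to upgrade a positive $(n+1)$-dimensional Lebesgue measure into positive $n$-dimensional Hausdorff measure on $\nor(C)$.

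First, I would fix $(a,u)\in W\cap\nor(C)$ and $s>0$ with $\dist(a+su,C)=s$, and show the preliminary fact that for every $s'\in(0,s)$ the point $x_0:=a+s'u$ belongs to $\Unp(C)$ with $\psi_C(x_0)=(a,u)$. Since $d_C$ is $1$-Lipschitz, $\dist(x_0,C)=s'$ and $a\in\xi_C(x_0)$. If $a'\in\overline C$ also realizes $|a'-x_0|=s'$, then $|a'-(a+su)|\leq s'+(s-s')=s=\dist(a+su,C)$, so equality holds in the triangle inequality; hence $x_0$ lies on the segment from $a'$ to $a+su$, and the obvious one-parameter computation forces $a'=a$. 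Thus $\xi_C(x_0)=a$ is unique and $\nu_C(x_0)=u$.

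Next, using the continuity of $\psi_C$ on $\Unp(C)$ (with the relative topology), pick $\delta\in(0,s')$ such that $\psi_C\bigl(\Unp(C)\cap B(x_0,\delta)\bigr)\subseteq W$. Because $\delta<s'=\dist(x_0,C)$, the ball $B(x_0,\delta)$ lies in $\mathbf{R}^{n+1}\setminus\overline C$, so \eqref{support eq1} gives $\mathcal{L}^{n+1}(V)=\mathcal{L}^{n+1}(B(x_0,\delta))>0$, where $V:=\Unp(C)\cap B(x_0,\delta)$. Apply the coarea formula to the Lipschitz function $d_C$ on $V$: since $|\nabla d_C|=1$ almost everywhere on $\Unp(C)$, one obtains
\[
0<\mathcal{L}^{n+1}(V)=\int_V |\nabla d_C|\,d\mathcal{L}^{n+1}=\int_0^\infty \mathcal{H}^n\bigl(V\cap S_t(C)\bigr)\,dt,
\]
so there exists $t>0$ with $\mathcal{H}^n(V\cap S_t(C))>0$.

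Finally, the restriction $\psi_C|_{V\cap S_t(C)}$ is injective because $V\cap S_t(C)\subseteq\Unp(C)\cap S_t(C)$ and on the latter set $\psi_C$ admits the explicit inverse $(\xi,\nu)\mapsto \xi+t\nu$, which is $\sqrt{1+t^2}$-Lipschitz. Consequently
\[
\mathcal{H}^n\bigl(\psi_C(V\cap S_t(C))\bigr)\geq (1+t^2)^{-n/2}\,\mathcal{H}^n(V\cap S_t(C))>0.
\]
By construction $\psi_C(V\cap S_t(C))\subseteq\psi_C(V)\subseteq W$, and $\psi_C(V)\subseteq\nor(C)$ by \eqref{support eq2}; this yields $\mathcal{H}^n(W\cap\nor(C))>0$. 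The only non-routine step is the uniqueness in step one, which I expect to be the main obstacle; but the equality case of the triangle inequality handles it cleanly without appealing to any structural result about the cut locus $\Cut(C)$.
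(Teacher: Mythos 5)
Your proof is correct, and it takes a genuinely different (and more elementary) route than the paper in the final step.

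Both arguments share the same skeleton: identify a subset of $\Unp(C)$ with positive $\mathcal{L}^{n+1}$-measure that is carried by $\psi_C$ into $W\cap\nor(C)$, slice it with the coarea formula applied to $\dist(\cdot,C)$ to produce some level set $S_t(C)$ on which the relevant piece has positive $\mathcal{H}^n$-measure, and then transfer this positivity through $\psi_C$. You set this up by constructing a ball $B(x_0,\delta)$ around an explicit point $x_0=a+s'u$, whereas the paper takes the relatively open set $T=\psi_C^{-1}(W\cap\nor(C))$ directly; these are interchangeable. The substantive difference is in the transfer step. The paper first invokes \eqref{support eq3} (i.e.\ $\mathcal{H}^n(S_t(C)\cap\Unp(C)\cap\Cut(C))=0$, from \cite[Lemma 4.2(53)]{HugSantilli}) to discard the cut locus, and then applies \cite[Theorem 3.16]{HugSantilli} to assert that $\psi_C|S_\tau(C)\cap\{\rho_C\geq s/\tau\}$ is a bi-Lipschitz homeomorphism. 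You instead observe that $\psi_C$ restricted to $\Unp(C)\cap S_t(C)$ is injective with inverse $(\xi,\nu)\mapsto\xi+t\nu$, which is the restriction of a linear map and hence $\sqrt{1+t^2}$-Lipschitz; since only a Lipschitz \emph{inverse} is needed to push the lower bound on $\mathcal{H}^n$ through $\psi_C$, you need neither the cut-locus estimate nor the bi-Lipschitz theorem. This is a cleaner and more self-contained argument, at the cost of only the routine triangle-inequality computation at the start (which you carry out correctly). The trade-off is that the paper's route re-uses machinery from \cite{HugSantilli} that it invokes elsewhere, whereas yours requires only the continuity of $\psi_C$ and \eqref{support eq1}, \eqref{support eq2}.
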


\begin{proof}
Notice that $ \mathbf{R}^{n+1} \setminus \overline{C} \neq \varnothing $ since $ \nor(C) \neq \varnothing $. It follows from the continuity of $ \psi_C $ that $ \psi_C^{-1}(W \cap \nor(C)) $ is relatively open in $ \Unp(C) $. Henceforth,  we conclude from \eqref{support eq1} that 
$$ \mathcal{L}^{n+1}\big(\psi_C^{-1}(W \cap \nor(C))\big) > 0. $$
We define $ T = \psi_C^{-1}(W \cap \nor(C)) $. Since $ J_1 \dist(\cdot, C)  $ is $ \mathcal{L}^{n+1} $ almost equal to the constant function $ 1 $ on $ \mathbf{R}^{n+1} \setminus \overline{C} $, we use coarea formula to compute
$$ 0 < \mathcal{L}^{n+1}(T) = \int\mathcal{H}^n(T \cap S_t(C))\, dt $$
we infer there exists $ \tau > 0 $ so that $  \mathcal{H}^n(T \cap S_\tau(C)) > 0 $ and we use \eqref{support eq3} to conclude 
$$ \mathcal{H}^n\big((T \cap S_\tau(C)) \setminus \Cut(C)\big) > 0. $$
Consequently there exists $ s > \tau $ so that 
$$ \mathcal{H}^n \big(T \cap S_\tau(C) \cap \{\rho_C \geq s/ \tau\} \big) > 0. $$
Since $ \psi_C | S_\tau(C) \cap \{\rho_C \geq s/ \tau\} $ is a bi-lipschitz homeomorphism by \cite[Theorem 3.16]{HugSantilli}, 
we conclude that 
$$ \mathcal{H}^n \big(\psi_C\big[T \cap S_\tau(C) \cap \{\rho_C \geq s/ \tau\}\big] \big) > 0,  $$
whence we infer that $ \mathcal{H}^n(W \cap \nor(C)) > 0 $.
\end{proof}

\begin{remark}
In relation with Lemma \ref{dense support lem 1}, the following example is particularly appropriate.	

Suppose $ 0 < \alpha < 1 $ and $f : \mathbf{R}^{n} \rightarrow \mathbf{R} $ is a $ C^{1, \alpha} $-function such that
	$$ \mathcal{H}^n(\{(x, f(x)) : x \in \mathbf{R}^n\} \cap B) =0 $$
	whenever $ B $ is a $ n $-dimensional $ C^2 $-submanifold of $ \mathbf{R}^{n+1} $. The existence of this type of functions is proved in \cite{Kohn}. We define $ M = \{(x, f(x)) : x \in \mathbf{R}^n\} $, we choose $ \nu : M \rightarrow \mathbf{S}^n $ a unit-normal vector field of $ M $ of class $ C^{0, \alpha} $ and we define 
	$$ N^{+} = \{(x, \nu(x)) : x \in M\} \quad \textrm{and} \quad   N^{-} = \{(x, -\nu(x)) : x \in M\}. $$
	Clearly, $ N^+ $ and $ N^- $ are disjointed closed $ n $-dimensional $ C^{0,\alpha} $-submanifolds without boundary  of $ \mathbf{R}^{n+1} \times \mathbf{S}^n $ and 
	$ \nor(M) \subseteq N^+ \cup N^-$. Moreover, we notice that $ \mathcal{H}^n(\pi_0(\nor(M))) =0 $ since $ \pi_0(\nor(M)) $ is $ \mathcal{H}^n $-rectifiable of class $ 2 $ by \cite{MenneSantilli}. Noting that $ \pi_0| N^+ $ and $ \pi_0| N^- $ are homeomorphisms and recalling Remark \ref{rmk prox normal bundle}, we conclude that \emph{$ \nor(M) $ is a countably $ \mathcal{H}^n $-rectifiable subset of $ N^+ \cup N^- $ with empty relative interior.} 
\end{remark}

For the next proof we recall that, for a subset $ C \subseteq \mathbf{R}^{n+1} $, the normal cone $ \Nor(C,z) $ (see \cite[3.1.21]{Fed69}) coincides with the cone of \emph{regular normals} of $ C $ at $ z $ introduced in \cite[Definition 6.3]{RockafellarWets} (and denoted there by $ \hat{N}_C(z) $), while $ \nor(C, z) $ is the cone of proximal normals of unit length of $ C $ at $ z $ defined in \cite[Example 6.16]{RockafellarWets}.
 
\begin{lemma}\label{dense support lem 2}
Suppose $ U \subseteq \mathbf{R}^n $ is open and $ f \in C(U) $ such that $ \overline{\nabla f}[\Diff(f)] $ is a dense subset of $ U \times \mathbf{R}^n $. Then $ \nor(E_f) $ is dense in $ G \times \mathbf{S}^n_+ $, where $ G = \{(x, f(x)) : x \in U\} $.
\end{lemma}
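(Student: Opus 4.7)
My plan is to construct, for each $(x_0, f(x_0), \nu_0) \in G \times \mathbf{S}^n_+$ and each $\epsilon > 0$, a point $(y^\ast, f(y^\ast), \psi(y^\ast_s)) \in \nor(E_f)$ arbitrarily close to $(x_0, f(x_0), \nu_0)$. The strategy is two steps: first, produce a convex quadratic majorant of $f$ touching at a nearby point with the desired slope, by a variational argument exploiting the density of $\overline{\nabla f}[\Diff(f)]$; second, inscribe an upper sphere of matching curvature above this quadratic.

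Write $\nu_0 = \psi(y_0)$ with $y_0 \in \mathbf{R}^n$. By density I pick $x_1 \in \Diff(f)$ with $|x_1 - x_0|$ and $|y_1 - y_0|$ small, where $y_1 := \nabla f(x_1)$. For $A > 0$ large and $\rho > 0$ small (with $\overline{B_\rho(x_1)} \subset U$) consider
$$G(y) := f(y) - y_1 \cdot (y - x_1) - A |y - x_1|^2, \qquad y \in \overline{B_\rho(x_1)}.$$
Pointwise differentiability at $x_1$ gives $G(y) - G(x_1) = o(|y - x_1|) - A|y - x_1|^2$, which is strictly negative on $\partial B_\rho(x_1)$ provided $\rho$ is small relative to the modulus
$$\omega(\rho) := \sup_{0 < |y - x_1| \le \rho} \frac{|f(y) - f(x_1) - y_1 \cdot (y - x_1)|}{|y - x_1|},$$
which tends to $0$ as $\rho \to 0$. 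Hence $G$ attains its maximum at an interior point $y^\ast \in B_\rho(x_1)$, and rearranging $G(y) \le G(y^\ast)$ yields
$$f(y) \le P(y) := f(y^\ast) + y^\ast_s \cdot (y - y^\ast) + A |y - y^\ast|^2 \qquad (y \in \overline{B_\rho(x_1)}),$$
with equality at $y^\ast$, where $y^\ast_s := y_1 + 2A(y^\ast - x_1)$; thus $P$ is an upward-opening convex paraboloid touching $f$ from above at $y^\ast$. The inequality $G(y^\ast) \ge G(x_1)$ forces $A|y^\ast - x_1|^2 \le \omega(|y^\ast-x_1|)\,|y^\ast - x_1|$, hence $|y^\ast - x_1| \le \omega(\rho)/A$ and $|y^\ast_s - y_1| \le 2\omega(\rho)$.

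Now set $s := 1/(2A)$ and $c := (y^\ast, f(y^\ast)) + s \psi(y^\ast_s) \in \mathbf{R}^{n+1}$. A direct computation for the lower hemisphere $h_s$ of $\partial B_s(c)$, parametrised as a convex graph over its $\mathbf{R}^n$-projection centred at $y^\ast - s y^\ast_s/\sqrt{1 + |y^\ast_s|^2}$, shows $h_s(y^\ast) = f(y^\ast)$, $\nabla h_s(y^\ast) = y^\ast_s$, and the uniform pointwise bound $\mathrm{Hess}\,h_s \ge (1/s)\,I = 2A\,I = \mathrm{Hess}\,P$ holds on the whole projection. Hence $h_s - P$ is convex on the projection and vanishes to first order at the interior point $y^\ast$, so $h_s \ge P$ throughout the projection. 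Choosing $A = 2/\rho$ ensures the projection of $B_s(c)$ is contained in $B_\rho(x_1)$; chaining $f \le P \le h_s$ on the projection is exactly the condition $B_s(c) \cap E_f = \varnothing$, so $(y^\ast, f(y^\ast), \psi(y^\ast_s)) \in \nor(E_f)$. By continuity of $f$ and $\psi$, taking $|x_1 - x_0|$, $|y_1 - y_0|$, and $\rho$ small places this point in any prescribed neighbourhood of $(x_0, f(x_0), \nu_0)$ in $G \times \mathbf{S}^n_+$.

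The main obstacle is to pass from the local Taylor match of $h_s$ and $P$ at $y^\ast$ to the global inequality $h_s \ge P$ on the whole projection; this is where the sharp hemispherical curvature bound $\mathrm{Hess}\,h_s(y) \ge (1/s)\,I$ valid \emph{pointwise} on the projection is essential, together with the observation that a convex function with vanishing value and gradient at an interior critical point is non-negative. The coupling $s = 1/(2A)$ between the sphere radius and the curvature of $P$ is what makes this comparison clean, and together with the variational interior maximum it is really the density hypothesis (via pointwise differentiability at $x_1$) that drives the whole construction.
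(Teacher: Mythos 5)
Your proof is correct, but it takes a genuinely different route from the paper's. The paper's argument is just two lines: it observes that $\psi(\nabla f(x))$ is a \emph{regular} normal to $E_f$ at $\overline f(x)$ whenever $x\in\Diff(f)$, notes that the density hypothesis (pushed forward by the homeomorphism $(x,y)\mapsto(\overline f(x),\psi(y))$) makes the set of such graph-point/regular-normal pairs dense in $G\times\mathbf S^n_+$, and then cites the general fact that regular normals to an arbitrary closed set are limits of proximal normals (Rockafellar--Wets, Exercise 6.18(a)). Your proof essentially re-derives that last fact from scratch in the subgraph setting: you build, near a given differentiability point $x_1$ and slope $y_1$, a concave comparison functional $G(y)=f(y)-y_1\cdot(y-x_1)-A|y-x_1|^2$ whose interior maximum produces a quadratic majorant $P$ of $f$ touching from above at $y^\ast$, and then you inscribe a ball of radius $s=1/(2A)$ above $P$ via the pointwise hemisphere Hessian bound $\mathrm{Hess}\,h_s\ge(1/s)I$, together with the convexity argument for $h_s-P$ and the coupling $A=2/\rho$ to keep the projection inside $B_\rho(x_1)$. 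The variational steps, the estimates $|y^\ast-x_1|\le\omega(\rho)/A$ and $|y^\ast_s-y_1|\le2\omega(\rho)$, and the sphere comparison all check out. What your approach buys is self-containedness and an explicit, quantitative sphere-touching construction that makes no appeal to external variational-analysis machinery; what the paper's approach buys is brevity, since the regular-normal/proximal-normal approximation is a standard tool and the reduction to it is nearly immediate from the density hypothesis. Both arguments are valid, and yours would also serve as a direct proof of the needed special case of that cited exercise.
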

\begin{proof}
First, we observe that 
$$ \psi(\nabla f(x)) \in  \Nor(E_f, \overline{f}(x)) \qquad \textrm{for every $ x \in \Diff(f) $}. $$
Since $  \psi $ is a diffeomorphism of $ \mathbf{R}^n $ onto $ \mathbf{S}^n_+ $ and $ f $ is continuous,  the set  $ \{(\overline{f}(x), \psi(\nabla f(x))) : x \in U\} $ is dense in $ G \times \mathbf{S}^n_+ $; consequently we infer that $ \nor(E_f)  $ is dense in $ G \times \mathbf{S}^n_+ $ by standard approximation of regular normals, see \cite[Exercise 6.18(a)]{RockafellarWets}.
\end{proof}

Fu in \cite[pag.\ 2260]{FuAlexandrov} observed that there exist continuous functions as in Lemma \ref{dense support lem 2} that belong to $ W^{2,n}(U) $. Consequently, combining Lemma \ref{dense support lem 1}, Lemma \ref{dense support lem 2} and Theorem \ref{W2n functions normal cycles}, we conclude that there exists $ n $-dimensional Legendrian cycles (of open subsets on $ \mathbf{R}^{n+1}$) whose support has positive $ \mathcal{H}^{2n} $-measure. This answers a question implicit in \cite[Remark 2.3]{RatajZaehle2015}.

\section{Reilly-type variational formulae for $ W^{2,n} $-domains}

In this section we  study the structure of the unit normal bundle of a $ W^{2,n} $-domain (see Theorem \ref{W2n domains}), and we prove the variational formulae for their mean curvature functions (see Theorem \ref{Reilly variational formualae}). The latter extends the well known variational formulae obtained by Reilly in \cite{Reilly1972} for smooth domains. As a corollary Minkowski-Hsiung formulae are also proved; see Theorem \ref{Minkowski formulae}.

\begin{definition}[Viscosity boundary]
Suppose $ \Omega \subseteq \mathbf{R}^{n+1} $ be an open set. We define $ \partial^v_+ \Omega $ to be the set of all $ p \in \partial \Omega $ such that there exists $ \nu \in \mathbf{S}^n $ and $ r > 0 $ such that 
$$ B^{n+1}(p + r \nu, r) \cap \Omega = \varnothing \quad \textrm{and} \quad B^{n+1}(p - r \nu, r) \subseteq \Omega. $$
[Notice $ \{p\} = \partial B(p+r\nu, r) \cap \partial B(p-r\nu, r) $.] Clearly for each $ p \in \partial^v_+ \Omega $ the unit vector $ \nu $ is unique. This defines an exterior unit-normal vector field on $ \partial^v_+ \Omega $,  
$$ \nu_\Omega : \partial^v_+ \Omega \rightarrow \mathbf{S}^n. $$
\end{definition}

We introduce the notion of second order rectifiability. Suppose $ X \subseteq \mathbf{R}^m $ and $ \mu $ is a positive integer such that $ \mathcal{H}^\mu(X) < \infty $. We say that $ X $ is \emph{$\mathcal{H}^\mu$-rectifiable of class $ 2 $} if and only if there exists countably many $ \mu $-dimensional submanifolds $ \Sigma_i \subseteq \mathbf{R}^m $ of class $ 2 $ such that 
$$ {\textstyle \mathcal{H}^\mu\big( X \setminus \bigcup_{i=1}^\infty \Sigma_i\big) =0}. $$

\begin{lemma}\label{lem approx diff unit normal}
	Suppose $ X \subseteq \mathbf{R}^{n+1} $ is $ \mathcal{H}^n $-measurable and $ \mathcal{H}^n $-rectifiable of class $ 2 $, and $ \nu : X \rightarrow \mathbf{S}^{n}$ is a  $ \mathcal{H}^n \restrict X  $-measurable map such that 
	$$ \nu(a) \in \Nor^n(\mathcal{H}^n \restrict X, a) \quad \textrm{for $ \mathcal{H}^n $ a.e.\ $ a \in X $.} $$
	
	Then there exist countably many  $ \mathcal{H}^n$-measurable sets $ X_i \subseteq X $ such that $ \mathcal{H}^n\big(X \setminus \bigcup_{i =1}^\infty X_i\big) =0 $ and \mbox{$ \Lip(\nu|X_i) < \infty $;} moreover, $ \nu $ is $ \mathcal{H}^n \restrict X $-approximately differentiable at $ \mathcal{H}^n $ a.e.\ $ a \in X $ and $ \ap \Der \nu(a) $ is a symmetric endomorphism of $ \Tan^n(\mathcal{H}^n \restrict X, a) $.
\end{lemma}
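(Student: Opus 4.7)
The plan is to exploit the second-order rectifiability of $X$ to reduce the statement to standard $C^2$-differential geometry of hypersurfaces. By definition of $\mathcal{H}^n$-rectifiability of class $2$, there exists a countable family of $n$-dimensional $C^2$-submanifolds $\Sigma_i\subseteq\mathbf{R}^{n+1}$ with $\mathcal{H}^n\bigl(X\setminus\bigcup_i\Sigma_i\bigr)=0$. Setting $Y_i=(X\cap\Sigma_i)\setminus\bigcup_{j<i}\Sigma_j$, one has $\mathcal{H}^n(X\setminus\bigcup_i Y_i)=0$ and, at $\mathcal{H}^n$-a.e.\ $a\in Y_i$, the density of $\mathcal{H}^n\restrict(X\setminus\Sigma_i)$ at $a$ vanishes, so that $\Tan^n(\mathcal{H}^n\restrict X,a)=T_a\Sigma_i$ is a genuine $n$-plane and therefore $\Nor^n(\mathcal{H}^n\restrict X,a)=(T_a\Sigma_i)^{\perp}$ is one-dimensional. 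By a further countable refinement of the cover I may assume that each $\Sigma_i$ carries a globally defined $C^1$ unit-normal field $\mathbf{n}_i:\Sigma_i\to\mathbf{S}^n$.

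Next, since $|\nu(a)|=1$ and $\nu(a)\in(T_a\Sigma_i)^{\perp}$ for $\mathcal{H}^n$-a.e.\ $a\in Y_i$, the function $\epsilon_i(a):=\nu(a)\bullet\mathbf{n}_i(a)$ takes values in $\{-1,+1\}$ and is $\mathcal{H}^n\restrict Y_i$-measurable. Splitting $Y_i$ into $Y_i^{\pm}:=\{a\in Y_i:\epsilon_i(a)=\pm 1\}$ yields $\nu=\pm\mathbf{n}_i$ $\mathcal{H}^n$-almost everywhere on $Y_i^{\pm}$. Since $\mathbf{n}_i$ is locally Lipschitz on $\Sigma_i$, exhausting each $Y_i^{\pm}$ by bounded sets on which $\|\Der\mathbf{n}_i\|$ is uniformly bounded produces the desired countable family $\{X_j\}$ with $\Lip(\nu|X_j)<\infty$ and $\mathcal{H}^n(X\setminus\bigcup_j X_j)=0$, proving the first assertion.

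Finally, at any point $a\in Y_i^{\pm}$ which is both a Lebesgue density point of $Y_i^{\pm}$ relative to $\mathcal{H}^n\restrict X$ and at which $\Tan^n(\mathcal{H}^n\restrict X,a)=T_a\Sigma_i$ --- a set of full $\mathcal{H}^n$-measure by \cite[2.10.19]{Fed69} --- the identity $\nu=\pm\mathbf{n}_i$ on $Y_i^{\pm}$ together with the $C^1$-regularity of $\mathbf{n}_i$ implies, directly from the definition of approximate differentiability in \cite[3.2.16]{Fed69}, that $\nu$ is $\mathcal{H}^n\restrict X$-approximately differentiable at $a$ with $\ap\Der\nu(a)=\pm\Der\mathbf{n}_i(a)|T_a\Sigma_i$. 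The symmetry claim then reduces to the classical symmetry of the Weingarten map of a $C^2$ hypersurface: differentiating $\mathbf{n}_i\bullet\mathbf{n}_i\equiv 1$ along $\Sigma_i$ shows that $\Der\mathbf{n}_i(a)$ maps $T_a\Sigma_i$ into itself, while differentiating the identity $\mathbf{n}_i\bullet V\equiv 0$ along $C^1$ tangent extensions $V,W$ of any pair of vectors in $T_a\Sigma_i$ and comparing the two resulting equalities after swapping $V$ and $W$ yields $\langle\Der\mathbf{n}_i(a)V,W\rangle=\langle V,\Der\mathbf{n}_i(a)W\rangle$. I expect the only genuine subtlety to be the measurable sign decomposition combined with the requirement that $\ap\Der\nu(a)$ be computed on the approximate tangent plane of $X$ itself --- not on the a priori smaller approximate tangent plane of $Y_i^{\pm}$ --- which is precisely why the argument must be carried out at common density points where these two planes collapse onto $T_a\Sigma_i$.
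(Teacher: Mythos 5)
Your proof is correct and follows essentially the same route as the paper's: cover $X$ up to a null set by $C^2$-hypersurfaces $\Sigma_i$, use Federer \cite[2.10.19(4)]{Fed69} to identify $\Tan^n(\mathcal{H}^n\restrict X,a)$ with $T_a\Sigma_i$ at density points, split according to the sign $\nu=\pm\mathbf{n}_i$, and read off approximate differentiability and symmetry from the Weingarten map of each $\Sigma_i$. The only cosmetic differences are that you disjointify the cover via the $Y_i$ and spell out the Weingarten symmetry, which the paper simply invokes.
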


\begin{proof}
	Suppose $ \{\Sigma_i\}_{i \geq 1} $ is a countable family of $ C^2 $-hypersurfaces such that $$ \mathcal{H}^n\big(X \setminus {\textstyle\bigcup_{i=1}^\infty} \Sigma_i\big) =0 $$ and  $ \eta_i : \Sigma_i \rightarrow \mathbf{S}^n $  is a continuously differentiable unit-normal vector field with $ \Lip(\eta_i) < \infty $ for $ i \geq 1 $. By \cite[2.10.19(4)]{Fed69} 
	$$ \Theta^n(\mathcal{H}^n \restrict X \setminus \Sigma_i, a) = \Theta^n(\mathcal{H}^n \restrict \Sigma_i \setminus X, a) =0 \quad \textrm{for $ \mathcal{H}^n $ a.e.\ $ a \in \Sigma_i \cap X $,} $$
	whence we infer that $ \Tan^n(\mathcal{H}^n \restrict X, a) = \Tan(\Sigma_i,a) $ for $ \mathcal{H}^n $ a.e.\ $ a \in \Sigma_i \cap X $.  In particular, if we define $$ \Sigma_i^+ =\{a \in \Sigma_i \cap X : \eta_i(a) =  \nu(a)\} \quad \textrm{and}  \quad  \Sigma_i^- =\{a \in \Sigma_i \cap X : \eta_i(a) = - \nu(a)\} $$
	we infer that $ \mathcal{H}^n\big(\Sigma_i \cap X \setminus (\Sigma_i^+ \cup \Sigma_i^-)\big) =0 $ for each $ i \geq 1 $.
	Moreover, employing again \cite[2.10.19(4)]{Fed69} we infer that 
	\begin{equation*}
		 \Theta^n(\mathcal{H}^n \restrict X \setminus \Sigma_i^\pm, a) =0 \quad \textrm{for $ \mathcal{H}^n $ a.e.\ $ a \in \Sigma_i^\pm $} 
	\end{equation*}
	whence we deduce that $ \nu $ is $ \mathcal{H}^n \restrict X $ approximately differentiable at $ a $ with  
	$$ \ap \Der \nu(a) = \pm \Der\eta_i(a) $$
for $ \mathcal{H}^n $ a.e.\ $ a \in \Sigma_i^\pm $.	Finally, since $ \Der \eta_i(a) $ is symmetric, we conclude the proof.
\end{proof}

We introduce now the class of $ W^{2,n} $ domains in a slightly more general fashion than the notion given in the Introduction. 

	\begin{definition}\label{def W2n domains}
	An open set $ \Omega\subseteq \mathbf{R}^{n+1} $ is a $ W^{2,n} $-domain if and only there exists a couple $(\Omega', F) $, where 
	\begin{enumerate}
		\item $ \Omega' \subseteq \mathbf{R}^{n+1} $ is an open set such that for each $ p \in \partial \Omega' $ there exist $ \epsilon > 0 $, $ \nu \in \mathbf{S}^n $, a bounded open set $ U \subseteq \nu^\perp $ with $ 0 \in U $ and a continuous function $ f \in W^{2,n}(U)$ with $ f(0) =0 $ such that 
		$$ \{p + b + \tau \nu : b \in U, \, -\epsilon < \tau \leq f(b)\} = \overline{\Omega'} \cap \{p + b + \tau \nu: b \in U, \, -\epsilon < \tau < \epsilon \}, $$
		\item $ F $ is a $ C^2 $-diffeomorphism defined over an open set $ V \subseteq  \mathbf{R}^{n+1} $ such that $ \overline{\Omega'} \subseteq V $,
		\item $ F(\Omega') = \Omega $.
	\end{enumerate}
\end{definition}

\begin{remark}
	This class of domains is invariant under images of $ C^2 $-diffeomorphisms, which is clearly a necessary condition in order to provide a natural framework to generalize Reilly's variational formulae. We do not know if we really need to introduce the diffeomorphism $ F $ in the definition above; in other words, if $ \Omega' $ belongs to the class $ \mathcal{S} $ of domains satisfying only condition (1) of Definition \ref{def W2n domains}, is it true that $ F(\Omega') $ belongs to $ \mathcal{S} $ too?
\end{remark}    

We collect some basic properties of $ W^{2,n} $-domains.
\begin{lemma}\label{W2n domains basic lemma}
If $ \Omega \subseteq \mathbf{R}^{n+1} $ is a $W^{2,n} $-domain, then the following statements hold.
\begin{enumerate}
\item\label{W2n domains basic lemma 1} $ \mathcal{H}^n(\partial \Omega \setminus \partial^v_+\Omega) =0 $ and $ K \cap \partial \Omega $ is $\mathcal{H}^n $-rectifiable of class $ 2 $ for every compact set $ K \subseteq \mathbf{R}^{n+1} $.
\item\label{W2n domains basic lemma 2} For $ \mathcal{H}^n $ a.e.\ $ p \in \partial \Omega $, 
 $$ \Tan^n(\mathcal{H}^n \restrict \partial \Omega, p) = \Tan(\partial \Omega, p)  = \nu_\Omega(p)^\perp. $$
\item\label{W2n domains basic lemma 3} For every $ p \in \partial^v_+ \Omega $,
 $$ \Tan^{n+1}(\mathcal{L}^{n+1} \restrict  \Omega, p) = \Tan(\Omega, p)  = \{v \in \mathbf{R}^{n+1}: v \bullet \nu_\Omega(p) \leq 0\}. $$
\end{enumerate}
\end{lemma}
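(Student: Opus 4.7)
The strategy is to reduce to the case of a single local graph. Since $F\colon V\to F(V)$ is a $C^2$-diffeomorphism with $F(\Omega') = \Omega$, each of the three properties passes from $\partial\Omega'$ to $\partial\Omega$: a touching ball of radius $r$ in $\Omega'$ maps to a $C^2$-distorted ball whose curvatures at the contact point are controlled by $\|F\|_{C^2}$ and $1/r$, so a genuine ball of a smaller but controlled radius can be inscribed tangent to $\partial\Omega$ on the correct side; $C^2$-diffeomorphisms preserve $\mathcal{H}^n$-rectifiability of class $2$ and send $\mathcal{H}^n$-null sets to $\mathcal{H}^n$-null sets; and $DF$ transports both $\Tan$ and the half-space description in (3) compatibly with the normal $\nu_\Omega$. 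Thanks to the local nature of all three claims, this reduces everything to a setting where $\Omega$ coincides with the subgraph $E_f$ of some continuous $f \in W^{2,n}(U)$ on a bounded open set $U \subseteq \mathbf{R}^n$, and $\partial\Omega$ coincides with $\overline{f}(U)$ within a coordinate chart.

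For part (1), Lemma \ref{W2n twice diff} gives $\mathcal{L}^n(U \setminus \mathcal{S}(f)) = 0$. At each $x\in\mathcal{S}(f)$ there is a quadratic polynomial $P$ with $f(y) = P(y) + o(|y-x|^2)$; fixing a small $\varepsilon>0$, the smooth sandwich functions $h_\pm(y) = P(y) \pm \varepsilon|y-x|^2$ satisfy $h_- \leq f \leq h_+$ on a neighborhood of $x$ and have bounded second derivatives there, so balls of sufficiently small radius can be inscribed tangent to the graph of $h_+$ from above and to the graph of $h_-$ from below at the point $\overline{f}(x)$; since $E_{h_-} \subseteq E_f \subseteq E_{h_+}$ locally, the same balls serve as two-sided touching balls for $E_f$, yielding $\overline{f}(x) \in \partial^v_+\Omega$. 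To upgrade this to $\mathcal{H}^n(\partial\Omega \setminus \partial^v_+\Omega) = 0$ I use that $f \in W^{1,1}_\loc$: Lemma \ref{lem weakly vs approx diff Sobolev maps} yields a countable decomposition of $U$ into sets on which $\overline{f}$ is Lipschitz, so $\overline{f}$ maps $\mathcal{L}^n$-null sets to $\mathcal{H}^n$-null sets, and in particular $\mathcal{H}^n(\overline{f}(U\setminus\mathcal{S}(f))) = 0$. For the $\mathcal{H}^n$-rectifiability of class $2$ of $K \cap \partial\Omega$, I partition $\mathcal{S}(f)$ into countably many Borel subsets on which the Taylor coefficients of $P$ are uniformly bounded and on which the $o(|y-x|^2)$ remainder is uniformly controlled, then apply a Whitney-type extension theorem (in the spirit of Calder\'on--Zygmund) to cover each piece of $\overline{f}(\mathcal{S}(f))$ by the graph of a $C^2$-function.

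Part (3) is a direct consequence of the definition of $\partial^v_+\Omega$: the exterior ball $B(p+r\nu,r)\cap\Omega = \varnothing$ forbids any $v$ with $v\bullet\nu_\Omega(p) > 0$ from lying in $\Tan(\Omega,p)$ or contributing positive upper density of $\mathcal{L}^{n+1}\restrict\Omega$ at $p$, while the interior ball $B(p-r\nu,r) \subseteq \Omega$ forces positive lower density of $\Omega$ along every direction $v$ with $v\bullet\nu_\Omega(p) < 0$, so both cones coincide with the closed half-space $\{v: v\bullet\nu_\Omega(p) \leq 0\}$. For (2), at each $x\in\mathcal{S}(f)$ the sandwich yields pointwise differentiability of $f$ with $\nabla f(x) = \nabla P(x)$ (cf.\ Remark \ref{rmk diff points of W2n}), so the graph of $f$ admits a classical tangent plane at $\overline{f}(x)$ equal to $\{(w,\nabla P(x)\bullet w): w \in \mathbf{R}^n\} = \nu_\Omega(\overline{f}(x))^\perp$; hence $\Tan(\partial\Omega,p) = \nu_\Omega(p)^\perp$ for $\mathcal{H}^n$-a.e.\ $p$. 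The approximate tangent $\Tan^n(\mathcal{H}^n\restrict\partial\Omega,p)$ is an $n$-dimensional subspace a.e.\ by second-order rectifiability and, by the same exterior-ball density argument as in (3), cannot contain any $v$ with $v\bullet\nu_\Omega(p) > 0$, so it coincides with $\nu_\Omega(p)^\perp$.

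The most delicate step is the $\mathcal{H}^n$-rectifiability of class $2$ in (1): one must upgrade the purely a.e.\ pointwise twice-differentiability on $\mathcal{S}(f)$, which carries no uniform quantitative control, to an actual countable covering by graphs of $C^2$-functions via Whitney extension over sets of uniform Taylor data. The $C^2$-transfer under $F$ in the reduction is also technical but standard, the main point being that images of touching balls are only $C^2$-distorted spheres, so the inscribed radius must be tracked carefully in terms of $\|F\|_{C^2}$ and the original touching radius.
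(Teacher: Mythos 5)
Your overall structure matches the paper's: reduce to a local graph $E_f$ via the $C^2$-diffeomorphism $F$, show that $\overline{f}(\mathcal{S}(f)) \subseteq \partial^v_+\Omega$ via a two-sided quadratic sandwich, and then control the remaining $\mathcal{L}^n$-null set and upgrade to $C^2$-rectifiability. The reduction through $F$ and the treatment of parts (2) and (3) are sound. However, there is a genuine gap in how you pass from the $\mathcal{L}^n$-null set $U\setminus\mathcal{S}(f)$ to the $\mathcal{H}^n$-null set $\overline{f}(U\setminus\mathcal{S}(f))$.

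You write that Lemma \ref{lem weakly vs approx diff Sobolev maps} ``yields a countable decomposition of $U$ into sets on which $\overline{f}$ is Lipschitz, so $\overline{f}$ maps $\mathcal{L}^n$-null sets to $\mathcal{H}^n$-null sets.'' This inference is wrong. That lemma provides $\mathcal{L}^n$-measurable sets $A_1, A_2, \ldots$ with $\Lip(\overline{f}|A_i) < \infty$ and $\mathcal{L}^n\big(U \setminus \bigcup_i A_i\big) = 0$; for a null set $Z$ this controls $\mathcal{H}^n\big(\overline{f}(Z \cap \bigcup_i A_i)\big)$, but it says nothing about $\mathcal{H}^n\big(\overline{f}(Z \setminus \bigcup_i A_i)\big)$, i.e.\ the image of the exceptional null set $U\setminus\bigcup_i A_i$ where no Lipschitz control is available. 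That exceptional set is precisely where a graph map can fail Lusin (N): a countable Lipschitz decomposition up to a null set holds for every $W^{1,1}$ function, yet $W^{1,1}$ graph maps in general do not satisfy Lusin (N). What is actually needed is a genuine Lusin (N) theorem for $\overline{f}$, and this is exactly what the paper invokes (Remark \ref{rmk Lusin condition (N) for f}): since $f \in W^{2,n}(U)$, the Sobolev embedding gives $f \in W^{1,p^\ast}(U)$ for some $p^\ast > n$, and then Marcus--Mizel's theorem \cite[Theorem 1.1]{MarcusMizel} yields Lusin (N) for $\overline{f}$. Without that input, you cannot conclude $\mathcal{H}^n(\partial\Omega \setminus \partial^v_+\Omega) = 0$, and the very same issue reappears in your $C^2$-rectifiability step: even after constructing the $C^2$-Lusin approximation of $f$ on a full-$\mathcal{L}^n$-measure set, you need Lusin (N) to see that the residual portion of the graph is $\mathcal{H}^n$-negligible.

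A secondary, non-fatal comment: reconstructing the $C^2$-Lusin approximation by hand through a Whitney extension over sets of uniform Taylor data amounts to re-proving Calder\'on--Zygmund's theorem; the paper simply cites \cite[Theorem 13]{CalderonZygmund} (together with \cite[2.10.19(4), 2.10.43]{Fed69}), which already gives countably many $g_i \in C^2(\mathbf{R}^n)$ with $\mathcal{L}^n\big(U \setminus \bigcup_i \{g_i = f\}\big) = 0$. Also, in part (2) you derive $\Tan(\partial\Omega, p) = \nu_\Omega(p)^\perp$ from pointwise differentiability of $f$, while the paper derives the inclusion $\Tan(\partial\Omega, p) \subseteq \nu_\Omega(p)^\perp$ directly from the two-sided touching ball at $p \in \partial^v_+\Omega$ and then uses that $\Tan^n(\mathcal{H}^n\restrict\partial\Omega, p)$ is an $n$-plane a.e.; both routes work once part (1) is established.
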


\begin{proof}
	Suppose $ \Omega = F(\Omega') $, where $ \Omega' $ and $ F $ are as in Definition \ref{def W2n domains}. Clearly, $ F(\partial \Omega') = \partial \Omega $ and $ F(\partial^v_+ \Omega') = \partial^v_+ \Omega $. Therefore, assertion \eqref{W2n domains basic lemma 1}  follows from Theorem \ref{W2n twice diff}, Theorem \ref{Nabelpunktsatz} and Remark \ref{rmk Lusin condition (N) for f}.  
	
	If $ p \in \partial^v_+ \Omega $, we have $ \Tan(\partial \Omega, p)  \subseteq \nu_\Omega(p)^\perp; $ since $  \Tan^n(\mathcal{H}^n \restrict \partial \Omega, p) \subseteq \Tan(\partial \Omega, p) $ for every $ p \in \partial \Omega $ and $ \Tan^n(\mathcal{H}^n \restrict \partial \Omega, p) $ is a $ n $-dimensional plane for $ \mathcal{H}^n $ a.e.\ $ p \in \partial \Omega $, we obtain \eqref{W2n domains basic lemma 2}. Finally it follows from definitions that 
$ \Tan(\Omega, p)  = \{v \in \mathbf{R}^{n+1}: v \bullet \nu_\Omega(p) \leq 0\} $ and $ \Tan^{n+1}(\mathcal{L}^{n+1} \restrict  \Omega, p) = \{v \in \mathbf{R}^{n+1}: v \bullet \nu_\Omega(p) \leq 0\} $
for every $ p \in \partial^v_+ \Omega $.
\end{proof}

 By Lemma \ref{lem approx diff unit normal} the map $ \nu_\Omega $ is $ \mathcal{H}^n \restrict \partial \Omega $-approximately differentiable with a symmetric approximate differential $ \ap \Der \nu_\Omega(x) $ at $ \mathcal{H}^n $ a.e.\ $ x \in \partial \Omega $. Consequently we introduce the following definition.

\begin{definition}[Approximate principal curvatures]
	Suppose $ \Omega \subseteq \mathbf{R}^{n+1} $ is a  $ W^{2,n} $-domain. The \emph{approximate principal curvatures of $ \Omega $} are the $ \mathbf{R} $-valued $ (\mathcal{H}^n \restrict \partial \Omega) $-measurable maps 
	$$ \rchi_{\Omega, 1}, \ldots , \rchi_{\Omega, n}, $$
	defined so that $ \rchi_{\Omega, 1}(p) \leq \ldots \leq \rchi_{\Omega, n}(p) $ are the eigenvalues of $ \ap \Der \nu_\Omega(p) $ for $ \mathcal{H}^n $ a.e.\ $ p \in \partial \Omega $.
\end{definition} 

We prove now the main structure theorem for the unit normal bundle $ \nor(\Omega) $ of a $ W^{2,n} $-domain.

\begin{theorem}\label{W2n domains}
	If $ \Omega \subseteq \mathbf{R}^{n+1} $ is a $W^{2,n}$-domain then the following statements hold.
	\begin{enumerate}
	\item\label{W2n domains 1} $ \mathcal{H}^n(\overline{\nu_\Omega}(Z)) =0 $ whenever $ Z \subseteq \partial^v_+ \Omega $ with $ \mathcal{H}^n(Z) =0 $.
	\item\label{W2n domains 2} $ \mathcal{H}^n\big(\nor(\Omega)
	\setminus \overline{\nu_\Omega}(\partial^v_+ \Omega)\big) =0 $.
	\item\label{W2n domains 3}  $ \kappa_{\Omega, i}(x, u) = \rchi_{\Omega, i}(x) $ for every $ i =1, \ldots , n $ and for $ \mathcal{H}^n $ a.e.\ $ (x,u)\in \nor(\Omega) $. In particular, $ \kappa_{\Omega,i}(x,u) < \infty $ for $ \mathcal{H}^n $ a.e.\ $ (x,u)\in \nor(\Omega) $.
	\item\label{W2n domains 4} If $ \partial \Omega $ is compact, then $ \mathcal{H}^n(\nor(\Omega))< 	\infty $ and there exists a unique Legendrian cycle $ T $ of $ \mathbf{R}^{n+1} $ such that
	$$ T = (\mathcal{H}^n \restrict \nor(\Omega)) \wedge \eta,  $$
	where $ \eta $ is a $ \mathcal{H}^n \restrict \nor(\Omega) $ measurable $ n $-vectorfield such that
	$$ | \eta(x, u)| =1, \quad \textrm{$ \eta(x, u) $ is simple}, $$
	$$ \textrm{$\Tan^n(\mathcal{H}^n \restrict \nor(\Omega), (x, u)) $ is associated with $ \eta(x, u) $} $$
	and 
	$$ \langle \big[ {\textstyle\bigwedge_n} \pi_0\big](\eta(x,u)) \wedge u, E' \rangle > 0 $$
	for $ \mathcal{H}^n $ a.e.\ $(x,u) \in \nor(\Omega) $. In this case, $ \eta = \zeta_1 \wedge \ldots \wedge \zeta_n $, where 
	$$ \zeta_i = \Bigg(\frac{1}{\sqrt{1 + \kappa_{\Omega, i}^2}} \tau_i, \frac{\kappa_{\Omega,i}}{\sqrt{1 + \kappa_{\Omega,i}^2}}\tau_i \Bigg) \quad \textrm{for $i = 1, \ldots, n $} $$
	and $ \tau_1(x,u), \ldots , \tau_n(x,u) $ are an orthonormal basis of $ u^\perp $ such that $ \tau_1(x,u) \wedge \ldots \wedge \tau_n(x,u) \wedge u = E $ for $ \mathcal{H}^n $ a.e.\ $(x,u) \in \nor(\Omega) $.
	\end{enumerate}
\end{theorem}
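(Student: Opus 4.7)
My plan is to reduce all four assertions to the local subgraph case treated in Section 3. By Definition \ref{def W2n domains} the class of $W^{2,n}$-domains is $C^2$-diffeomorphism invariant, and the objects $\nor$, $\overline{\nu_\Omega}$, $\partial^v_+$, and the collections $\rchi_{\Omega,i}$ and $\kappa_{\Omega,i}$ all transform naturally under $C^2$-diffeomorphisms (with the unit normal renormalized by $\Psi_F$ from \eqref{PsiF}). Hence I may work locally at a point $p \in \partial \Omega$, assuming $\Omega \cap W = E_f \cap W$ with $f \in C(U) \cap W^{2,n}(U)$, and I write $\pi : \operatorname{graph}(f) \to U$ for the base projection, which is $1$-Lipschitz.

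For (1), points of $\partial^v_+ E_f$ admit exterior tangent balls, so $\pi(Z) \subseteq \mathcal{S}^*(f)$; since $\pi$ is $1$-Lipschitz, $\mathcal{L}^n(\pi(Z))=0$, and $\overline{\nu_{E_f}} | \partial^v_+ E_f = \Phi_f \circ \pi$, so Lemma \ref{W2n functions Lusin} yields $\mathcal{H}^n(\overline{\nu_{E_f}}(Z))=0$. For (2), Lemma \ref{W2n functions normal bundle} identifies $\nor(E_f)$ locally with $\Phi_f(\mathcal{S}^*(f))$; since $\mathcal{L}^n(\mathcal{S}^*(f) \setminus \mathcal{S}(f))=0$ by Lemma \ref{W2n twice diff}, a further application of Lemma \ref{W2n functions Lusin} shows that $\nor(E_f) = \Phi_f(\mathcal{S}(f))$ up to $\mathcal{H}^n$-null sets, and every $x \in \mathcal{S}(f)$ enjoys two-sided quadratic contact with $f$, hence two-sided tangent balls at $(x, f(x))$, so $\Phi_f(\mathcal{S}(f)) \subseteq \overline{\nu_{E_f}}(\partial^v_+ E_f)$.

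For (3), at $x \in \mathcal{S}(f)$ the osculating polynomial $P$ satisfies $\Der^2 P(x) = \der^2 f(x)$ by Remark \ref{rmk diff points of W2n} applied to $\nabla f$ (via Lemma \ref{lem weakly vs approx diff Sobolev maps}), and the classical shape-operator formula for the Gauss map of a smooth graph produces $\ap \Der \nu_{E_f}(x, f(x))$ whose eigenvalues are the $\rchi_{E_f, i}(x, f(x))$. The explicit form of $\ap \Der \Phi_f$ given in Remark \ref{rmk approximate differentiability of Phi f} then identifies the frame $\{\zeta_i\}$ of Lemma \ref{lem: Santilli20} on $\Tan^n(\mathcal{H}^n \restrict \nor(E_f), \Phi_f(x))$ with the frame built from eigenvectors of this shape operator, yielding $\kappa_{E_f, i}(\Phi_f(x)) = \rchi_{E_f, i}(x, f(x))$. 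For (4), when $\partial \Omega$ is compact I cover it by finitely many charts of the above form, apply Theorem \ref{W2n functions normal cycles} and push forward by the local $C^2$-diffeomorphisms to obtain Legendrian cycles $T_j$ of open sets $W_j \subseteq \mathbf{R}^{n+1}$, carried by $\nor(\Omega) \cap (W_j \times \mathbf{R}^{n+1})$ and oriented by the positivity condition; this already implies $\mathcal{H}^n(\nor(\Omega)) < \infty$. Since both the carrier and the orienting $n$-vector (which by (3) and positivity has the explicit form $\zeta_1 \wedge \cdots \wedge \zeta_n$) are intrinsic, the $T_j$ agree on overlaps, and Lemma \ref{lem patching of legendrian cycles} produces the desired global Legendrian cycle; uniqueness follows from the pointwise uniqueness of the orienting $n$-vector.

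The main obstacle will be (3), where one must rigorously match two a priori distinct families of principal curvatures: those obtained from the approximate differential of the Gauss map along $\partial \Omega$, and those obtained from the approximate tangent plane of $\nor(\Omega)$ via Lemma \ref{lem: Santilli20}. Both reduce at $\mathcal{S}(f)$-points to the spectrum of the same symmetric Hessian, but carrying out the identification cleanly requires a careful computation in the graph parametrization, unwinding the definition of the $\zeta_i$ and comparing against the shape operator. A secondary technical point is the $C^2$-diffeomorphism invariance step itself, which relies on the standard but nontrivial transformation rules for proximal normals and shape operators under $\Psi_F$.
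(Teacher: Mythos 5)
Your proposal follows essentially the same route as the paper: reduce to the local subgraph case via the diffeomorphism from Definition \ref{def W2n domains}, use Lemma \ref{W2n functions Lusin} and Lemma \ref{W2n functions normal bundle} for assertions (1) and (2), match the two families of curvatures via the approximate differential structure for (3), and patch the local Legendrian cycles from Theorem \ref{W2n functions normal cycles} via Lemma \ref{lem patching of legendrian cycles} for (4). One small difference in execution: the paper first builds the Legendrian cycle $T' = (\mathcal{H}^n\restrict\nor(\Omega'))\wedge\eta'$ on the model domain $\Omega'$ by patching in a single coordinate frame, and only at the end pushes forward once by $\Psi_F$; you instead push forward chart by chart before patching. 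The two orders are equivalent, but yours makes the orientation issue more delicate: after pushforward by $\Psi_F$, the positivity condition $\langle[\bigwedge_n\pi_0](\eta)\wedge u, E'\rangle > 0$ is \emph{not} automatic, and the paper must prove (Remark \ref{rmk hodge star}) that the sign of this pairing is globally constant over $\nor(\Omega)$ before it can be normalized; your phrase ``oriented by the positivity condition'' glosses over this, and you would need the same Hodge-star argument to know that choosing the sign pointwise yields a single consistent Legendrian current rather than a discontinuous mess. A second minor imprecision is in (3): you appeal to ``Remark \ref{rmk diff points of W2n} applied to $\nabla f$'', but $\nabla f$ is only $W^{1,n}$, not $W^{2,n}$, so that remark does not literally apply; the identification $\ap\Der(\nabla f) = \der^2 f$ a.e.\ is what Lemma \ref{lem weakly vs approx diff Sobolev maps} gives you, and the paper instead routes this step through Lemma \ref{lem approx diff unit normal} and \cite[Lemma B.2]{SantilliAnnali} together with the uniqueness clause in Lemma \ref{lem: Santilli20}, which is cleaner. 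Neither issue is a genuine gap in the strategy, but both would require care to make rigorous.
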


\begin{proof}
		Suppose $ \Omega = F(\Omega') $, where $ \Omega' $ and $ F $ are as in Definition \ref{def W2n domains}. We recall the definition of $ \Psi_F $ from \eqref{PsiF} and   notice that
		\begin{equation}\label{W2n domains eq1}
			\Psi_F(\nor(\Omega')) = \nor(\Omega) 
		\end{equation} 
	by \cite[Lemma 2.1]{Santilli21}.
	Since $ F(\partial^v_+ \Omega') = \partial^v_+ \Omega $, we readily infer from \eqref{W2n domains eq1}  that 
	$$ \Psi_F(x, \nu_{\Omega'}(x)) = (F(x), \nu_\Omega(F(x))) \quad \textrm{for every $ x \in \partial^v_+\Omega' $} $$
	and 
		\begin{equation}\label{W2n domains eq2}
		\Psi_F\big(\overline{\nu_{\Omega'}}(F^{-1}(S))\big) = \overline{\nu_\Omega}(S) \quad \textrm{for every $ S \subseteq \partial^v_+ \Omega $}.
	\end{equation}

To prove the assertions in \eqref{W2n domains 1} and \eqref{W2n domains 2} we notice, firstly, that they are true for $ \Omega' $ as a consequence of Lemma \ref{W2n functions Lusin} and \eqref{W2n functions normal bundle par} of Lemma \ref{W2n functions normal bundle}; then we apply \eqref{W2n domains eq1}  and \eqref{W2n domains eq2}.

To prove \eqref{W2n domains 3} we first employ Lemma \ref{lem approx diff unit normal} to find a countable family $ X_i \subseteq \partial^v_+ \Omega $ such that $ \mathcal{H}^n\big(\partial \Omega \setminus \bigcup_{i=1}^\infty X_i\big) =0 $ and $ \Lip(\nu_\Omega|X_i)< \infty $ for every $ i \geq 1 $; then we define $ Y_i $ to be the set of $ x \in X_i $ such that $ \nu_\Omega $ is $ \mathcal{H}^n\restrict \partial \Omega $ approximately differentiable at $ x $, $ \Tan^n(\mathcal{H}^n\restrict \partial \Omega, x)  $  and $ \Tan^n(\mathcal{H}^n\restrict \nor(\Omega), \overline{\nu_\Omega}(x)) $ are $ n $-dimensional planes, and $ \Theta^n(\mathcal{H}^n\restrict \partial \Omega \setminus X_i, x) =0 $. We notice that $ \overline{\nu_\Omega}| X_i $ is bi-lipschitz and, since $\Tan^n(\mathcal{H}^n\restrict \nor(\Omega), (x,u)) $ is a $ n $-dimensional plane for $ \mathcal{H}^n $ a.e.\ $ (x,u) \in \nor(\Omega) $, we conclude that 
\begin{equation*}
	\mathcal{H}^n(X_i \setminus Y_i) =0 \quad \textrm{for every $ i \geq 1 $.}
\end{equation*} 
It follows from \eqref{W2n domains 1}  and \eqref{W2n domains 2} that 
\begin{equation}\label{W2n domains eq3}
	\mathcal{H}^n\big(\nor(\Omega) \setminus {\textstyle\bigcup_{i=1}^\infty \overline{\nu_\Omega}(Y_i)}\big) =0.
\end{equation}
We fix now $ x \in Y_i $. Then there exists a map $ g : \mathbf{R}^{n+1} \rightarrow \mathbf{R}^{n+1} \times \mathbf{R}^{n+1} $ pointwise differentiable at $ x $ such that $ \Theta^n(\mathcal{H}^n \restrict \partial \Omega \setminus \{g = \overline{\nu_\Omega}\}, x)  =0 $ and $ \ap \Der \overline{\nu_\Omega}(x) = \Der g(x) |  \Tan^n(\mathcal{H}^n\restrict \partial \Omega, x) $; noting that $ \ap \Der \overline{\nu_\Omega}(x) $ is injective, $ g | X_i \cap \{g = \overline{\nu_\Omega}\} $ is bi-lipschitz and  
$$ \Tan^n(\mathcal{H}^n \restrict \partial \Omega, x) = \Tan^n(\mathcal{H}^n \restrict X_i \cap \{g = \overline{\nu_\Omega}\}, x), $$
we readily infer by \cite[Lemma B.2]{SantilliAnnali} that
\begin{equation*}
\ap\Der \overline{\nu_\Omega}(x)[\Tan^n(\mathcal{H}^n\restrict \partial \Omega, x)] = \Tan^n(\mathcal{H}^n\restrict \nor(\Omega), \overline{\nu_\Omega}(x)).
\end{equation*}  
Henceforth, if $ \tau_1, \ldots , \tau_n $ is an orthonormal basis of $ \Tan^n(\mathcal{H}^n \restrict \partial \Omega, x) $ with $ \ap \Der \nu_\Omega(x)(\tau_i) = \rchi_{\Omega, i}(x) \tau_i $ for $ i = 1, \ldots , n $, we conclude that 
$$ \Bigg\{\bigg(\frac{1}{\sqrt{1 + \rchi_{\Omega, i}(x)^2}}\tau_i, \frac{\rchi_{\Omega, i}(x)}{\sqrt{1 + \rchi_{\Omega, i}(x)^2}}\tau_i\bigg): i = 1, \ldots, n\Bigg\} $$
is an orthonormal basis of $ \Tan^n(\mathcal{H}^n \restrict \nor(\Omega), \overline{\nu_\Omega}(x)) $. Since $x $ is arbitrarily chosen in $ Y_i $, thanks to \eqref{W2n domains eq3},  we deduce from the uniqueness stated in Lemma \ref{lem: Santilli20} that 
$$ \kappa_{\Omega, i}(x,u) = \rchi_{\Omega,i}(x) \quad \textrm{for $ \mathcal{H}^n $ a.e.\ $ (x,u) \in \nor(\Omega)$}. $$

Finally we prove \eqref{W2n domains 4}. By Lemma \ref{lem: Santilli20} we can choose maps $ \tau_1, \ldots , \tau_n $ defined $ \mathcal{H}^n $ a.e.\ on $ \nor(\Omega') $ such that $ \tau_{1}(x,u), \ldots , \tau_n(x,u), u $ is an orthonormal basis of $ \mathbf{R}^{n+1} $,
\begin{equation}\label{W2n domains eq4}
\tau_1(x,u) \wedge \cdots \tau_n(x,u) \wedge u = e_1 \wedge \cdots \wedge e_{n+1} \quad \textrm{for $ \mathcal{H}^n $ a.e.\ $(x,u)\in \nor(\Omega') $}
\end{equation} 
and  the vectors 
$$ \zeta_i'(x,u) = \bigg(\frac{1}{\sqrt{1 + \kappa_{\Omega', i}(x,u)^2}} \tau_i(x,u), \frac{\kappa_{\Omega',i}(x,u)}{\sqrt{1 + \kappa_{\Omega',i}(x,u)^2}}\tau_i(x,u) \bigg), \quad i = 1, \ldots, n, $$
form an orthonormal basis of $ \Tan^n(\mathcal{H}^n \restrict \nor(\Omega'), (x,u)) $ for $ \mathcal{H}^n $ a.e.\ $(x,u)\in \nor(\Omega') $. Then we define 
$$ \eta' = \zeta_1' \wedge \cdots \wedge \zeta_n' $$
and notice that 
	$$ | \eta'(x, u)| =1, \quad \textrm{$ \eta'(x, u) $ is simple}, $$
$$ \textrm{$\Tan^n(\mathcal{H}^n \restrict \nor(\Omega'), (x, u)) $ is associated with $ \eta'(x, u) $} $$
and (see \eqref{projections} and \eqref{n form and coform}) 
\begin{equation}\label{W2n domains eq6}
	\langle \big[ {\textstyle\bigwedge_n} \pi_0\big](\eta'(x,u)) \wedge u, E' \rangle > 0 \quad \textrm{(by \eqref{W2n domains eq4})}
\end{equation}  
for $ \mathcal{H}^n $ a.e.\ $(x,u) \in \nor(\Omega') $. If $ p \in \partial \Omega' $, $ \epsilon > 0 $, $ \nu \in \mathbf{S}^n $,  $ U \subseteq \nu^\perp $ is a bounded open set with $ 0 \in U $ and  $ f \in W^{2,n}(U)$ is a continuous function with $ f(0) =0 $ such that 
$$ \{p + b + \tau \nu : b \in U, \, -\epsilon < \tau \leq f(b)\} = \overline{\Omega'} \cap C_{U,\epsilon}, $$
where $ C_{U, t} = \{p + b + \tau \nu: b \in U, \, -t < \tau < t \} $ for each $ 0 < t \leq \infty $, then we observe that 
$$ N_f = \nor(\Omega') \cap (C_{U, \epsilon} \times \mathbf{S}^n)$$ 
where $ N_f =  \nor(E_f) \cap (C_{U, \infty} \times \mathbf{S}^n) $ and $ E_f = \{p + b + \tau \nu : b \in U, \, -\infty < \tau \leq f(b)\} $. It follows from \eqref{W2n domains eq6} and Theorem \ref{W2n functions normal cycles} that $ \eta'| \big[ \nor(\Omega') \cap (C_{U,\epsilon} \times \mathbf{S}^n)\big] $ is $ \mathcal{H}^n $ almost equal to a Borel $ n $-vectorfield defined over $ \nor(\Omega') \cap (C_{U,\epsilon} \times \mathbf{S}^n) $ and  $ (\mathcal{H}^n \restrict \big[\nor(\Omega') \cap (C_{U,\epsilon} \times \mathbf{S}^n)\big]) \wedge \eta' $ is a $ n $-dimensional Legendrian cycle of $ C_{U, \epsilon} $.
 Henceforth,  we define the integer multiplicity locally rectifiable $ n $-current $$ T' = \big(\mathcal{H}^n \restrict \nor(\Omega')\big) \wedge \eta' $$ and we conclude by Lemma \ref{lem patching of legendrian cycles} that $ T' $ is a Legendrian cycle of $ \mathbf{R}^{n+1} $.  
 
 We define now $ \psi = \Psi_F | \nor(\Omega') $ and, recalling \eqref{W2n domains eq1} and noting that 
\begin{equation}
\ap \Der \psi(\psi^{-1}(y,v)) = \Der \Psi_F(\psi^{-1}(y,v))
\end{equation}  
for $ \mathcal{H}^n $ a.e.\ $ (y,v) \in \nor(\Omega) $, we define 
$$ \eta(y,v) = \frac{\big[{\textstyle \bigwedge_n \ap \Der \psi(\psi^{-1}(y,v))}\big] \eta'(\psi^{-1}(y,v))}{J^{\nor(\Omega')}_n \psi(\psi^{-1}(y,v))} $$ 
for $ \mathcal{H}^n $ a.e.\ $ (y,v) \in \nor(\Omega) $. Since $ \Psi_F $ is a diffeomorphism we have that $ \eta(y,v) \neq 0 $ for $ \mathcal{H}^n $ a.e.\ $ (y,v) \in \nor(\Omega) $. We now apply \cite[4.1.30]{Fed69} with $ U $, $ K $, $ W $, $ \xi $, $ G $ and $ g $ replaced by $ \mathbf{R}^{n+1} \times \mathbf{R}^{n+1} $, $ \partial \Omega' \times \mathbf{S}^n $, $ \nor(\Omega') $, $ \Psi_F $ and $ \psi $ respectively. We infer that  
$$ (\Psi_F)_{\#}\big[(\mathcal{H}^n \restrict \nor(\Omega')) \wedge \eta' \big] = \big(\mathcal{H}^n \restrict \nor(\Omega)\big) \wedge \eta  $$
and that $ | \eta(y,v) | = 1 $ and $ \Tan^n(\mathcal{H}^n \restrict \nor(\Omega), (y,v)) $ is associated with $ \eta(y,v) $ for $ \mathcal{H}^n $ a.e.\ $ (y,v) \in \nor(\Omega) $.
Clearly $ \big(\mathcal{H}^n \restrict \nor(\Omega)\big) \wedge \eta $ is a cycle, and $\big[ \big(\mathcal{H}^n \restrict \nor(\Omega)\big) \wedge \eta\big]  \restrict \alpha =0 $
by Lemma \ref{lem: Santilli20}. Finally, if $ \bm{\ast} $ is the Hodge-star operator with respect to $ E $ (cf.\ Remark \ref{rmk hodge star}), since $ \tau_1(x,u) \wedge \cdots \wedge \tau_n(x,u) = (-1)^n\, (\bm{\ast}u) $ and 
\begin{flalign*}
&	\big[{\textstyle \bigwedge_n} \pi_0 \big](\eta(\Psi_F(x,u)))  \\
& \quad = \frac{1}{J_n^{\nor(\Omega')}\psi(x,u)}\, \Bigg(\prod_{i=1}^{n}\frac{1}{\sqrt{1 + \kappa_{\Omega', i}(x,u)^2}}\Bigg)\, \big[ \Der F(x)(\tau_1(x,u)) \wedge \cdots \wedge \Der F(x)(\tau_n(x,u))\big] \\
& \quad = \frac{(-1)^n}{J_n^{\nor(\Omega')}\psi(x,u)}\, \Bigg(\prod_{i=1}^{n}\frac{1}{\sqrt{1 + \kappa_{\Omega', i}(x,u)^2}}\Bigg)\, \big[ {\textstyle \bigwedge_n} \Der F(x)\big](\bm{\ast} u)
\end{flalign*}
for $ \mathcal{H}^n $ a.e.\ $(x,u) \in \nor(\Omega') $, it follows by Remark \ref{rmk hodge star} below and \eqref{W2n domains eq1} that 
either 
$$ 	\langle \big[ {\textstyle\bigwedge_n} \pi_0\big](\eta(y,v)) \wedge v, E' \rangle > 0 \quad \textrm{for $ \mathcal{H}^n $ a.e.\ $(y,v) \in \nor(\Omega) $} $$
or 
$$ 	\langle \big[ {\textstyle\bigwedge_n} \pi_0\big](\eta(y,v)) \wedge v, E' \rangle < 0 \quad \textrm{for $ \mathcal{H}^n $ a.e.\ $(y,v) \in \nor(\Omega) $}. $$
This settles the existence part in statement \eqref{W2n domains 4}. Uniqueness easily follows from the defining  conditions of $ T $ and the representation of $ \eta $ follows from Lemma \ref{lem: Santilli20}.
\end{proof}

\begin{remark}\label{rmk hodge star}
		Let $ \bm{\ast} : \mathbf{R}^{n+1} \rightarrow \bigwedge_{n}\mathbf{R}^{n+1} $ be the Hodge-star operator, taken with respect to $ E $; cf.\  \cite[1.7.8]{Fed69}. We notice that if $ u \in \mathbf{S}^n $ and $ \tau_1, \ldots , \tau_n $ is an orthonomal basis of $ u^\perp $ such that $ u \wedge  \tau_1 \wedge \cdots \wedge \tau_n  = E $, then it follows from the shuffle formula \cite[pag.\ 18]{Fed69} that
	$$ \bm{\ast} u = \tau_1 \wedge \cdots \wedge \tau_n. $$
	Using this remark, we prove that \emph{if $ F : \mathbf{R}^{n+1} \rightarrow \mathbf{R}^{n+1} $ is a diffeomorphism, then either
		$$ \langle \big[ {\textstyle\bigwedge_n} \Der F(x)\big](\bm{\ast} u) \wedge (\Der F(x)^{-1})^\ast(u), E'\rangle > 0 \quad \textrm{for every $(x,u) \in \mathbf{R}^{n+1} \times \mathbf{S}^n $} $$
		or 
		$$ \langle \big[ {\textstyle\bigwedge_n} \Der F(x)\big](\bm{\ast} u) \wedge (\Der F(x)^{-1})^\ast(u), E' \rangle < 0 \quad \textrm{for every $(x,u) \in \mathbf{R}^{n+1} \times \mathbf{S}^n $}. $$} By contradiction, assume that there exists $ (x,u) \in \mathbf{R}^{n+1} \times \mathbf{S}^n $ such that 
$$ \langle \big[ {\textstyle\bigwedge_n} \Der F(x)\big](\bm{\ast} u) \wedge (\Der F(x)^{-1})^\ast(u), E' \rangle = 0 $$
and choose an orthonormal basis $ \tau_1, \ldots, \tau_n $ of $ u^\perp $ such that $ u \wedge  \tau_1 \wedge \cdots \wedge \tau_n  = e_1 \wedge \cdots \wedge e_{n+1} $. Henceforth, $ \Der F(x)(\tau_1) \wedge \cdots \wedge \Der F(x)(\tau_n) \wedge (\Der F(x)^{-1})^\ast(u) =0 $ and, since $ \{\Der F(x)(\tau_i) : i = 1, \ldots n\} $ are linearly independent, we conclude that there exists $ c_1, \ldots , c_n \in \mathbf{R} $ such that 
$$ (\Der F(x)^{-1})^\ast(u) = \sum_{i=1}^n c_i\, \Der F(x)(\tau_i). $$
Applying $ \Der F(x)^{-1} $ to both sides  and taking the scalar product with $ u $, we get 
$$ \big[\Der F(x)^{-1} \circ (\Der F(x)^{-1})^\ast \big](u) \bullet u =0, $$
whence we infer that $ (\Der F(x)^{-1})^\ast(u)  =0 $, a contradiction.
\end{remark}

\begin{definition}
Suppose $  \Omega \subseteq \mathbf{R}^{n+1} $ is a $ W^{2,n} $-domain. We denote by $ N_\Omega $ the Legendrian cycle given by Theorem \ref{W2n domains}\eqref{W2n domains 4}.
\end{definition}

\begin{remark}\label{rmk invariance under diff}
The proof of Theorem \ref{W2n domains}\eqref{W2n domains 4} proves that \emph{if $ F : U \rightarrow V $ is a $ C^2 $-diffeomorphism between open subsets of $ \mathbf{R}^{n+1} $ and $  \Omega $ is a bounded $ W^{2,n}$-domain such that $ \overline{\Omega} \subseteq U $, then 
$$ (\Psi_F)_{\#}(N_\Omega) = N_{F(\Omega)}. $$}
	\end{remark}

\begin{definition}[$ r $-th elementary symmetric function]\label{symmetric function}
Suppose $ r \in \{1, \ldots , n\} $. The $ r $-th symmetric function $ \sigma_r : \mathbf{R}^n \rightarrow \mathbf{R} $ is  defined as 
$$ \sigma_r(t_1, \ldots , t_n) =  \frac{1}{{n \choose r}}\sum_{\lambda \in \Lambda_{n, r}} t_{\lambda(1)}\cdots t_{\lambda(r)}  $$
	where $ \Lambda_{n,r} $ is the set of all increasing functions from $ \{1, \ldots , r\} $ to $ \{1, \ldots , n\} $. We set 
	$$ \sigma_{0}(t_1, \ldots , t_n) = 1 \quad \textrm{for $ (t_1, \ldots , t_n) \in \mathbf{R}^n $.} $$
\end{definition}

\begin{definition}[$r$-th mean curvature function]\label{rth mean curvature function}
	Suppose $ \Omega \subseteq \mathbf{R}^{n+1} $ is a  $ W^{2,n} $-domain and $ r \in \{0, \ldots, n\} $. Then we define the \emph{$ r $-th mean curvature function of $\Omega$} as 
	$$ H_{\Omega,r}(z) = \sigma_r(\rchi_{\Omega, 1}(z), \ldots , \rchi_{\Omega, n}(z)) $$
	for $ \mathcal{H}^n $ a.e.\ $ z \in \partial \Omega $.
\end{definition}

\begin{lemma}\label{rmk curvature measures}
If $ \Omega \subseteq \mathbf{R}^{n+1} $ is a bounded $ W^{2,n} $-domain and $ \phi $ is a smooth $ \mathbf{R} $-valued function over $ \mathbf{R}^{n+1} \times \mathbf{S}^n $ then
		$$ [N_\Omega \restrict \varphi_{n-k}](\phi) = {n \choose k} \int_{\partial \Omega} H_{\Omega, k}(x)\, \phi(x, \nu_\Omega(x))\, d\mathcal{H}^n(x) \quad \textrm{for $ k = 0, \ldots , n $}. $$
\end{lemma}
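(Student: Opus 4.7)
The plan is to compute the pointwise pairing $\langle \eta(x,u),\varphi_{n-k}(x,u)\rangle$ using the explicit representation of $\eta$ from Theorem \ref{W2n domains}\eqref{W2n domains 4}, and then transfer the resulting integral on $\nor(\Omega)$ to $\partial\Omega$ via an area-formula argument driven by $\overline{\nu_\Omega}$.

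First, by Theorem \ref{W2n domains}\eqref{W2n domains 4} we may write $\eta=\zeta_1\wedge\cdots\wedge\zeta_n$ where $\tau_1(x,u),\ldots,\tau_n(x,u),u$ is an orthonormal basis of $\mathbf{R}^{n+1}$ with $\tau_1\wedge\cdots\wedge\tau_n\wedge u=E$, and
$$\pi_0(\zeta_i(x,u))=\frac{\tau_i(x,u)}{\sqrt{1+\kappa_{\Omega,i}(x,u)^2}},\qquad \pi_1(\zeta_i(x,u))=\frac{\kappa_{\Omega,i}(x,u)\,\tau_i(x,u)}{\sqrt{1+\kappa_{\Omega,i}(x,u)^2}}.$$
Plugging this into Definition \ref{Lipschitz Killing} with $k$ replaced by $n-k$ (so $\Sigma_{n,n-k}$ consists of $\sigma\colon\{1,\ldots,n\}\to\{0,1\}$ with $\sum_i\sigma(i)=k$), each summand reduces to $\langle c_\sigma\,\tau_1\wedge\cdots\wedge\tau_n\wedge u,E'\rangle=c_\sigma$ with $c_\sigma=\prod_i\kappa_{\Omega,i}^{\sigma(i)}/\sqrt{1+\kappa_{\Omega,i}^2}$. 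Grouping the $\binom{n}{k}$ sign choices by the $k$-subset $\{i:\sigma(i)=1\}$ yields
$$\langle \eta(x,u),\varphi_{n-k}(x,u)\rangle=\frac{\binom{n}{k}\,\sigma_k(\kappa_{\Omega,1}(x,u),\ldots,\kappa_{\Omega,n}(x,u))}{\prod_{i=1}^n\sqrt{1+\kappa_{\Omega,i}(x,u)^2}}$$
for $\mathcal{H}^n$-a.e.\ $(x,u)\in\nor(\Omega)$.

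Next, by Lemma \ref{W2n domains basic lemma}\eqref{W2n domains basic lemma 1} and Theorem \ref{W2n domains}\eqref{W2n domains 1}-\eqref{W2n domains 2}, the map $\overline{\nu_\Omega}\colon\partial^v_+\Omega\to\nor(\Omega)$ is injective and $\mathcal{H}^n\restrict\nor(\Omega)$ is concentrated on $\overline{\nu_\Omega}(\partial^v_+\Omega)$; by Lemma \ref{lem approx diff unit normal} applied to $\nu_\Omega$ we obtain a countable $\mathcal{H}^n$-decomposition of $\partial\Omega$ into pieces on which $\overline{\nu_\Omega}$ is Lipschitz, and $\ap \Der \overline{\nu_\Omega}(x)$ sends each eigenvector $\tau_i$ of $\ap \Der\nu_\Omega(x)$ to $(\tau_i,\rchi_{\Omega,i}(x)\tau_i)$. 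Hence the approximate Jacobian is
$$J_n^{\partial\Omega}\overline{\nu_\Omega}(x)=\prod_{i=1}^n\sqrt{1+\rchi_{\Omega,i}(x)^2}.$$
By Theorem \ref{W2n domains}\eqref{W2n domains 3}, $\kappa_{\Omega,i}(x,\nu_\Omega(x))=\rchi_{\Omega,i}(x)$ for $\mathcal{H}^n$-a.e.\ $x\in\partial\Omega$, so applying the area formula \cite[3.2.20]{Fed69} piecewise (summing over the countable decomposition, cf.\ \cite[2.4.8]{Fed69}) yields
$$\int_{\nor(\Omega)}F\,d\mathcal{H}^n=\int_{\partial\Omega}F(x,\nu_\Omega(x))\prod_{i=1}^n\sqrt{1+\rchi_{\Omega,i}(x)^2}\,d\mathcal{H}^n(x)$$
for every non-negative Borel $F$ on $\nor(\Omega)$.

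Combining the two displays, the factors $\prod_i\sqrt{1+\rchi_{\Omega,i}^2}$ cancel and we get
$$[N_\Omega\restrict\varphi_{n-k}](\phi)=\int_{\nor(\Omega)}\phi\,\langle\eta,\varphi_{n-k}\rangle\,d\mathcal{H}^n=\binom{n}{k}\int_{\partial\Omega}\phi(x,\nu_\Omega(x))H_{\Omega,k}(x)\,d\mathcal{H}^n(x),$$
as desired. The only delicate step is the area-formula calculation: it requires the countable bi-Lipschitz decomposition of $\partial\Omega$ provided by Lemma \ref{lem approx diff unit normal} together with the identification of the approximate tangent plane and its oriented basis given by Theorem \ref{W2n domains}\eqref{W2n domains 3}-\eqref{W2n domains 4}; once those are in place, the computation is routine.
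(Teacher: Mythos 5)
Your proposal is correct and uses essentially the same method as the paper: compute the pointwise pairing $\langle\eta,\varphi_{n-k}\rangle$ from the explicit frame $\zeta_1\wedge\cdots\wedge\zeta_n$ of Theorem \ref{W2n domains}\eqref{W2n domains 4}, identify $\kappa_{\Omega,i}$ with $\rchi_{\Omega,i}$ via Theorem \ref{W2n domains}\eqref{W2n domains 3}, and pass from $\nor(\Omega)$ to $\partial\Omega$ by the area formula. The only cosmetic difference is that you push forward by $\overline{\nu_\Omega}$ (Jacobian $\prod_i\sqrt{1+\rchi_{\Omega,i}^2}$) whereas the paper pushes forward by $\pi_0$ (Jacobian $\prod_i(1+\kappa_{\Omega,i}^2)^{-1/2}$); since these are inverse bijections up to an $\mathcal{H}^n$-null set, the two routes are equivalent and both invoke the same supporting facts (Lemma \ref{W2n domains basic lemma}\eqref{W2n domains basic lemma 1}, Theorem \ref{W2n domains}\eqref{W2n domains 1}--\eqref{W2n domains 3}, and the countable Lipschitz decomposition from Lemma \ref{lem approx diff unit normal}).
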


\begin{proof}
We know by Theorem \ref{W2n domains}\eqref{W2n domains 4} that $ N_{\Omega} = (\mathcal{H}^n \restrict \nor(\Omega)) \wedge (\zeta_1\wedge \ldots \wedge \zeta_n) $. Noting that 
$$ J_n^{\nor(\Omega)}\pi_0(x,u) = \prod_{i=1}^n \frac{1}{\sqrt{1 + \kappa_{\Omega,i}(x,u)^2}} \quad \textrm{for $ \mathcal{H}^n $ a.e.\ $(x,u) \in \nor(\Omega) $}, $$
we employ Theorem \ref{W2n domains}\eqref{W2n domains 3} to compute
\begin{flalign*}
 [N_\Omega \restrict \varphi_{n-k}](\phi) = {n \choose k} \int_{\nor(\Omega)} J_n^{\nor(\Omega)}\pi_0(x,u)\,\phi(x,u)\,H_{\Omega, k}(x)\, d\mathcal{H}^n(x,u),
\end{flalign*}
whence we conclude using area formula in combination with Theorem \ref{W2n domains}\eqref{W2n domains 2} and Lemma \ref{W2n domains basic lemma}\eqref{W2n domains basic lemma 1}.
\end{proof}

\begin{definition}[$r $-th total curvature measure]\label{r-totCurvMeasOmega}
If $ \Omega \subseteq \mathbf{R}^{n+1} $ is a bounded $ W^{2,n} $-domain and $ r = 0 , \ldots, n $, we define 
$$ \mathcal{A}_r(\Omega) = \int_{\partial \Omega} H_{\Omega, r}\, d\mathcal{H}^n. $$
\end{definition}

Now we can quickly derive the following extension of Reilly's variational formulae (cf.\ \cite{Reilly1972}) to $W^{2,n}$-domain.

\begin{theorem}\label{Reilly variational formualae}
	Suppose $ \Omega \subseteq \mathbf{R}^{n+1} $ is a bounded $W^{2,n} $-domain and $ (F_t)_{t \in I} $ is a local variation of $ \mathbf{R}^{n+1} $ with initial velocity vector field $ V $. Then 
	$$ \frac{d}{dt}\mathcal{A}_{k-1}(F_t(\Omega)) \Big|_{t =0} = (n-k + 1) \int_{\partial \Omega} H_{\Omega, k}\, (\nu_\Omega \bullet V)\, d\mathcal{H}^n \quad \textrm{ for $ k = 1, \ldots , n $} $$ 
	and 
	\begin{equation}\label{Reilly last formula}
		\frac{d}{dt}\mathcal{A}_n(F_t(\Omega)) \Big|_{t =0} = 0.
	\end{equation} 
\end{theorem}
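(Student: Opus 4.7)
The plan is to combine three ingredients that are already in hand: the variational formula for Legendrian cycles (Lemma \ref{Lemma Fu}), the representation of curvature integrals as pairings of $N_\Omega$ against Lipschitz--Killing forms (Lemma \ref{rmk curvature measures}), and the naturality of the Legendrian cycle under $ C^2 $-diffeomorphisms (Remark \ref{rmk invariance under diff}). Since $ \Omega $ is bounded, $ \partial \Omega $ is compact and so $ \spt(N_\Omega) \subseteq \partial \Omega \times \mathbf{S}^n $ is compact, guaranteeing the applicability of Lemma \ref{Lemma Fu}.

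First I would evaluate Lemma \ref{rmk curvature measures} at $ \phi \equiv 1 $ with index shifted to $ k-1 $ to obtain
$$ N_{F_t(\Omega)}(\varphi_{n-k+1}) = \binom{n}{k-1}\,\mathcal{A}_{k-1}(F_t(\Omega)) \quad \textrm{for every $ k \in \{1, \ldots, n\} $ and $ t \in I $,} $$
and then rewrite the left-hand side via Remark \ref{rmk invariance under diff} as $ [(\Psi_{F_t})_\# N_\Omega](\varphi_{n-k+1}) $. Differentiating at $ t = 0 $ using Lemma \ref{Lemma Fu} with $ i = n-k+1 $ gives
$$ \binom{n}{k-1}\,\frac{d}{dt}\mathcal{A}_{k-1}(F_t(\Omega))\Big|_{t=0} = k\, N_\Omega(\theta_V \wedge \varphi_{n-k}). $$
Next I would apply Lemma \ref{rmk curvature measures} again, this time with $ \phi = \theta_V $ (which is smooth on $ \mathbf{R}^{n+1} \times \mathbf{S}^n $ and satisfies $ \theta_V(x, \nu_\Omega(x)) = V(x) \bullet \nu_\Omega(x) $), and use that $ \theta_V $ is a $ 0 $-form so that $ \theta_V \wedge \varphi_{n-k} = \varphi_{n-k} \wedge \theta_V $, to identify
$$ N_\Omega(\theta_V \wedge \varphi_{n-k}) = [N_\Omega \restrict \varphi_{n-k}](\theta_V) = \binom{n}{k}\int_{\partial \Omega} H_{\Omega, k}\, (\nu_\Omega \bullet V)\, d\mathcal{H}^n. $$
The first formula then drops out from the elementary binomial identity $ k \binom{n}{k} = (n-k+1) \binom{n}{k-1} $. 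For \eqref{Reilly last formula} the same mechanism applies with the second conclusion of Lemma \ref{Lemma Fu}: since $ N_{F_t(\Omega)}(\varphi_0) = \mathcal{A}_n(F_t(\Omega)) $ by Lemma \ref{rmk curvature measures} (case $ k = n $), and $ \frac{d}{dt}[(\Psi_{F_t})_\# N_\Omega](\varphi_0)|_{t=0} = 0 $, the derivative of $ \mathcal{A}_n $ along the variation vanishes.

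No serious obstacle is expected; the argument is essentially a dictionary translation between the currential identities for Legendrian cycles and the curvature-integral language. The only genuine care required is in bookkeeping the three different index conventions (Lipschitz--Killing form degree $ n-k $ in Definition \ref{Lipschitz Killing}, mean curvature order $ k $ in Definition \ref{rth mean curvature function}, and the shift $ i = n-k+1 $ coming from Lemma \ref{Lemma Fu}), and in checking that the smoothness hypotheses of Lemma \ref{rmk curvature measures} and Lemma \ref{Lemma Fu} are respected when $ \phi = \theta_V $, which is immediate because the initial velocity $ V $ of a local variation is smooth on $ \mathbf{R}^{n+1} $.
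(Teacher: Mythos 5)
Your proposal is correct and follows essentially the same route as the paper's own proof: rewrite $\mathcal{A}_{k-1}(F_t(\Omega))$ as a pairing of the pushed-forward Legendrian cycle with a Lipschitz--Killing form via Remark~\ref{rmk invariance under diff} and Lemma~\ref{rmk curvature measures}, differentiate with Lemma~\ref{Lemma Fu}, and convert back to a curvature integral using Lemma~\ref{rmk curvature measures} with $\phi=\theta_V$, with the binomial identity $k\binom{n}{k}=(n-k+1)\binom{n}{k-1}$ (left implicit in the paper) closing the first formula and the second conclusion of Lemma~\ref{Lemma Fu} giving \eqref{Reilly last formula}. Your bookkeeping of the index shifts and the check that $\theta_V$ is an admissible test function are exactly the right points of care.
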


\begin{proof}
	Combining Remark \ref{rmk invariance under diff} and Lemma \ref{rmk curvature measures} we obtain 
	$$ \big[(\Psi_{F_t})_{\#}N_\Omega  \big](\varphi_{n-k+1}) = N_{F_t(\Omega)}(\varphi_{n-k+1}) = {n \choose k-1} \mathcal{A}_{k-1}(F_t(\Omega)) $$ 
	for $ k = 1 , \ldots, n+1 $. Hence we use Lemma \ref{Lemma Fu} and again Lemma \ref{rmk curvature measures} to compute
	$$  \frac{d}{dt}\big[(\Psi_{F_t})_{\#}N_\Omega  \big](\varphi_{n-k+1}) \Big|_{t=0} = k\,{n \choose k}\int_{\partial \Omega} (V(x) \bullet \nu_\Omega(x))\, H_{\Omega, k}(x)\, d\mathcal{H}^n(x) $$
	for $ k = 1, \ldots , n $ and
	$$  \frac{d}{dt}\big[(\Psi_{F_t})_{\#}N_\Omega  \big](\varphi_{0}) \Big|_{t=0} = 0. $$
\end{proof}

\begin{remark}
	 If $ \Omega $ is  a $ C^2 $-domain, then  \eqref{Reilly last formula} follows from the Gauss-Bonnet theorem.  The validity of the Gauss-Bonnet theorem for bounded $ W^{2,n} $-domains is an interesting open question, and \eqref{Reilly last formula} seems to point to a possible positive answer.
\end{remark}

The following integral formulae can be easily deduced from Theorem \ref{Reilly variational formualae} by  a standard procedure. For the $ C^2 $-regular domains these formulae are classic, see \cite{Hsiung}.

\begin{corollary}\label{Minkowski formulae}
	If $ \Omega \subseteq \mathbf{R}^{n+1} $ is a bounded  $ W^{2,n} $-domain and $ r \in \{1, \ldots, n\} $ then 
$$ \int_{\partial \Omega} H_{\Omega, r-1}(x)\, d\mathcal{H}^n(x) =  \int_{\partial \Omega} (x \bullet \nu_\Omega(x))\, H_{\Omega, r}(x)\, d\mathcal{H}^n(x). $$
\end{corollary}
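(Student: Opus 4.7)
The plan is to deduce the formulae from Theorem \ref{Reilly variational formualae} applied to the dilation variation, in exact analogy with the classical smooth argument. Concretely, I will take $F_t(x) = (1+t)x$ for $t$ in a small open interval around $0$, which is a smooth variation of $\mathbf{R}^{n+1}$ whose initial velocity vector field is $V(x) = x$. Since $F_t$ is a global $C^2$-diffeomorphism, $F_t(\Omega)$ is again a bounded $W^{2,n}$-domain and Remark \ref{rmk invariance under diff} applies: $N_{F_t(\Omega)} = (\Psi_{F_t})_{\#}N_\Omega$.

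The key computation is the scaling law $\mathcal{A}_{k-1}(F_t(\Omega)) = (1+t)^{n-k+1} \mathcal{A}_{k-1}(\Omega)$. I would obtain this directly at the level of the Legendrian cycle: for $F_t(x) = (1+t)x$ one has $\Psi_{F_t}(x,u) = ((1+t)x, u)$ so $\pi_0 \circ \Der\Psi_{F_t} = (1+t)\pi_0$ and $\pi_1 \circ \Der\Psi_{F_t} = \pi_1$. Since each summand in $\varphi_{n-k+1}$ (see Definition \ref{Lipschitz Killing}) contains exactly $n-k+1$ factors of $\pi_0$, $k-1$ factors of $\pi_1$, and the preserved factor $u$, the pullback satisfies $\Psi_{F_t}^{\#}\varphi_{n-k+1} = (1+t)^{n-k+1}\varphi_{n-k+1}$. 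Combining this with Remark \ref{rmk invariance under diff} and Lemma \ref{rmk curvature measures} gives
\begin{equation*}
\binom{n}{k-1}\mathcal{A}_{k-1}(F_t(\Omega)) = N_{F_t(\Omega)}(\varphi_{n-k+1}) = (1+t)^{n-k+1}\, N_\Omega(\varphi_{n-k+1}) = (1+t)^{n-k+1}\binom{n}{k-1}\mathcal{A}_{k-1}(\Omega).
\end{equation*}

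Differentiating at $t=0$ yields $\frac{d}{dt}\mathcal{A}_{k-1}(F_t(\Omega))\big|_{t=0} = (n-k+1)\,\mathcal{A}_{k-1}(\Omega)$. On the other hand, applying Theorem \ref{Reilly variational formualae} with the same variation and $V(x)=x$ gives
\begin{equation*}
\frac{d}{dt}\mathcal{A}_{k-1}(F_t(\Omega))\Big|_{t=0} = (n-k+1)\int_{\partial\Omega} H_{\Omega,k}(x)\,(x \bullet \nu_\Omega(x))\, d\mathcal{H}^n(x).
\end{equation*}
Equating the two expressions and dividing by the positive integer $n-k+1 \geq 1$ (recall $1\leq k \leq n$) gives the claimed identity after renaming $k=r$.

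I do not expect any serious obstacle: everything reduces to verifying the scaling law, and the main subtlety is simply to justify it in the $W^{2,n}$ setting (where the principal curvatures are defined only almost everywhere). The route through Remark \ref{rmk invariance under diff} bypasses any pointwise manipulation of the approximate principal curvatures and reduces the matter to the explicit homogeneity of the Lipschitz--Killing forms under scaling, which is clean and purely algebraic.
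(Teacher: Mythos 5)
Your proposal is correct and takes essentially the same route as the paper: apply Theorem \ref{Reilly variational formualae} to a dilation with initial velocity $V(x)=x$, and combine with the scaling law $\mathcal{A}_{k-1}(F_t(\Omega)) = \lambda(t)^{\,n-k+1}\,\mathcal{A}_{k-1}(\Omega)$ for the dilation factor $\lambda(t)$. The only difference is how the scaling law is established. The paper (using $F_t(x)=e^t x$) argues geometrically on the boundary: $\nu_{F_t(\Omega)}\circ F_t = \nu_\Omega$, $\rchi_{F_t(\Omega),i}\circ F_t = e^{-t}\rchi_{\Omega,i}$, and then the area formula gives $\mathcal{A}_{r-1}(F_t(\Omega)) = e^{(n-r+1)t}\mathcal{A}_{r-1}(\Omega)$. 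You instead lift the computation to the Legendrian cycle, using Remark \ref{rmk invariance under diff} together with the algebraic homogeneity $\Psi_{F_t}^{\#}\varphi_{n-k+1} = (1+t)^{n-k+1}\varphi_{n-k+1}$, which follows by counting $\pi_0$- versus $\pi_1$-factors in Definition \ref{Lipschitz Killing}. Your count is right ($\varphi_j$ has $j$ factors of $\pi_0$ and $n-j$ of $\pi_1$), and your route is marginally cleaner since it avoids any pointwise statement about the approximate curvatures of the dilated domain; for a dilation, however, that pointwise statement is elementary, so both arguments are equally short.
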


\begin{proof}
	We consider the local variation $ F_t(x) = e^t\, x $ for $ (x,t) \in \mathbf{R}^n \times \mathbf{R} $ and we notice that
	$$ \Tan^n(\mathcal{H}^n \restrict \partial \Omega, x) = \Tan^n(\mathcal{H}^n \restrict F_t(\partial \Omega), F_t(x)) $$
	$$  \nu_{F_t(\Omega)}(F_t(x)) = \nu_\Omega(x) \quad \textrm{and} \quad \rchi_{F_t(\Omega),i}(F_t(x)) = e^{-t} \rchi_{\Omega,i}(x) $$
		for $ \mathcal{H}^n $ a.e.\ $ x \in \partial \Omega $ and $ i = 1, \ldots , n $.
		Henceforth, we compute by area formula
		\begin{flalign*}
		\mathcal{A}_{r-1}(F_t(\Omega)) & = \int_{\partial F_t(\Omega)} H_{F_t(\Omega), r-1}\, d\mathcal{H}^n \\
		& = e^{-(r-1)t}  \int_{F_t(\partial \Omega)} H_{\Omega, r-1}(F_t^{-1}(y))\, d\mathcal{H}^n(y)\\
		& = e^{(n-r+1)t}  \int_{\partial \Omega}  H_{\Omega, r-1}(x)\, d\mathcal{H}^n(x)
		\end{flalign*}
and we apply Theorem \ref{Reilly variational formualae}.
\end{proof}

\begin{corollary}\label{Minkowski fomrulae cor}
Suppose $ \Omega \subseteq \mathbf{R}^{n+1} $ is a bounded  $ W^{2,n} $-domain, $ k \in \{1, \ldots, n\} $ and 
\begin{equation}\label{k-1 mean convexity}
	 H_{\Omega, i}(z) \geq 0 \quad \textrm{for $ i = 1, \ldots , k-1 $ and for $ \mathcal{H}^n $ a.e.\ $ z \in \partial \Omega $.} 
\end{equation}

Then there exists $ P \subseteq \partial \Omega $ such that $ \mathcal{H}^n(P) > 0 $ and $ H_{\Omega, k}(z) \neq 0 $ for $ z \in P $.
\end{corollary}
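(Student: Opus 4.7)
The plan is to argue by contradiction and run the Minkowski--Hsiung identity of Corollary \ref{Minkowski formulae} iteratively, descending the index from $k$ down to $0$. Specifically, suppose for contradiction that $H_{\Omega,k}(z)=0$ for $\mathcal{H}^n$-a.e.\ $z\in\partial\Omega$; the goal is to force $\mathcal{H}^n(\partial\Omega)=0$, which is absurd because a bounded non-empty $W^{2,n}$-domain has boundary locally representable (after the $C^2$-diffeomorphism $F$ in Definition \ref{def W2n domains}) as the graph of a continuous function over a non-empty open subset of $\mathbf{R}^n$, and the projection onto that open subset is $1$-Lipschitz, so $\mathcal{H}^n(\partial\Omega)>0$.

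The driving mechanism is the identity
\begin{equation*}
\int_{\partial\Omega} H_{\Omega,r-1}\,d\mathcal{H}^n = \int_{\partial\Omega} (x\bullet\nu_\Omega(x))\,H_{\Omega,r}(x)\,d\mathcal{H}^n(x),\qquad r=1,\ldots,n,
\end{equation*}
combined with the sign hypothesis \eqref{k-1 mean convexity}. First I would apply the identity with $r=k$: the right-hand side vanishes by the contradiction hypothesis, so $\int_{\partial\Omega} H_{\Omega,k-1}\,d\mathcal{H}^n=0$; together with $H_{\Omega,k-1}\geq 0$ a.e., this forces $H_{\Omega,k-1}=0$ $\mathcal{H}^n$-a.e. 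Then I would apply the identity with $r=k-1$, $r=k-2,\ldots$, and use \eqref{k-1 mean convexity} at each step (valid as long as the index $j-1$ lies in $\{1,\ldots,k-1\}$) to successively deduce that $H_{\Omega,j}=0$ $\mathcal{H}^n$-a.e.\ for every $j=k-1,k-2,\ldots,1$.

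Finally, applying the identity once more with $r=1$ gives $\int_{\partial\Omega} H_{\Omega,0}\,d\mathcal{H}^n = \int_{\partial\Omega}(x\bullet\nu_\Omega)H_{\Omega,1}\,d\mathcal{H}^n=0$. Since $H_{\Omega,0}\equiv \sigma_0\equiv 1$, the left-hand side equals $\mathcal{H}^n(\partial\Omega)$, contradicting the positivity of $\mathcal{H}^n(\partial\Omega)$ noted above. I expect no genuine obstacle here: the argument is a short descending induction, and the only point that deserves an explicit remark is the lower bound $\mathcal{H}^n(\partial\Omega)>0$, which is immediate from the graph structure built into Definition \ref{def W2n domains}.
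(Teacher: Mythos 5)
Your proof is correct and follows exactly the same route as the paper's: a descending induction that iterates the Minkowski--Hsiung identity of Corollary~\ref{Minkowski formulae} from $r=k$ down to $r=1$, using the nonnegativity in \eqref{k-1 mean convexity} at each step to pass from $\int_{\partial\Omega}H_{\Omega,r-1}\,d\mathcal{H}^n=0$ to $H_{\Omega,r-1}=0$ a.e., and finally concluding $\mathcal{H}^n(\partial\Omega)=0$. The only difference is cosmetic: the paper dismisses the final contradiction as ``clearly impossible,'' whereas you spell out why $\mathcal{H}^n(\partial\Omega)>0$ via the local graph structure from Definition~\ref{def W2n domains}.
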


\begin{proof}
	Suppose $ H_{\Omega,k}(z) =0 $ for $ \mathcal{H}^n $ a.e.\ $ z \in \partial \Omega $. Then we can employ Corollary \ref{Minkowski formulae} (with $ r = k $) and use \eqref{k-1 mean convexity} (for $ i = k-1 $) to infer that $ H_{\Omega, k-1}(z) =0 $ for $ \mathcal{H}^n $ a.e.\ $ z \in \partial \Omega $. Now we repeat this argument with $ r = k-1 $ and $ i = k-2 $ to infer that $ H_{\Omega, k-2}(z) =0 $ for $ \mathcal{H}^n $ a.e.\ $ z \in \partial \Omega $, and we continue until we obtain that $ H_{\Omega, 0}(z) =0 $ for $ \mathcal{H}^n $ a.e.\ $ z \in \partial \Omega $, which means $ \mathcal{H}^n(\partial \Omega) =0 $. Since the latter is clearly impossible, we have proved the assertion.
\end{proof}

\section{Sphere theorems for $W^{2,n}$-domains}

The results in the previous section in combination with the Heintze-Karcher inequality proved below can be used to generalize classical sphere theorems to $W^{2,n}$-domains.

\begin{theorem}[Heintze-Karcher inequality]\label{Heintze-Karcher}
	Suppose $ \Omega \subseteq \mathbf{R}^{n+1} $ is a bounded and connected  $ W^{2,n} $-domain such that $ H_{\Omega, 1}(z) \geq 0 $ for $ \mathcal{H}^n $ a.e.\ $ z \in \partial \Omega $. Then 
		$$ (n+1)\mathcal{L}^{n+1}(\Omega) \leq  \int_{\partial \Omega} \frac{1}{H_{\Omega, 1}(x)}\, d\mathcal{L}^n(x). $$ 
Moreover, if $ H_{\Omega, 1}(z) \geq  \frac{\mathcal{H}^n(\partial \Omega)}{(n+1)\mathcal{L}^{n+1}(\Omega)} $ for $ \mathcal{H}^n $ a.e.\ $ z \in \partial \Omega $ then $ \Omega $ is a round ball.
\end{theorem}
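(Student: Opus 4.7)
The strategy is to deduce Theorem~\ref{Heintze-Karcher} from the general Heintze--Karcher inequality, Theorem~\ref{HK general}, applied to $C=\overline{\Omega}$, and then to convert the resulting integral over $\nor(K)$ (with $K=\mathbf{R}^{n+1}\setminus\Omega$) into an integral over $\partial\Omega$ using the structural information on the proximal unit normal bundle supplied by Theorem~\ref{W2n domains}. The set $K$ is itself locally a $W^{2,n}$-subgraph (flip the orientation $\nu\mapsto -\nu$ and replace $f$ by $-f$ in Definition~\ref{def W2n domains}), so Theorem~\ref{W2n domains}(1)--(3) is available for $K$.

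First I would verify the admissibility hypothesis of Theorem~\ref{HK general}. For $(x,u)\in\nor(K)$ the proximal ball $B(x+su,s)$ lies in $\Omega$, hence for $\mathcal{H}^n$-a.e.\ such pair $x\in\partial_+^v\Omega$ and $u=-\nu_\Omega(x)$. By Theorem~\ref{W2n domains}(3) applied to $K$, the multiset $\{\kappa_{K,i}(x,-\nu_\Omega(x))\}_{i=1}^n$ agrees with $\{-\rchi_{\Omega,i}(x)\}_{i=1}^n$, so
$$\sum_{i=1}^n\kappa_{K,i}(x,-\nu_\Omega(x))=-nH_{\Omega,1}(x)\le 0$$
by the mean-convexity hypothesis. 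Theorem~\ref{HK general} then yields
$$(n+1)\mathcal{L}^{n+1}(\Omega)\le \int_{\nor(K)}J_n^{\nor(K)}\pi_0(y,u)\,\frac{n}{|\sum_i\kappa_{K,i}(y,u)|}\,d\mathcal{H}^n(y,u).$$
To simplify the right-hand side I would combine Theorem~\ref{W2n domains}(1)--(2) (for $K$) with the area formula: by Lemma~\ref{lem approx diff unit normal} the map $x\mapsto(x,-\nu_\Omega(x))$ is, up to $\mathcal{H}^n$-null sets, a countable union of bi-Lipschitz parametrisations onto $\nor(K)$, and the same Jacobian computation as in the proof of Lemma~\ref{rmk curvature measures} gives
$$J_n^{\nor(K)}\pi_0(x,-\nu_\Omega(x))\cdot J_n\overline{(-\nu_\Omega)}(x)=1.$$
The area formula then collapses the integral to $\int_{\partial\Omega}H_{\Omega,1}^{-1}\,d\mathcal{H}^n$, which proves the inequality.

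For the rigidity statement set $h=\mathcal{H}^n(\partial\Omega)/[(n+1)\mathcal{L}^{n+1}(\Omega)]$. If $H_{\Omega,1}\ge h$ $\mathcal{H}^n$-a.e., then $\int_{\partial\Omega}H_{\Omega,1}^{-1}\,d\mathcal{H}^n\le \mathcal{H}^n(\partial\Omega)/h=(n+1)\mathcal{L}^{n+1}(\Omega)$, which combined with the inequality just established forces equality throughout and $H_{\Omega,1}=h$ almost everywhere. In particular $|\sum_i\kappa_{K,i}|=nh$ is uniformly bounded, so the equality clause of Theorem~\ref{HK general} applies and $\Omega$ is a finite disjoint union of (possibly mutually tangent) open balls; connectedness of $\Omega$ then forces it to be a single round ball.

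The main obstacle I anticipate is the ``dictionary'' step: verifying that Theorem~\ref{W2n domains} genuinely transfers to $K$ and that the approximate principal curvatures of $K$ at $(x,-\nu_\Omega(x))$ are indeed the negatives of those of $\Omega$, so that mean-convexity on $\Omega$ really translates into the admissibility condition $\sum_i\kappa_{K,i}\le 0$ needed by Theorem~\ref{HK general}. Once this sign-flip correspondence and the ``one-sheeted'' description of $\nor(K)$ over $\partial_+^v\Omega$ are in place, the remainder is a clean area-formula computation that mirrors Lemma~\ref{rmk curvature measures}.
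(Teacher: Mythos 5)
Your proposal matches the paper's own proof in essentially every detail: you apply Theorem~\ref{HK general} with $C=\overline{\Omega}$ (so $K=\mathbf{R}^{n+1}\setminus\Omega$, whose proximal unit normal bundle coincides with $\nor(\Omega')$ for $\Omega'=\mathbf{R}^{n+1}\setminus\overline{\Omega}$), use Theorem~\ref{W2n domains}(2)--(3) for $\Omega'$ together with $\nu_{\Omega'}=-\nu_\Omega$ to verify admissibility and to reduce the integral over $\nor(K)$ to $\int_{\partial\Omega}H_{\Omega,1}^{-1}\,d\mathcal{H}^n$ by the area formula, and then run the equality analysis feeding into the rigidity clause of Theorem~\ref{HK general}. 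The only superficial difference is that the paper phrases the ``$H_{\Omega,1}=h$ a.e.'' step via an $\epsilon$-level-set argument while you deduce it from the chain of integral inequalities directly; these are interchangeable.
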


\begin{proof}
	 We define $ \Omega' = \mathbf{R}^{n+1} \setminus  \overline{\Omega} $ and notice that $ \Omega' $ is a $ W^{2,n} $-domain. Since $ \partial^v_+ \Omega' = \partial^v_+ \Omega $ and $ \nu_{\Omega'} = -\nu_\Omega $, it follows from Theorem \ref{W2n domains} that 
	$$ \mathcal{H}^n\big(\nor(\Omega') \setminus \{(z, -\nu_\Omega(z)): z \in \partial^v_+\Omega\}\big) =0, $$
	and \begin{equation*}
	-\rchi_{\Omega,i}(z)= \rchi_{\Omega',i}(z) = \kappa_{\Omega',i}(z, -\nu_\Omega(z)) \quad \textrm{for $ \mathcal{H}^n $ a.e.\ $ z \in \partial^v_+ \Omega $.}
	\end{equation*}   
Henceforth, 
	\begin{equation}\label{Heintze-Karcher eq1}
	 \sum_{i=1}^n \kappa_{\Omega',i}(z,u) = - n\, H_{\Omega, 1}(z)   \leq 0 \quad \textrm{for $ \mathcal{H}^n $ a.e.\ $(z,u) \in \nor(\Omega') $}
	\end{equation}  
	and we infer from Theorem \ref{HK general} and area formula \cite[3.2.20]{Fed69} that 
	\begin{flalign}\label{Heintze-Karcher eq}
	(n+1)\mathcal{L}^{n+1}(\Omega) & \leq \int_{\nor(\Omega')}J_n^{\nor(\Omega')}\pi_0(z,u)\,\frac{n}{|\sum_{i=1}^n \kappa_{\Omega',i}(z,u)|}\, d\mathcal{H}^n(z,u) \notag \\
	& = \int_{\partial \Omega}\frac{1}{H_{\Omega,1}(z)}\, d\mathcal{H}^n(z).
	\end{flalign}  
	
	We assume now that $  H_{\Omega, 1}(z) \geq  \frac{\mathcal{H}^n(\partial \Omega)}{(n+1)\mathcal{L}^{n+1}(\Omega)} $ for $ \mathcal{H}^n $ a.e.\ $ z \in \partial \Omega $. Then, we observe that 
	$$ \mathcal{H}^n\bigg( \bigg\{z \in \partial \Omega: H_{\Omega, 1}(z) \geq  (1+\epsilon)\frac{\mathcal{H}^n(\partial \Omega)}{(n+1)\mathcal{L}^{n+1}(\Omega)} \bigg\}\bigg) =0 \quad \textrm{for every $ \epsilon > 0 $,} $$
	otherwise we would obtain a contradiction with the inequality \eqref{Heintze-Karcher eq} (cf.\ proof of \cite[Corollary 5.16]{HugSantilli}). This implies that 
	$$ H_{\Omega, 1}(z) =  \frac{\mathcal{H}^n(\partial \Omega)}{(n+1)\mathcal{L}^{n+1}(\Omega)} \quad \textrm{for $ \mathcal{H}^n $ a.e.\ $ z \in \partial \Omega $}, $$
	whence we infer that \eqref{Heintze-Karcher eq} holds with equality. Recalling \eqref{Heintze-Karcher eq1} we deduce from Theorem \ref{HK general} that $ \Omega $ must be a round ball.
\end{proof}

\begin{theorem}\label{Alexandrov}
	If $ k \in \{1, \ldots , n\} $, $ \lambda \in \mathbf{R} $ and  $ \Omega \subseteq \mathbf{R}^{n+1} $ is a bounded and connected $ W^{2,n} $-domain such that
		\begin{equation}\label{Alexandrov hp1}
 H_{\Omega, i}(z) \geq 0 \quad \textrm{for $ i = 1, \ldots , k-1 $}		
		\end{equation} 
	and
\begin{equation}\label{Alexandrov hp}
H_{\Omega, k}(z) = \lambda  
\end{equation}  
for $ \mathcal{H}^n $ a.e.\ $ z \in \partial \Omega $, then $ \Omega $ is a round ball.
\end{theorem}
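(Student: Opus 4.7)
My plan is to execute the Montiel-Ros integral-geometric argument in the $W^{2,n}$-setting, combining the Minkowski-Hsiung identities of Corollary \ref{Minkowski formulae}, the Heintze-Karcher inequality of Theorem \ref{Heintze-Karcher}, and the pointwise Newton-Maclaurin inequalities applied $\mathcal{H}^n$-a.e.\ on $\partial \Omega$ to the real numbers $\rchi_{\Omega, 1}(z), \ldots, \rchi_{\Omega, n}(z)$. The first step is to combine Corollary \ref{Minkowski formulae} with $r = k$ and the Euclidean Gauss--Green identity
\begin{equation*}
  \int_{\partial \Omega} x \bullet \nu_\Omega(x) \, d\mathcal{H}^n(x) = (n+1) \mathcal{L}^{n+1}(\Omega),
\end{equation*}
which holds because a $W^{2,n}$-domain has locally finite perimeter with reduced boundary $\mathcal{H}^n$-almost equal to $\partial^v_+ \Omega$ and measure-theoretic outer normal $\nu_\Omega$ (Lemma \ref{W2n domains basic lemma}). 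This yields
\begin{equation*}
  \int_{\partial \Omega} H_{\Omega, k-1} \, d\mathcal{H}^n = \lambda (n+1) \mathcal{L}^{n+1}(\Omega).
\end{equation*}
In view of \eqref{Alexandrov hp1} this forces $\lambda \geq 0$; in fact $\lambda > 0$: for $k = 1$ the right-hand side is $\mathcal{H}^n(\partial \Omega) > 0$, while for $k \geq 2$ the alternative $\lambda = 0$ would give $H_{\Omega, k-1} \equiv 0$ on $\partial \Omega$, contradicting Corollary \ref{Minkowski fomrulae cor} applied with $k$ replaced by $k - 1$.

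Next, at $\mathcal{H}^n$-a.e.\ $z \in \partial \Omega$ the tuple $(\rchi_{\Omega, 1}(z), \ldots, \rchi_{\Omega, n}(z))$ satisfies $\sigma_j \geq 0$ for $j \leq k - 1$ and $\sigma_k = \lambda > 0$, which places it in the closure of the G{\aa}rding cone $\Gamma_k$. The Maclaurin chain of inequalities for normalized elementary symmetric functions then yields
\begin{equation*}
  H_{\Omega, 1}(z) \geq \lambda^{1/k}
  \qquad \text{and} \qquad
  H_{\Omega, k-1}(z) \geq \lambda^{(k-1)/k}.
\end{equation*}
Inserting the second bound into the Minkowski identity above gives
\begin{equation*}
  (n+1) \mathcal{L}^{n+1}(\Omega) = \frac{1}{\lambda} \int_{\partial \Omega} H_{\Omega, k-1} \, d\mathcal{H}^n \geq \lambda^{-1/k} \mathcal{H}^n(\partial \Omega),
\end{equation*}
hence $\lambda^{1/k} \geq \mathcal{H}^n(\partial \Omega) / \big((n+1)\mathcal{L}^{n+1}(\Omega)\big)$. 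Combined with the first Newton-Maclaurin bound, this yields
\begin{equation*}
  H_{\Omega, 1}(z) \geq \frac{\mathcal{H}^n(\partial \Omega)}{(n+1) \mathcal{L}^{n+1}(\Omega)} \quad \text{for $\mathcal{H}^n$-a.e.\ } z \in \partial \Omega,
\end{equation*}
and the rigidity statement in Theorem \ref{Heintze-Karcher} concludes that $\Omega$ is a round ball.

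The proof is a short chain of inequalities; the substantive work has already been carried out in the preceding sections, which build the Legendrian-cycle structure of $\nor(\Omega)$ (providing the Reilly-type variational formulae and hence the Minkowski-Hsiung identity) and establish the Heintze-Karcher inequality for $W^{2,n}$-domains. The only point requiring mild care is the Euclidean divergence identity used in the first step, but it follows from the standard Gauss--Green theorem for sets of finite perimeter together with the identification of $\nu_\Omega$ with the measure-theoretic outer normal on $\mathcal{H}^n$-almost all of $\partial \Omega$ (Lemma \ref{W2n domains basic lemma}). Notably, no appeal to the Nabelpunktsatz or to an explicit umbilicality argument is needed, because the reduction proceeds directly through the rigidity half of Heintze-Karcher.
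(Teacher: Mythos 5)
Your proof is correct and follows essentially the same Montiel--Ros chain as the paper: the Minkowski identity from Corollary \ref{Minkowski formulae} together with the divergence theorem, the Newton--Maclaurin inequalities to bound $H_{\Omega,1}$ below pointwise, and the rigidity case of the Heintze--Karcher inequality (Theorem \ref{Heintze-Karcher}). The only cosmetic difference is how positivity of $\lambda$ is obtained: you invoke Corollary \ref{Minkowski fomrulae cor} with $k$ replaced by $k-1$ to rule out $H_{\Omega,k-1}\equiv 0$, while the paper applies that corollary directly with the given $k$ to conclude $\lambda\neq 0$; both are valid.
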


\begin{proof}
 Combining Theorem \ref{Minkowski formulae} and divergence theorem for sets of finite perimeter (it is clear by Lemma \ref{W2n domains basic lemma} that $ \Omega $ is a set of finite perimeter whose reduced boundary is $\mathcal{H}^n$ almost equal to the topological boundary) we obtain 
\begin{flalign}\label{Alexandrov eq1}
	\int_{\partial \Omega} H_{\Omega, k-1}\, d\mathcal{H}^n =  \lambda \int_{\partial \Omega} x \bullet \nu_\Omega(x)\, d\mathcal{H}^n(x) = \lambda  (n+1) \mathcal{L}^{n+1}(\Omega)
\end{flalign} 
and we infer that $ \lambda \geq 0 $. Hence we deduce from \cite[Lemma 2.2]{HugSantilli} and Corollary \ref{Minkowski fomrulae cor} that 
\begin{equation}\label{Alexandrov Newton inequality}
	H_{\Omega, 1}(z)\geq \ldots \geq H_{\Omega, k-1}(z)^{\frac{1}{k-1}} \geq  H_{\Omega, k}(z)^{\frac{1}{k}} =\lambda^{\frac{1}{k}} > 0
\end{equation}  
for $ \mathcal{H}^n $ a.e.\ $ z \in \partial \Omega $. By \eqref{Alexandrov Newton inequality}, 
\begin{equation*}
	\int_{\partial \Omega} H_{\Omega, k-1}(z)\, d\mathcal{H}^n(z) \geq \lambda^{\frac{k-1}{k}}\mathcal{H}^n(\partial \Omega)
	\end{equation*}
and combining with \eqref{Alexandrov eq1} we obtain 
$$ \lambda (n+1) \mathcal{L}^{n+1}(\Omega) \geq \lambda^{\frac{k-1}{k}}\mathcal{H}^n(\partial \Omega). $$
Since  $ \lambda > 0 $, we obtain from \eqref{Alexandrov Newton inequality} that
\begin{equation}\label{Alexandrov eq2}
H_{\Omega, 1}(z) \geq  \frac{\mathcal{H}^n(\partial \Omega)}{(n+1)\mathcal{L}^{n+1}(\Omega)} \quad \textrm{for $ \mathcal{H}^n $ a.e.\ $ z \in \partial \Omega $}
\end{equation}
and we conclude applying Lemma \ref{Heintze-Karcher}.
\end{proof}

\begin{remark}\label{Alexandrov rmk}
Hypothesis \eqref{Alexandrov hp1} in Theorem \ref{Alexandrov} can be equivalently replaced by the following assumption:
\begin{equation}\label{Alexandrov hp2}
		\frac{\partial \sigma_k}{\partial t_i}(\rchi_{\Omega, 1}(z), \ldots , \rchi_{\Omega, n}(z)) \geq 0 \quad \textrm{for $ i = 1, \ldots , n $ and for $ \mathcal{H}^n $ a.e.\ $ z \in \partial \Omega $.} 
\end{equation}
Assume \eqref{Alexandrov hp2} in  place of \eqref{Alexandrov eq1} in Theorem \ref{Alexandrov}. Then, first we use \cite[eq.\ (1.15)]{Salani} to infer that 
$$ H_{\Omega, k-1}(z) = \frac{1}{k} \sum_{i=1}^n \frac{\partial \sigma_k}{\partial t_i}(\rchi_{\Omega, 1}(z), \ldots , \rchi_{\Omega, n}(z)) \geq 0 $$
	for $ \mathcal{H}^n $ a.e.\ $ z \in \partial \Omega $. Then, as in \eqref{Alexandrov eq1}, we deduce that $ \lambda \geq 0 $. Finally, we employ \cite[Proposition 1.3.2]{Salani} to infer  \eqref{Alexandrov hp1}.
\end{remark}

\section{Nabelpunksatz for Sobolev graphs}

In this final section we extend the Nabelpunktsatz to graphs of twice weakly differentiable functions in terms of the approximate curvatures of their graphs. In particular, Theorem \ref{Nabelpunktsatz} provides a general version of the Nabelpunktsatz for $ W^{2,1} $-graphs. In view of well known examples of convex functions, this result is sharp; see Remark \ref{rmk primitive cantor}. In this section we use the symbols $ \der_i $ and $ \der^2_{ij} $ (respectively $ \Der_i $ and $ \Der^2_{ij} $) for the distributional partial derivatives of a Sobolev function (respectively the classical partial derivatives of a function) with respect to the standard base $ e_1, \ldots , e_n $ of $ \mathbf{R}^n $.

\begin{remark}\label{rmk umbilicality for C2}
Let $ U \subseteq \mathbf{R}^n $ be a connected open and $ f \in C^2(U) $. We define $ G = \{(x, f(x)) : x \in U\} $, and $ \nu : G \rightarrow \mathbf{S}^n \subseteq \mathbf{R}^{n+1} $ so that 
\begin{equation}\label{umbilical normal}
	\nu(\overline{f}(x)) = \frac{(-\nabla f(x), 1)}{\sqrt{1 + | \nabla f(x)|^2}} 
\end{equation} 
for every $ x \in U $.  Differentiating \eqref{umbilical normal} we get
$$ \Der \nu(\overline{f}(x))(v, \Der f(x)(v)) = \frac{(-\Der (\nabla f)(x)(v), 0)}{\sqrt{1 + | \nabla f(x)|^2}}  - \frac{\nabla f(x) \bullet \Der(\nabla f)(x)(v)}{1 + | \nabla f(x)|^2}\nu(\overline{f}(x)) $$
for every $ v \in \mathbf{R}^n $. We recall that $ G $ is umbilical if and only if there exists a function $ \lambda : G \rightarrow \mathbf{R} $ such that 
$$ \Der \nu(z) = \lambda(z) \Id_{\Tan(G,z)} \quad \forall z \in G. $$
Therefore, noting that  $ \Tan(G, \overline{f}(x)) = \{(v, \Der f(x)(v)): v \in \mathbf{R}^n\} $, we conclude that \emph{$ G $ is umbilical if and only if
	\begin{equation}\label{umbilical condition}
		\lambda(\overline{f}(x)) \big[e_i \bullet e_j + \Der_i f(x)\Der_j f(x) \big] = - \frac{\Der^2_{ij}f(x)}{\sqrt{1 + | \nabla f(x)|^2}}
	\end{equation}  
	for every $ x \in U $ and for every $ i, j = 1, \ldots , n $.}   It follows from \cite{SouamToubiana2006} that \emph{if $ U \subseteq \mathbf{R}^n $ is a connected open set, $ f \in C^2(U) $ and $ \lambda : G \rightarrow \mathbf{R} $ is a function such that \eqref{umbilical condition} holds for every $ x \in U $, then either $ \overline{f}(U) $ is contained in an $ n $-dimensional plane, or $ \overline{f}(U) $ is contained in an $ n $-dimensional sphere.}  
\end{remark} 

The first result of this section generalizes Remark \ref{rmk umbilicality for C2} to $ W^{2,1} $-functions.  Suppose $ U \subseteq \mathbf{R}^n $ is an open set, $ \nu \in \mathbf{S}^{n-1} $ and $ \pi_\nu $ is the orthogonal projection onto $ \nu^\perp $. Then we define 
$$ U_\nu = \pi_\nu[U] $$
and 
$$ U^\nu_y = \{t \in \mathbf{R} : y + t \nu \in U \} \subseteq \mathbf{R} \quad \textrm{for $ y \in U_\nu $.} $$
Notice that $ U_\nu $ is an open subset of $ \nu^\perp $ and $ U^\nu_y $ is an open subset of $ \mathbf{R} $ for every $ y \in U_\nu $. 

\begin{lemma}\label{umbilicality lemma}
	Suppose $ U \subseteq \mathbf{R}^n $ be an open set, $g \in W^{1,1}_\loc(U) $  and $ k \in \{1, \ldots , n\} $ such that 
	$$ \der_k g(x) =0 \quad \textrm{for $ \mathcal{L}^n $ a.e.\ $ x \in U $.} $$
	
	Then for $ \mathcal{L}^{n-1} $ a.e.\ $ y \in U_{e_k} $ the function mapping $ t \in U^{e_k}_y $ into $ g(y+te_k) $ is $ \mathcal{L}^1 $ almost equal to a constant function.
\end{lemma}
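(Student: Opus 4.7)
The plan is to use the classical absolute-continuity-on-lines (ACL) characterization of Sobolev functions, together with Fubini's theorem. By the ACL characterization of $W^{1,1}_{\loc}$ (cf.\ \cite[Section 2.1.4]{Ziemerbook}), there exists a representative $\tilde g$ of $g$ such that for $\mathcal{L}^{n-1}$ a.e.\ $y \in U_{e_k}$ the function
\[ \varphi_y : U^{e_k}_y \to \mathbf{R}, \qquad \varphi_y(t) = \tilde g(y + t e_k), \]
is absolutely continuous on every compact subinterval of $U^{e_k}_y$, and its classical derivative satisfies $\varphi_y'(t) = \der_k g(y + te_k)$ for $\mathcal{L}^1$ a.e.\ $t \in U^{e_k}_y$.

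Next, I would apply Fubini's theorem to the $\mathcal{L}^n$-null set $Z = \{x \in U : \der_k g(x) \neq 0\}$ to conclude that, for $\mathcal{L}^{n-1}$ a.e.\ $y \in U_{e_k}$, the slice $\{t \in U^{e_k}_y : y + te_k \in Z\}$ is $\mathcal{L}^1$-null. Combining this with the previous step, for $\mathcal{L}^{n-1}$ a.e.\ $y \in U_{e_k}$ the function $\varphi_y$ is absolutely continuous on compact subintervals of $U^{e_k}_y$ and has classical derivative equal to zero $\mathcal{L}^1$ a.e.; hence, by the fundamental theorem of calculus, $\varphi_y$ is constant on each connected component of $U^{e_k}_y$.

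Finally, since $\tilde g = g$ outside an $\mathcal{L}^n$-null set, a last application of Fubini yields that, for $\mathcal{L}^{n-1}$ a.e.\ $y \in U_{e_k}$, the map $t \mapsto g(y+te_k)$ agrees $\mathcal{L}^1$-a.e.\ on $U^{e_k}_y$ with the (locally) constant function $\varphi_y$, which is the desired conclusion (interpreted componentwise on the open set $U^{e_k}_y$). I do not foresee any real obstacle in this argument; the only delicate point is the ACL statement itself, namely the identification of the classical derivative of the one-dimensional restriction with the trace of the weak partial derivative $\der_k g$ on almost every line parallel to $e_k$, but this is standard and follows from a routine mollification argument together with the dominated convergence theorem.
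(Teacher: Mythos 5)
Your proof is correct and takes essentially the same route as the paper: both invoke the ACL characterization of $W^{1,1}_{\loc}$ from Ziemer (the paper cites Theorem 2.1.4) to get an absolutely continuous representative on almost every line parallel to $e_k$ whose classical derivative agrees a.e.\ with $\der_k g$, and then use Fubini (implicitly in the paper) to see that this derivative vanishes a.e.\ on a.e.\ line, concluding by the fundamental theorem of calculus. Your parenthetical caveat that the conclusion is really ``locally constant'' on the (possibly disconnected) open set $U^{e_k}_y$ is a fair observation; the paper's statement is phrased slightly loosely, but this is harmless where the lemma is applied (to axis-parallel cubes $Q$, whose slices are intervals).
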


\begin{proof}
	It follows from \cite[Theorem 2.1.4]{Ziemerbook} that there exists a representative $ \tilde{g} $ of $ g $ such that the restriction of $ \tilde{g} $ on $ U^{e_k}_y $ is absolutely continuous and 
	$$ \der_k g(y + t e_k) = \frac{d}{dt}\tilde{g}(y + te_k) \quad \textrm{for $ \mathcal{L}^1 $ a.e.\ $ t \in U^{e_k}_y $} $$
	for $ \mathcal{L}^{n-1} $ a.e.\ $ y \in U_{e_k} $. 
	It follows from the hypothesis that 
	$$ \frac{d}{dt}\tilde{g}(y + te_k) =0   $$
	for $ \mathcal{L}^1 $ a.e.\ $ t \in U^{e_k}_y $ and for $ \mathcal{L}^{n-1} $ a.e.\ $ y \in U_{e_k} $, and we readily obtain the conclusion from the absolute continuity hypothesis of $ \tilde{g} $.
\end{proof}

We prove now the first result of this section.
\begin{theorem}\label{umbilicalty}
	Suppose $ U \subseteq \mathbf{R}^n $ is a connected open set, $ f \in W^{2,1}_\loc(U) $ and $ \mu : U \rightarrow \mathbf{R} $ is a function such that 
	\begin{equation}
		\mu(x)\big[e_i \bullet e_j + \der_i f(x)\der_j f(x) \big] = - \frac{\der^2_{ij}f(x)}{\sqrt{1 + | \nabla f(x)|^2}}
	\end{equation} 
	for $ \mathcal{L}^n $ a.e.\ $ x \in U $ and for every $ i,j = 1, \ldots , n $.
	
	Then, either $ f $ is $ \mathcal{L}^n $ almost equal to a linear function on $ U $,  or there exists a $ n $-dimensional sphere $ S $ in $ \mathbf{R}^{n+1} $ such that $ \overline{f}(x) \in S $ for $ \mathcal{L}^n $ a.e.\ $ x \in U $.
\end{theorem}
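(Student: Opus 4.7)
The strategy is to introduce the upward unit normal map
$$ \nu : U \to \mathbf{S}^n \subseteq \mathbf{R}^{n+1}, \qquad \nu(x) = \frac{(-\nabla f(x), 1)}{\sqrt{1 + |\nabla f(x)|^2}}, $$
and to exploit the umbilicity hypothesis to extract strong algebraic constraints on $\mu$ via the Sobolev chain rule. Since the map $p \mapsto (-p, 1)/\sqrt{1+|p|^2}$ is smooth from $\mathbf{R}^n$ to $\mathbf{S}^n$ with globally bounded derivative and $\nabla f \in W^{1,1}_\loc(U, \mathbf{R}^n)$, the chain rule (justified by Lemma \ref{lem weakly vs approx diff Sobolev maps}) gives $\nu \in W^{1,1}_\loc(U, \mathbf{R}^{n+1})$ together with a pointwise a.e.\ formula for $\der_k \nu_j$ in terms of $\der_i f$ and $\der^2_{ij} f$. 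Substituting the umbilicity identity $\der^2_{ij} f = -\mu \sqrt{1 + |\nabla f|^2}(\delta_{ij} + \der_i f\, \der_j f)$ and using the cancellation $\mu(\delta_{kj} + \der_k f\, \der_j f) - \mu\, \der_j f\, \der_k f = \mu\, \delta_{kj}$, a short computation yields the decoupled identities
\begin{equation*}
    \der_k \nu_j = \mu(x)\, \delta_{kj} \text{ for } j = 1, \ldots, n, \qquad \der_k \nu_{n+1} = \mu(x)\, \der_k f,
\end{equation*}
valid for $\mathcal{L}^n$ a.e.\ $x \in U$ and every $k \in \{1, \ldots, n\}$. Equivalently, $\der_k \nu = \mu \cdot \der_k \overline{f}$ in $L^1_\loc$; in particular $\mu \in L^1_\loc(U)$.

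\textbf{The function $\mu$ is a.e.\ constant (assuming $n \geq 2$, which is implicit in the hypersurface setting).} Fix $\ell \in \{1, \ldots, n\}$ and pick $j \neq \ell$. From the identities above, $\der_\ell \nu_j = 0$ as an $L^1_\loc$ function, so commutativity of distributional derivatives applied to $\nu_j \in L^1_\loc(U)$ yields
\begin{equation*}
    \der_\ell \mu = \der_\ell \der_j \nu_j = \der_j \der_\ell \nu_j = 0
\end{equation*}
in the distributional sense on $U$. Hence $\mu \in W^{1,1}_\loc(U)$ with zero distributional gradient; applying Lemma \ref{umbilicality lemma} in every coordinate direction together with the connectedness of $U$, one concludes that $\mu$ is $\mathcal{L}^n$ a.e.\ equal to some constant $c \in \mathbf{R}$.

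\textbf{Conclusion.} If $c = 0$, the umbilicity relation forces $\der^2_{ij} f = 0$ a.e.\ for all $i,j$; iterating Lemma \ref{umbilicality lemma} on each component of $\nabla f$ shows that $\nabla f$ is a.e.\ constant and therefore $f$ is $\mathcal{L}^n$ a.e.\ equal to a linear function on $U$. If $c \neq 0$, the identities of the first step become $\der_k(\nu - c\, \overline{f}) = 0$ componentwise for every $k = 1, \ldots, n$, so $\nu - c\, \overline{f}$ is $\mathcal{L}^n$ a.e.\ equal to a constant vector $K \in \mathbf{R}^{n+1}$. Setting $q = -K/c$, one obtains $\overline{f}(x) - q = \nu(x)/c$ for $\mathcal{L}^n$ a.e.\ $x \in U$, whence $|\overline{f}(x) - q| = 1/|c|$; thus $\overline{f}(x)$ lies $\mathcal{L}^n$ a.e.\ on the $n$-sphere $S \subseteq \mathbf{R}^{n+1}$ of radius $1/|c|$ centered at $q$.

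\textbf{Main obstacle.} The substantive step is the Sobolev chain rule calculation of $\der_k \nu$ together with the algebraic cancellation, after substitution of the umbilicity relation, that collapses the a priori nonlinear derivative expression to the clean linear form $\der_k \nu = \mu \cdot \der_k \overline{f}$. Once this cancellation is in hand, the rest of the argument --- commutativity of distributional derivatives, the passage from "vanishing distributional gradient on a connected domain" to "a.e.\ constant" via Lemma \ref{umbilicality lemma}, and the geometric interpretation of the resulting constant center --- is essentially routine.
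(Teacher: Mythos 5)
Your proof is correct and follows the same overall architecture as the paper: introduce the upward unit normal $\nu = \psi \circ \bm{\nabla} f$, apply the Sobolev chain rule, derive the identity $\der_k \nu = \mu\, \der_k \overline{f}$ from the umbilicality hypothesis, show $\mu$ is a.e.\ constant, and conclude by integrating. Two of the intermediate steps are executed differently. For the identity $\der_k\nu = \mu\,\der_k\overline{f}$, you perform the algebraic cancellation directly, whereas the paper argues geometrically: it first shows the tangential component of $\der_i\eta - \mu(e_i, \der_i f)$ vanishes (so it is a multiple $\lambda_i \eta$ of the normal), then uses $|\eta|=1 \Rightarrow \eta \bullet \der_i\eta = 0$ to kill $\lambda_i$. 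Both work; yours is more computational, the paper's avoids tracking the individual entries of $\der(\bm{\nabla} f)$. For the constancy of $\mu$, you observe that $\der_\ell \mu = \der_\ell \der_j \nu_j = \der_j \der_\ell \nu_j = 0$ in the distributional sense (using $\der_\ell \nu_j = 0$ for $\ell \neq j$), which is a clean shortcut; the paper instead integrates by parts once and then invokes Lemma \ref{umbilicality lemma} together with a Fubini argument to show $\int_Q \mu\, \Der_k\phi\, d\mathcal{L}^n = 0$. Your route via commutativity of distributional derivatives is shorter and achieves the same conclusion, and the final appeal to the constancy criterion (either Lemma \ref{umbilicality lemma} iteratively or \cite[4.1.4]{Fed69} directly) plus connectedness of $U$ is standard. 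Both proofs implicitly require $n \geq 2$ in order to choose $j \neq \ell$ (resp.\ $j \neq k$), which you correctly flag.
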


\begin{proof}
Recall the diffeomorphism $\psi $ from Remark  \ref{rmk map eta est} and define $\eta = \psi \circ \bm{\nabla} f $.
	By the classical chain rule for Sobolev maps (cf.\ \cite{GilbargTrudinger}), $ \eta \in W^{1,1}_\loc(U, \mathbf{R}^{n+1}) $ and 
	\begin{flalign*}
		\der\eta(x)(v) & = \big[\Der \psi(\bm{\nabla} f(x)) \circ \der (\bm{\nabla}f)(x)\big](v)\\
		& =  \frac{(-\der (\bm{\nabla} f)(x)(v), 0)}{\sqrt{1 + | \bm{\nabla}f (x)|^2}}  - \frac{\bm{\nabla} f(x) \bullet \der(\bm{\nabla}f)(x)(v)}{1 + | \bm{\nabla} f(x)|^2}\eta(x)
	\end{flalign*} 
	for $ \mathcal{L}^n $ a.e.\ $ x \in U $. In particular, noting that $ \eta(x) \bullet (e_j, \der_jf(x)) =0 $ for every $ j = 1, \ldots , n $ and for $ \mathcal{L}^n $ a.e.\ $ x \in U $, we employ the umbilicality condition to obtain
	\begin{flalign*}
		\der_i \eta(x) \bullet (e_j, \der_j f(x)) & =   - \frac{\der^2_{ij}f(x)}{\sqrt{1 + | \bm{\nabla} f(x)|^2}} \\
		& = \mu(x) (e_i, \der_i f(x)) \bullet (e_j, \der_j f(x))
	\end{flalign*}
	for $ \mathcal{L}^n $ a.e.\ $ x \in U $ and for every $ i,j = 1, \ldots , n $. Consequently, for every $ i = 1, \ldots , n $ and for $ \mathcal{L}^n $ a.e.\ $ x \in U $ there exists $ \lambda_i(x) \in \mathbf{R} $ such that 
	\begin{equation}\label{umbilicality eq 1}
		\der_i \eta(x) - \mu(x) (e_i, \der_i f(x)) = \lambda_i(x) \eta(x).
	\end{equation}  
	On the other hand, since $ \eta $ is a unit-length vector, we see (again from the chain rule for Sobolev maps) that $ \eta(x) \bullet  \der_i \eta(x) =0 $ for $ \mathcal{L}^n $ a.e.\ $ x \in U $ and for $ i = 1, \ldots, n $. Therefore, we infer from \eqref{umbilicality eq 1} that $ \lambda_i(x) =0 $ and 
	\begin{equation}\label{umbilicality eq 2}
		\der_i \eta(x) = \mu(x) (e_i, \der_i f(x)) = \mu(x) \der_i \overline{f}(x)
	\end{equation} 
	for $ \mathcal{L}^n $ a.e.\ $ x \in U $. For $ k = 1, \ldots , n $ let $ g_k \in W^{1,1}_\loc(U) $ be given by
	$$ g_k = - \frac{\der_k f}{\sqrt{1 + | \bm{\nabla}f|^2}}, $$
	and we notice from \eqref{umbilicality eq 2} that 
	\begin{equation}\label{umbilicality eq 3}
		\der_i g_j =0,  \qquad \textrm{whenever $ i , j \in \{1, \ldots , n\} $ and $ i \neq j $,}
	\end{equation}
	\begin{equation}\label{umbilicality eq 4}
		\der_i g_i = \mu,  \qquad \textrm{whenever $ i  \in \{1, \ldots , n\} $.}
	\end{equation}
	
	We fix now an open cube $ Q \subseteq U $ with sides parallel to the coordinate axes, $ \phi \in C^\infty_c(Q) $ and $ k = 1, \ldots , n $, and we prove that 
	\begin{equation}\label{umbilicality eq6}
		\int_Q \mu \, \Der_k \phi\, d\mathcal{L}^n =0. 
	\end{equation}
	Choose $ j \in \{1, \ldots , n\} $ with $ k \neq j $. Since by \eqref{umbilicality eq 3} we have that $ \der_k g_j =0 $, it follows from Lemma \ref{umbilicality lemma} that for $\mathcal{L}^{n-1}$ a.e.\ $ y \in U_{e_k} $ there exists $ v_j(y) \in \mathbf{R} $ such that 
	$$ g_j(y + t e_k) = v_j(y) \quad \textrm{for $ \mathcal{L}^1 $ a.e.\ $ t \in U^{e_k}_y $.} $$ 
	Now we use \eqref{umbilicality eq 4} to obtain
	\begin{flalign*}
		\int_Q \mu \, \Der_k \phi\, d\mathcal{L}^n & = \int_Q \der_j g_j\, \Der_k \phi \, d\mathcal{L}^n  \\
		& = - \int_Q g_j\, \Der_j \big(\Der_k \phi\big)\, d\mathcal{L}^n \\
		& = - \int_{Q_{e_k}} v_j(y)\int_{Q^y_{e_k}} \Der_k \big(\Der_j \phi\big)(y + te_k)\, d\mathcal{L}^1(t)\, d\mathcal{L}^{n-1}(y) =0,
	\end{flalign*}
	where the last equality follows from the fact that the function mapping $ t \in Q^{e_k}_y $ into $ \Der_j \phi(y + t e_k) $ has compact support in $ Q^{e_k}_y $.
	
	Since \eqref{umbilicality eq6} holds for every open cube $ Q $ with sides parallel to the coordinate axes and for every $ \phi \in C^\infty_c(Q) $, and since $ U $ is connected, we infer from \cite[4.1.4]{Fed69} that
	\begin{equation}\label{umbilicality eq 7}
		\textrm{$ \mu $ is $ \mathcal{L}^n $ almost equal to a constant function on $ U $.} 
	\end{equation}
	Since $ U $ is connected, we combine \eqref{umbilicality eq 7} and \eqref{umbilicality eq 2} to infer that there exists $ c \in \mathbf{R} $ and $ w \in \mathbf{R}^{n+1} $ such that 
	$$ \eta(x) - c \overline{f}(x) = w \quad \textrm{for $ \mathcal{L}^n $ a.e.\ $ x \in U $.} $$
	If $ c \neq 0 $ the last equation evidently implies that $ \overline{f}(x) \in \partial B^{n+1}(-w/c, 1/|c|) $ for $ \mathcal{L}^n $ a.e.\ $ x \in U $. If $ c =0 $, we have that $ w \bullet e_{n+1} = (1 + |\bm{\nabla} f|^2)^{-1/2} $ and
	$$ \der_i f(x) = - \frac{w \bullet e_i}{w \bullet e_{n+1}} \quad \textrm{for $ \mathcal{L}^n $ a.e.\ $ x \in U $ and $ i = 1, \ldots , n $.} $$
	This implies that $ f $ is $ \mathcal{L}^n $ almost equal to linear function on $ U $, since $ U $ is connected.
\end{proof}

\begin{definition}
	Suppose  $ X \subseteq \mathbf{R}^{n+1} $ is  $ \mathcal{H}^n $-measurable and  $ \mathcal{H}^n $-rectifiable of class $ 2 $. We say that $ X $ is \emph{approximate totally umbilical} if there exists a $ \mathcal{H}^n \restrict X $-measurable map $ \nu $ such that 
	$ \nu(x) \in \Nor^n(\mathcal{H}^n \restrict X,x) \cap \mathbf{S}^n $ and there exists a function $ \mu : X \rightarrow \mathbf{R} $ such that
	\begin{equation}\label{umbilicality}
		\ap \Der \nu(x)(\tau) = \mu(x) \tau \quad \textrm{for every $ \tau \in \Tan^n(\mathcal{H}^n \restrict X,x) $},
	\end{equation}  
	for $ \mathcal{H}^n $ a.e.\ $ x \in X $ (keep in mind Lemma \ref{lem approx diff unit normal}).
\end{definition}

\begin{definition}[Lusin (N) condition]\label{Lusin condition}
Suppose $ U \subseteq \mathbf{R}^n $ is open and $ g : U \rightarrow \mathbf{R}^k $ ($k \geq n $). We say that $ g $ satisfies the \emph{Lusin's condition (N)} if and only if $ \mathcal{H}^n(g(Z)) =0 $ for every $ Z \subseteq U $ with $ \mathcal{L}^n(Z) =0  $.
\end{definition}

We are now ready to prove the second result of this section.

\begin{theorem}\label{Nabelpunktsatz}
	Suppose $ U \subseteq \mathbf{R}^n $ is a bounded open set, $ f \in  W^{2,1}(U) $, $ \overline{f} $ satisfies the Lusin's condition (N) and $ G = \overline{f}(U) $. 
	
	Then $ G $ is $ \mathcal{H}^n $-rectifiable of class $ 2 $. Moreover, if $ G $ is approximate totally umbilical 
	then, up to  a $ \mathcal{H}^n $-negligible set, either $ G $ is a subset of a $ n $-dimensional plane or a subset of a $ n $-dimensional sphere.
\end{theorem}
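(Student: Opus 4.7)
The plan has two steps: establishing the $\mathcal{H}^n$-rectifiability of class $2$ of $G$, and then, under the umbilical hypothesis, identifying $G$ as a subset of a hyperplane or of a sphere. For the first step, $f\in W^{2,1}(U)$ gives $\nabla f\in W^{1,1}(U,\mathbf{R}^n)$, so Lemma \ref{lem weakly vs approx diff Sobolev maps} supplies measurable sets $A_i\subseteq U$ with $\mathcal{L}^n(U\setminus\bigcup_i A_i)=0$ and $\Lip(\nabla f|A_i)<\infty$. Extending each $\nabla f|A_i$ to a Lipschitz map on $\mathbf{R}^n$ via Kirszbraun and then invoking the classical $C^1$-Lusin approximation for Lipschitz maps (cf.\ \cite[3.1.15, 3.1.16]{Fed69}), I would obtain countably many functions $\tilde f_j\in C^2(\mathbf{R}^n)$ and measurable sets $B_j\subseteq U$ with $\mathcal{L}^n(U\setminus\bigcup_j B_j)=0$ and $f|B_j=\tilde f_j|B_j$. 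Each $\overline{f}(B_j)$ is then a subset of the $C^2$ graph $\Sigma_j=\{(x,\tilde f_j(x)):x\in\mathbf{R}^n\}$, while the Lusin condition (N) ensures $\mathcal{H}^n\big(G\setminus\bigcup_j\overline{f}(B_j)\big)=0$, giving the desired second-order rectifiability.

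Assume now that $G$ is approximate totally umbilical, with normal $\nu:G\to\mathbf{S}^n$ and scalar $\mu:G\to\mathbf{R}$ realizing \eqref{umbilicality}. The core of the proof is to pull this identity back to an equation on $f$ and apply Theorem \ref{umbilicalty}. Since $\overline{f}|B_j$ is a bi-Lipschitz embedding of $B_j$ into the $C^2$-hypersurface $\Sigma_j$, \cite[2.10.19(4)]{Fed69} forces $\Tan^n(\mathcal{H}^n\restrict G,\overline{f}(x))=\Tan(\Sigma_j,\overline{f}(x))$ for $\mathcal{H}^n$ a.e.\ $\overline{f}(x)\in\overline{f}(B_j)$, and on this set $\nu(\overline{f}(x))=\varepsilon_j(x)\,\psi(\nabla\tilde f_j(x))=\varepsilon_j(x)\,\psi(\nabla f(x))$ for some measurable sign $\varepsilon_j:B_j\to\{\pm1\}$, with $\psi$ as in Section 3. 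Using Lemma \ref{lem approx diff unit normal} together with the fact that the approximate differential of $\nabla f$ equals $\der^2 f$ a.e., the computation of Remark \ref{rmk umbilicality for C2} carried out for the $C^2$ representative $\tilde f_j$ transforms \eqref{umbilicality} into the pointwise identity
\begin{equation*}
\tilde\mu(x)\,\big[e_i\bullet e_j+\der_i f(x)\,\der_j f(x)\big] = -\frac{\der^2_{ij}f(x)}{\sqrt{1+|\nabla f(x)|^2}}
\end{equation*}
for $\mathcal{L}^n$ a.e.\ $x\in U$ and every $i,j$, where $\tilde\mu$ is a suitable measurable function on $U$.

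With the pulled-back umbilical equation in hand, applying Theorem \ref{umbilicalty} to each connected component of $U$ yields the dichotomy: on each component either $f$ is $\mathcal{L}^n$ a.e.\ affine, forcing $\overline{f}$ to land a.e.\ in an $n$-plane, or $\overline{f}$ lands a.e.\ in a fixed $n$-sphere. Applying once more the Lusin (N) hypothesis to the exceptional set, I would conclude that $G$ is contained, up to an $\mathcal{H}^n$-null set, in a (countable) union of such planes and spheres; under the implicit connectedness assumption on $U$, a single such object suffices for the stated conclusion.

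The main obstacle is the second step: the rigorous pullback of the umbilical identity. Concretely, I must verify that $\ap\Der\nu(\overline{f}(x))$ acts on $\Tan^n(\mathcal{H}^n\restrict G,\overline{f}(x))$ as the classical differential at $x$ of $\varepsilon_j\,\psi\circ\nabla\tilde f_j$, that a measurable choice of the sign $\varepsilon_j$ is possible on each $B_j$, and that the identification $\ap\Der(\nabla f)=\der^2 f$ composes with $\Der\psi$ without measurability loss on the union $\bigcup_j B_j$. Once these technicalities are settled, the remaining algebraic manipulation is essentially the one already recorded in Remark \ref{rmk umbilicality for C2}.
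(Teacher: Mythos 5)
Your overall plan — (i) $C^2$-Lusin approximation of $f$ to get $\mathcal{H}^n$-rectifiability of class $2$, (ii) pullback of the umbilical identity to a pointwise PDE on $f$, (iii) Theorem \ref{umbilicalty} — is exactly the paper's strategy, and your list of "things to verify" in the pullback step is essentially the list of identities the paper proves (the sign issue $\varepsilon_j$ is circumvented in the paper by fixing $\nu=\psi\circ\bm{\nabla}f\circ(\pi|G)$ from the start and then remarking that if $G$ is umbilical for some normal it is umbilical for this one).

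However, step (i) as you describe it has a genuine gap. Kirszbraun applied to $\nabla f|A_i$ followed by the $C^1$-Lusin approximation of Lipschitz maps (\cite[3.1.15, 3.1.16]{Fed69}) produces $C^1$ \emph{vector fields} $h_m:\mathbf{R}^n\to\mathbf{R}^n$ with $h_m=\nabla f$ on a large subset $B_m$. Such an $h_m$ is in general not a gradient on $\mathbf{R}^n$, so there is no $\tilde f_j\in C^2(\mathbf{R}^n)$ with $\nabla\tilde f_j=h_m$; and even if there were, nothing yet forces $\tilde f_j=f$ on $B_j$ (matching only the gradient does not fix the zeroth-order term). What you actually need is the full $C^2$-Lusin approximation of the $W^{2,1}$ function $f$: countably many $g_i\in C^2(\mathbf{R}^n)$ with $\mathcal{L}^n\big(U\setminus\bigcup_i\{g_i=f\}\big)=0$ (and $\Lip(g_i)<\infty$). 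This is precisely Calder\'on--Zygmund's theorem \cite[Theorem 13]{CalderonZygmund}, which the paper cites; it is proved by verifying Whitney's second-order extension condition on density points using the $\mathcal{L}^n$-approximate second-order Taylor expansion of $W^{2,1}$ functions — a step not supplied by the $C^1$-Lusin of the gradient alone. Replace your Kirszbraun-plus-$C^1$-Lusin pipeline with that reference (or reproduce its Whitney argument) and the rest of your outline goes through.

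One further remark: you correctly observe that the final step needs $U$ connected, since Theorem \ref{umbilicalty} is stated for connected $U$ and the conclusion "\emph{a single} plane or sphere" would otherwise fail on disconnected domains with different components landing in different spheres. The paper's statement omits this hypothesis but uses it implicitly in the same way; you are right to flag it.
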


\begin{proof}
	By \cite[Theorem 13]{CalderonZygmund} and \cite[2.10.19(4), 2.10.43]{Fed69} we can find countably many functions $ g_1, g_2, \ldots \in C^2(\mathbf{R}^n) $ such that $ \mathcal{L}^n(U \setminus \bigcup_{i=1}^\infty \{g_i = f\}) =0 $ and $ \Lip(g_i) < \infty $ for every $ i \geq 1 $.
	Henceforth, thanks to the Lusin's condition (N), we readily infer that $ G $ is $ \mathcal{H}^n $-rectifiable of class $ 2 $. We define $ D_i $ as the set of $ x \in \{g_i = f\} $ such that 
	$$ \Theta^n(\mathcal{L}^n \restrict U \setminus \{g_i = f\}, x) =0, \quad \Der g_i(x)  = \der f(x) $$
	and $\Tan^n(\mathcal{H}^n \restrict G, \overline{f}(x))  $ is a $ n $-dimensional plane. Since $ \Der g_i(x) = \ap \Der f(x) $ for every $ x \in D_i $, it follows from \cite[2.10.19(4), 3.2.19]{Fed69} and Lemma \ref{lem weakly vs approx diff Sobolev maps} that 
	$$ \mathcal{L}^n(\{g_i = f\} \setminus D_i) =0 \quad \textrm{for every $ i \geq 1 $.} $$
	Since $ \Tan^n(\mathcal{L}^n \restrict \{g_i = f\}, x) = \mathbf{R}^n $ for every $ x \in D_i $, and noting that $ \overline{g_i} : \mathbf{R}^n \rightarrow \overline{g_i}(\mathbf{R}^n) $ is a bi-lipschitz homeomorphism, we use \cite[Lemma B.2]{SantilliAnnali} to conclude
	\begin{flalign*}
		\der \overline{f}(x)[\mathbf{R}^n] = \Der \overline{g_i}(x)[\Tan^n(\mathcal{L}^n \restrict \{g_i = f\}, x)] \subseteq \Tan^n(\mathcal{H}^n \restrict G, \overline{f}(x))
	\end{flalign*}
	for every $ x \in D_i $. Since $ \der \overline{f}(x) $ is injective whenever it exists, we conclude that 
	$$ 	\der \overline{f}(x)[\mathbf{R}^n] = \Tan^n(\mathcal{H}^n \restrict G, \overline{f}(x)) $$
	and $$ \psi(\bm{\nabla} f(x)) \in \Nor^n(\mathcal{H}^n \restrict G, \overline{f}(x)) $$ for every $ x \in D_i $ and $ i \geq 1 $. Let $ D = \bigcup_{i=1}^\infty D_i $ and notice that $ \mathcal{H}^n(G \setminus \overline{f}(D)) =0 $ (again by Lusin's condition (N)). Let $ \nu $ be the $ \mathcal{H}^n \restrict G $-measurable map defined by 
	$$ \nu = \psi \circ \bm{\nabla} f \circ (\pi|G), $$
	where $ \pi : \mathbf{R}^n \times \mathbf{R} \rightarrow \mathbf{R}^n $ is the  orthogonal projection onto $ \mathbf{R}^n $. We observe that if $ z \in \overline{f}(D_i) $ and $ \Theta^n(\mathcal{H}^n \restrict G \setminus \overline{f}(D_i), z) = 0 $, then $ \nu  $ is $ \mathcal{H}^n \restrict G $-approximately differentiable at $ z $ (since $ \nu| \overline{f}(D_i) = (\psi \circ \nabla g_i \circ \pi) | \overline{f}(D_i) $) and
	\begin{flalign*}
		\ap \Der \nu(z)& = \Der(\psi \circ \nabla g_i \circ \pi)(z) \\
		& = \Der(\psi \circ \nabla g_i)(\pi(z)) \circ \big(\pi| \Tan^n(\mathcal{H}^n \restrict G, z)\big) \\
		& = \ap \Der(\psi \circ \bm{\nabla} f)(\pi(z)) \circ \big(\pi| \Tan^n(\mathcal{H}^n \restrict G, z)\big) \\
		&= \der(\psi \circ \bm{\nabla} f)(\pi(z)) \circ \big(\pi| \Tan^n(\mathcal{H}^n \restrict G, z)\big),
	\end{flalign*}
	whence we infer
	\begin{equation}\label{Nabelpunktsatz eq 1}
		\ap \Der \nu(z) \circ \der \overline{f}(\pi(z))  = \der (\psi \circ \bm{\nabla} f)(\pi(z)).
	\end{equation} 
	By \cite[2.10.19(4)]{Fed69} we conclude that \eqref{Nabelpunktsatz eq 1} is true for $ \mathcal{H}^n $ a.e.\ $ z \in G $. 
	
	If $ G $ is approximate totally umbilical then it is easy to see that the unit normal vector field $ \nu $ defined above fulfils the umbilicalilty condition in \eqref{umbilicality} with some function $ \mu $. Henceforth, 
	\begin{flalign*}
		\mu(\overline{f}(x))(e_i \bullet e_j + \der_if(x)\der_jf( x)) & = \big(\ap \Der \nu(\overline{f}(x)) \circ \Der \overline{f}(x)\big)(e_i) \bullet (e_j, \der_j f(x)) \\
		& = \der_i(\psi \circ \nabla f)(x) \bullet (e_j, \der_j f(x))\\
		& = - \frac{\der^2_{ij}f(x)}{\sqrt{1 + | \bm{\nabla} f( x)|^2}}
	\end{flalign*}
	for every $ i, j = 1, \ldots , n $ and for $ \mathcal{L}^n $ a.e.\ $ x \in U $. By Theorem \ref{umbilicalty} and the Lusin's condition (N) we deduce that, up to a $ \mathcal{H}^n $-negligible set, $ G $ is either a subset of a $ n $-dimensional plane, or a subset of a $ n $-dimensional sphere of $ \mathbf{R}^{n+1} $. 
\end{proof}

\begin{remark}\label{rmk Lusin condition (N) for f}
	If $ f \in W^{2,p}(U) $ with $ p > \frac{n}{2} $, then the Sobolev embedding theorem \cite[Theorem 7.26]{GilbargTrudinger} ensures that $ f \in W^{1,p^\ast}(U) $ with $ p^\ast > n $. Henceforth, $ \overline{f} $ satisfies the Lusin's condition (N) by \cite[Theorem 1.1]{MarcusMizel}.
\end{remark}

\begin{remark}\label{rmk primitive cantor}
	It is easy to find convex functions $ f \in C^{1,\alpha}(\mathbf{R}^n) $ such that the approximate principal curvatures of the graph are zero $ \mathcal{H}^n $ almost everywhere and the conclusion of Theorem \ref{Nabelpunktsatz} fails (Notice that the graph is $ \mathcal{H}^n $-rectifiable of class $ 2 $ and $ \overline{f} $ satisfies the Lusin's condition (N)). Indeed the gradient of these functions are continuous maps of bounded variation whose distributional derivative is not a function. An example of such functions is given by the primitive of the ternary Cantor function.
\end{remark}

\appendix
\section{Area of the proximal unit normal bundle}

\begin{lemma}\label{lem approx curvatures}
	If $ C \subseteq \mathbf{R}^{n+1} $ is a closed set and $ M \subseteq \mathbf{R}^{n+1} $ is a $ k $-dimensional submanifold of class $ 2 $ then there exists $ R \subseteq M \cap C $ such that:
	\begin{enumerate}
		\item \label{lem approx curvatures1} $ \nor(C) \cap (R \times \mathbf{S}^n) \subseteq \nor(M) $; 
		\item \label{lem approx curvatures2} $\mathcal{H}^k \big((M \cap C) \setminus R \big) =0.$
	\end{enumerate}
\end{lemma}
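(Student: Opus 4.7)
The plan is to let $R$ be the set of $x \in M \cap C$ at which the following two conditions hold: (i) $\Theta^k(\mathcal{H}^k \restrict (M \setminus C),x) = 0$, and (ii) $\Tan^k(\mathcal{H}^k \restrict M, x) = T_x M$. Condition (ii) holds everywhere on $M$ since $M$ is a $C^2$-submanifold, and condition (i) holds at $\mathcal{H}^k$-a.e.\ $x \in M \cap C$ by the standard Federer density argument \cite[2.10.19(4)]{Fed69} applied to $M \setminus C \subseteq M$. So $\mathcal{H}^k((M \cap C) \setminus R) = 0$, settling \eqref{lem approx curvatures2}.

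The key claim is that for every $x \in R$,
$$ \Tan^k(\mathcal{H}^k \restrict (M \cap C), x) = T_x M. $$
One inclusion is trivial from $M \cap C \subseteq M$ and (ii). For the reverse, observe that the definition of $\Tan^k$ is via strictly positive upper density, which is monotone under $\mathcal{H}^k$-negligible perturbations of the underlying set: given $v \in T_x M$ and $\epsilon > 0$, condition (i) implies that the upper density of $\mathcal{H}^k \restrict (M \cap C)$ on the cone $\{y : |r(y-x) - v| < \epsilon \text{ for some } r > 0\}$ coincides with that of $\mathcal{H}^k \restrict M$, and the latter is positive by (ii).

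Next I will verify \eqref{lem approx curvatures1}. Fix $x \in R$ and $(x, u) \in \nor(C)$, so that there exists $s > 0$ with $|y - (x + su)| \geq s$ for every $y \in C$. I claim $u \in (T_x M)^\perp$. Let $v \in T_x M$ be a unit vector; by the claim above there is a sequence $y_n \in M \cap C$ with $y_n \to x$ and $(y_n - x)/|y_n-x| \to v$. The proximal condition gives
$$ |y_n - x|^2 - 2s\, u \bullet (y_n - x) \geq 0, $$
and dividing by $2s|y_n - x|$ and taking the limit yields $u \bullet v \leq 0$. Replacing $v$ with $-v \in T_x M$ gives $u \bullet v = 0$, hence $u \in (T_x M)^\perp$. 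Since $M$ is a $C^2$-submanifold of $\mathbf{R}^{n+1}$, it has locally positive reach; in particular there exists $s' > 0$ (depending on $x$ and a neighborhood of $x$ in $M$) such that $\mathrm{dist}(x + s'u, M) = s'$. Thus $(x,u) \in \nor(M)$, concluding the proof.

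I expect no essential obstacle: the only nontrivial point is the tangent-cone equality, which is a standard consequence of monotonicity of approximate upper density under $\mathcal{H}^k$-null perturbations. The rest is a direct computation combined with the positive-reach property of $C^2$-submanifolds.
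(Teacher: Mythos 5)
Your proof is correct and follows a natural line of argument. The paper's own proof simply refers the reader to the proof of \cite[Lemma 6.1]{SantilliAnnali}, so no line-by-line comparison is possible from the present source, but the structure you give (the density-point choice of $R$, the tangent-cone identity $\Tan^k(\mathcal{H}^k \restrict (M\cap C), x) = T_x M$ at $R$-points, the limiting proximal estimate yielding $u \perp T_x M$, and the local positive reach of a $C^2$-submanifold) is what one expects the cited argument to contain. The only step that would benefit from one more sentence is the last one: local positive reach a priori controls only $\mathrm{dist}(x+s'u, M \cap V)$ for some open neighborhood $V$ of $x$ in $\mathbf{R}^{n+1}$, and to upgrade to $\mathrm{dist}(x+s'u, M) = s'$ one should further shrink $s'$ so that $B^{n+1}(x+s'u, s') \subseteq V$; then the portion of $M$ outside $V$ cannot come closer than $s'$, and the global distance coincides with the local one. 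This is routine and does not affect the correctness of the argument.
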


\begin{proof}
	See the proof of \cite[Lemma 6.1]{SantilliAnnali}. 
\end{proof}

Suppose $ C \subseteq \mathbf{R}^{n+1} $ is closed with $\mathcal{H}^n(C) < \infty $ and $ \Sigma = \pi_0(\nor(C)) $. It follows from \cite{MenneSantilli} that $ \Sigma $ is $ \mathcal{H}^n $-rectifiable of class $ 2 $. We fix a $ \mathcal{H}^n \restrict \Sigma $-measurable map $ \nu : \Sigma \rightarrow \mathbf{S}^n $ such that $ \nu(a) \in \Nor^n(\mathcal{H}^n \restrict \Sigma, a) $ for $ \mathcal{H}^n $ a.e.\ $ a \in \Sigma $ and we notice that it is $ \mathcal{H}^n \restrict \Sigma $-approximately differentiable at $ \mathcal{H}^n $ a.e.\ $ a \in \Sigma $ with a symmetric approximate differential $ \ap \Der \nu(a)  $ by Lemma \ref{lem approx diff unit normal}. We denote by 
$$ \rchi_{\Sigma,1}(a) \leq \ldots \leq \rchi_{\Sigma, n}(a) $$
the eigenvalues of $ \ap \Der \nu(a) $ and we define  (cf. Definition \ref{def principal curvatures})
$$ E = \{(x,u) \in \nor(C) : \kappa_{C,n}(x,u) < \infty\}. $$

\begin{lemma}\label{lem area lower bound gauss graphs}
	If $ A \subseteq \mathbf{R}^{n+1} $ is a Borel set, then
	$$ \mathcal{H}^n(E \cap (A \times \mathbf{S}^n)) \geq \int_{\Sigma \cap A} \prod_{\ell=1}^n\sqrt{1 + \rchi_{\Sigma, \ell}(x)^2}\, d\mathcal{H}^n(x). $$
\end{lemma}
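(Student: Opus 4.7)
The plan is to reduce to a countable family of $C^2$-hypersurfaces containing $\Sigma$ and then lift $\Sigma\cap A$ into $E$ via their smooth unit normals, applying the area formula. Since $\Sigma$ is $\mathcal{H}^n$-rectifiable of class $2$, I would first cover $\mathcal{H}^n$-almost all of $\Sigma$ by countably many $C^2$-hypersurfaces $M_j\subseteq\mathbf{R}^{n+1}$, each equipped with a $C^1$ unit normal $\nu_j$. Applying Lemma \ref{lem approx curvatures} to each $M_j$ produces $R_j\subseteq M_j\cap C$ with $\nor(C)\cap(R_j\times\mathbf{S}^n)\subseteq\nor(M_j)$ and $\mathcal{H}^n((\Sigma\cap M_j)\setminus R_j)=0$, allowing me to form a disjoint Borel partition $\{\Sigma_j\}_j$ of a full-measure subset of $\Sigma\cap A$ with $\Sigma_j\subseteq R_j\cap M_j$.

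Next I would introduce the $C^1$-injections $\bar{\nu}_j^\pm:M_j\to\mathbf{R}^{n+1}\times\mathbf{S}^n$, $\bar{\nu}_j^\pm(x)=(x,\pm\nu_j(x))$, and set $P_j^\pm=\{x\in\Sigma_j:(x,\pm\nu_j(x))\in\nor(C)\}$. Because every $x\in\Sigma_j\subseteq\pi_0(\nor(C))$ lifts to some $(x,u)\in\nor(C)$ and Lemma \ref{lem approx curvatures} forces such a $u$ to equal $\pm\nu_j(x)$, one has $\Sigma_j=P_j^+\cup P_j^-$; the images $\bar{\nu}_j^+(P_j^+)$ and $\bar{\nu}_j^-(P_j^-)$ are pairwise disjoint (different second coordinate), and for different indices $j$ they project into disjoint pieces $\Sigma_j$.

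The heart of the argument is to verify that $\bar{\nu}_j^\pm(P_j^\pm)\subseteq E$ up to $\mathcal{H}^n$-null sets, together with the identity $J_n\bar{\nu}_j^\pm(x)=\prod_{\ell}\sqrt{1+\rchi_{\Sigma,\ell}(x)^2}$ at $\mathcal{H}^n$-a.e.\ $x\in\Sigma_j$. Since $\nor(M_j)$ is an $n$-dimensional $C^1$-submanifold of $\mathbf{R}^{n+1}\times\mathbf{S}^n$ containing the $\mathcal{H}^n$-rectifiable set $\nor(C)\cap(R_j\times\mathbf{S}^n)$, at $\mathcal{H}^n$-a.e.\ point of the latter the approximate tangent plane of $\nor(C)$ coincides with the tangent plane of $\nor(M_j)$, which has the explicit basis $(\tau,\pm D\nu_j(x)\tau)$ with $\tau\in T_xM_j$. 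Comparing this basis with the orthonormal frame $\zeta_\ell$ of Lemma \ref{lem: Santilli20} identifies each $|\kappa_{C,\ell}(x,u)|$ with the modulus of an eigenvalue of $D\nu_j(x)|_{T_xM_j}$; in particular all $\kappa_{C,\ell}$ are finite, so the lifted points lie in $E$. Similarly, on $\Sigma\cap M_j$ the tangent planes $T_x\Sigma$ and $T_xM_j$ coincide $\mathcal{H}^n$-a.e., whence $\ap D\nu(x)=\pm D\nu_j(x)|_{T_xM_j}$ has eigenvalues of the same moduli, yielding the identification of Jacobians.

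With these ingredients the area formula for the $C^1$-injections $\bar{\nu}_j^\pm$ on the rectifiable sets $P_j^\pm$ will give
$$\mathcal{H}^n(E\cap(\Sigma_j\times\mathbf{S}^n))\,\geq\,\mathcal{H}^n(\bar{\nu}_j^+(P_j^+))+\mathcal{H}^n(\bar{\nu}_j^-(P_j^-))\,\geq\,\int_{\Sigma_j}\prod_{\ell}\sqrt{1+\rchi_{\Sigma,\ell}(x)^2}\,d\mathcal{H}^n(x),$$
and summing over the disjoint partition $\{\Sigma_j\}_j$ of $\Sigma\cap A$ will deliver the claim. The main obstacle is the tangent-plane matching in the previous paragraph: it is the single step that simultaneously yields both the finiteness of the proximal principal curvatures $\kappa_{C,\ell}$ along the lifted points and the identification of $J_n\bar{\nu}_j^\pm$ with $\prod_\ell\sqrt{1+\rchi_{\Sigma,\ell}^2}$, and it relies crucially on the $\mathcal{H}^n$-rectifiability of class $2$ of $\Sigma$ together with Lemma \ref{lem approx curvatures}.
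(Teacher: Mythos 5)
Your proposal is correct and follows essentially the same strategy as the paper: cover $\Sigma$ by countably many $C^2$-hypersurfaces, invoke Lemma \ref{lem approx curvatures} to confine $\nor(C)$ over the subsets $R_j$ to the smooth normal bundles $\nor(M_j)$, match approximate tangent planes to identify $\kappa_{C,\ell}$ with $\pm\rchi_{M_j,\ell}$ (yielding both the finiteness needed to land in $E$ and the Jacobian identity), and conclude by the area formula plus Lemma \ref{lem approx diff unit normal} to replace $\rchi_{M_j,\ell}$ by $\rchi_{\Sigma,\ell}$. The only cosmetic difference is directional: you push $\Sigma_j$ \emph{forward} into $\nor(C)$ via the two Gauss lifts $\bar\nu_j^\pm$ and split into $P_j^\pm$, whereas the paper pushes $\nor(C)\cap(R_j\times\mathbf{S}^n)$ \emph{down} via $\pi_0$ with Jacobian $J_n^{\nor(\Sigma_j)}\pi_0 = \prod_\ell(1+\rchi_{\Sigma_j,\ell}^2)^{-1/2}$, absorbing the $\pm$ multiplicity into the fiber count $\geq 1$; these are dual formulations of the same area-formula step.
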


\begin{proof}
Let $ \{\Sigma_i\}_{i \geq 1} $ be a sequence of $ C^2 $-hypersurfaces such that $ \mathcal{H}^{n}(\Sigma \setminus \bigcup_{i=1}^\infty \Sigma_i) =0 $. Employing Lemma \ref{lem approx curvatures} we can find a \emph{disjointed} sequence of Borel subsets $ \{R_i\}_{i \geq 1} $ of $ \Sigma_i \cap \Sigma  $ such that
$$ \mathcal{H}^n\big( \Sigma \setminus {\textstyle \bigcup_{i=1}^\infty R_i}\big) =0 \quad \textrm{and} \quad \nor(C) \cap (R_i \times \mathbf{S}^n) \subseteq \nor(\Sigma_i)  $$
	for every $ i \geq 1 $. It follows from \cite[2.10.19(4)]{Fed69} that 
	\begin{equation}\label{lem area lower bound gauss graphs eq}
	\Tan^n\big(\mathcal{H}^n \restrict (\nor(C) \cap (R_i \times \mathbf{S}^n)), (x,u)\big) = \Tan(\nor(\Sigma_i), (x,u)) 
	\end{equation}
	for $ \mathcal{H}^n $ a.e.\ $ (x,u) \in \nor(C) \cap (R_i \times \mathbf{S}^n) $ (recall that $ \nor(\Sigma_i) \cap (\Sigma_i \times \mathbf{S}^n) $ is a $ n $-dimensional submanifold of $ \mathbf{R}^{n+1} \times \mathbf{S}^n $ of class $ 1 $). Since $ \pi_0 \big(\Tan(\nor(\Sigma_i), (x,u))\big) = \Tan(\Sigma_i, x) $ is a $ n $-dimensional linear space for every $ (x,u) \in \nor(\Sigma_i) \cap (\Sigma_i \times \mathbf{S}^n) $, we  deduce from \eqref{lem area lower bound gauss graphs eq} and \cite[Lemma 3.9]{HugSantilli} that 
	$$ \kappa_{C,n}(a,u) < \infty \quad \textrm{for $ \mathcal{H}^n $ a.e.\ $ (a,u) \in \bigcup_{i =1}^n \big[\nor(C) \cap (R_i \times \mathbf{S}^n)\big]. $} $$
For each $ i \geq 1 $ let 
	$ \rchi_{\Sigma_i, 1} \leq \ldots \leq \rchi_{\Sigma_i, n} $
	be the principal curvatures of $ \Sigma_i $, and we notice that 
	$$  J_n^{\nor(\Sigma_i)} \pi_0(x,u) = \prod_{\ell=1}^n \frac{1}{\sqrt{1 + \rchi_{\Sigma_i, \ell}(x)^2}} $$
	for $  (x,u) \in \nor(\Sigma_i) \cap (\Sigma_i \times \mathbf{S}^n) $.
	Applying area formula we can estimate
	\begin{flalign*}
			\mathcal{H}^n(E \cap (A \times & \mathbf{S}^n)) \\    &\geq \sum_{i=1}^\infty \mathcal{H}^n(\nor(C) \cap ((A \cap R_i) \times \mathbf{S}^n))\\
			& = \sum_{i=1}^\infty \int_{\nor(C) \cap ((A \cap R_i) \times \mathbf{S}^n)} J_n^{\nor(\Sigma_i)} \pi_0(x,u)\, \prod_{\ell=1}^n \sqrt{1 + \rchi_{\Sigma_i, \ell}(x)^2}\, d\mathcal{H}^n(x,u)\\
			& \geq \sum_{i=1}^\infty \int_{A \cap R_i}\prod_{\ell=1}^n \sqrt{1 + \rchi_{\Sigma_i, \ell}(x)^2}\, d\mathcal{H}^n(x).
	\end{flalign*}
From the proof of Lemma \ref{lem approx diff unit normal} we obtain that if $ i \geq 1 $  then
$$ \prod_{\ell=1}^n \sqrt{1 + \rchi_{\Sigma_i, \ell}(x)^2} = \prod_{\ell=1}^n \sqrt{1 + \rchi_{\Sigma, \ell}(x)^2} \quad \textrm{for $ \mathcal{H}^n $ a.e.\ $ x \in \Sigma \cap \Sigma_i $.}$$
 Henceforth, we conclude 
$$ 	\mathcal{H}^n(E \cap (A \times \mathbf{S}^n)) \geq  \int_{A \cap \Sigma}\prod_{\ell=1}^n \sqrt{1 + \rchi_{\Sigma, \ell}(x)^2}\, d\mathcal{H}^n(x). $$
\end{proof}

\begin{lemma}\label{lem Brakke example}
	There exist a smooth $ 2 $-dimensional submanifold $ M \subseteq \mathbf{R}^3 $ with bounded mean curvature such that $ \mathcal{H}^2 \restrict \overline{M} $ is a Radon measure over $ \mathbf{R}^3 $, and a set $ P \subseteq \mathbf{R}^3 \times \mathbf{S}^2 $  such that $ \mathcal{H}^2(P) > 0 $ and
	$$  \mathcal{H}^{2}\big(\nor(\overline{M}) \cap \{(x,u) \in \mathbf{R}^{n+1} \times \mathbf{S}^n : | x-b| < r,\; | u - \upsilon| < r\}\big) = \infty  $$
	for every $(b, \upsilon) \in P $ and for every $ r > 0 $.
\end{lemma}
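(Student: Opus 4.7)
The plan is to take $M$ as the disjoint union of countably many small scaled catenoids placed at a dense set of points above a horizontal disk. Since a catenoid is minimal and (after scaling) its Gauss map is a diffeomorphism from each half onto a fixed open half-band of $\mathbf{S}^2$, each catenoid will contribute a fixed positive amount of normal-bundle area in every direction inside this half-band, and the dense accumulation will produce an infinite local measure.

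\emph{Construction.} Fix $s_0>0$ and let $C_0\subseteq\mathbf{R}^3$ be the truncated catenoid parametrised by $F(s,v)=(\cosh s\cos v,\cosh s\sin v,s)$, $|s|<s_0$, $v\in[0,2\pi)$. A direct computation gives $\nu_{C_0}(F(s,v))=\cosh^{-1}(s)\,(-\cos v,-\sin v,\sinh s)$, so the Gauss map is a diffeomorphism of the upper half $C_0^+=\{s>0\}$ onto the open upper half-band $\mathcal{B}^+:=\{u\in\mathbf{S}^2:0<u_3<\tanh s_0\}$, and the principal curvatures of $C_0$ are $\pm\cosh^{-2}(s)$, so $C_0$ is minimal. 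Let $\Omega\subseteq\mathbf{R}^2\times\{0\}$ be the open unit disk, fix distinct points $\{a_i\}_{i\geq 1}\subseteq\Omega$ such that $\{a_i\}\cap U$ is infinite for every non-empty open $U\subseteq\Omega$, and inductively choose $r_i\in(0,2^{-i}]$ small enough that the scaled catenoids $M_i:=a_i+r_iC_0$ are pairwise disjoint. Setting $M:=\bigsqcup_{i\geq 1}M_i$, then $M$ is a smooth minimal $2$-submanifold of $\mathbf{R}^3$ with $\mathcal{H}^2(M)=\mathcal{H}^2(C_0)\sum_i r_i^2<\infty$, and since $r_i\to 0$, every accumulation point of $M$ lies in $\overline\Omega$, so $\overline M\subseteq M\cup\overline\Omega$ and $\mathcal{H}^2\restrict\overline M$ is Radon.

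\emph{Main estimate.} Put $P:=\Omega\times\mathcal{B}^+\subseteq\mathbf{R}^3\times\mathbf{S}^2$; as an open subset of a $4$-dimensional manifold, $\mathcal{H}^2(P)=\infty$. Fix $(b,\upsilon)\in P$ and $r>0$, and let $D=\sqrt{\cosh^2 s_0+s_0^2}$. By density and $r_i\to 0$, the index set
$$I:=\{i\geq 1:a_i\in B(b,r/2)\text{ and }r_iD<r/2\}$$
is infinite, and for every $i\in I$ we have $M_i\subseteq B(b,r)$. For $i\in I$ and $x$ in the upper half $M_i^+$ (where $x_3>0$), only finitely many of the other catenoids can reach height $x_3/2$ (since $r_js_0\to 0$), and these finitely many are at positive distance from $x$; moreover the accumulation plane $\overline\Omega\times\{0\}$ lies at distance $\geq x_3$ from $x$. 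Hence there is $\delta_x>0$ with $\overline M\cap B(x,\delta_x)=M_i\cap B(x,\delta_x)$, and by the definition of the proximal unit normal bundle $(x,\nu_i(x))\in\nor(\overline M)$. Applying the area formula to the graph map $\overline{\nu_i}$ on $A_i:=\nu_i^{-1}(B(\upsilon,r)\cap\mathcal{B}^+)\subseteq M_i^+$, whose Jacobian satisfies $\sqrt{(1+\kappa_1^2)(1+\kappa_2^2)}\geq|\kappa_1\kappa_2|=|J_2\nu_i|$ (as in the proof of Lemma \ref{lem area lower bound gauss graphs}), followed by the change of variables $u=\nu_i(x)$, gives
\begin{align*}
\mathcal{H}^2\!\left(\nor(\overline M)\cap B(b,r)\times B(\upsilon,r)\right)
&\geq \sum_{i\in I}\int_{A_i}\sqrt{(1+\kappa_1^2)(1+\kappa_2^2)}\,d\mathcal{H}^2\\
&\geq \sum_{i\in I}\mathcal{H}^2\!\left(B(\upsilon,r)\cap\mathcal{B}^+\right)=\infty,
\end{align*}
the per-index contribution being a fixed positive number because $\upsilon\in\mathcal{B}^+$.

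\emph{Main obstacle.} The delicate point is the localization argument that upgrades proximal normals of the individual $M_i$ to proximal normals of the full set $\overline M$ at points away from the plane; this requires fast enough decay of $r_i$ so that only finitely many catenoids reach any positive height, combined with the pairwise disjointness built into the construction. Everything else reduces to scale-invariant computations on the unit catenoid and to the density of the centres $\{a_i\}$.
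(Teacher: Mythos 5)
Your proposal takes a genuinely different route from the paper's: the paper does not construct $M$ but cites the Brakke/Kolasinski--Menne example, and turns the known divergence of $\int_{M\cap B(b,r)}(\kappa_1^2+\kappa_2^2)\,d\mathcal{H}^2$ into divergence of $\int\prod_\ell(1+\kappa_\ell^2)^{1/2}$ via the mean-curvature bound $|\kappa_1+\kappa_2|\leq 1$, then applies Lemma~\ref{lem area lower bound gauss graphs} and a compactness argument on $\mathbf{S}^n$ to localize the direction $\upsilon(b)$. Your catenoid pile is more elementary and self-contained, replaces the curvature-integral inequality by a scale-invariant computation on a single model surface, and hands you the whole open direction set $\mathcal{B}^+$ directly, with no localization step needed.

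However, there is a genuine gap: the union $M=\bigsqcup_i M_i$ of full truncated catenoids centred at $a_i\in\Omega\subseteq\mathbf{R}^2\times\{0\}$ is not a smooth embedded submanifold of $\mathbf{R}^3$. Each waist circle $\{a_i+r_i(\cos v,\sin v,0):v\}$ lies in the plane $z=0$, and since the centres $\{a_j\}$ are dense in $\Omega$ with $r_j\to 0$, these circles are accumulation points of $\bigcup_{j\neq i}M_j$: for any $p$ on the waist circle of $M_1$ one can choose $a_{j_k}\to p$, and then $M_{j_k}\subseteq B(a_{j_k},r_{j_k}D)\to\{p\}$, so no neighbourhood of $p$ in $\mathbf{R}^3$ meets $M$ in a single graph. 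Your localization step is stated only for $x_3>0$ (``the accumulation plane lies at distance $\geq x_3$ from $x$''), which betrays the inconsistency. The fix is the one your own estimate already suggests: take $M_i$ to be the open upper half $a_i+r_i\{F(s,v):0<s<s_0\}$, so that $M$ is disjoint from $\mathbf{R}^2\times\{0\}$; then the finitely-many-catenoids-reach-height-$x_3/2$ argument applies at every point of $M$, $M$ is a genuine smooth minimal submanifold, and $\overline M\subseteq\bigcup_i\overline{M_i}\cup\overline\Omega$ still has finite $\mathcal{H}^2$-measure. You should also demand pairwise disjoint closures $\overline{M_i}$ in the inductive choice of $r_i$, so that the finitely many tall catenoids are automatically at positive distance from $x$. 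With these repairs the rest of the argument is correct.
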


\begin{proof}
By \cite[Example 10.8 and Remark 10.10]{KolasinskiMenne} (see also \cite[6.1]{Brakke}) there exists a smooth $ 2 $-dimensional submanifold $ M \subseteq \mathbf{R}^3 $ such that $ \mathcal{H}^2 \restrict \overline{M} $ is a Radon measure, and if $ \rchi_1 \leq \rchi_2 $ are the principal curvatures of $ M $ with respect to a unit-normal vector field $ \eta : M \rightarrow \mathbf{S}^n $ then 
$$ | \rchi_1(a) + \rchi_2(a) | \leq 1 \quad \textrm{for every $ a \in M $} $$ 
and there exists a Borel set $ B \subseteq \overline{M} \setminus M $ such that 
$$ \int_{M \cap B(b,r)} \big(\rchi_1^2 + \rchi_2^2\big)^{q/2}\, d\mathcal{H}^2 = \infty \quad \textrm{for every $ 1 < q < \infty $,  $ b \in B $ and $ r > 0 $.} $$
Henceforth, for every $ b \in B $ and $ r > 0 $, 
$$ +\infty = \int_{M \cap B(b,r)} \big(\rchi_1^2 + \rchi_2^2\big)\, d\mathcal{H}^2 \leq \mathcal{H}^2(M \cap B(b,r)) - 2 \int_{M \cap B(b,r)}\rchi_1\, \rchi_2\, d\mathcal{H}^2 $$
and 
$$ \int_{M \cap B(b,r)}\rchi_1\, \rchi_2\, d\mathcal{H}^2 = - \infty. $$
By Lemma \ref{lem area lower bound gauss graphs} we infer that 
$$ \mathcal{H}^2 \big(\nor(\overline{M}) \cap (B(b,r) \times \mathbf{S}^n)\big) \geq \int_{M\cap B(b,r)} \prod_{\ell=1}^2 (1 + \rchi_\ell(x)^2)^{1/2}\, d\mathcal{H}^n(x) = \infty $$
for every $ r > 0 $ and $ b \in B $.

Now the compactness of $ \mathbf{S}^n $ guarantees that for each $ b \in B $ there exists $ \upsilon(b) \in \mathbf{S}^n $ such that 
$$ \mathcal{H}^{2}\big(\nor(\overline{M}) \cap \{(x,u) \in \mathbf{R}^{n+1} \times \mathbf{S}^n : | x-b| < r,\; | u - \upsilon(b)| < r\}\big) = \infty $$
for each $ r > 0 $. Henceforth, setting $ P = \{(b, \upsilon(b)) : b \in B\} $ we have that $ \mathcal{H}^2(P) > 0 $ and the proof is complete.
\end{proof}

\begin{remark}\label{rmk Brakke example}
	In particular, $ \nor(\overline{M}) $ is a Legendrian rectifiable set by Lemma \ref{lem: Santilli20}, but it cannot be the carrier a integer-multiplicity rectifiable $ n $-current of $ \mathbf{R}^3 \times \mathbf{S}^2 $.
\end{remark}

\end{document}